\newcommand{\lie}{\mathcal{L}}
\newcommand{\F}{\mathcal{F}}
\title{Poisson cohomology of 3D Lie algebras}
\author{Douwe Hoekstra, Florian Zeiser}
\date{November 8, 2022}
\begin{document}

\maketitle

\begin{abstract}
    We compute the Poisson cohomology associated with several three dimensional Lie algebras. Together with existing results and the classification of three dimensional Lie algebras, this provides the Poisson cohomology of all linear Poisson structures in dimension $3$.
\end{abstract}

\tableofcontents

\section{Introduction}
A Poisson structure on a smooth manifold is a bivector field $\pi \in \mathfrak{X}^2(M):=\Gamma(\wedge^2TM)$ subject to the condition
\begin{equation}\label{eq: differential}
    [\pi,\pi]=0.
\end{equation}
Here $[\cdot,\cdot]$ denotes the Schouten-Nijenhuis bracket, an extension of the Lie bracket to the multivector fields $\mathfrak{X}^{\bullet}(M):=\Gamma(\wedge^{\bullet}TM)$ on $M$, providing the structure of a graded Lie algebra on $\mathfrak{X}^{\bullet}(M)$. Equation \eqref{eq: differential} and the graded Jacobi identity imply  that every Poisson structure induces the differential
\begin{equation*}
    \diff_{\pi}:=[\pi,\cdot ]:\  \mathfrak{X}^{\bullet}(M)\to \mathfrak{X}^{\bullet+1}(M), \qquad \text{ with }\ \diff_{\pi}^2=0.
\end{equation*}
The resulting cohomology is called \textbf{Poisson cohomology}, denoted by $H^{\bullet}(M,\pi)$, which was first introduced by Lichnerowicz in \cite{Lichnerowicz1977}. Poisson cohomology is interesting from an algebraic and a geometric perspective. Algebraically, the wedge product and the Schouten-Nijenhuis bracket descend to cohomology, providing the structure of a Gerstenhaber algebra (\cite{Vaisman1994}):
\begin{equation*}
    (H^{\bullet}(M,\pi),[\cdot,\cdot],\wedge)
\end{equation*}
Geometrically, the cohomology groups encode interesting information about the Poisson manifold $(M,\pi)$, i.e.
\begin{itemize}
    \item $H^{0}(M,\pi)$ is the space of Casimir functions, i.e. smooth functions on $M$ which are constant along the closure of the leaves of $\pi$;
    \item $(H^1(M,\pi),[\cdot,\cdot])$ can be seen as the Lie algebra of the group of outer Poisson automorphism of $(M,\pi)$;
    \item $H^2(M,\pi)$ controls infinitesimal deformations modulo deformations induced by diffeomorphisms, as such it plays a central role in the question of linearization \cite{Conn1985}, \cite{Crainic2011}, \cite{Weinstein1983}, and deformations \cite{FLS},  \cite{KV1}, \cite{Wein87};
    \item $H^3(M,\pi)$ provides obstructions to extend infinitesimal deformations of $(M,\pi)$ to actual deformations;
\end{itemize} 
For a further exposition and more details see for example \cite{Wein98} or \cite{DZ}. It is not yet fully understood to which extend these interpretations are true in general. One reason for that is that there are few explicit computations of Poisson cohomology groups known, due to a lack of general methods for the calculation. Some examples of such calculations for different classes of Poisson structures can be found in  \cite{Ginzburg1996}, \cite{Ginzburg1992}, \cite{Xu92} and for low dimensions in \cite{Gammella}, \cite{Marcut2019}, \cite{Monnier2002}, \cite{Nakanishi1997} and \cite{Roy}.

In this paper we expand the list of examples, as we compute the Poisson cohomology groups associated to various $3$-dimensional Lie algebras, together with their algebraic structures. Given a Lie algebra $(\mathfrak{g},[\cdot,\cdot])$ we obtain a linear Poisson structure $\pi$ on $\mathfrak{g}^*$ by
\begin{equation}\label{eq: linear poisson}
    \pi_\xi(X,Y):= \langle \xi ,[X,Y]\rangle_{\mathfrak{g}^*\times \mathfrak{g}} \qquad \text{ for all } \ \xi\in \mathfrak{g}^*,\, X,Y\in \mathfrak{g}.
\end{equation}
We want to point out that the results of this paper together with the trivial case ($\pi=0$) and the results for the two semisimple $3$-dimensional Lie algebras, i.e. for $\mathfrak{so}(3)$ by Ginzburg and Weinstein in \cite{Ginzburg1992} (more generally for all compact semisimple Lie algebras), and for $\mathfrak{sl}_2(\R)$ by M\u{a}rcu\c{t} and the second author in \cite{Marcut2019}, provide the Poisson cohomology associated to all $3$-dimensional Lie algebras up to isomorphism, by the classification of such Lie algebras. Moreover, the results should be compared to results about the tangential Poisson cohomology associated to $3$-dimensional Lie algebras by Gammella in \cite{Gammella}.

The paper is structured as follows:

In \cref{sec:results} we recall the classification of $3$-dimensional Lie algebras and present our results for the Poisson cohomology groups of the associated linear Poisson structures on their dual.

In \cref{sec:prelim} we fix some notation, give some identities for the Poisson differentials and outline some of the techniques used for the calculations later on.

In \cref{sec:heisenberg} up to \cref{sec:typevii} we prove the results for the various $3$-dimensional Lie algebras. In order to obtain our results we use different techniques, i.e. direct computations (\cref{sec:heisenberg} $\&$ \cref{sec:affine}), averaging over the action of a compact Lie group as in \cite{Ginzburg1992} and classifying $S^1$-invariant multivector fields in $\R^3$ (\cref{sec:euclidean}), and using the splitting into (partially-)formal and flat Poisson cohomology along various Poisson submanifolds, generalizing methods used in \cite{Ginzburg1992}, \cite{Roy} and \cite{Marcut2019} (see \cref{sec:prelim} for more details). In the last case, the formal part is obtained by direct computations and the flat part either using a Poisson diffeomorphism away from the Poisson submanifold which preserves the flat Poisson complex due to mild singularities (\cref{sec:typevi} $\&$ \cref{sec:typeiv}), or via short exact sequences (\cref{sec:typevii}).

\textbf{Acknowledgements} 

We would like to thank Ioan M\u{a}rcu\c{t} for many useful discussions and his supervision of the first authors' Bachelor thesis, where this paper originated from. The second author would like to thank the the Max Planck Institute for Mathematics in Bonn for its hospitality and financial support, during the early stages of this project.

\section{Results}\label{sec:results}
In this section we state the results of the paper and provide some geometric interpretations for them. First we recall the classification of $3$-dimensional Lie algebras in \cref{subsec: classification}. Based on the classification, we give the results of the Poisson cohomology groups for the associated linear Poisson structures on their duals. As mentioned in the introduction, the Poisson cohomology groups have various geometric interpretations in the different degrees. As an example we work these interpretations out in detail for the linear Poisson structure associated with the Heisenberg Lie algebra in \cref{subsec:heisenberg}. For the other Lie algebras we only provide interpretations of classes which are of special importance. 

\subsection{The classification of $3$-dimensional Lie algebras}\label{subsec: classification}
We begin by recalling the classification of $3$-dimensional Lie algebras, originally due to Bianchi ~\autocite{Bia} (also e.g. ~\autocite{Bowers2005}). We distinguish by the dimension of the first derived algebra $\gf^{(1)} = [\gf,\gf]$ of $\gf$:
\begin{enumerate}
    \item zero-dimensional: the abelian Lie algebra 
	\item one-dimensional:
		\begin{enumerate}
		    \item The Heissenberg Lie algebra determined by $[e_1, e_2] = e_3$ 
			\item The direct product of the non-abelian two-dimensional Lie algebra and the one-dimensional Lie algebra given by $[e_1,e_2] = e_1$ 
		\end{enumerate}
	\item two-dimensional:
		\begin{enumerate}
		    \item The Euclidean Lie algebra given by the bracket $[e_1, e_3] = - e_2$ and $[e_1, e_3] = e_1 $
			\item The Lie algebra parametrized by $0 < \tau \leq 1$ (open book-type) and $0 > \tau \ge -1$ (hyperbolic-type) with non-trivial brackets given by $[e_1, e_3] = e_1$ and $[e_2, e_3] = \tau e_2$ 
			\item The Lie algebra with non-trivial brackets $[e_1,e_3] = e_1$ and $[e_2, e_3] = e_1 + e_2$. (semi open book)
			\item The Lie algebras with parameter $0<\tau $ and brackets $[e_1, e_3] = \tau e_1 - e_2$ and $[e_1, e_3] = e_1 + \tau e_2$ (spiral-type).
		\end{enumerate}
	\item three-dimensional: 
		\begin{enumerate}
			\item The semisimple Lie algebra $\sla(2,\R)$
			\item The compact semisimple Lie algebra  $\so(3)$
		\end{enumerate}
\end{enumerate}

We present the results (and the computations) in the order of the classification. To describe the results we identify $\mathfrak{g}^*$ with $\R^3$ and coordinates $(x,y,z)$, using the basis for $\gf$ as in the classification.

\subsubsection*{The abelian Lie algebra}
In the abelian case the Poisson structure $\pi=0$ is trivial and the Poisson cohomology is given by all multivector fields on $\mathfrak{g}^*$.

\subsection{The Heissenberg Lie algebra}\label{subsec:heisenberg}
To describe the results for the Heissenberg Lie algebra we define by
\begin{align*}
    E^3:=x\partial_x +y\partial_y + z\partial_z,
\end{align*}
the Euler vector field on $\R^3$. Moreover, we denote by $C^{\infty}_0(\R^m)$ the smooth functions which vanish at the origin.
\begin{theorem}\label{thm:heisenberg}
Let $\pi = z\partial_x \wedge \partial_y$ be the linear Poisson structure associated with the Heissenberg Lie algebra. In the different degrees the Poisson cohomology classes are uniquely described by the following representatives:
	\begin{itemize}
	    \setlength\itemsep{0em}
		\item in degree zero the Casimir functions are of the form $f(z)$ for $f\in C^{\infty}(\R)$;
		\item in degree one for $f\in C^\infty(\R)$ and $g \in C^\infty_0(\R^2)$ by
			\begin{equation*}
				f(z)E^3 +  [g(x,y),\partial_x\wedge \partial_y];
			\end{equation*}
		\item in degree two for $g_1,g_2 \in C^\infty_0(\R^2)$ by
		\begin{equation*}
			[(g_1(x,y) + zg_2(x,y))\partial_z, \partial_x \wedge \partial_y];
		\end{equation*}
		\item in degree three the classes have for $g \in C^\infty(\R^2)$ unique representatives of the form
		\begin{equation*}
			g(x,y)\partial_x\wedge\partial_y\wedge\partial_z
		\end{equation*}
	\end{itemize}
	Using these set of representatives the algebraic structure is determined by the following relations in $H^\bullet(\R^3,\pi)$:
	\begin{align*}
        \left[[h_0(x,y,z),\partial_x\wedge \partial_y]\right]&=\left[[h(x,y,0),\partial_x\wedge \partial_y]\right]\\
        \left[[h_1(x,y,z)\partial_z,\partial_x\wedge \partial_y]+ h_2(x,y,z)\partial_x\wedge \partial_y\right]&=\left[[h_1(x,y,0)+z(\partial_z h_1(x,y,0) -h_2(x,y,0))\big)\partial_z,\partial_x\wedge \partial_y]\right] \\
        \left[h_3(x,y,z)\partial_x\wedge \partial_y\wedge \partial_z\right] &=\left[ h_3(x,y,0)\partial_x\wedge \partial_y\wedge \partial_z \right]
    \end{align*}
    for any $h_i\in C^{\infty}(\R^3)$, $i=0,1,2,3$ which makes the multivector fields Poisson.
\end{theorem}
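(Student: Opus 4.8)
The plan is to compute the complex $(\mathfrak{X}^\bullet(\R^3),\diff_\pi)$ by hand, degree by degree; this is feasible because in dimension three everything is small and explicit. First I would record the differentials. On a function $f$ one has $\diff_\pi f = z(\partial_x f)\partial_y - z(\partial_y f)\partial_x$; on a vector field $V = a\partial_x + b\partial_y + c\partial_z$,
\begin{equation*}
  \diff_\pi V = \bigl(z(\partial_x a + \partial_y b) - c\bigr)\,\partial_x\wedge\partial_y \;-\; z(\partial_x c)\,\partial_y\wedge\partial_z \;+\; z(\partial_y c)\,\partial_x\wedge\partial_z ;
\end{equation*}
and on a bivector $\beta$ with $\partial_y\wedge\partial_z$- and $\partial_z\wedge\partial_x$-coefficients $P$ and $Q$, $\diff_\pi\beta = z(\partial_x Q - \partial_y P)\,\partial_x\wedge\partial_y\wedge\partial_z$, independently of the $\partial_x\wedge\partial_y$-coefficient. (One could instead note that $\pi$ is unimodular for the standard volume form and transport these formulas from the Koszul--Brylinski differential on forms, but the direct route is equally short.) Since the multivector fields listed in the statement are visibly $\diff_\pi$-closed, it remains to prove surjectivity and uniqueness of the normal forms. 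Degrees $0$ and $3$ are then immediate: $\ker\diff_\pi$ on functions is exactly $\{f(z)\}$, and since $\partial_x Q - \partial_y P$ realises every smooth function the image of $\diff_\pi$ on bivectors is $z\,C^\infty(\R^3)\cdot\partial_x\wedge\partial_y\wedge\partial_z$, whence $H^3\cong C^\infty(\R^3)/zC^\infty(\R^3)\cong C^\infty(\R^2)$ with the stated representative.

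For $H^1$: from the formula above $V$ is closed iff $\partial_x c = \partial_y c = 0$ and $c = z(\partial_x a + \partial_y b)$, so $c=c(z)$ with $c(0)=0$; dividing by $z$ (Hadamard) write $c = z\tilde c(z)$, so $\partial_x a + \partial_y b = \tilde c(z)$. Subtracting a suitable multiple $f(z)E^3$, which is $\diff_\pi$-closed, kills the $\partial_z$-component and makes the planar part divergence-free; by the Poincar\'e lemma with $z$ as a parameter this planar part equals $[G(x,y,z),\partial_x\wedge\partial_y]$ for some $G\in C^\infty(\R^3)$, and since $[G(x,y,z),\partial_x\wedge\partial_y]-[G(x,y,0),\partial_x\wedge\partial_y]=[zK,\partial_x\wedge\partial_y]=\diff_\pi K$ with $zK = G(x,y,z)-G(x,y,0)$, one may replace $G$ by $G(\cdot,\cdot,0)$ and subtract a constant to land in $C^\infty_0(\R^2)$. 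Uniqueness follows by setting $f(z)E^3 + [g,\partial_x\wedge\partial_y] = \diff_\pi\phi$ and comparing components: the $\partial_z$-component forces $f=0$, then the planar components force $\partial_x g = \partial_y g = 0$, hence $g=0$. For $H^2$: the cocycle condition is just $\partial_x Q = \partial_y P$, so by the Poincar\'e lemma $(P,Q)=(\partial_x\phi,\partial_y\phi)$ and the cocycle is $[\phi\partial_z,\partial_x\wedge\partial_y]+R\,\partial_x\wedge\partial_y$; subtracting $\diff_\pi(-R\partial_z)$ removes the $\partial_x\wedge\partial_y$-term and replaces $\phi$ by $\phi - zR$, and writing $\phi - zR = g_1(x,y)+zg_2(x,y)+z^2\rho$ one finishes by exhibiting an explicit $W$ with $\diff_\pi W = [z^2\rho\,\partial_z,\partial_x\wedge\partial_y]$, after which the constant parts of $g_1,g_2$ may be discarded since they contribute nothing. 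Uniqueness is again a comparison of components, using that $\partial_x g_i$ and $\partial_y g_i$ are forced to vanish.

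The three displayed relations for the algebraic structure are precisely the coboundary identities used in these reductions, restated for arbitrary Poisson multivector fields. Relation one is $[zK,\partial_x\wedge\partial_y]=\diff_\pi K$ with $K = (h_0(x,y,z)-h_0(x,y,0))/z$. Relation two is the two-step reduction $[h_1\partial_z,\partial_x\wedge\partial_y]+h_2\,\partial_x\wedge\partial_y \mapsto [(h_1-zh_2)\partial_z,\partial_x\wedge\partial_y] \mapsto [\bigl(h_1|_{z=0}+z(\partial_z h_1|_{z=0}-h_2|_{z=0})\bigr)\partial_z,\partial_x\wedge\partial_y]$, the first step subtracting $\diff_\pi(-h_2\partial_z)$ and the second the explicit primitive of the $z^2$-tail. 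Relation three is $h_3\equiv h_3|_{z=0}$ modulo $zC^\infty(\R^3)$. Together with the elementary wedge products and Schouten brackets of the chosen representatives, these relations determine the full Gerstenhaber structure, since any such product or bracket is again an explicit multivector field to which the relation in the appropriate degree applies. I do not expect a genuine obstacle: the only care needed is with sign conventions and with the smoothness of the various divisions by $z$ and of the solutions of $\partial_x a = h$; the substance of the argument is the bookkeeping of which terms are exact, which for this nilpotent, unimodular Poisson structure is entirely mechanical.
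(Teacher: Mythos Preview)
Your approach is essentially the paper's: a direct degree-by-degree computation, using Hadamard's lemma to peel off $z$-factors and the planar Poincar\'e lemma to recognise divergence-free fields as brackets with $\partial_x\wedge\partial_y$. The paper phrases degree one slightly differently---it first uses coboundaries to kill selected Taylor coefficients of $X^x,X^y$ and then reads off the remaining cocycle equations---but the content is the same.

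There is, however, one genuine slip in your degree-one argument: $f(z)E^3$ is \emph{not} $\diff_\pi$-closed. Your own formula gives, for $a=f(z)x$, $b=f(z)y$, $c=f(z)z$,
\[
z(\partial_x a+\partial_y b)-c \;=\; 2zf(z)-zf(z)\;=\;zf(z),
\]
so $\diff_\pi\bigl(f(z)E^3\bigr)=zf(z)\,\partial_x\wedge\partial_y$. Hence subtracting $\tilde c(z)E^3$ from a cocycle neither leaves a cocycle nor makes the planar part divergence-free (the divergence becomes $-\tilde c(z)$, not $0$). The fix is immediate: subtract instead $\tilde c(z)\,(x\partial_x+z\partial_z)$, which \emph{is} closed, kills the $\partial_z$-component, and leaves the planar part with vanishing divergence; the rest of your argument then goes through verbatim. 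This is really a defect in the theorem's stated representative rather than in your strategy---the paper's own proof carries the same issue in its final normalisation step---and the cohomology group is correctly identified either way.
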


\subsubsection*{The geometric interpretation}

 \begin{wrapfigure}{r}{4cm}
  \vspace{-50pt}
 \includegraphics[scale=0.3]{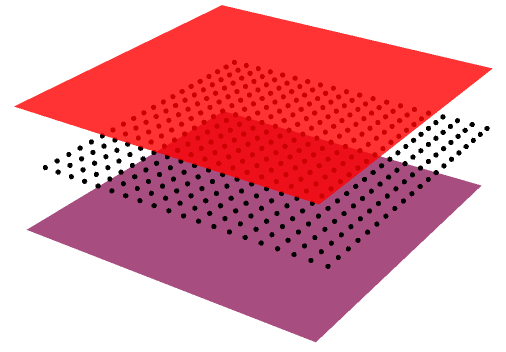}
 \caption{The foliation of $\pi=z\partial_x \wedge \partial_y$}
 \vspace{-30pt}
 \end{wrapfigure}

We give a geometric interpretation of these results, exemplary for all other results. For the Lie algebra $\mathfrak{sl}_2(\R)$ there is a similar interpretation of the results in \cite{Marcut2019}.

In order to understand these results geometrically, let us first understand the leaf space of the symplectic foliation induced by $\pi$. The leaves of $\pi$ are obtained by small flows of the Hamiltonian vector fields, i.e.
\begin{equation*}
    X_g:= \pi^\sharp(\diff g) \qquad \text{ where }\ g\in C^\infty (\R^3).
\end{equation*}
Hence we have two different types of leaves, the planes parallel to the $x\text{-}y$ plane for $z\in \R\backslash \{0\}$ and for $z=0$ the points of the $x\text{-}y$ plane. As a consequence, the leaf space is topologically a real line with a plane of origins.

Casimir functions are precisely the smooth functions on $\R^3$ which are constant along these small flows and accumulation points thereof (which are contained in the leaves here). This is reflected in the result for $H^0(\R^3,\pi)$.\vspace*{5pt}

\textbf{The Lie algebra $H^1(\R^3,\pi)$:}\\
For the interpretation of $H^1(\R^3,\pi)$ we denote the Poisson vector fields by
\begin{equation*}
    \mathfrak{poiss}:=\{X\in \mathfrak{X}^1(\R^3)|\diff_\pi X=0\}.
\end{equation*}
Note that the set of Hamiltonian vector fields 
\begin{equation*}
    \mathfrak{ham}:=\{X_g|g\in C^\infty(\R^3)\}\subset \mathfrak{poiss}
\end{equation*}
forms a Lie ideal in the Lie algebra $(\mathfrak{poiss},[\cdot,\cdot] )$ and the first Poisson cohomology is the Lie algebra on the quotient, i.e.
\begin{equation*}
    H^1(\R^3,\pi)= \mathfrak{poiss}/\mathfrak{ham}.
\end{equation*}
The result for the bracket implies that we have a $2$-term filtration of $\mathfrak{poiss}$
\begin{equation*}
    \mathfrak{ham} \unlhd (\mathfrak{ham} \rtimes \mathfrak{g}_{x\text{-}y})\unlhd \big((\mathfrak{ham} \rtimes \mathfrak{g}_{x\text{-}y})\rtimes \mathfrak{g}_z\big)=\mathfrak{poiss}
\end{equation*}
where the Lie algebras $\mathfrak{g}_{x\text{-}y}$ and $\mathfrak{g}_z$ are given by
\begin{equation*}
    \mathfrak{g}_{x\text{-}y}:= \{\left[[g(x,y),\partial_x\wedge\partial_y]\right]|g\in C^\infty _0 (\R^2)\} \qquad \text{ and }\qquad \mathfrak{g}_z:= \{ \left[f(z)E^3\right]|f\in C^\infty(\R)\}.
\end{equation*}
\begin{remark}\label{remark: lie algebras nil}
    Note that $\mathfrak{g}_{x\text{-}y}$ is naturally isomorphic to $\mathfrak{symp}(\R^2)$, the Lie algebra of symplectic vector fields on $\R^2$, via the projection onto the $x\text{-}y$-plane and $\mathfrak{g}_z$ is isomorphic to $\mathfrak{X}_0^1(\R)$, the Lie algebra of vector fields on $\R$ vanishing at the origin, via the projection onto the $z$-axis.
\end{remark}
A first interesting question is related to formality of the representatives.
\begin{question}
Is it possible to realize $H^1(\R^3,\pi)$ as a Lie subalgebra of $\mathfrak{poiss}$.
\end{question}\vspace*{5pt}

\textbf{The Poisson diffeomorphism group:}\\
To discuss the groups corresponding to the Lie algebras we denote the group of Poisson-diffeomorphisms by
\begin{equation*}
    \mathrm{Poiss}:=\{\varphi \in \mathrm{Diff}(\R^3)| \varphi^\star (\pi)=\pi\}
\end{equation*}
The associated normal subgroup $\mathrm{Ham}\unlhd \mathrm{Poiss}$ of Hamiltonian-diffeomorphisms is given by diffeomorphisms $\varphi\in \mathrm{Poiss}$ which are connected to the identity by a smooth family of diffeomorphisms $\{\varphi_t\}_{t\in [0,1]}$, generated by a smooth family of Hamiltonian vector fields $\{X_{g_t}\}_{t\in [0,1]}$ associated to $\{g_t\in C^\infty(\R^3)\}_{t\in [0,1]}$.
\begin{remark}
    Very little is known about the smoothness of the groups or if they integrate the Lie algebras. Some results have been obtained in \cite{Marcut21}, \cite{Smi2} and \cite{Smilde1}.
\end{remark}
Note that all non-trivial cohomology classes have representatives transverse to the foliation.
\begin{question}
Is any Poisson-diffeomorphism which preserves the leaves of $\pi$ Hamiltonian?
\end{question}
In correspondence with \cref{remark: lie algebras nil} we define the group $G_{x\text{-}y}\simeq \mathrm{Symp}(\R^2)$ by extending a symplectomorphism $\varphi \in \mathrm{Symp}(\R^2)$ of $\R^2$ to a diffeomorphism $\tilde{\varphi}= \varphi\times \mathrm{id}\in \mathrm{Poiss}$. Hence it is natural to ask:
\begin{question}
Is the subgroup of $\mathrm{Poiss}$ which fixes $\R\backslash \{0\}$ in the leaf space of $\pi$ isomorphic to 
\begin{equation*}
    \mathrm{Ham}\rtimes G_{x\text{-}y}?
\end{equation*}
\end{question}
Similarly we define the groups $G_z^0\simeq \mathrm{Diff}^+_0(\R)$ and $G_z\simeq \mathrm{Diff}_0(\R)$ where $\mathrm{Diff}_0^+(\R)$ and $\mathrm{Diff}_0(\R)$ denote the groups of (orientation-preserving) diffeomorphisms on $\R$ which preserve the origin, respectively. We define $\tilde{\varphi}\in G_z^0$ for every $\varphi \in \mathrm{Diff}_0^+(\R)$ in the following way: every $\varphi\in \mathrm{Diff}_0^+(\R)$ is isotopic to the identity (see for example \cite{Mil}[Section 6, Lemma 2]), hence there exists $\{\varphi_t\}_{t\in [0,1]}$ and $X_t\in \mathfrak{X}^1_0(\R)$ with
\begin{equation*}
    \varphi_t' =X_t\circ \varphi_t \qquad \text{ such that } \qquad \varphi_0 = \mathrm{id} \qquad \text{ and }\qquad \varphi_1=\varphi
\end{equation*}
We define a smooth family of functions $f_t\in C^{\infty}(\R)$ by
\begin{equation*}
    f_t(z):= \iota_{\diff z} \left(\frac{X_t(z)}{z}\right)
\end{equation*}
which is well-defined since $X_t$ vanishes at the origin for all $t\in[0,1]$ (see \cref{lemma: hadamard}). Hence the vector field $f_t(z)E^3$ induces the isotopy of Poisson diffeomorphism given by
\begin{align*}
    \tilde{\varphi}_t (x,y,z) = \left(xe^{\int_0 ^t f_s(\varphi_t(z))\diff s},ye^{\int_0 ^t f_s(\varphi_t(z))\diff s},\varphi_t(z)\right) \qquad \text{ and } \qquad \tilde{\varphi}:=\tilde{\varphi}_1
\end{align*}
Define $\tau: \R \to \R$ by $\tau (z)=-z$ and note that
\begin{equation*}
    \mathrm{Diff}_0(\R)=\mathrm{Diff}^+_0(\R)\cup \tau \cdot \mathrm{Diff}^+_0(\R). 
\end{equation*}
We can extend $\tau$ to an element $\tilde{\tau}\in \mathrm{Poiss}$ by $\tilde{\tau}(x,y,z):=(\tau(x),y,\tau(z))$ and set
\begin{equation*}
    G_z:= G_{z}^0 \cup \tilde{\tau}\cdot G_z^0.
\end{equation*}
which yields the question:
\begin{question}
Is the group of Poisson diffeomorphism $\mathrm{Poiss}$ isomorphic to
\begin{equation*}
    \left(\mathrm{Ham}\rtimes \mathrm{Symp}(\R^2)\right) \rtimes \mathrm{Diff}_0(\R)
\end{equation*}
where the actions are given via the corresponding identifications above, and similarly for $\mathrm{Poiss}^0$.
\end{question}
Another interesting question which we only mention here is the relation of $\mathrm{Poiss}$ and the Picard group \cite{BF},\cite{BW}.\vspace*{5pt}

\textbf{Linearization and deformations: the groups $H^2(\R^3,\pi)$ and $H^3(\R^3,\pi)$}\\
The second Poisson cohomology group $H^2(\R^3,\pi)$ controls infinitesimal deformations of the Poisson structure $\pi$, as such it has the heuristic interpretation as "tangent space" to the Poisson-moduli space. Let us define
\begin{equation}\label{eq: inf deformation}
    \pi_{g_1,g_2}:= \pi + [(g_1(x,y)+zg_2(x,y))\partial_z ,\partial_x\wedge\partial_y]
\end{equation}
for $g_1,g_2\in C^{\infty}_0(\R^2)$, i.e. we deform the Poisson bivector $\pi$ by a general representative of a class in $H^2(\R^3,\pi)$. A direct computation shows that
\begin{equation}\label{eq: bracket inf deformation}
    [\pi_{g_1,g_2},\pi_{g_1,g_2}]=2(\partial_xg_2\partial_yg_1-\partial_xg_1\partial_yg_2)\partial_x\wedge\partial_y\wedge\partial_z 
\end{equation}
Hence $\pi_{g_1,g_2}$ is Poisson iff $\partial_xg_2\partial_yg_1=\partial_xg_1\partial_yg_2$ and its obstruction to being Poisson is given by the class
\begin{equation*}
    \left[2(\partial_xg_2\partial_yg_1-\partial_xg_1\partial_yg_2)\partial_x\wedge\partial_y\wedge\partial_z\right] \in H^3(\R^3,\pi)
\end{equation*}
One might now wonder if all deformations of $\pi$ are of this form, i.e.
\begin{question}
Is every Poisson structure close to $\pi$ of the form $\pi_{g_1,g_2}$ for some $g_1,g_2\in C^{\infty}_0(\R^2)$ satisfying $\partial_xg_2\partial_yg_1=\partial_xg_1\partial_yg_2$?
\end{question}
A special class of Poisson structures are those which are \emph{unimodular}. On an oriented manifold $M^m$ these are Poisson structures $\pi$ which admit a volume form $\mu\in \Omega^m(M)$ invariant under all Hamiltonian vector fields or equivalently
\begin{equation}\label{eq: unimodular}
    \diff \iota_{\pi }\mu = 0
\end{equation}
The Poisson structure $\pi$ corresponding to the Heissenberg Lie algebra satisfies \eqref{eq: unimodular} with respect to the standard volume form $\mu=\diff x\wedge\diff y\wedge\diff z$  on $\R^3$, and hence it is unimodular. Let us consider deformations of the form \eqref{eq: inf deformation} which preserve the Poisson submanifold of the zero dimensional symplectic leaves, i.e. the $x\text{-}y$-plane, i.e. deformations of the form $\pi_{0,g_2}$. Note that such deformations are unobstructed due to \eqref{eq: bracket inf deformation}. Moreover, by a direct calculation one can verify that the Poisson structures $\pi_{0,g_2}$ are unimodular iff $g_2=0$.
\begin{question}\label{question: lin nil}
Is any unimodular Poisson structure $\tilde{\pi}$ whose first jet $j^1\tilde{\pi}$ is isomorphic to $j^1\pi$ along the $x\text{-}y$-plane, locally isomorphic to $\pi$?
\end{question}
Here the first jet map $j^1$ is just given by the Taylor series of the coefficient functions up to order one in $z$.

\subsection{The direct product Lie algebra}\label{subsec:dp}

For the linear Poisson structure associated with  $\mathfrak{aff}(1,\R) \times \R $ we obtain the following result.
\begin{theorem}\label{thm:affine3}
Let $\pi = x \partial_x \wedge \partial_y$ on $\R^3$. The associated Poisson cohomology groups are uniquely described by:
\begin{itemize}
    \setlength\itemsep{0em}
	\item the Casimir functions are of the form $f(z)$ for $f\in C^{\infty}(\R)$;
	\item in degree $1$ we have a free module over the Casimirs with two generators:
	\begin{align*}
	    \langle \partial_y,\partial_z\rangle_{H^0(\R^3,\pi)};
	\end{align*}
	\item in degree $2$ we have a free module over the Casimirs with one generator:
	\begin{align*}
	    \langle \partial_y\wedge\partial_z\rangle_{H^0(\R^3,\pi)};
	\end{align*}
	\item the third Poisson cohomology group is trivial.
\end{itemize}
The wedge product and the induced bracket preserve the representatives, which yields the algebraic structure.
\end{theorem}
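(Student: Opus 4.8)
The plan is to compute the Poisson cohomology of $\pi = x\,\partial_x\wedge\partial_y$ on $\R^3$ by direct calculation, exploiting that $z$ is a Casimir so the whole complex is "fibered" over the $z$-direction. First I would write the Poisson differential explicitly on the graded pieces $\mathfrak{X}^0 \to \mathfrak{X}^1 \to \mathfrak{X}^2 \to \mathfrak{X}^3$. Writing a $1$-vector as $A\partial_x + B\partial_y + C\partial_z$ with $A,B,C \in C^\infty(\R^3)$, one computes $\diff_\pi f = [\pi, f] = x(\partial_x f\,\partial_y - \partial_y f\,\partial_x)$, i.e. $\diff_\pi f = X_f$, the Hamiltonian vector field; similarly $\diff_\pi(A\partial_x + B\partial_y + C\partial_z)$ and $\diff_\pi$ on bivectors are explicit first-order operators whose coefficients involve $x$, $\partial_x$, $\partial_y$ only (no $\partial_z$), since $\partial_z\lrcorner\,\pi = 0$. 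The key structural observation is that $\partial_z$ commutes with every term of $\diff_\pi$, so the complex splits as the "$z$-direction tensor" the complex of the $2$-dimensional Poisson structure $x\,\partial_x\wedge\partial_y$ on $\R^2$ — except that multivectors involving $\partial_z$ must be tracked carefully. Concretely I would decompose each multivector field uniquely into a part not involving $\partial_z$ and a part of the form $(\text{lower multivector on }\R^2_{x,y}\text{, with }z\text{-dependent coefficients})\wedge\partial_z$; the differential preserves this decomposition up to a sign, reducing everything to the planar computation.

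The heart of the matter is then the planar Poisson structure $\pi_2 = x\,\partial_x\wedge\partial_y$ on $\R^2$ with coefficients in $C^\infty(\R)$ (the $z$-variable being a passive parameter). For $\pi_2$: $H^0 = \{f : \partial_y f = x\,(\text{something})\} = C^\infty(\R_z)$ — functions of $z$ alone, since $X_f = 0$ forces $x\partial_x f = x\partial_y f = 0$ hence $f$ independent of $x,y$ away from $\{x=0\}$ and by continuity everywhere. For $H^1(\pi_2)$ one must solve $\diff_\pi(A\partial_x + B\partial_y) = 0$, i.e. $x\partial_y A - x\partial_x B - B = 0$ (the coefficient of $\partial_x\wedge\partial_y$), and quotient by Hamiltonian vector fields $x\partial_x f\,\partial_y - x\partial_y f\,\partial_x$. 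I expect this reduces, via Hadamard's lemma (\cref{lemma: hadamard}, to absorb the factor $x$) together with an averaging/integration argument along $y$, to the statement that every cocycle is cohomologous to a $z$-dependent multiple of $\partial_y$; the generator $\partial_y = \frac{1}{x}X_{\text{something}}$? no — $\partial_y$ is genuinely not Hamiltonian because of the singularity at $x=0$, which is exactly why it survives in cohomology. For $H^2(\pi_2)$, a bivector $g\,\partial_x\wedge\partial_y$ is automatically closed (top degree), and one shows it is exact iff $g$ lies in the image of $\diff_\pi$ on $1$-vectors, which by the same Hadamard/integration analysis leaves the class of $\partial_x\wedge\partial_y$ — wait, on $\R^2$ the surviving generator should pair with the $\partial_z$ in the $3$D picture to give $\partial_y\wedge\partial_z$ and the top class $\partial_x\wedge\partial_y\wedge\partial_z$ must be shown to be exact, accounting for $H^3(\R^3,\pi) = 0$.

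Assembling: combining the planar result with the $\partial_z$-decomposition gives $H^0(\R^3,\pi) = C^\infty(\R_z)$; $H^1 = \langle\partial_y,\partial_z\rangle$ as a free $H^0$-module (the $\partial_y$ coming from $H^1(\pi_2)$, the $\partial_z$ from pairing $H^0(\pi_2)$ with $\partial_z$); $H^2 = \langle \partial_y\wedge\partial_z\rangle$ (from pairing $H^1(\pi_2)$ with $\partial_z$; the piece from $H^2(\pi_2)$ not involving $\partial_z$ must be killed); and $H^3 = 0$ (the only candidate, $g\,\partial_x\wedge\partial_y\wedge\partial_z$, is the $\partial_z$-pairing of $H^2(\pi_2)$, which must be shown to vanish in cohomology — i.e. $g\,\partial_x\wedge\partial_y$ is always $\diff_{\pi_2}$-exact with $z$-dependent coefficients, equivalently every $g \in C^\infty(\R^3)$ is of the form $x\partial_y A - x\partial_x B - B$). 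The main obstacle I anticipate is precisely this surjectivity/exactness analysis near the singular locus $\{x = 0\}$: one needs to show, using Hadamard's lemma to write $g = x\tilde g + g_0(y,z)$ with $g_0$ the restriction to $x=0$, that both pieces are hit — the $x\tilde g$ part easily by choosing $B = -x\tilde g \cdot(\ldots)$, but the $g_0(y,z)$ part requires a more careful choice (e.g. $B = -g_0(y,z)$ contributes $-B = g_0$ with no $x$, plus correcting terms), and verifying no obstruction remains is where the real work lies. The claimed compatibility of $\wedge$ and $[\cdot,\cdot]$ with the representatives is then immediate since $\partial_y, \partial_z, \partial_y\wedge\partial_z$ have constant coefficients and commute.
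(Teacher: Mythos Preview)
Your approach is correct and organizes the computation differently from the paper. The paper proceeds by a direct degree-by-degree calculation in $\R^3$, normalizing representatives via Hadamard's lemma at each stage. You instead exploit the splitting $\mathfrak{X}^\bullet(\R^3) = \mathfrak{X}^\bullet_{\mathrm{no}\,\partial_z} \oplus \mathfrak{X}^{\bullet-1}_{\mathrm{no}\,\partial_z}\wedge\partial_z$, which is a splitting of complexes because $\partial_z$ is a Poisson vector field commuting with $\pi$ (so $[\pi,\alpha\wedge\partial_z]=[\pi,\alpha]\wedge\partial_z$). Each summand is isomorphic to the Poisson complex of $\pi_2=x\partial_x\wedge\partial_y$ on $\R^2$ with $z$-dependent coefficients, and the result follows from $H^0(\pi_2)=C^\infty(\R_z)$, $H^1(\pi_2)=C^\infty(\R_z)\cdot\partial_y$, $H^2(\pi_2)=0$. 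This is precisely the K\"unneth-type isomorphism the paper records as an observation \emph{after} the theorem; your route makes it the engine of the proof. The actual $2$D analysis (Hadamard splitting at $x=0$, then integrating in $y$) is the same work the paper does in $3$D, just packaged more cleanly.

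One correction: your explicit formula for the degree-$1$ differential is wrong. One has $\diff_\pi(A\partial_x+B\partial_y)=(x\partial_xA+x\partial_yB-A)\partial_x\wedge\partial_y$, not $x\partial_yA-x\partial_xB-B$; with your formula $\partial_y$ would not even be a cocycle. The correct formula makes the rest of your sketch go through: restricting to $x=0$ forces $A|_{x=0}=0$, a coboundary $x\partial_y f\,\partial_x - x\partial_x f\,\partial_y$ then kills $A$ entirely, the cocycle condition reduces to $\partial_y B=0$, and the residual gauge $f=f(x,z)$ kills $B-B(0,z)$, leaving $B(0,z)\partial_y$. Similarly, $H^2(\pi_2)=0$ follows by writing $g=g_0(y,z)+x g_1$ and taking $A=-g_0$, $B=\int_0^y g_1\,dy'$ so that $x\partial_xA+x\partial_yB-A=g_0+xg_1=g$.
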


 \begin{wrapfigure}{r}{4cm}
  \vspace{-150pt}
 \includegraphics[scale=0.3]{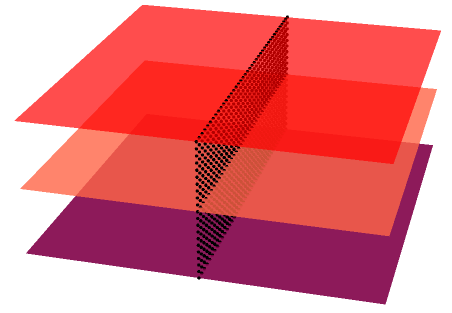}
 \caption{The foliation of $\pi=x\partial_x \wedge \partial_y$}
 \vspace{-100pt}
 \end{wrapfigure}

Similar as for the Heissenberg Lie algebra one can give geometric interpretations of these results. However, we will not do this, but rather we want to point out two observations:
\begin{enumerate}
    \item The modular class and foliation: any oriented Poisson manifold $(M,\pi)$ with volume form $\mu\in \Omega^m(M)$ admits a unique class $\textrm{mod}(M,\pi)\in H^1(M,\pi)$ called the \emph{modular class} defined as follows: the map
    \begin{align*}
        g\mu \mapsto  \Lie_{X_g}\mu \qquad \text{ for any }\qquad g\in C^{\infty}(M)
    \end{align*}
    is a derivation and hence it is given by a vector field $X_{\mu} \in \mathfrak{X}^1(M)$, the \emph{modular vector field} of $(M,\pi,\mu)$. The vector field $X_{\mu}$ is Poisson, i.e. $\diff_{\pi}X_{\mu}=0$ and the class $\textrm{mod}(M,\pi)=[X_\mu]\in H^1(M,\pi)$ is independent of the chosen volume form $\mu$. It is well-known that if $(M,\pi)$ is unimodular iff there exists a volume form $\mu$ such that $X_\mu=0$ (see e.g.\, \cite{Laurent2013}[chapter 4]). In this example the modular class is given by $-\partial_y$. Note that as for the Heissenberg Lie algebra, we have a plane of $0$-dimensional symplectic leaves. However, the values of a general Casimir function is not constant on this plane, but rather only on the flow lines of the modular vector field $\partial_y$, i.e. the leaves of the modular foliation $\tilde{\F}$ given by the distribution
    \begin{equation*}
        T\tilde{\F} = \mathrm{Im}\, (\pi^{\sharp}) \oplus \partial_y
    \end{equation*}
    \item A K\"unneth type formula for Poisson cohomology: the Lie algebra is the direct product of the non-abelian $2$-dimensional Lie algebra $\mathfrak{aff}(1,\R)$ and the $1$-dimensional Lie algebra $\R$. Denote the corresponding linear Poisson structures by $\pi _{\mathfrak{aff}(1,\R)}$ and $0$, respectively. Then $(\R^3,\pi)=(\R^2\times \R, \pi_{\mathfrak{aff}(1,\R)}\oplus 0)$ and we obtain
    \begin{equation*}
        H^{\bullet}(\R^3,\pi) \simeq \bigoplus_{i+j=\bullet} H^i(\R^2,\pi_{\mathfrak{aff}(1,\R)}) \otimes H^j(\R,0).
    \end{equation*}
    A K\"unneth type formula for Poisson cohomology is only known for some cases, even if one Poisson structure is trivial (e.g.\, ~\autocite[Example 9.35]{CFM}).
\end{enumerate}

\subsection{The Euclidean Lie algebra}
Next we look at the Poisson structure associated with the Euclidean Lie algebra $\mathfrak{e}(2)$. Let us define
\begin{align*}
    E:= x\partial_x +y\partial_y \qquad \text{ and } T:= -y\partial_x +x\partial_y.
\end{align*}
The Poisson cohomology groups are given in the following theorem.
\begin{theorem}\label{thm:cohomology_euclidean}
	Let $\pi = T\wedge \partial_z$ be the Poisson structure on $\R^3$ associated with $\mathfrak{e}(2)$. The Poisson cohomology of $\pi$ is given as follows:
	\begin{itemize}
    \setlength\itemsep{0em}
	\item the Casimir functions are of the form $f(x^2+y^2)$ for $f\in C^{\infty}([0,\infty))$;
	\item in degree $1$ we have a free module over the Casimirs with two generators:
	\begin{align*}
	    \langle E,\partial_z\rangle_{H^0(\R^3,\pi)};
	\end{align*}
	\item in degree $2$ the cohomology classes are uniquely represented by
	\begin{align*}
	    f(x^2+y^2)E\wedge \partial_z +g(z)\partial_x\wedge \partial_y;
	\end{align*}
	\item in degree $3$ the cohomology classes are uniquely represented by
	\begin{align*}
	     g(z)\partial_x\wedge \partial_y\wedge \partial_z;
	\end{align*}
\end{itemize}
The algebraic structure is described by the following relations in cohomology:
\begin{align*}
    \left[h(x^2+y^2,z)\partial_x\wedge \partial_y\right]=\left[h(0,z)\partial_x\wedge \partial_y\right] \qquad \text{ and }\qquad \left[h(x^2+y^2,z)\partial_x\wedge \partial_y\wedge \partial_z\right]=\left[h(0,z)\partial_x\wedge \partial_y\wedge\partial_z\right]
\end{align*}
for any $h\in C^{\infty}([0,\infty)\times \R)$ for which the given multivector fields are Poisson.
\end{theorem}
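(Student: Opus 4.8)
The plan is to reduce the computation to the subcomplex of rotation-invariant multivector fields. The vector field $T = -y\partial_x + x\partial_y$ is complete, its flow $\Phi_\theta$ is the standard $S^1$-action on $\R^3$ rotating the $x$-$y$ plane, and this action preserves $\pi$ since $\lie_T\pi = [T, T\wedge\partial_z] = 0$. The key point is that $T = X_{-z}$ is Hamiltonian (indeed $\pi^\sharp(-\diff z) = T$), so $\lie_T$ is chain homotopic to zero on the Poisson complex $(\mathfrak{X}^\bullet(\R^3), \diff_\pi)$ with homotopy $\pm\,\iota_{\diff z}$ (see \cref{sec:prelim}). Integrating this over the circle, the averaging operator $P := \tfrac1{2\pi}\int_0^{2\pi}(\Phi_\theta)_*\,\diff\theta$ onto the invariant multivector fields is a chain map that restricts to the identity on invariants and satisfies $\mathrm{id} - P = [\diff_\pi, H]$ for an explicit degree $-1$ operator $H$ (using that each $(\Phi_\theta)_*$ commutes with $\diff_\pi$ and, since $\Phi_\theta$ fixes $z$, also with $\iota_{\diff z}$). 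Hence $\big(\mathfrak{X}^\bullet(\R^3)^{S^1}, \diff_\pi\big) \hookrightarrow \big(\mathfrak{X}^\bullet(\R^3), \diff_\pi\big)$ is a quasi-isomorphism, as in \cite{Ginzburg1992}, and it suffices to compute the cohomology of the invariant subcomplex.

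Next I would classify the invariant multivector fields. Writing $\rho := x^2 + y^2$, the smooth invariant functions are exactly the $F(\rho, z)$ with $F\in C^\infty([0,\infty)\times\R)$; from this and a Hadamard-type argument, the $C^\infty(\rho,z)$-module of invariant vector fields is generated by $E$, $T$, $\partial_z$, the module of invariant bivectors is generated by $E\wedge\partial_z$, $T\wedge\partial_z$, $\partial_x\wedge\partial_y$ (note $E\wedge T = \rho\,\partial_x\wedge\partial_y$, so the genuinely new generator, coming from the fixed locus $\{x=y=0\}$, is $\partial_x\wedge\partial_y$), and the invariant trivector fields are the $h(\rho,z)\,\partial_x\wedge\partial_y\wedge\partial_z$.

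I would then compute $\diff_\pi$ on this subcomplex directly, using $[E,T]=[E,\partial_z]=[T,\partial_z]=0$, $[\partial_x,T]=\partial_y$, $[\partial_y,T]=-\partial_x$, and $\diff f(T)=0$ for invariant $f$; up to the sign convention for $\diff_\pi$ this gives
\begin{align*}
 \diff_\pi\big(F(\rho,z)\big) &= -\,\partial_z F\cdot T, \\
 \diff_\pi\big(a\,E + b\,T + c\,\partial_z\big) &= -\rho\,\partial_z a\;\partial_x\wedge\partial_y \;+\; \partial_z c\; T\wedge\partial_z, \\
 \diff_\pi\big(a\,E\wedge\partial_z + b\,T\wedge\partial_z + g\,\partial_x\wedge\partial_y\big) &= \rho\,\partial_z a\;\partial_x\wedge\partial_y\wedge\partial_z,
\end{align*}
with $\diff_\pi$ vanishing on trivector fields. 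Reading off kernels and images yields the theorem: $H^0 = \{f(\rho)\}$; in degree $1$ the cocycles are $a(\rho)E + b(\rho,z)T + c(\rho)\partial_z$ and the coboundaries are exactly the multiples $\beta(\rho,z)\,T$ of $T$, so $H^1$ is the free $H^0$-module on $E$ and $\partial_z$; in degrees $2$ and $3$ one uses the elementary facts that $a\mapsto\partial_z a$ is surjective on $C^\infty([0,\infty)\times\R)$ and that $F(\rho,z) - F(0,z)\in\rho\,C^\infty([0,\infty)\times\R)$, which identify the coboundaries as $\{\rho h\,\partial_x\wedge\partial_y + k\,T\wedge\partial_z\}$ and $\{\rho h\,\partial_x\wedge\partial_y\wedge\partial_z\}$ respectively, giving the stated representatives $f(x^2+y^2)E\wedge\partial_z + g(z)\partial_x\wedge\partial_y$ and $g(z)\partial_x\wedge\partial_y\wedge\partial_z$.

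Finally, the algebraic structure follows by computing wedge products and Schouten brackets of the chosen representatives — using $E\wedge T=\rho\,\partial_x\wedge\partial_y$ and $E\wedge\partial_x\wedge\partial_y = T\wedge\partial_x\wedge\partial_y = 0$ for the product, and the commutators above for the bracket — and then normalising with the same divisibility fact, which is precisely the content of the relations $[h(x^2+y^2,z)\partial_x\wedge\partial_y] = [h(0,z)\partial_x\wedge\partial_y]$ and $[h(x^2+y^2,z)\partial_x\wedge\partial_y\wedge\partial_z] = [h(0,z)\partial_x\wedge\partial_y\wedge\partial_z]$. The hard part is the behaviour along the fixed locus of the $S^1$-action (the $z$-axis): getting the classification of invariant multivector fields correct there — in particular not overlooking the extra degree-$2$ generator $\partial_x\wedge\partial_y$, which is what produces the $g(z)\partial_x\wedge\partial_y$ classes in $H^2$ and $H^3$ — and proving the Hadamard-type divisibility statements that pin down the images in degrees $2$ and $3$. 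Checking that the invariant subcomplex really computes $H^\bullet(\R^3,\pi)$ is the other place where care is needed, and where the identity $T = X_{-z}$ is essential.
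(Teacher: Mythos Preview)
Your approach is essentially the same as the paper's: reduce to the $S^1$-invariant subcomplex via Ginzburg's averaging (using that $T=X_{-z}$ is Hamiltonian), classify the invariant multivector fields as the free $C^\infty(\rho,z)$-module on $\{1\}$, $\{E,T,\partial_z\}$, $\{E\wedge\partial_z,\,T\wedge\partial_z,\,\partial_x\wedge\partial_y\}$, $\{\partial_x\wedge\partial_y\wedge\partial_z\}$, and compute $\diff_\pi$ directly on these generators. Your differentials and the ensuing identification of kernels and images match the paper's computation, including the Hadamard-type step $F(\rho,z)-F(0,z)\in\rho\,C^\infty$ that produces the $g(z)\partial_x\wedge\partial_y$ classes and the stated algebraic relations.
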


 \begin{wrapfigure}{r}{4cm}
  \vspace{-220pt}
 \includegraphics[scale=0.3]{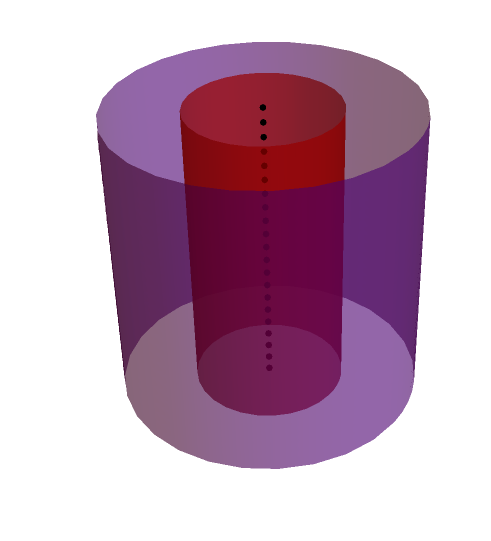}
 \caption{The foliation of $\pi=T \wedge \partial_z$}
 \vspace{-100pt}
 \end{wrapfigure}

For this example we want to take a closer look at its deformations.\vspace*{5pt}

\textbf{Deformations}\\
 Infinitesimal deformations of $\pi$ are governed by $H^2(\R^3,\pi)$. Hence let us define
\begin{equation*}
    \pi_{f,g}:= \pi + f(x^2+y^2)E\wedge\partial_z +g(z)\partial_x\wedge\partial_y
\end{equation*}
for $f\in C^\infty ([0,\infty))$ and $g\in C^{\infty}_0(\R)$. By a direct computation we obtain
\begin{equation*}
    [\pi_{f,g},\pi_{f,g}]=4g(z)(f(x^2+y^2)+(x^2+y^2)\partial f(x^2+y^2))\partial_x\wedge\partial_y\wedge\partial_z 
\end{equation*}
\cref{thm:cohomology_euclidean} implies that in cohomology the corresponding class is represented by 
\begin{equation*}
    \left[[\pi_{f,g},\pi_{f,g}]\right]=\left[4f(0)g(z)\partial_x\wedge\partial_y\wedge\partial_z\right] \in H^3(\R^3,\pi)
\end{equation*}
By formal deformation theory (see \cite{Laurent2013}[chapter 13]) the vanishing of this cohomology class, i.e. $f(0)=0$ or $g(z)=0$ implies that we can find formal deformations of $\pi$, i.e. in terms of power series, for such $f$ and $g$. It would be interesting to understand whether this is also true in the smooth category.
\begin{question}
Does there exist a Poisson deformation $\tilde{\pi}_{f,g}$ of $\pi$ for every $f\in C^\infty ([0,\infty))$ and $g\in C^{\infty}_0(\R)$ with $f(0)=0$ or $g(z)=0$?
\end{question}
Let us look at the deformations corresponding to $f$ and $g$ respectively, separately. Consider first the deformations given by $f=0$. We observe that the Poisson structures $\pi_{0,g}$ are unimodular for any choice of $g$. As an example we may choose $g\in \R$. Then the leaves of $\pi_{0,g}$ are given by the level sets of the functions
\begin{equation*}
    x^2+y^2-gz. 
\end{equation*}
The foliations of the Poisson structures $\pi_{f,0}$ on the other hand yield spiraling planes toward the $z$-axis as leaves, similar to the deformations of $\mathfrak{sl}_2(\R)$ (see \cite{Weinstein1983}[section 6]). Note that the Poisson structures of this form are not unimodular, which motivates the question similar to \cref{question: lin nil}:
\begin{question}
Is any unimodular Poisson structure $\tilde{\pi}$ whose first jet $j^1\tilde{\pi}$ is isomorphic to $j^1\pi$ along the $z$-axis, locally isomorphic to $\pi$?
\end{question}

\subsection{Open book and hyperbolic-type Lie algebras}
Now we consider the Lie algebras given by the non-trivial brackets
\[ [e_1, e_3] = e_1\qquad \text{ and }\qquad [e_2, e_3] = \tau e_2 \qquad \text{ for }\ \tau\in [-1,1]\backslash \{0\}\]
We define the vector fields
\begin{align*}
    E_{\tau}:= x\partial_x +\tau y\partial_y
\end{align*}
with $E_1=E$. We distinguish five different cases for $\tau$, three for the open book-type Lie algebras ($0<\tau\le 1$) and two for the hyperbolic-type Lie algebras ($-1\le \tau <0$). The case $\tau=0$ was discussed in section \ref{subsec:dp}.
\subsubsection*{The open book-type Lie algebras}
For $0<\tau\le 1$ we distinguish three cases
\begin{equation*}
    {\color{blue}\tau=1}, \qquad {\color{purple}\tau=\frac{1}{n} \quad \text{ for }2\le n\in \N},\qquad \text{ and }\qquad {\color{orange} \tau \in (0,1]\backslash \left\{\frac{1}{n}\right\}_{n\in \N}}. 
\end{equation*}
which we indicate by the given color code.
\begin{theorem}\label{thm:open_book t1}
    The Poisson cohomology of $\pi_{\tau}=E_{\tau}\wedge \partial_z$ for $\tau\in (0,1]$ on $\R^3$ is described by:
    \begin{itemize}
        \item the Casimir functions are given by elements in $\R$.
        \item in degree $1$ we have a free module over the Casimir functions with generators:
        \begin{align*}
            {\color{blue}\langle y\partial_x ,x\partial_y,E_{-1},\partial_z\rangle}, \qquad 
            {\color{purple}\langle y^n\partial_x ,E_{-1},\partial_z\rangle}\qquad\text{ and }\qquad
            {\color{orange}\langle E_{-1},\partial_z\rangle}\qquad \text{ respectively;}
        \end{align*}
        \item in degree $2$ we have a free module over the Casimir functions with generators:
        \begin{align*}
            {\color{blue}\langle y\partial_x\wedge \partial_z,x\partial_y\wedge \partial_z,E_{-1}\wedge \partial_z\rangle}, \qquad 
            {\color{purple}\langle y^n\partial_x\wedge \partial_z,E_{-1}\wedge \partial_z\rangle} \qquad \text{ and }\qquad
            {\color{orange}\langle E_{-1}\wedge \partial_z\rangle} \qquad \text{ respectively;}
        \end{align*}
        \item and the groups $H^3(\R^3,\pi)$ are trivial.
    \end{itemize}
    The algebraic structures is determined by the relations:
    \begin{align*}
        {\color{blue}\left[E\wedge\partial_z\right]=\left[0\right] \in H^2(\R^3,\pi_1),\quad \left[p(x,y)\partial_x\wedge\partial_y\right] =\left[0\right]\in H^2(\R^3,\pi_1)}\quad \text{ and } \quad {\color{purple}\left[y^{n+1}\partial_x\wedge\partial_y\right] =\left[0\right]\in H^2(\R^3,\pi_{\frac{1}{n}})}
    \end{align*}
    for any polynomial $p$ homogeneous of degree $2$. In particular, we note that for $\tau \in (0,1]\backslash \{\frac{1}{n}\}_{n\in \N}$ the wedge product and the induced bracket preserve the representatives.
\end{theorem}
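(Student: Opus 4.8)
The plan is to compute the Poisson cohomology of $\pi_\tau = E_\tau \wedge \partial_z$ directly, exploiting the product-like structure: since $\pi_\tau$ involves $\partial_z$ linearly and $E_\tau$ only depends on $x,y$, I would first reduce to a computation in the $x$-$y$ variables and then reassemble. Concretely, write a general multivector field on $\R^3$ in terms of the basis $\partial_x,\partial_y,\partial_z$ with coefficient functions in $C^\infty(\R^3)$, and compute $\diff_{\pi_\tau}$ on each graded piece using the identities for the Poisson differential collected in \cref{sec:prelim}. Because $\diff_{\pi_\tau}(\partial_z)=0$ and $\diff_{\pi_\tau}$ acts by Lie derivative along $E_\tau$ in the remaining directions (up to lower-order corrections coming from the Schouten bracket), the complex splits according to whether $\partial_z$ appears as a factor, and the core object becomes the Lichnerowicz complex of the planar vector field $E_\tau = x\partial_x + \tau y\partial_y$ acting on functions and on $\partial_x\wedge\partial_y$-multiples.

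The key technical step is therefore the analysis of the operator $f \mapsto E_\tau(f) - c\, f$ on $C^\infty(\R^2)$ for the relevant constants $c$ (here $c$ ranges over small integer combinations $1, \tau, 1+\tau$ coming from divergence terms). I would decompose $f$ into its Taylor components and handle the flat part and the polynomial part separately: for $f = \sum a_{ij} x^i y^j$ one has $E_\tau(x^iy^j) = (i + \tau j)x^i y^j$, so $E_\tau - c$ is invertible on a monomial exactly when $i + \tau j \neq c$. This is precisely where the three cases for $\tau$ enter: for $\tau$ irrational-type (the orange case) the only solution of $i + \tau j = c \in \{0,1,\tau,1+\tau\}$ with $i,j \geq 0$ is the obvious one, so no exotic kernel or cokernel appears and everything is a free module over the Casimirs $\R$; for $\tau = 1$ (blue) there are extra resonances $i + j = c$ producing the additional generators $y\partial_x, x\partial_y$ and the extra relations involving degree-$2$ homogeneous polynomials; for $\tau = 1/n$ (purple) the resonance $n\cdot\tau = 1$ produces the generator $y^n\partial_x$ and the relation $[y^{n+1}\partial_x\wedge\partial_y]=0$. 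One must also check that flat functions (vanishing to infinite order at $0$, or here along the $z$-axis) contribute nothing, which follows from integrating the ODE $\dot\phi = E_\tau(\phi) - c\phi$ along the dilation flow and checking the solution stays smooth — a Hadamard-type argument as in \cref{lemma: hadamard}.

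Once the planar computation is in hand, I would reassemble: a degree-$k$ cocycle on $\R^3$ decomposes as (something not involving $\partial_z$) $+\ \partial_z\wedge$(something not involving $\partial_z$), the differential is block-triangular with diagonal blocks given by the planar Lichnerowicz differential twisted by the $z$-dependence, and a short spectral-sequence or direct bookkeeping argument yields the stated modules in degrees $0,1,2$ and the vanishing in degree $3$ (the top-degree piece $g\,\partial_x\wedge\partial_y\wedge\partial_z$ is always a boundary since $E_\tau$ has nonzero divergence $1+\tau \neq 0$, so $\diff_{\pi_\tau}((\text{something})\partial_z\wedge\partial_y$ or $\partial_x)$ hits it). Finally, the algebraic structure: one verifies on representatives that $\wedge$ and $[\cdot,\cdot]$ preserve the chosen generators in the orange case, and in the blue and purple cases computes the few products that produce the listed relations (e.g.\ $E\wedge\partial_z$ is exact, and $y\partial_x \wedge x\partial_y$ reduces modulo boundaries). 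I expect the main obstacle to be the careful case analysis at the resonant values of $\tau$ — keeping track of exactly which monomials survive in the kernel versus the cokernel in each degree, and matching this to the claimed generators and relations without sign or indexing errors — rather than any conceptual difficulty; the flat-function estimates and the reassembly step should be routine given the preliminaries.
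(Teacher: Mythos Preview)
Your overall strategy is sound and the heart of it---the eigenvalue analysis of $E_\tau$ on monomials $x^iy^j$ with eigenvalue $i+\tau j$, and the resulting resonance trichotomy for $\tau=1$, $\tau=1/n$, and generic $\tau$---is exactly what the paper does in \cref{thm:open_book_formal} via \cref{lemma: number game}. Where you differ is in the treatment of the flat part along the $z$-axis. The paper does not use a dilation-flow homotopy; instead it builds an explicit Poisson diffeomorphism $\varphi_\tau(r,\theta,s)=(r\cos\theta,\,r^\tau\sin\theta,\,s)$ from the model $((0,\infty)\times S^1\times\R,\,r\partial_r\wedge\partial_s)$ onto $\R^3\setminus\Rho$, proves with some care that it extends to an isomorphism of the flat complexes (\cref{proposition: positive tau isomorphism}, \cref{corollary: flat isomorphism positive tau}), and then invokes the affine computation of \cref{sec:affine} to conclude $H^\bullet_{\Rho_f}=0$. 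Your flow-based approach is genuinely different and more direct---it is essentially the technique the paper itself uses later for the spiral-type algebras in \cref{sec:typevii}---and it avoids the bijectivity and flatness estimates for $\varphi_\tau$; the paper's route, in exchange, gives a cleaner geometric picture and reuses an already-computed model. Both rely on $\tau>0$ so that $-E_\tau$ contracts onto the $z$-axis.

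One point to tighten: your claim that $H^3=0$ because $\operatorname{div} E_\tau=1+\tau\ne 0$ is not sufficient on its own, since $E_\tau-(1+\tau)$ annihilates the resonant monomials (always $xy$, and additionally $y^{n+1}$ when $\tau=1/n$, and all degree-$2$ monomials when $\tau=1$). You need the full bivector differential \eqref{eq: general 2}, in particular the $x\partial_z W^x$ and $\tau y\partial_z W^y$ terms, to hit those pieces; your parenthetical hints at this but the divergence remark alone would leave a gap.
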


     \begin{wrapfigure}{r}{4cm}
  \vspace{-30pt}
 \includegraphics[scale=0.5]{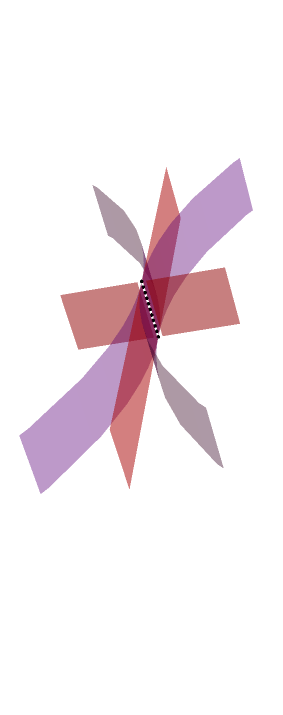}
 \vspace{-60pt}
 \caption{The foliation of $\pi_{\frac{1}{2}}=E_{
\frac{1}{2}} \wedge \partial_z$}
 \vspace{-100pt}
 \end{wrapfigure}

Let us point out two observations: 
\begin{enumerate}
    \item The Lie algebra $H^1(\R^3,\pi_\tau)$ is in the different cases given by:
    \begin{itemize}
        \item for $\tau=1$: the three Poisson vector fields $y\partial_x$, $x\partial_y$ and $E_{-1}$ generate a Lie algebra isomorphic to $\mathfrak{sl}_2(\R)$ and all of them commute with $\partial_z$. Hence we have an isomorphism of Lie algebras:
        \begin{equation*}
            H^1(\R^3,\pi_1)\simeq \mathfrak{sl}_2(\R)\oplus \R
        \end{equation*}
        \item for $\tau=\frac{1}{n}$ with $2\le n\in \N$: the vector fields $y^n\partial_x$ and $E_{-1}$ generate a Lie algebra isomorphic to $\mathfrak{aff}_1(\R)$ and both commute with $\partial_z$. Therefore we obtain in this case an Lie algebra isomorphism:
        \begin{equation*}
            H^1(\R^3,\pi_{\frac{1}{n}})\simeq \mathfrak{aff}_1(\R)\oplus \R
        \end{equation*}
        \item for $\tau \in (0,1]\backslash \{\frac{1}{n}\}_{n\in \N}$ we obtain the Lie algebra isomorphism
        \begin{equation*}
            H^1(\R^3,\pi_{\tau})\simeq \R\oplus \R
        \end{equation*}
    \end{itemize}
    \item The linearization problem: the classes given by representatives of the form
    \begin{equation*}
        y^n\partial_x\wedge \partial_z
    \end{equation*}
    for $2\le n\in \N$ are precisely those which provide obstructions to the linearization problem for Poisson structures. Any Poisson structure $\tilde{\pi}$ with $j^1\tilde{\pi}=j^1\pi_\tau$ for $\tau \in (0,1]\backslash\{\frac{1}{n}\}_{2\le n\in \N}$ is locally around the origin isomorphic to $\pi_\tau$ (see \cite{Duf} or \cite{DZ}[Proposition 4.2.2]). 
\end{enumerate}
\newpage

\subsubsection*{The hyperbolic-type Lie algebras}

     \begin{wrapfigure}{r}{4cm}
  \vspace{-120pt}
 \includegraphics[scale=0.4]{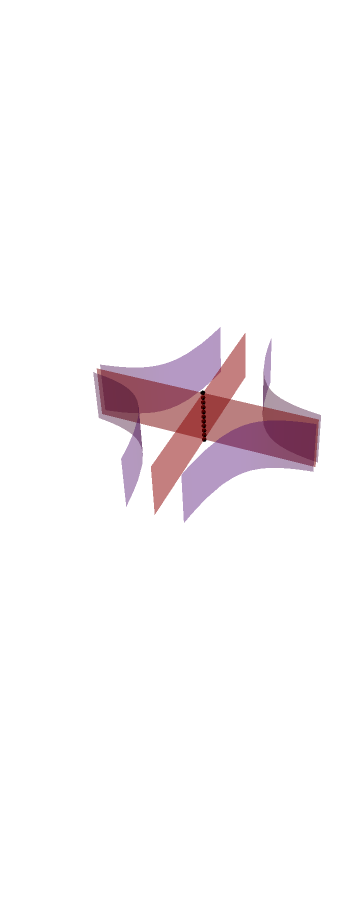}
 \vspace{-110pt}
 \caption{The foliation of $\pi_{-\frac{1}{2}}=E_{
-\frac{1}{2}} \wedge \partial_z$}
 \vspace{-60pt}
 \end{wrapfigure}

For $-1\le \tau<0$ we first consider the two cases
\begin{equation*}
    {\color{brown}\tau=-\frac{p}{q}\quad \text{ with }\ p\le q\in \N\  \text{ relatively prime }}\qquad \text{ and } \qquad {\color{violet}\tau\in [-1,0)\backslash \Q}.
\end{equation*}
To describe the results we set
\begin{equation*}
    C^{\infty}_{0_f}([0,\infty)):=\{\ g\in C^\infty ([0,\infty))\ |\  j^\infty_0g =0\ \}
\end{equation*}
where $j^\infty_0$ denotes the infinite jet at the origin, i.e. the Taylor series of $g$.
\begin{theorem}\label{thm:hyperbolic}
    The Poisson cohomology of $\pi_\tau =E_{\tau}\wedge \partial_z$ on $\R^3$ for $\tau \in [-1,0)$ is given as follows
    \begin{itemize}
        \item the Casimir functions are of the form $f(x,y)$ given by
        \begin{align*}
            {\color{brown}f(x,y):=g(x^py^q)+\begin{cases}
            g_1(x^py^q) &\text{ if } \ 0<x,y\\
            g_2(-x^py^q) &\text{ if } \ x>0>y \ \text{ and } p \text{ odd} \\
            g_2(-x^py^q) &\text{ if } \ y>0>x \ \text{ and } p \text{ even}\\
            0 &\text{ else;}
            \end{cases}
            \quad \text{ for } \ g\in C^{\infty}(\R) \text{ and } g_1,g_2\in C^{\infty}_{0_f}([0,\infty))}
        \end{align*}
        and
        \begin{align*}
            {\color{violet}f(x,y,\tau):=c+\begin{cases}
            g_{ij}(|x|^{-\tau}|y|) &\text{ if } \ 0<(-1)^ix,(-1)^jy\\
            0 & \text{ else,}
            \end{cases}\quad \text{ where } g_{ij}\in C^{\infty}_{0_f}([0,\infty)) \text{ for } i,j=0,1.} 
        \end{align*}
        \item in degree $1$ we have non-trivial classes uniquely represented by:
        \begin{align*}
            {\color{brown}g E+ f(x,y)  \partial_z}\qquad \text{ and } \qquad {\color{violet}f(x,y)  \partial_z} \qquad \text{ for } \ f\in H^0(\R^3,\pi_\tau), g\in \R[[x^py^q]];
        \end{align*}
        \item the group $H^2(\R^3,\pi_\tau)$ has unique representatives of the form
        \begin{equation*}
            {\color{brown} g E\wedge \partial_z} \qquad \text{ and }\qquad {\color{violet} \{0\}} \qquad \text{ for } \ g\in \R[[x^py^q]];
        \end{equation*}
        \item and the groups $H^3(\R^3,\pi_\tau)$ are trivial.
    \end{itemize}
\end{theorem}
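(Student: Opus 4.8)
The plan is to compute the Poisson cohomology of $\pi_\tau = E_\tau \wedge \partial_z$ on $\mathbb{R}^3$ for $\tau \in [-1,0)$ by exploiting the structure of the Poisson differential very concretely. First I would note that since $\partial_z$ appears as a factor and the vector fields $E_\tau$, $\partial_z$ commute, the Schouten bracket $\diff_\pi = [\pi_\tau, \cdot]$ splits nicely with respect to the $z$-direction: writing a $k$-vector field as $A + B \wedge \partial_z$ with $A,B$ not involving $\partial_z$ (but with $z$-dependent coefficients), one gets $\diff_\pi(A + B\wedge \partial_z) = (\mathcal{L}_{E_\tau} A) \wedge \partial_z \pm \partial_z$-derivative terms, reducing the computation to understanding the operator $\mathcal{L}_{E_\tau}$ acting on multivector fields in the $x$-$y$ plane, together with a de Rham-type complex in the $z$ variable. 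Concretely I expect the cohomology of $\diff_\pi$ to be governed by (i) $\ker$ and $\mathrm{coker}$ of $\mathcal{L}_{E_\tau}$ on $C^\infty(\mathbb{R}^2)$ and on $\mathfrak{X}^{\bullet}(\mathbb{R}^2)$, and (ii) the trivial cohomology of $\partial_z$ on $\mathbb{R}$ — this is what produces the statement that $H^0$ is the $E_\tau$-invariant functions, that $H^1, H^2$ are built from $E_\tau$-invariant functions times $E$ or $\partial_z$ wedges, and that $H^3 = 0$.

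The key steps, in order, would be: (1) establish the $z$-splitting of the complex and show that the $\partial_z$-direction contributes nothing new (a standard homotopy/averaging-in-$z$ argument, using that the relevant coefficients lie in spaces closed under $\int_0^z$); (2) reduce to computing the cohomology of the one-variable-style complex governed by $\mathcal{L}_{E_\tau}$ on $\wedge^\bullet \mathbb{R}^2$; (3) solve the PDE $\mathcal{L}_{E_\tau} f = h$ on $C^\infty(\mathbb{R}^2)$ — this is where the resonance condition on $\tau$ enters. For $E_\tau = x\partial_x + \tau y \partial_y$ with $\tau < 0$ irrational there are no monomial resonances, so formally the only invariants are constants; for $\tau = -p/q$ rational one picks up the invariant monomials $x^p y^q$ and hence the ring $\mathbb{R}[[x^p y^q]]$ appears in $H^1, H^2$. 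The subtle point is the \emph{flat} (non-formal) part: because $E_\tau$ is a hyperbolic linear vector field, its flow leaves invariant the ``quadrant'' regions, and on each open quadrant the function $|x|^{-\tau}|y|$ (or $x^p y^q$) is a first integral, so smooth functions that are flat at the origin and constant along these first integrals are genuine Casimirs — this is exactly the content of the piecewise definitions of $f(x,y)$ in the theorem, including the parity bookkeeping in $p$ that determines which quadrants get glued.

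(4) Assemble the degree-by-degree answer: in degree $1$, a Poisson vector field decomposes as $gE_\tau + (\text{stuff}) + f\partial_z$ modulo Hamiltonians, and the cocycle condition forces $g \in \mathbb{R}[[x^py^q]]$ (resp.\ $g=0$ in the irrational case, absorbed since $\tau<0$ means $E_{-1}$-type generators behave differently here) and $f$ a Casimir; in degree $2$ one similarly gets $gE_\tau\wedge\partial_z$; in degree $3$, surjectivity of $\mathcal{L}_{E_\tau}$ onto the top-degree coefficient space (which holds precisely because $\mathcal{L}_{E_\tau}$ on $\mathfrak{X}^2(\mathbb{R}^2) = C^\infty(\mathbb{R}^2)\,\partial_x\wedge\partial_y$ is $\mathcal{L}_{E_\tau} + \mathrm{div}(E_\tau) = \mathcal{L}_{E_\tau} + (1+\tau)$, shifted off resonance) gives $H^3 = 0$. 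The main obstacle I anticipate is step (3) in its \emph{flat} incarnation: carefully proving that every smooth solution of $\mathcal{L}_{E_\tau} f = h$ (and every flat Casimir) has exactly the claimed piecewise form requires a delicate analysis near the coordinate axes and the origin — integrating along the hyperbolic flow lines, controlling the behaviour as orbits escape to infinity or approach the axes, and matching Taylor data across quadrants — and this is genuinely more involved than the formal/resonance bookkeeping. I would handle it by a Hadamard-type division lemma (cf.\ \cref{lemma: hadamard}) combined with an explicit integration of the flow, treating the four quadrants and the rational-versus-irrational cases separately, and isolating the formal part via a Borel-summation argument so that the remaining flat part can be analysed purely along individual orbits.
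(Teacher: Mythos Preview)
Your overall strategy—separate formal resonances from flat data, then handle the flat part quadrant-by-quadrant along the hyperbolic flow of $E_\tau$—is sound and close in spirit to what the paper does. However, there are two concrete gaps.

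First, step (1) is not as simple as you suggest. The Poisson differential genuinely entangles the $z$-derivative with the $xy$-structure: in your decomposition $A + B\wedge\partial_z$, the differential sends $(A,B)$ to something of the shape $(E_\tau\wedge\partial_z A,\;\mathcal{L}_{E_\tau}A + E_\tau\wedge\partial_z B)$, which is lower-triangular but not diagonal. There is no ``standard homotopy in $z$'' that reduces you directly to $\mathcal{L}_{E_\tau}$ acting on $\mathfrak{X}^\bullet(\mathbb{R}^2)$; making this precise would require either a Hochschild--Serre spectral sequence for the ideal $\langle e_1,e_2\rangle\triangleleft\mathfrak{g}$ (whose $E_2$-page still demands nontrivial computations of $H^q(\langle e_1,e_2\rangle;C^\infty(\mathbb{R}^3))$ with the actions $x\partial_z,\ \tau y\partial_z$), or the paper's route. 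The paper instead runs the flat/formal long exact sequence along the $z$-axis $\Rho$, computes the formal part degree-by-degree in $C^\infty(\mathbb{R})[[x,y]]$, and treats the $\Rho$-flat part via \emph{further} short exact sequences along the coordinate planes $Y$ and $XY$ (\cref{proposition: ses}), finishing with explicit Poisson diffeomorphisms $\varphi_{ij,\tau}:(0,\infty)^2\times\mathbb{R}\to D_{ij}$ on each open quadrant that straighten $E_\tau$ to $\partial_r$ and reduce the flat complex to the already-computed $\mathfrak{aff}(1,\mathbb{R})\times\mathbb{R}$ case. Your ``integrate along the flow, match across quadrants'' is exactly this last step, but the paper's layered exact sequences are what supply the rigour your outline is missing.

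Second, your argument for $H^3=0$ is wrong as stated. The operator $\mathcal{L}_{E_\tau}+(1+\tau)$ on $C^\infty(\mathbb{R}^2)$ is \emph{not} shifted off resonance for rational $\tau=-p/q$: it annihilates every monomial $x^{mp+1}y^{mq+1}$ with $m\ge 0$, so it is not surjective. The vanishing of $H^3$ in the formal part comes instead from the \emph{full} degree-2 differential
\[
x\,\partial_zW^x+\tau y\,\partial_zW^y+(1+\tau-\mathcal{L}_{E_\tau})W^z,
\]
where at each resonant bidegree the $\partial_z$-terms supply the missing image (via surjectivity of $\partial_z$ on $C^\infty(\mathbb{R})$). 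These are precisely the contributions your step (1) would have discarded. So if your $z$-reduction were literally valid it would contradict the theorem; the $z$-direction is essential, not inert.
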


As for the open book-type Lie algebras, the representatives of the non-trivial classes in $H^2(\R^3,\pi_\tau)$ are precisely those which yield obstructions to the linearization problem for such Poisson structures.

\subsection{The semi open book-type, spiral-type and the semisimple Lie algebras}
For the Lie algebra of the semi open book-type in the classification we obtain the following result.

 \begin{wrapfigure}{r}{4cm}
  \vspace{-180pt}
 \includegraphics[scale=0.35]{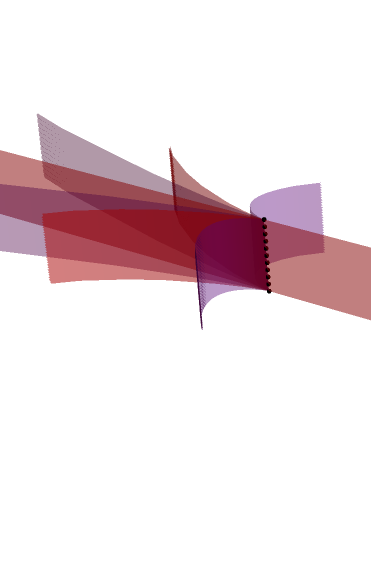}
 \vspace{-65pt}
 \caption{The foliation of $\pi=(E+x\partial_y) \wedge \partial_z$}
 \vspace{35pt}
 \end{wrapfigure}

\begin{theorem}\label{thm: type iv}
The Poisson cohomology groups $H^{\bullet}(\R^3,\pi)$ of the Poisson structure $\pi= (E+x\partial_y)\wedge\partial_z$ are uniquely characterized by the following representatives:
\begin{itemize}
    \item Casimir functions are given by elements in $\R$;
    \item in degree $1$ we have a free module over the Casimir functions with generators
    \[ \langle x\partial_y, \partial_z\rangle;\]
    \item in degree $2$ we have a free module with generator
    \[ \langle y\partial_{x}\wedge\partial_z \rangle; \]
    \item and the third Poisson cohomology group is trivial.
\end{itemize}
In this case all the algebraic structure is trivial with the exception of module structure over the Casimir functions.
\end{theorem}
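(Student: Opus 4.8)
Write $W := E + x\partial_y = x\partial_x + (x+y)\partial_y$, so that $\pi = W\wedge\partial_z$ on $\R^3 = \R^2_{x,y}\times\R_z$, and let $N := \{x = y = 0\}$ be the $z$-axis. Since $W$ vanishes exactly along $N$, the set $N$ is the union of the $0$-dimensional symplectic leaves and hence a Poisson submanifold; the plan is to split $H^\bullet(\R^3,\pi)$ along $N$ into its formal and flat parts as in \cref{sec:prelim}. The short exact sequence of $\diff_\pi$-complexes given by the multivector fields flat along $N$, all multivector fields, and the $\infty$-jets of multivector fields along $N$ yields a long exact sequence, so it suffices to treat the flat and the formal Poisson cohomology separately. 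Both computations exploit the product structure: since $W$ is $z$-independent and $[W,\partial_z] = 0$, one checks that $(\mathfrak{X}^\bullet(\R^3),\diff_\pi)$ is the mapping cone of the degree-preserving cochain map $\lie_W$ acting on the complex $(\mathfrak{X}^\bullet_{(0)},\delta)$, where $\mathfrak{X}^k_{(0)}$ denotes the $\partial_z$-free $k$-vector fields (with coefficients depending on $x,y,z$) and $\delta := W\wedge\partial_z(\cdot)$; this description restricts to the flat and to the formal subcomplexes.

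I would first compute $H^\bullet(\mathfrak{X}^\bullet_{(0)},\delta)$: a direct calculation gives $H^0(\delta) = \{f : \partial_z f = 0\} = C^\infty(\R^2)$, $H^1(\delta) = \mathfrak{X}^1(\R^2)/\bigl(C^\infty(\R^2)\,W\bigr)$, $H^2(\delta) = C^\infty(\R_z)\,\partial_x\wedge\partial_y$, and $H^k(\delta) = 0$ for $k \geq 3$; here one uses the surjectivity of $\partial_z$ and the transversality of the hypersurfaces $\{x = 0\}$ and $\{x+y = 0\}$, which makes the image of $\delta(P\partial_x + Q\partial_y) = (x\,\partial_z Q - (x+y)\,\partial_z P)\,\partial_x\wedge\partial_y$ equal to the ideal of functions vanishing on $N$. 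The cone then produces, for each $k$, a short exact sequence
\[
0 \to \operatorname{coker}\bigl(\lie_W \text{ on } H^{k-1}(\delta)\bigr) \to H^k(\R^3,\pi) \to \ker\bigl(\lie_W \text{ on } H^k(\delta)\bigr) \to 0,
\]
and likewise with flat or formal coefficients. Since $\lie_W(\partial_x\wedge\partial_y) = -2\,\partial_x\wedge\partial_y$, the operator $\lie_W$ is an isomorphism on $H^2(\delta)$; hence $H^3(\R^3,\pi) = 0$ and $H^2(\R^3,\pi) = \operatorname{coker}(\lie_W \text{ on } H^1(\delta))$, while $H^0(\R^3,\pi) = \ker(\lie_W \text{ on } H^0(\delta))$. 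Everything thus reduces to the kernel and cokernel of $\lie_W$ on $C^\infty(\R^2)$ and on $\mathfrak{X}^1(\R^2)/C^\infty(\R^2)W$, which I would analyse via the flat/formal splitting of these spaces along the origin of $\R^2$.

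For the formal part the engine is the Jordan structure of the linear field $W$: on homogeneous polynomials of degree $m$ the operator $W$ acts as $m\cdot\mathrm{id}$ plus a nilpotent operator coming from the shear term $x\partial_y$, so $\lie_W$ is invertible on functions (resp.\ vector fields, bivectors) of degree $m$ outside the resonant degrees $m = 0$ (resp.\ $1$, $2$). This gives: on $\R[[x,y]] = H^0(\delta)_{\mathrm{form}}$, $\lie_W$ has kernel and cokernel $\R$ (the constants); on $\widehat{\mathfrak{X}}^1(\R^2)/\R[[x,y]]W = H^1(\delta)_{\mathrm{form}}$, kernel $\R\cdot[x\partial_y]$ (the centralizer of the linear field $W$ modulo $W$) and cokernel $\R\cdot[y\partial_x]$ (the nilpotent $\mathrm{ad}$-action on linear fields is $3$-dimensional modulo $W$); and on $H^2(\delta)_{\mathrm{form}} = C^\infty(\R_z)\,\partial_x\wedge\partial_y$, $\lie_W = -2$. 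Substituting into the exact sequence above yields formal Poisson cohomology $\R$, $\R^2$, $\R$, $0$ in degrees $0,\dots,3$, with representatives $1$; $\partial_z$ and $x\partial_y$; $y\partial_x\wedge\partial_z$; and nothing — exactly the asserted list.

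The flat part should vanish, and making this precise is the step I expect to be the main obstacle. Off $N$ the field $W$ is nowhere zero, and, being linear, has linear flow; since $W(x^2+y^2) = 2(x^2+xy+y^2) > 0$, the function $r^2 = x^2+y^2$ is strictly increasing along this flow, so every orbit meets every circle $\{r = c\}$ exactly once, and the flow box over the unit circle gives a diffeomorphism $\R^2\setminus\{0\} \cong \R_t\times S^1$ carrying $W$ to $\partial_t$, with $N$ corresponding to the end $t \to -\infty$. Linearity of the flow is precisely why this straightening is "mildly singular": it degenerates at $N$ only exponentially (hence polynomially), so it carries the flat Poisson complex of $\pi$ to the complex of multivector fields on $S^1\times\R_t\times\R_z$ decaying rapidly as $t\to-\infty$, with differential $\diff_{\partial_t\wedge\partial_z}$. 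On that complex $\lie_W = \partial_t$ is invertible on the flat analogues of $H^0(\delta)$ and $H^1(\delta)$ (integration from $t = -\infty$ is a two-sided inverse, using the decay), while $H^2(\delta)_{\mathrm{flat}} = 0$ already because any function flat along $N$ lies in $(x,y)\cdot C^\infty$ with flat cofactors, by Hadamard's lemma. Hence $H^\bullet_{\mathrm{flat}}(\R^3,\pi) = 0$, the long exact sequence collapses to $H^\bullet(\R^3,\pi) \cong H^\bullet_{\mathrm{form}}(\R^3,\pi)$, and the stated groups follow. Finally, the triviality of all the algebraic structure beyond the module structure over $H^0 = \R$ reduces to the identities $[\partial_z]\wedge[x\partial_y] = [x\partial_y\wedge\partial_z] = \diff_\pi(y\partial_y)$ and $[x\partial_y,\,y\partial_x\wedge\partial_z] = (x\partial_x - y\partial_y)\wedge\partial_z = \diff_\pi(y\partial_x)$, together with the evident vanishing of the remaining wedge products and Schouten brackets.
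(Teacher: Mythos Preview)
Your argument is correct and shares the paper's overarching strategy---the flat/formal decomposition along the $z$-axis---but organizes the computation differently. The paper computes $H^\bullet_{\Rho_F}$ by a direct coefficient-by-coefficient analysis of the formal Poisson differential (its \cref{prop: type iv}), whereas you package the whole complex as the mapping cone of $\lie_W$ on $(\mathfrak{X}^\bullet_{(0)},\delta)$ and then exploit the Jordan decomposition $W=E+x\partial_y$ to read off kernels and cokernels degree by degree; this is cleaner and makes the role of the ``resonant'' degrees $m=0,1$ transparent. For the flat part you and the paper in fact use the \emph{same} change of coordinates: your flow-box $\Phi(t,\theta)=e^{t}(\cos\theta,\,t\cos\theta+\sin\theta)$ is exactly the paper's $\varphi(r,\theta,s)=(r\cos\theta,\,r(\log r\,\cos\theta+\sin\theta),s)$ under $r=e^{t}$. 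The one point you leave as an assertion---that this straightening carries $C^\infty_{\Rho_f}(\R^3)$ isomorphically onto functions rapidly decaying at $t\to-\infty$ (equivalently flat at $r=0$)---is precisely what the paper verifies carefully; after that, the paper reduces to the flat cohomology of $r\partial_r\wedge\partial_s$ from \cref{sec:affine}, while you instead reuse the cone and invert $\partial_t$ by integration from $-\infty$. Both routes give vanishing flat cohomology, and your checks of the algebraic structure match the paper's.
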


Poisson structures associated to Lie algebras of the spiral-type have the following Poisson cohomology:
\begin{theorem}\label{thm:typevii}
The Poisson cohomology groups $H^{\bullet}(\R^3,\pi_{\tau})$ for $0<\tau$ with $\pi_\tau = (\tau E +T)\wedge\partial_z$ are uniquely characterized by the following representatives in the different degrees:
\begin{itemize}
    \item Casimir functions are given by elements in $\R$;
    \item in degree $1$ we have a free module over the Casimir functions with generators
    \[ \langle E, \partial_z\rangle;\]
    \item in degree $2$ we have a free module with generator
    \[ \langle E\wedge\partial_z \rangle; \]
    \item and the third Poisson cohomology group is trivial.
\end{itemize}
Moreover, it's algebraic structure is given by the operations on the representatives.
\end{theorem}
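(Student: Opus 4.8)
The plan is to compute $H^\bullet(\R^3,\pi_\tau)$ for $\pi_\tau = (\tau E + T)\wedge \partial_z$ by combining a change of coordinates that diagonalizes the radial/rotational part with the "formal + flat" splitting along the $z$-axis, which is the unique closed Poisson submanifold consisting of the $0$-dimensional symplectic leaves. First I would pass to polar-type coordinates $(r,\theta,z)$ with $r^2 = x^2+y^2$, in which $E = r\partial_r$ and $T = \partial_\theta$, so that $\pi_\tau = (\tau r\partial_r + \partial_\theta)\wedge\partial_z$. The symplectic leaves are the logarithmic spirals $r = C e^{\tau\theta}$ crossed with... wait — more precisely, the leaves are two-dimensional half-planes-like surfaces parametrized away from $r=0$, and the single $0$-dimensional leaves fill out the $z$-axis $\{r=0\}$. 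Because the rotational vector field $T$ generates an $S^1$-action and commutes with everything in sight, I would first reduce to $S^1$-invariant (equivalently $\theta$-independent) multivector fields, exactly as in the Euclidean case (\cref{sec:euclidean}); the complex of $S^1$-invariant multivectors computes the same cohomology by an averaging argument. On invariant tensors, $\partial_\theta$ acts trivially, so $\diff_{\pi_\tau}$ reduces to the differential of the $2$-dimensional Poisson structure $\tau r\partial_r \wedge \partial_z$ on the $(r,z)$ half-plane, i.e. essentially a linear Poisson structure of $\mathfrak{aff}(1,\R)$-type, twisted by the factor $\tau$.

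Next I would carry out the splitting into the formal part along $\{r=0\}$ and the flat part. For the flat part, on the open set $r>0$ the bivector $\tau r\partial_r\wedge\partial_z$ is (up to the scaling constant $\tau$, which is invertible) the symplectic form $\diff(\log r)\wedge \diff z$ on a cylinder, so its Poisson cohomology there is the de Rham cohomology of $(0,\infty)\times\R$ tensored with the invariant directions, which is trivial in positive degrees and constants in degree $0$; the flatness (infinite-order vanishing at $r=0$) together with the mild nature of the singularity of $\pi_\tau$ should let me transport this via a Poisson diffeomorphism away from the $z$-axis, exactly the technique cited for \cref{sec:typevi,sec:typeiv}, and conclude the flat Poisson cohomology vanishes in all degrees. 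For the formal part, I would compute the cohomology of the complex of formal power series in $r$ (with smooth dependence on $z$, but in fact one shows the $z$-dependence drops out) under $\diff_{\tau r\partial_r\wedge\partial_z}$: because multiplication by $r^k$ scales this bivector's action by $k\tau \neq 0$ for $k\geq 1$, every formal jet of positive order is both a coboundary and not a cocycle in the relevant degree, so only the order-zero (constant-in-$r$) part survives. Assembling, $H^0$ is the constants $\R$; in degree $1$ the surviving cocycles are spanned over $\R$ by $E = r\partial_r$ and $\partial_z$; in degree $2$ by $E\wedge\partial_z$; and $H^3$ vanishes. Reintroducing the non-invariant directions contributes nothing new because $\iota_{\diff\theta}$ of any invariant tensor is again invariant and the $\theta$-wedge directions are killed, so the free-module-over-$H^0 = \R$ description in the statement follows, with the algebraic structure (wedge and bracket) read off directly on these representatives since $E, \partial_z$ commute and $[E, E\wedge\partial_z] = 0$.

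The main obstacle I expect is the flat part: showing that the infinite-order-flat Poisson complex along the $z$-axis is acyclic. The subtlety is that although $\pi_\tau$ looks like a symplectic structure on $r>0$, the Poisson diffeomorphism used to trivialize it (roughly $(r,z)\mapsto(\log r, z)$, or a spiral unwinding in $(x,y,z)$) does not extend across $r=0$, so one must check carefully that it nonetheless preserves the subcomplex of flat multivector fields — i.e. that flatness at the $z$-axis is preserved under the diffeomorphism and its inverse despite the logarithmic blow-up, using that all derivatives of a function flat at $r=0$ decay faster than any power of $r$. This is precisely the "mild singularities" argument; alternatively, if that transport is delicate for the spiral, one falls back to the short-exact-sequence method announced for \cref{sec:typevii}, filtering by order of vanishing along $\{r=0\}$ and running the associated long exact sequences, where the key input is again that $r\partial_r$ acts invertibly on each graded piece $r^k\,\Omega/\Omega$ for $k\geq1$. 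Either way, once acyclicity of the flat part is in hand, the formal computation is a short direct calculation and the theorem follows.
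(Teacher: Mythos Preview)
Your plan has a genuine gap at the $S^1$-averaging step. Ginzburg's proposition (\cref{proposition: ginzburg}) requires the $S^1$-action to be Hamiltonian, or at least by closed cotangent lifts, i.e.\ the infinitesimal generator must lie in the image of $\pi_\tau^\sharp$ applied to closed one-forms. For $\pi_\tau=(\tau E+T)\wedge\partial_z$ with $\tau>0$ this fails: away from the $z$-axis the image of $\pi_\tau^\sharp$ is spanned by $\tau E+T$ and $\partial_z$, and $T$ is not in that span. Concretely, $\diff_{\pi_\tau}z=\tau E+T$, so $[T]=-\tau[E]\neq 0$ in $H^1(\R^3,\pi_\tau)$; a Hamiltonian generator would have trivial class. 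Thus the reduction to the $S^1$-invariant subcomplex is not justified, and there is no reason the inclusion of invariant multivector fields should be a quasi-isomorphism here. (Contrast with the Euclidean case, where $T$ \emph{is} the Hamiltonian vector field of $-z$.)

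A secondary issue: even on the invariant subcomplex, $\diff_{\pi_\tau}$ does not reduce to the differential of $\tau r\partial_r\wedge\partial_z$. For an invariant function $g(r,z)$ one has $\diff_{\pi_\tau}g=(\partial_z g)(\tau r\partial_r+\partial_\theta)-\tau r(\partial_r g)\partial_z$, which still carries a $\partial_\theta$-component; the invariant complex contains the $T$-direction (cf.\ \cref{prop:classification_invariants}) and you cannot simply drop it.

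The paper proceeds differently. The formal cohomology along $\Rho$ is computed directly in Cartesian coordinates (\cref{proposition: formal 7}), using the decomposition of \cref{lemma: nice basis} and an elementary inductive argument in the total degree $i+j$. For the flat part, rather than a Poisson diffeomorphism, the paper exploits that $\pi_\tau$ is regular of corank one off the $z$-axis: the flat Poisson complex fits into the short exact sequence of \cref{ses for flat things}, and the flat foliated cohomology is shown to vanish via a chain homotopy built from the flow of the \emph{Hamiltonian} vector field $W_\tau=\pi_\tau^\sharp(\diff z)=-(\tau E+T)$, whose trajectories spiral into the $z$-axis. This is precisely the ``short exact sequences'' route announced in the introduction for \cref{sec:typevii}. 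If you want to salvage your approach, you could try to average over the flow of $\tau E+T$ (which \emph{is} Hamiltonian) rather than $T$, but that flow is not periodic, so you end up with exactly the homotopy argument the paper uses.
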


 \begin{wrapfigure}{r}{4cm}
  \vspace{-130pt}
 \includegraphics[scale=0.3]{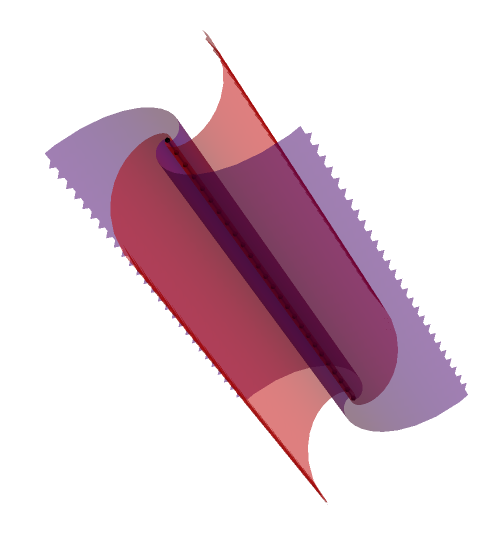}
 \vspace{-20pt}
 \caption{The foliation of $\pi_1=(E+T) \wedge \partial_z$}
 \vspace{0pt}
 \end{wrapfigure}

Finally, there are the two semisimple Lie algebras $\mathfrak{sl}_2(\R)$ and $\mathfrak{so}(3)$. For the linear Poisson structure associated with $\mathfrak{so}(3)$ the result is obtained using a more general result by Ginzburg and Weinstein in \cite{Ginzburg1992} for any compact semisimple Lie algebra. The $\mathfrak{so}(3)$ their result implies:
\begin{theorem}[Ginzburg-Weinstein \cite{Ginzburg1992}]
    The Poisson cohomology of the linear Poisson structure $\pi$ associated with $\mathfrak{so}(3)$ is characterized by
    \begin{itemize}
        \item the Casimir functions are of the form $f(x^2+y^2+z^2)$ for $f\in C^{\infty}([0,\infty))$;
        \item the groups $H^1(\mathfrak{so}(3)^*,\pi)$ and $H^2(\mathfrak{so}(3)^*,\pi)$ are trivial;
        \item $H^3(\mathfrak{so}(3)^*,\pi)$ is a free module over $H^0(\mathfrak{so}(3)^*,\pi)$ generated by the standard $3$-field on $\mathfrak{so}(3)^*$ induced by the Killing form.
    \end{itemize}
\end{theorem}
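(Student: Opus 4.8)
The plan is to reproduce the averaging argument of Ginzburg and Weinstein \cite{Ginzburg1992} in this very explicit setting. First I would observe that the coadjoint action of the compact connected group $SO(3)$ on $\mathfrak{so}(3)^\ast\cong\R^3$ is linear and preserves $\pi$, hence acts on the Lichnerowicz complex $(\mathfrak{X}^\bullet(\R^3),\diff_\pi)$ by chain maps. Since $SO(3)$ is connected, every group element acts as a chain map homotopic to the identity (via the usual Cartan-type homotopy along a path to the identity), and averaging these homotopies against the Haar measure shows that the averaging projector onto $SO(3)$-invariant multivector fields is chain-homotopic to the identity. Consequently the inclusion of the subcomplex $(\mathfrak{X}^\bullet(\R^3)^{SO(3)},\diff_\pi)$ is a quasi-isomorphism, and it suffices to compute its cohomology.

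Second, I would classify the $SO(3)$-invariant multivector fields. The invariant functions are exactly $C^\infty(\R^3)^{SO(3)}=\{\,f(x^2+y^2+z^2):f\in C^\infty([0,\infty))\,\}$, i.e.\ the Casimirs. In each positive degree the invariant multivector fields form a free rank-one module over this ring, generated respectively by the Euler field $E^3$, by $\pi$ itself, and by $\partial_x\wedge\partial_y\wedge\partial_z$. The cleanest route uses that $SO(3)$ acts transitively on each sphere $\{r^2=c\}$ with stabilizer $SO(2)$: the only vector, resp.\ covector, at a fixed point invariant under that $SO(2)$ is the radial one, so every invariant $1$-vector and every invariant $1$-form is radial away from the origin; dualizing $k$-vectors against the invariant volume form $\mu=\diff x\wedge\diff y\wedge\diff z$ (which satisfies $\iota_\pi\mu=\tfrac12\diff(r^2)$) then reduces degrees $2$ and $3$ to degrees $1$ and $0$. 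To promote the coefficients to genuine smooth functions of $r^2$ on all of $\R^3$ one invokes the Schwarz-type division theorem for the $SO(3)$-action together with its module analogues for invariant vector fields and forms.

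Third, I would compute $\diff_\pi$ on the three generators. Since $r^2$ is a Casimir, $\diff_\pi(f(r^2))=X_{f(r^2)}=0$, which identifies $H^0$ with the Casimirs. Homogeneity of the linear Poisson structure gives $\lie_{E^3}\pi=-\pi$, hence $\diff_\pi E^3=[\pi,E^3]=\pi$; combined with the Leibniz rule $[\pi,f(r^2)E^3]=X_{f(r^2)}\wedge E^3+f(r^2)[\pi,E^3]=f(r^2)\pi$, this shows the differential from invariant $1$-vectors to invariant $2$-vectors is an isomorphism onto $\{f(r^2)\pi\}$. All remaining differentials vanish: $\diff_\pi(f(r^2)\pi)=X_{f(r^2)}\wedge\pi+f(r^2)[\pi,\pi]=0$, and $\diff_\pi$ vanishes on invariant $3$-vectors for degree reasons. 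Reading off the cohomology of the invariant subcomplex then gives $H^0$ the Casimirs, $H^1=H^2=0$, and $H^3$ a free rank-one module over $H^0$ generated by $\partial_x\wedge\partial_y\wedge\partial_z$, which coincides up to normalization with the $3$-field induced by the Killing form.

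The main obstacle is Step 2, and within it the behaviour at the origin: away from $0$ the classification is an immediate consequence of transitivity of the $SO(3)$-action on spheres, but concluding that the coefficient is a \emph{smooth} function of $r^2$ near $0$ requires the invariant-theoretic division statement (Schwarz's theorem and its equivariant refinements). The Leibniz computations, the homogeneity identity $\lie_{E^3}\pi=-\pi$, and the averaging argument of Step 1 are all routine.
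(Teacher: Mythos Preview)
The paper does not prove this theorem: it is quoted as a result of Ginzburg--Weinstein and cited directly from \cite{Ginzburg1992}, so there is no ``paper's own proof'' to compare against. Your proposal is a correct outline of the Ginzburg--Weinstein averaging argument specialized to $\mathfrak{so}(3)$, and in fact it mirrors closely the strategy the paper \emph{does} carry out for $\mathfrak{e}(2)$ in \cref{sec:euclidean}: invoke \cref{proposition: ginzburg} to reduce to the invariant subcomplex, classify the invariant multivector fields, and compute $\diff_\pi$ on generators. Your identification of the invariant multivector fields (free rank-one modules over the Casimirs generated by $E^3$, $\pi$, and $\partial_x\wedge\partial_y\wedge\partial_z$) and the computation $\diff_\pi(f(r^2)E^3)=f(r^2)\pi$ via $\lie_{E^3}\pi=-\pi$ are correct, and you are right that the only nontrivial point is smoothness of the coefficient at the origin, which is handled by Schwarz's theorem and its equivariant analogues.
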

 For the Poisson cohomology of the linear Poisson structure associated with $\mathfrak{sl}_2(\R)$ we refer the reader to \cite{Marcut2019}.

\section{Preliminaries}\label{sec:prelim}
In this we introduce some general notations, tools and computations which we will use several times throughout the paper. In \cref{subsec:notation} we identify Poisson cohomology of linear Poisson structures as Lie algebra cohomology with coefficients, we write down a general form of the Poisson differential which we frequently use in our calculations and we recall some useful identities. In \cref{subsec:flatformal} we explain how one can split the calculation of Poisson cohomology into a flat and formal part. Finally, in \cref{subsec:corank1} we explain how Poisson cohomology of a corank 1 symplectic foliation can be calculated. 

explain the our general approach for explicit calculations

\subsubsection*{The general plan for the computations}
Our general procedure for the explicit computations of (formal) Poisson cohomology in degree $0\le i$ is as follows:
    \begin{enumerate}
        \item We choose a complement to the cocycles in degree $i-1$ and look at its image under $\diff_{\pi}$ (void for $i=0$). This gives a description of the coboundaries in degree $i$ and allows us to make certain restrictions on representatives of cocyles in this degree. 
        \item Determine the cocycles with respect to the restrictions obtained in $1.$
    \end{enumerate}
    The result will give unique representatives for the cohomology classes.

\subsection{Notation and useful identities}\label{subsec:notation}
For the linear Poisson structure on the dual of a finite dimensional Lie algebra $(\mathfrak{g},[\cdot,\cdot ])$, we identify Poisson cohomology with the Chevalley-Eilenberg cohomology of $\mathfrak{g}$ with coefficients in certain representations. Let $\pi$ be the associated linear Poisson structure on $\mathfrak{g}^*$ given by \eqref{eq: linear poisson} with corresponding Poisson bracket $\{\cdot,\cdot \}$ on $C^{\infty}(\mathfrak{g}^*)$ and $l:\mathfrak{g}\to C^{\infty}(\mathfrak{g}^*)$ the map identifying $\mathfrak{g}$ with $(\mathfrak{g}^*)^*$. With this notation $C^{\infty}(\mathfrak{g}^*)$ becomes a $\mathfrak{g}$-representation, with $X\cdot f:=\{l_X,f\}$. 
Moreover, the Poisson complex of $(\mathfrak{g}^*,\pi)$ is isomorphic to the Chevalley-Eilenberg complex of $\mathfrak{g}$ with coefficients in $C^{\infty}(\mathfrak{g}^*)$ \cite[Prop 7.14]{Laurent2013}
\begin{equation*}
(\mathfrak{X}^{\bullet}(\mathfrak{g}^*),\diff_{\pi})\simeq (\wedge^{\bullet}\mathfrak{g}^*\otimes C^{\infty}(\mathfrak{g}^*),\diff_{EC}).
\end{equation*}
Since we only consider three dimensional lie algebras $\mathfrak{g}$, for the calculation of the Poisson cohomology we usually identify $\mathfrak{g}^*\simeq \R^3$ with coordinates $(x,y,z)$. We consider Poisson cohomology with coefficients in different subr(i)ngs $\mathcal{R}$ of $C^{\infty}(\R^3)$ which are preserved by $\pi$. We denote the corresponding subcomplex of the Poisson complex by $(\mathfrak{X}_{\mathcal{R}}^{\bullet},\diff_{\pi})$, i.e.
\begin{equation}\label{eq: iso cx general}
(\mathfrak{X}^{\bullet}_{\mathcal{R}}(\R^3),\diff_{\pi})\simeq (\wedge^{\bullet}\mathfrak{g}^*\otimes \mathcal{R},\diff_{EC}).
\end{equation}
In general we denote by $g$ an element in $\mathfrak{X}^0_{\mathcal{R}}$, by $X$ an element in $\mathfrak{X}^1_{\mathcal{R}}$, by $W$ an element in $\mathfrak{X}^2_{\mathcal{R}}$. A vector field $X$ and a bivector field $W$ are then of the form
\begin{align*}
    X=X^x\partial_x+X^y\partial_y+X^z\partial_z  \qquad \text{ and }\qquad W=W^x\partial_y \wedge \partial_z+W^y\partial_z \wedge \partial_x+W^z\partial_x \wedge \partial_y 
\end{align*}
with $X^i,W^i\in {\mathcal{R}}$.

Note that all Poisson structures $\pi$ which we consider in this paper are of the form
\begin{align*}
    \pi = X_{\pi}\wedge \partial_z
\end{align*}
for some vector field $X_{\pi}$ on $\R^2$ in $(x,y)$ of homogeneous degree $0$, i.e.\, $m_t^*X_\pi=X_\pi$ where $m_t:(x,y)\mapsto t\cdot (x,y)$ and $t\ne 0$. The only exceptions will be $\mathfrak{heiss}$ and $\mathfrak{aff}(1,\R)\times \R$ where we will exchange the roles of $y$ and $z$ since this notation is more standard.

For such Poisson structures the general Poisson differential in the different degrees is as follows:
\begin{align}
    \diff_{\pi}(g)=&\, (\lie_{\partial_z}g)X_{\pi}-(\lie_{X_{\pi}}g)\partial_z\label{eq: general 0}\\ 
    \diff_{\pi}(X)=&\, = W(\pi)=W^x(\pi)\partial_y \wedge \partial_z+W^y(\pi)\partial_z \wedge \partial_x+W^z(\pi)\partial_x \wedge \partial_y \nonumber \\
    \diff_{\pi}(W)=&\, \Big(X_{\pi}^x \lie_{\partial_z}W^x+X_{\pi}^y \lie_{\partial_z}W^y+(\lie_{\partial_x}X_{\pi}^x+\lie_{\partial_y}X_{\pi}^y-\lie_{X_{\pi}})W^z\Big)\partial_x\wedge\partial_y\wedge\partial_z \label{eq: general 2}
\end{align}
where 
\begin{align}
    W^x(\pi)=&\, \lie_{X_{\pi}}X^y-X^x\lie_{\partial_x}X_{\pi}^y-X^y\lie_{\partial_y}X_{\pi}^y+X_{\pi}^y\lie_{\partial_z}X^z\label{eq: general 1x}\\ 
    W^y(\pi)=&\,-\lie_{X_{\pi}}X^x+X^x\lie_{\partial_x}X_{\pi}^x+X^x\lie_{\partial_x}X_{\pi}^x-X_{\pi}^x\lie_{\partial_z}X^z\label{eq: general 1y}\\
    W^z(\pi)=&\, X_{\pi}^x\lie_{\partial_z}X^y-X_{\pi}^y\lie_{\partial_z}X^x\label{eq: general 1z}
\end{align}
and the Poisson differential is $0$ when applied to $3$-vector fields for dimensional reasons.

A very useful tool for our computations is the following lemma.
\begin{lemma}[Hadamard]\label{lemma: hadamard}
Let $g \in C^\infty(\R^n)$, then:
\begin{align*}
    g(x_1,\dots, x_n)= g_0(x_1,\dots x_{n-1})+x_ng_1(x_1,\dots ,x_n) 
\end{align*}
where 
\begin{align*}
    g_0(x_1,\dots ,x_{n-1})=g(x_1,\dots ,x_{n-1},0), \qquad \text{ and }\qquad  g_1(x_1,\dots ,x_n)=\int_0^1\partial_{x_n}g(x_1,\dots,tx_n)\diff t
\end{align*}
\end{lemma}
As an immediate consequence we obtain:
\begin{corollary}\label{cor: invertible}
On $\R^n$ with coordinates $(x_1,\dots ,x_n)$ the operator
\begin{align*}
    \partial_{x_i}:x_iC^{\infty}(\R^n)\to C^{\infty}(\R^n) 
\end{align*}
is bijective with inverse $\int_0^{x_i}\diff\tilde{x}_i$, for all $i=1,\dots,n$
\end{corollary}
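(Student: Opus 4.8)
The plan is to use Hadamard's lemma (\cref{lemma: hadamard}) twice: once to identify the domain $x_iC^\infty(\R^n)$ intrinsically, and once to recognize the output of the integral operator as lying in that domain. Write $T$ for the proposed inverse, i.e.
\begin{equation*}
    (Tf)(x_1,\dots,x_n):=\int_0^{x_i}f(x_1,\dots,x_{i-1},\tilde x_i,x_{i+1},\dots,x_n)\diff \tilde x_i.
\end{equation*}
First I would observe that, by \cref{lemma: hadamard} applied in the $i$-th variable, a smooth function $g$ lies in $x_iC^\infty(\R^n)$ if and only if it vanishes on the hyperplane $\{x_i=0\}$: indeed $g=g_0+x_ig_1$ with $g_0=g|_{x_i=0}$, so $g\in x_iC^\infty(\R^n)$ exactly when $g_0=0$. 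This gives a clean description of the domain that makes both composition identities transparent.

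Next I would check that $T$ maps $C^\infty(\R^n)$ into $x_iC^\infty(\R^n)$. The substitution $\tilde x_i=tx_i$ rewrites
\begin{equation*}
    (Tf)(x)=x_i\int_0^1 f(x_1,\dots,tx_i,\dots,x_n)\diff t,
\end{equation*}
and the integral on the right is a smooth function of $x$ by differentiation under the integral sign. Hence $Tf=x_ik$ for a smooth $k$, so $Tf\in x_iC^\infty(\R^n)$. It is then immediate from the fundamental theorem of calculus, treating the remaining coordinates as parameters, that $\partial_{x_i}(Tf)=f$ for every $f\in C^\infty(\R^n)$; this already yields surjectivity of $\partial_{x_i}$ and shows $T$ is a right inverse.

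For the other composition, take $g\in x_iC^\infty(\R^n)$. The fundamental theorem of calculus gives
\begin{equation*}
    (T\partial_{x_i}g)(x)=\int_0^{x_i}\partial_{x_i}g(x_1,\dots,\tilde x_i,\dots,x_n)\diff \tilde x_i=g(x)-g|_{x_i=0},
\end{equation*}
and the boundary term vanishes precisely because $g\in x_iC^\infty(\R^n)$ forces $g|_{x_i=0}=0$ by the first step. Thus $T\partial_{x_i}g=g$, so $T$ is also a left inverse, and $\partial_{x_i}$ is bijective with inverse $T$, as claimed.

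The computation is elementary; the only point requiring genuine care is that $T$ lands in $x_iC^\infty(\R^n)$ rather than merely in continuous or $C^1$ functions. Smoothness is secured by the rescaled form $Tf=x_i\int_0^1 f(\dots,tx_i,\dots)\diff t$, and membership in the domain is then governed entirely by the Hadamard characterization $x_iC^\infty(\R^n)=\{g\in C^\infty(\R^n):g|_{x_i=0}=0\}$, which is exactly what couples the two halves of the argument together.
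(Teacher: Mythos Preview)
Your proof is correct and follows exactly the route the paper intends: the corollary is stated there as ``an immediate consequence'' of Hadamard's lemma, and your argument simply unpacks that---using Hadamard to identify $x_iC^\infty(\R^n)$ with the smooth functions vanishing on $\{x_i=0\}$, and the rescaled integral formula (which is precisely the $g_1$ in the lemma) to see that $T$ lands in the domain. There is nothing to add.
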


\subsection{Flat and formal Poisson cohomology along Poisson submanifolds}\label{subsec:flatformal}

In some of our computations it is useful to divide the computation of Poisson cohomology into the computation of a flat and formal part along certain Poisson submanifolds. In this section we explain the general idea and define the objects we need later in our calculations.

Let $(M,\pi)$ be a Poisson manifold. Its Poisson cohomology $H^{\bullet}(M,\pi)$ is the cohomology of the chain complex: 
\[(\mathfrak{X}^{\bullet}(M),\diff_{\pi}:=[\pi,\cdot]).\]
For an embedded Poisson submanifold $N \subset M$, let $\mathfrak{X}_{N_f}^{\bullet}(M)$ denote the set of multivector fields that are flat along $N$, i.e. the inverse limit of $\mathcal{I}_N^n\mathfrak{X}^{\bullet}(M)$, where $\mathcal{I}_N$ denotes the ideal of functions which vanish along $N$. Since $\mathfrak{X}_{N_f}^{\bullet}(M)$ is a Lie ideal in $\mathfrak{X}^{\bullet}(M)$, it is also a subcomplex with respect to $\diff_{\pi}$. The cohomology of this complex, denoted $H^{\bullet}_{N_f}(M,\pi)$, will be called the flat Poisson cohomology along $N$. 

An adaptation of Borel's Lemma on the existence of smooth functions with a prescribed Taylor series yields the following identification for the quotient:
\[\mathfrak{X}^{\bullet}(M)/\mathfrak{X}_{N_f}^{\bullet}(M)\simeq \Gamma(\wedge^{\bullet}T_NM\otimes \underset{k\ge 0}{\Pi}S^k\nu^*_N)=:\mathfrak{X}_{N_F}^{\bullet}(M),\]
where $S^k\nu^*_N$ denotes the $k$-th symmetric power in the conormal bundle of $N$. Thus, we obtain a short exact sequence of complexes
 \begin{align}\label{eq: ses general}
  0\to(\mathfrak{X}^{\bullet}_{N_f}(M),\diff_{\pi})\to(\mathfrak{X}^{\bullet}(M),\diff_{\pi})\stackrel{j^{\infty}_N}{\longrightarrow} (\mathfrak{X}_{N_F}^{\bullet}(M),\diff_{j^{\infty}_{N}\pi})\to 0,
 \end{align}
where $j^{\infty}_N$ is the infinite jet map along $N$. The cohomology of the quotient complex, denoted by $H^{\bullet}_{N_F}(M,\pi)$, will be called the formal Poisson cohomology along $N$. The short exact sequence induces a long exact sequence in cohomology: 
\begin{equation}\label{jet}
\ldots \stackrel{j^{\infty}_N}{\to} H^{q-1}_{N_F}(M,\pi)\stackrel{\partial}{\to} 
H^{q}_{N_f}(M,\pi)\to H^{q}(M,\pi)\stackrel{j^{\infty}_N}{\to}H^{q}_{N_F}(M,\pi )\stackrel{\partial}{\to}\ldots. \end{equation}
For us only the following submanifolds will be relevant:
\begin{align*}
    \Rho := \Set{(0,0,z)\in \R^3}[z \in \R],\qquad X:= \Set{(0,y,z)\in \R^3}[y,z \in \R],\qquad Y:= \Set{(x,0,z)\in \R^3}[x,z \in \R] \\ XY:= \Set{(x,y,z)\in \R^3}[x,y,z \in \R :\, x=0\ \text{ or } \ y=0 ].\qquad \qquad \qquad \qquad \quad
\end{align*}
Note that by the identification \eqref{eq: iso cx general}, in order to obtain short exact sequences as in \eqref{eq: ses general} and hence long exact sequences in cohomology, we only need to make sure that these submanifolds are Poisson for a given Poisson structure $\pi$ and that the corresponding sequences for functions are short exact. To ensure the latter we have the following statement.
\begin{proposition}\label{proposition: ses}
The following sequences are short exact sequences:
\begin{align*}
    0\to C_{\Rho _f}^{\infty}(\R^3)\hookrightarrow{} C^{\infty}(\R^3)\xrightarrow{j^{\infty}_R} C^{\infty}(\R)[[x,y]]\to 0\\
    0\to C_{Y_f}^{\infty}(\R^3)\hookrightarrow{} C^{\infty}_{\Rho _f}(\R^3)\xrightarrow{j^{\infty}_Y} C^{\infty}_{U_f}(\R^2)[[y]]\to 0\\
    0\to C_{XY_f}^{\infty}(\R^3)\hookrightarrow{} C^{\infty}_{Y_f}(\R^3)\xrightarrow{j^{\infty}_{X}} C^{\infty}_{U_f}(\R^2)[[x]]\to 0
\end{align*}
where 
\begin{align*}
    C_{Y_f}^{\infty}(\R^3):=\Set{f\in C^{\infty}_{\Rho _f}(\R^3)}[j^{\infty}_Yf=0]\qquad C_{XY_f}^{\infty}(\R^3):=\Set{f\in C^{\infty}_{Y_f}(\R^3)}[j^{\infty}_Xf=0]\\
    C^{\infty}_{U_f}(\R^2):=\Set{f\in C^{\infty}(\R^2)}[j^{\infty}_{U}f=0, \  U= \Set{(0,v)\in \R^2}[v \in \R]]\qquad \qquad
\end{align*}
We denote the multivector fields associated to the corresponding rng of functions by $\mathfrak{X}^{\bullet}_{\Rho _f}(\R^3)$, $\mathfrak{X}^{\bullet}_{\Rho _F}(\R^3)$, $\mathfrak{X}^{\bullet}_{Y_f}(\R^3)$, $\mathfrak{X}^{\bullet}_{Y_F}(\R^3)$, $\mathfrak{X}^{\bullet}_{XY_f}(\R^3)$ and $\mathfrak{X}^{\bullet}_{XY_F}(\R^3)$, respectively.
\end{proposition}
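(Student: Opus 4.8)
The plan is to prove each of the three short exact sequences by the same strategy: exactness at the left and in the middle is essentially definitional, so the real content is the surjectivity of the jet map $j^\infty$ on the relevant (sub)ring of smooth functions. First I would recall that for any embedded closed submanifold $N\subset M$, the sequence $0\to C^\infty_{N_f}(M)\to C^\infty(M)\xrightarrow{j^\infty_N} \Gamma(\Pi_{k\ge 0}S^k\nu^*_N)\to 0$ is exact: injectivity on the left and exactness in the middle hold because $C^\infty_{N_f}(M)$ is \emph{defined} as $\bigcap_n \mathcal I_N^n C^\infty(M)=\ker j^\infty_N$, and surjectivity is Borel's Lemma on the prescription of Taylor coefficients along $N$ (in the relative form, parametrized by the points of $N$). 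For the first sequence one takes $M=\R^3$ and $N=\Rho=\{x=y=0\}$, whose conormal bundle is trivial with fibre coordinates $x,y$, so $\Gamma(\Pi_k S^k\nu^*_\Rho)\cong C^\infty(\R)[[x,y]]$, giving exactly the stated sequence.

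For the second and third sequences the only modification is that we restrict to the subrng of functions already flat along a smaller submanifold, and I would check that Borel's Lemma can be applied \emph{relatively}, i.e.\ producing a function that is simultaneously flat along the previously fixed submanifold and realizes a prescribed jet along the new one. Concretely, for the second sequence I work inside $C^\infty_{\Rho_f}(\R^3)$ (functions flat along the $z$-axis) and take $N=Y=\{y=0\}$; the conormal directions of $Y$ are spanned by $y$, and a function flat along $\Rho$ restricts on $Y\cong\R^2_{(x,z)}$ to a function flat along $U=\{x=0\}$, with all $y$-Taylor coefficients likewise flat along $U$ — this is where the target $C^\infty_{U_f}(\R^2)[[y]]$ comes from. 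Exactness on the left and in the middle is again definitional once one notes $C^\infty_{Y_f}(\R^3)=\{f\in C^\infty_{\Rho_f}(\R^3): j^\infty_Y f=0\}$ is precisely the kernel of $j^\infty_Y$ restricted to $C^\infty_{\Rho_f}(\R^3)$; indeed a function flat along $Y$ is automatically flat along $\Rho\subset Y$, so the inclusion makes sense and lands in the right place. The third sequence is identical with $\Rho_f$ replaced by $Y_f$, $Y$ by $X=\{x=0\}$, and the fibre variable $y$ by $x$; note $\Rho=X\cap Y$ so flatness along $X$ or $Y$ subsumes flatness along $\Rho$, keeping the nested structure consistent, and $XY=X\cup Y$ so $C^\infty_{XY_f}=C^\infty_{X_f}\cap C^\infty_{Y_f}=\{f\in C^\infty_{Y_f}: j^\infty_X f=0\}$ as claimed.

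The key step — and the only place where anything must actually be verified — is the \emph{relative surjectivity}: given a formal power series $\sum_k h_k(x,z)\, y^k$ with each coefficient $h_k\in C^\infty_{U_f}(\R^2)$, I must produce a genuine smooth function on $\R^3$ that is flat along the $z$-axis and whose $y$-jet along $Y$ equals this series. The standard Borel construction does this: choose a bump function $\chi$ equal to $1$ near $0$ and set $f(x,y,z)=\sum_k \chi(y/\varepsilon_k)\, h_k(x,z)\, y^k$ with $\varepsilon_k\to 0$ chosen fast enough (depending on the $C^k$ norms of $h_k$ on compacta) so the series and all its derivatives converge locally uniformly. Because each $h_k$ is flat along $U=\{x=0\}$, every term and hence $f$ is flat along $\{x=0\}\supset\Rho$, in particular flat along the $z$-axis, so $f$ indeed lies in $C^\infty_{\Rho_f}(\R^3)$; and by construction $\partial_y^k f|_{y=0}=k!\,h_k$. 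I expect this convergence-of-the-Borel-sum argument, together with bookkeeping that flatness along the smaller submanifold is preserved at every stage, to be the main (though routine) obstacle; once it is in place, the passage from functions to multivector fields in \eqref{eq: iso cx general} is purely formal, since multivector fields are finitely generated free modules over the function rng and flatness/jets are taken componentwise in the coordinate frame $\partial_x,\partial_y,\partial_z$.
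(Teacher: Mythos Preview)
Your proposal is correct and follows essentially the same approach as the paper: reduce to surjectivity of the jet maps and prove this via the standard Borel construction with a bump-function cutoff, observing that the construction preserves flatness of the coefficient functions. The paper's proof is a one-line reduction to its preceding Borel Lemma (``replace $g$ by an element in any of the right-hand spaces''), while you spell out explicitly why the resulting function lands in the correct flat subrng; this extra bookkeeping is helpful but not a different method.
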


The proof of this proposition is based on the following lemma.
\begin{lemma}[Borel]\label{lemma: Borel}
	Let $f \in C^\infty(\R^{k + m})$, then the map $j^\infty:C^{\infty}(\R^{k+m})\to C^{\infty}(\R^k)[[\R^m]]$ given by
	\begin{equation*}
	    j^\infty_{\R^m}(f) = \sum_{\alpha \in \N_0^m} D^\alpha_{y}f(x,0)y_{1}^{\alpha_1}\cdots y_{m}^{\alpha_m}, \qquad \text{ where }\  D_y^{\alpha}:=\frac{1}{\alpha_1! \dots \alpha_1! }\partial_{y_1}^{\alpha_1}\dots \partial_{y_m}^{\alpha_m}
	\end{equation*}
	is surjective.
\end{lemma}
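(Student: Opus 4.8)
The plan is to prove surjectivity by an explicit parametrized Borel construction. Given a target element $\sum_{\alpha \in \N_0^m} c_\alpha(x)\, y^\alpha \in C^\infty(\R^k)[[\R^m]]$ with $c_\alpha \in C^\infty(\R^k)$, I will build a smooth function $F$ on $\R^{k+m}$ with $D_y^\alpha F(x,0) = c_\alpha(x)$ for every $\alpha$, so that $j^\infty_{\R^m}(F)$ is exactly the prescribed series. Fix once and for all a cut-off $\chi \in C^\infty_c(\R^m)$ with $\chi \equiv 1$ on a neighbourhood of the origin and $\mathrm{supp}\,\chi$ contained in the unit ball, and set
\[
F(x,y) := \sum_{\alpha \in \N_0^m} c_\alpha(x)\, \chi(t_\alpha y)\, y^\alpha,
\]
where $y^\alpha = y_1^{\alpha_1}\cdots y_m^{\alpha_m}$ and the scalars $t_\alpha > 0$ are to be chosen large, growing with $|\alpha|$.

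The jet identity is essentially formal. Since $\chi \equiv 1$ near $0$, each summand equals $c_\alpha(x)\, y^\alpha$ for $y$ in a neighbourhood of the origin, so its $y$-Taylor expansion along $\{y=0\}$ is the single monomial $c_\alpha(x)\, y^\alpha$. Because $\partial_y^\beta (y^\alpha)|_{y=0} = \beta!\,\delta_{\alpha\beta}$, differentiating term by term gives $\partial_y^\beta F(x,0) = \beta!\, c_\beta(x)$, i.e. $D_y^\beta F(x,0) = c_\beta(x)$, exactly the required coefficients. The only thing needing justification here is that term-by-term differentiation is legitimate, which is precisely the content of the convergence step.

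For convergence I would estimate a single summand $u_\alpha := c_\alpha\,\chi(t_\alpha \cdot)\, y^\alpha$. Applying $\partial_x^\gamma\partial_y^\delta$ and expanding the $y$-derivative by Leibniz, every resulting term carries a factor $t_\alpha^{|\sigma|}$ from differentiating $\chi(t_\alpha y)$ and, on the support $\{|y|\le 1/t_\alpha\}$, a factor bounded by $t_\alpha^{-(|\alpha|-|\delta|+|\sigma|)}$ from the surviving power of $y$; these powers of $t_\alpha$ combine to the single exponent $|\delta|-|\alpha|$, independent of $\sigma$. Thus on $\{|x|\le R\}$ one gets a bound $C(\alpha,\gamma,\delta,R)\, t_\alpha^{|\delta|-|\alpha|}$, where $C$ absorbs $\sup_{|x|\le R}|\partial_x^\gamma c_\alpha|$ and the finitely many constants coming from $\chi$. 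The key point is that this exponent is strictly negative whenever $|\delta| < |\alpha|$. I would therefore choose each $t_\alpha$ large enough, depending only on $\alpha$, so that for all $\gamma,\delta$ with $|\gamma|+|\delta| < |\alpha|$ the $C^0$-norm of $\partial_x^\gamma\partial_y^\delta u_\alpha$ on $\{|x|\le|\alpha|\}\times\R^m$ is at most $2^{-|\alpha|}$; this is possible since the relevant sups are finite on the compact set $\{|x|\le|\alpha|\}$.

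With this choice, fix any multi-indices $\gamma,\delta$ and any compact $K\subset\R^{k+m}$ whose $x$-projection lies in $\{|x|\le R\}$. All but finitely many summands, namely those with $|\alpha| > \max(|\gamma|+|\delta|, R)$, satisfy the $2^{-|\alpha|}$ bound, so $\sum_\alpha \partial_x^\gamma\partial_y^\delta u_\alpha$ converges uniformly on $K$ by the Weierstrass $M$-test. Since this holds for every $\gamma,\delta$, the series defining $F$ converges in $C^\infty$, so $F$ is smooth and may be differentiated term by term, which both validates the jet computation above and completes the proof. The main obstacle is exactly this convergence bookkeeping: because $\R^k$ is non-compact, the coefficients $c_\alpha$ and their $x$-derivatives are only locally bounded, forcing the exhaustion $\{|x|\le|\alpha|\}$ and the $\alpha$-dependent choice of $t_\alpha$; once the derivative estimate is seen to produce the decaying factor $t_\alpha^{|\delta|-|\alpha|}$ for $|\delta|<|\alpha|$, the remainder is a routine $M$-test.
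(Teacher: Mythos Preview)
Your proof is correct and follows the same core Borel construction as the paper: both build the candidate function as $\sum_\alpha c_\alpha(x)\,\chi(t_\alpha y)\,y^\alpha$ with a fixed cut-off $\chi$ and choose the scaling parameters $t_\alpha$ so that the tail $|\alpha|>N$ satisfies a $2^{-|\alpha|}$-type bound in every $C^N$-norm, whence the Weierstrass $M$-test gives $C^\infty$-convergence and validates term-by-term differentiation.

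The only difference lies in how the non-compactness of $\R^k$ is handled. The paper covers $\R^k$ by relatively compact open sets $U_i$, carries out the construction on each $\overline{U}_i\times\R^m$ with $i$-dependent constants, and then glues by a partition of unity in $x$. You instead build the exhaustion directly into the choice of $t_\alpha$, requiring the $2^{-|\alpha|}$ bound only on $\{|x|\le|\alpha|\}\times\R^m$; for any fixed compact set and derivative order this captures all but finitely many terms. Your variant is a bit more direct (one global series, no gluing), while the paper's version separates the analytic estimate from the globalization; both are standard and essentially interchangeable.
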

\begin{proof}
	This proof has been adapted from the proof of theorem I.1.3 in~\cite{Moerdijk1991}.
	Let $g \in C^\infty(\R^k)[[y_1,\ldots,y_m]]$, i.e.
	\begin{equation*}
		g = \sum_{\alpha \in \N_0^m} g_\alpha(x_1,\ldots,x_n) y_1^{\alpha_1}\cdots y_m^{\alpha_m}.
	\end{equation*}

	Let $(U_i)_{i \in I}$ be an open cover of $\R^k$ such that each $\overline{U}_i$ is compact. Let $\phi: \R^m \rightarrow \R$ be a bump function with $\phi|_{\overline{B}_1(0)} = 1$ and $\supp \phi \subseteq B_2(0)$. We first show that there exist constants $c_\alpha > 1$ such that 
	\begin{equation}\label{eq:guessed_function_derivs}
		\sum_{\alpha \in \N_0^m} D^\beta_{(x,y)}\left(g_\alpha(x) \phi(c_\alpha \cdot y) y^\alpha\right)
	\end{equation}
	is uniformly convergent on $\overline{U}_i \times \R^m$ for all $\beta \in \N_0^{k + m}$. 

    For all $\alpha \in \N_0^m$, set $\phi_\alpha(y) = \phi(y)y_1^{\alpha_1}\cdots y_m^{\alpha_m}$, then we have 
	\begin{equation*}
		g_\alpha(x) \phi(c_\alpha y) y^{\alpha} = \left(\frac{1}{c_\alpha}\right)^{|\alpha|}g_\alpha(x)\phi_\alpha(c_\alpha y).
	\end{equation*}
	Since $\phi$ has compact support, the same holds for $\phi_\alpha$ and hence
	\begin{equation*}
		M_\alpha = \max_{|\beta|<|\alpha|} \sup_{x \in \overline{U}_i, y \in \R^m}\left|D^\beta_{(x,y)}\left(g_\alpha(x)\phi_\alpha(y)\right)\right| < \infty.
	\end{equation*}
	For $|\beta| < |\alpha|$ and $(x,y)\in \overline{U}_i \times \R^m$ we obtain the estimate
	\begin{align*}
	    \left|D^\beta_{(x,y)}\left(g_\alpha(x) \phi(c_\alpha y) y^{\alpha}\right)\right| &= \left(\frac{1}{c_\alpha}\right)^{|\alpha|} \left|D^\beta_{(x,y)}(g_\alpha(x)\psi_\alpha(c_\alpha y))\right| \leq \left(\frac{1}{c_\alpha}\right)^{|\alpha|} c_\alpha^{|\beta|} M_\alpha < \frac{M_\alpha}{c_\alpha}.
	\end{align*}
	Let $c_\alpha = 2^{|\alpha|}M_\alpha$, then for each $\beta \in \N_0^{n + m}$, the sum in \cref{eq:guessed_function_derivs} is dominated by $\sum_{\alpha \in \N_0^m} 2^{-|\alpha|}$ for $|\alpha| > |\beta|$, hence it converges uniformly on $\overline{U}_i \times \R^m$ for all $i \in I$. Therefore we have for any $\alpha \in \N^m$ that
	\begin{align*}
	    D^\alpha g|_{U_i}=g_{\alpha}|_{U_i}.
	\end{align*}
    Let $(\rho_i, V_i)_{i \in I}$ be a partition of unity subordinate to this open cover. Then the function $f\in C^{\infty}(\R^{k+m})$ defined by
	\begin{equation*}
		f(x,y) := \sum_{i \in I} \rho_i(x)g(x,y)
	\end{equation*}
	satisfies the desired properties.
\end{proof}

\begin{proof}[Proof of \cref{proposition: ses}]
We note that it suffices to show that the infinite jet maps are surjective. For this we may replace the function $g$ in the previous proof by an element in any of the right hand spaces in the sequences.
\end{proof}

\subsection{Corank 1 Poisson structures}\label{subsec:corank1}
We first briefly recall the necessary theory for corank one symplectic foliation. For details see for example \cite{Torres15}.

Note that for a regular Poisson manifold $(M,\pi)$ of corank one with symplectic foliation $(\mathcal{F}=\pi^{\sharp}(T^*M),\omega)$, where $\omega\in \Omega^2(\F)$ is the leafwise symplectic structure, 
The Poisson complex of $\pi$ fits into a short exact sequence:  
 \begin{align}\label{ses poisson}
  0\to (\Omega^{\bullet}(\mathcal{F}),\diff _{\mathcal{F}})\xrightarrow{j} (\mathfrak{X}^{\bullet}(M),\diff _{\pi})\xrightarrow{p} (\Omega^{\bullet-1}(\mathcal{F},\nu),\diff _{\nabla})\to 0.
 \end{align}
 Here $\diff_{\nabla}$ is the differential induces by the Bott connection on the normal bundle $\nu=TM/\F$. The map $j$ is obtained by pulling back Lie algebroid forms via the Lie algebroid map $\pi^{\sharp}:T^*M\to \F$ and the map $p$ is obtained by using the isomorphism
\[(-\omega^{\flat})=(-\pi^{\sharp})^{-1}:\wedge^{\bullet} \F \diffto \wedge^{\bullet} \F^*.\] 
Therefore there is a long exact sequence 
 \begin{equation}\label{les poisson}\arraycolsep=1.4pt
  \begin{array}{ccccccccccc}
   \dots &\xrightarrow{\partial}& H^k(\mathcal{F}) &\xrightarrow{j}&H^k(M,\pi)&\xrightarrow{p}&H^{k-1}(\mathcal{F},\nu)&\xrightarrow{\partial}&H^{k+1}(\mathcal{F})&\to&\dots
  \end{array}
 \end{equation}
If $\F$ is coorientable and unimodular, and let $\varphi$ be a closed defining one-form. Then \eqref{ses poisson} can be rewritten as the short exact sequence:
 \begin{align}\label{ses poisson unimod}
  0\to (\varphi\wedge \Omega^{\bullet}(M),\diff )\xrightarrow{j_{\varphi}} (\mathfrak{X}^{\bullet}(M),\diff _{\pi})\xrightarrow{p_{\varphi}} (\varphi\wedge \Omega^{\bullet-1}(M),\diff )\to 0.
 \end{align}
The maps $j_{\varphi}$ and $p_{\varphi}$ can be made explicit as follows: let $V$ be a vector field on $M$ such that $i_V\varphi=1$, and let $\widetilde{\omega}\in \Omega^2(M)$ be the unique extension of $\omega$ such that $i_{V}\widetilde{\omega}=0$. 
Then 
\[j_{\varphi}=(-\pi^{\sharp})\circ i_{V}, \ \ \ \  p_{\varphi}=e_{\varphi}\circ (-\widetilde{\omega}^{\flat})\circ i_{\varphi},\]
where $e_{\varphi}(-)=\varphi\wedge (-)$ is the exterior product with $\varphi$. Even though it was convenient to use $V$ (and $\widetilde{\omega}$) to write these formulas, the maps $j_{\varphi}$ and $p_{\varphi}$ are independent of this choice. However, $V$ allows us to build dual maps:
\[  0\leftarrow \varphi\wedge \Omega^{\bullet}(M)\xleftarrow{p_{V}} \mathfrak{X}^{\bullet}(M)\xleftarrow{j_{V}} \varphi\wedge \Omega^{\bullet-1}(M)\leftarrow 0,\]
with
\begin{equation}\label{dual_maps}
p_V=e_{\varphi}\circ (-\widetilde{\omega}^{\flat}), \ \ \ j_V=e_V\circ (-\pi^{\sharp})\circ i_V,
\end{equation}
which satisfy the homotopy relations:
\begin{equation}\label{dual_maps_relations}
p_V\circ j_V=0,\ \ p_V\circ j_{\varphi}=\mathrm{Id}, \ \ p_{\varphi}\circ j_{V}=\mathrm{Id}, \ \ \mathrm{Id}=j_V\circ p_{\varphi}+j_{\varphi}\circ p_V.
\end{equation}
It can be checked that the maps $p_V$ and $j_V$ are chain morphisms precisely when $V$ is a Poisson vector field, which is also equivalent $\widetilde{\omega}$ being closed; in this case the pair $(\varphi,\widetilde{\omega})$ is a cosymplectic structure on $M$. In general, we can write $\diff \widetilde{\omega}=\varphi\wedge \xi$, where $\xi=i_V\diff \widetilde{\omega}$.

\section{The Heisenberg Lie algebra}\label{sec:heisenberg}
In this section we compute the Poisson cohomology groups of the linear Poisson structure
\begin{align*}
    \pi=z\partial_x\wedge \partial_y
\end{align*}
associated to the Heisenberg Lie algebra.
\begin{proof}[Proof of \cref{thm:heisenberg}]
We do the computations degreewise.

\underline{In degree $0$ :} For $g \in C^\infty(\R^3)$ we obtain for the Poisson differential
\begin{equation}\label{eq: h0}
    \mathrm{d}_\pi g =  z(\partial_y g) \partial_x-z(\partial_x g)\partial_y.
\end{equation}
Therefore $g$ is a Poisson cocycle iff $\partial_x g(x,y,z)=\partial_y g(x,y,z)=0$, since it must hold on the open dense subset $\{z\ne 0\}\subset \R^3$, which is equivalent to
$g(x, y, z) = g(z)$.

\underline{In degree $1$ :} Let $X \in \mathfrak{X}^1(\R^3)$. By Hadamard's Lemma \ref{lemma: hadamard} we can write
\begin{align*}
    X^x= X^x_0(x,y) +z(X_{1,0}^x(y,z)+xX^x_{1,1}(x,y,z))\qquad \text{ and } \qquad X^y= X^y_0+zX^y_1
\end{align*}
and by a choice of $g(x,y,z)= g_0(y,z)+xg_1(x,y,z)$ using \cref{lemma: hadamard} and \eqref{eq: h0} we may assume that
\begin{align*}
    X^x_{1,0}=X^y_1=0.
\end{align*}
Using \eqref{eq: general 1x}-\eqref{eq: general 1z} and taking into account the exchanged roles of $y$ and $z$, we obtain for the Poisson differential of $X$ that
\begin{equation}\label{eq: h1}
    W^x(\pi)=-z\partial_xX^z, \qquad W^y(\pi)=-z\partial_yX^z, \qquad \text{ and }\qquad W^z(\pi)= z(\partial_x X^x + \partial_y X^y_0) - X^z.
\end{equation}
Setting $W^k(\pi)=0$, the first two equations imply that \begin{align*}
    X^z(x,y,z)=X^z(z).
\end{align*}
Using Lemma \ref{lemma: hadamard} we can write
\begin{align*}
    X^z(z)=X^z_0+zX^z_{1,0}+z^2X_{1,1}^z(z)
\end{align*}
and the third equation becomes
\begin{align*}
    0= - X^z_0+ z(\partial_x X^x_0(x,y) + \partial_y X^y_0(x,y)- X^z_{1,0})+z^2(\partial_x (xX^x_{1,1}(x,y,z))-X^z_{1,1}(z))
\end{align*}
which implies that
\begin{align*}
    X^z_0=0,  \qquad  \partial_x X^x_0(x,y) =- \partial_y X^y_0(x,y)+X_{1,0}^z\qquad \text{ and }\qquad X^z_{1,1}(z)=\partial_x (xX^x_{1,1}(x,y,z)).
\end{align*}
The second equations implies that $X^x_0(x,y)$ and $X^y_0(x,y)$ are uniquely determined by a function $g\in C^{\infty}(\R^2)$ and $X^z_{1,0}\in \R$ via
\begin{align*}
    g(0)=0, \qquad  X^x_0(x,y)=\partial_yg(x,y)+xX^z_{1,0}, \qquad \text{ and }\qquad X^y_0(x,y)=-\partial_xg(x,y).
\end{align*}
The third equation implies that
\begin{align*}
    X^z_{1,1}(z)=X_{1,1}^x(x,y,z).
\end{align*}
Finally, we note that using \eqref{eq: h0} for a function of the form $x\tilde{g}(z)$ for some $\tilde{g}\in C^{\infty}(\R)$ and changing $g(x,y)$ by a term $cxy$ for some $c\in \R$ yields the desired result. 

\underline{In degree $2$ :} For $W \in \mathfrak{X}^2(\R^3)$ we obtain by \eqref{eq: general 2} for the Poisson differential that
\begin{align}\label{eq: h2}
    \mathrm{d}_\pi W = z(\partial_yW^x-\partial_x W^y )\partial_x \wedge \partial_y \wedge \partial_z.
\end{align}
Using Lemma \ref{lemma: hadamard} we can write
\begin{align*}
    X^z(x,y,z)=&\, X^z_{0,0}(x,y)+ z(X^z_{1,0}(y,z)+xX^z_{1,1}(x,y,z)),\\
    W^x(x,y,z)=&\, W^x_0(x,y)+z(W^x_{1,0}(x,y)+zW_{1,1}^x(x,y,z)) \\
    W^y(x,y,z)=&\, W^y_0(x,y)+z(W^y_{1,0}(x,y)+z(W_{1,1,0}^y(y,z)+xW^y_{1,1,1}(x,y,z)))
\end{align*}
Hence by equation \eqref{eq: h1} we may assume that
\begin{align*}
    W^x_{1,1}(x,y,z)=W^y_{1,1,0}(y,z)=W^z(x,y,z)=0. 
\end{align*}
Here we used invertability of $\partial_x$ according to Corollary \ref{cor: invertible}. The cocyle condition for $W$ then becomes
\begin{align*}
    \partial_y (W^x_0(x,y)+zW^x_{1,0}(x,y))=\partial_x (W^y_0(x,y)+zW^y_{1,0}(x,y)+xz^2W^y_{1,1,1}(x,y,z)).
\end{align*}
which implies by comparing powers in $z$ that
\begin{align*}
    W^x=\partial_x(g_0(x,y)+zg_1(x,y))\qquad \text{ and }\qquad W^y=\partial_y(g_0(x,y)+zg_1(x,y)) 
\end{align*}
using again invertability of $\partial_x$ as mentioned above.

\underline{In degree $3$:} Using that the Poisson differential of every three-vector field in $\R^3$ is zero, equation \eqref{eq: h2} together with Lemma \ref{lemma: hadamard} implies the result.

\underline{The algebraic structure:} The relation for the algebraic structure follow from \eqref{eq: h0}, \eqref{eq: h1}, \eqref{eq: h2} and \cref{lemma: hadamard}.
\end{proof}

\section{The Lie algebra $\mathfrak{aff}(1,\mathbb{R})\, \mathrm{x}\, \mathbb{R}$}\label{sec:affine}
In this section we present a calculation for the lie algebra $\mathfrak{aff}(1,\R) \times \R$, i.e.
\begin{align*}
    \pi=x\partial_x\wedge \partial_y
\end{align*}
\begin{proof}[Proof of \cref{thm:affine3}]
\underline{In degree $0$:} For $g \in C^{\infty}(\R^3)$ we obtain
\begin{align}\label{eq: a0}
    \mathrm{d}_\pi g = x(\partial_y g)\partial_x-x(\partial_x g)\partial_y.
\end{align}
Hence $\mathrm{d}_\pi g = 0$ iff $\partial_xg = \partial_yg = 0$ which implies that $g(x,y,z)=g(z)$.

\underline{In degree $1$:} Let $X \in \mathfrak{X}(\R^3)$. By \eqref{eq: general 1x}-\eqref{eq: general 1z} and taking into account that we exchanged the roles of $y$ and $z$ we obtain for the Poisson differential of $X$ that
\begin{align}\label{eq: a1}
    W^x(\pi)=-x\partial_xX^z, \qquad W^y(\pi)=-x\partial_yX^z, \qquad \text{ and }\qquad 
    W^z(\pi)= x(\partial_xX^x + \partial_yX^y) - X^x
\end{align}
Using Hadamard's Lemma \ref{lemma: hadamard} and \eqref{eq: a0} we may assume that
\begin{align*}
    X^x(x,y,z)=X^x_0(y,z)\qquad \text{ and } \qquad X^y(x,y,z)=X^y_0(y,z)+xyX^y_{1,1}(x,y,z).
\end{align*}
The first two equations in \eqref{eq: a1} imply that $X^z(x,y,z)=X^z(z)$. The third equation becomes
\begin{align*}
    0= -X_0^x(y,z)+x\partial_y(X^y_0(y,z)+xyX^y_{1,1}(x,y,z)) 
\end{align*}
which implies, by separating the powers of $x$ and using Corollary \ref{cor: invertible}, that
\begin{align*}
    X_0^x(y,z)=\partial_yX^y_0(y,z)=X^y_{1,1}(x,y,z)=0.
\end{align*}
Hence the result follows.

\underline{In degree $2$:} For $W \in \mathfrak{X}^2(\R^3)$ we obtain by \eqref{eq: general 2} and adapted accordingly, that
\begin{align}\label{eq: a2}
    \mathrm{d}_\pi W = (x(\partial_y W^x-\partial_x W^y) + W^y) \partial_x\wedge\partial_y\wedge\partial_z.
\end{align}
By Hadamard's Lemma \ref{lemma: hadamard} and \eqref{eq: a1} we may assume that
\begin{align*}
    W^x(x,y,z)=W^x_0(y,z)+xyW^x_{1,1}(x,y,z), \qquad  W^y(x,y,z)=W^y_0(y,z)\qquad \text{ and } \qquad W^z(x,y,z)=0.
\end{align*}
Then the cocycle condition for $W$ becomes
\begin{align*}
    0=W^y_0(y,z)+x\partial_y(W^x_0(y,z)+xyW^x_{1,1}(x,y,z))
\end{align*}
which implies similar as above, by separating in powers of $x$ and Corollary \ref{cor: invertible}, that
\begin{align*}
    W^y_0(y,z)=\partial_yW^x_0(y,z)=W^x_{1,1}(x,y,z)=0
\end{align*}
and hence we obtain the result.

\underline{In degree $3$:} Equation \eqref{eq: a2} together with Lemma \ref{lemma: hadamard} implies that the Poisson differential is surjective onto three-vector fields.
\end{proof}

\section{The Euclidean Lie algebra $\mathfrak{e}(2)$}\label{sec:euclidean}
In this section we compute the Poisson cohomology is the Euclidean algebra $\mathfrak{e}(2)$. The Poisson structure on $\mathfrak{e}(2)^\ast$ is given by 
\begin{align*}
    \pi = T\wedge \partial_z\qquad \text{ where } \ T:= - y\partial_x+x\partial_y.
\end{align*}
The idea is to study the subcomplex $(\mathfrak{X}^{\bullet,S^1}(\mathfrak{e}(2)^\ast),\mathrm{d}_\pi)$ of $(\mathfrak{X}^\bullet(\R^3), \mathrm{d}_\pi)$
of multivector fields invariant under the standard $S^1$-action in the $(x,y)$-plane, i.e. the action with infinitesimal vector field $T$. Note that $\pi = T \wedge \partial_z$ is invariant under the action of $S^1$, hence the differential $\mathrm{d}_\pi$ descends to $\mathfrak{X}^{\bullet,S^1}(\mathfrak{e}(2)^\ast)$,
giving rise to the \emph{invariant cohomology} $H^{\bullet,S^1}(\R^3, \pi)$. Then we use averaging with respect to the $S^1$ on $(\mathfrak{e}(2)^\ast,\pi)$. This is based on a result by Ginzburg (see Proposition 4.11 in~\cite{Ginzburg1996}):
\begin{proposition}\label{proposition: ginzburg}
If a compact Lie group $K$ acts on a Poisson manifold $(M,\pi)$ by Hamiltonian diffeomorphisms, then we obtain an isomorphism
\begin{align*}
    H^{\bullet,K}(M,\pi) \cong H^\bullet(M, \pi).
\end{align*}
\end{proposition}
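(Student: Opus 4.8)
The plan is to run the classical averaging argument over the compact group $K$: compactness gives an averaging projection onto the invariant subcomplex, and the Hamiltonian hypothesis supplies the homotopy showing that restricting to invariants does not change cohomology -- this is exactly Ginzburg's Proposition~4.11 in \cite{Ginzburg1996}, so the aim is only to recall it. First I would fix a Haar probability measure $\mathrm{d}g$ on $K$ and set $\mathrm{Av}(P):=\int_K g^\ast P\,\mathrm{d}g$ for $P\in\mathfrak{X}^\bullet(M)$; since the action is smooth and $K$ is compact, $\mathrm{Av}(P)$ is again a smooth multivector field, it is $K$-invariant, it fixes invariant multivectors, and -- as every $g\in K$ preserves $\pi$, hence commutes with $\mathrm{d}_\pi$ -- it is a chain map. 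Writing $\iota\colon(\mathfrak{X}^{\bullet,K}(M),\mathrm{d}_\pi)\hookrightarrow(\mathfrak{X}^\bullet(M),\mathrm{d}_\pi)$ for the inclusion, we get $\mathrm{Av}\circ\iota=\mathrm{id}$, so $\iota$ is a split injection of complexes and $\iota^\ast$ is injective on cohomology. It then remains to show that $\iota^\ast$ is surjective, i.e. that $P-\mathrm{Av}(P)$ is $\mathrm{d}_\pi$-exact for every $\mathrm{d}_\pi$-cocycle $P$.

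For this I would use the Cartan-type identity on the Poisson complex: for $h\in C^\infty(M)$ with Hamiltonian vector field $X_h=[\pi,h]$, the graded Jacobi identity for the Schouten bracket gives, with the standard sign conventions,
\[\mathcal{L}_{X_h}=[X_h,\cdot\,]=\mathrm{d}_\pi\circ\iota_{\mathrm{d}h}+\iota_{\mathrm{d}h}\circ\mathrm{d}_\pi\qquad\text{on }\mathfrak{X}^\bullet(M),\]
where $\iota_{\mathrm{d}h}$ lowers multivector degree by one. The hypothesis provides, for each $\xi\in\mathfrak{k}$, a function $\mu_\xi\in C^\infty(M)$ whose Hamiltonian vector field generates the flow of $\xi$. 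Writing an element of the identity component of $K$ as $g=\exp(\xi)$ and integrating $\frac{\mathrm{d}}{\mathrm{d}t}\left(\exp(t\xi)^\ast P\right)=\exp(t\xi)^\ast\mathcal{L}_{X_{\mu_\xi}}P$ over $t\in[0,1]$, while pulling $\mathrm{d}_\pi$ past the Poisson maps $\exp(t\xi)^\ast$, yields
\[g^\ast P-P=\mathrm{d}_\pi\!\left(\int_0^1\exp(t\xi)^\ast\iota_{\mathrm{d}\mu_\xi}P\,\mathrm{d}t\right)+\int_0^1\exp(t\xi)^\ast\iota_{\mathrm{d}\mu_\xi}(\mathrm{d}_\pi P)\,\mathrm{d}t.\]
For a cocycle the last term vanishes, so $g^\ast P-P=\mathrm{d}_\pi H_g(P)$ with $H_g(P)$ the first integral; averaging then gives $\mathrm{Av}(P)-P=\mathrm{d}_\pi\!\int_K H_g(P)\,\mathrm{d}g$, which is the required exactness. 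In the only case needed here, $K=S^1$ (see \cref{sec:euclidean}), $\mathfrak{k}=\R$ has a single generator, $\mu_\xi$ is one function, and $\exp$ is the obvious periodic parametrisation, so all choices are manifestly smooth.

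The main obstacle will be the passage from this infinitesimal homotopy to the integral over $K$: one needs $g\mapsto H_g(P)$ to depend measurably (for $S^1$, continuously) on $g$, i.e. $\xi=\xi(g)$ to be chosen measurably -- automatic for a torus, and in general handled by a measurable section of $\exp\colon\mathfrak{k}\to K$. An alternative that avoids this altogether is to decompose $\mathfrak{X}^\bullet(M)$ into $K$-isotypic components via Peter--Weyl and to observe that on each non-trivial component the (averaged) Casimir operator $\sum_i\mathcal{L}_{X_{\mu_{\xi_i}}}^2$ acts invertibly and is, by the displayed Cartan identity, $\mathrm{d}_\pi$-null-homotopic, so such components are acyclic and $H^\bullet(M,\pi)$ collapses onto the cohomology of the invariant part. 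Beyond this, only the sign bookkeeping in the Cartan identity and the degree count for $\iota_{\mathrm{d}h}$ remain.
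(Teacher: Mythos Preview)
The paper does not prove this proposition at all: it is stated as a quotation of Ginzburg's result (Proposition~4.11 in \cite{Ginzburg1996}) and then used as a black box. Your sketch is the standard averaging argument that underlies Ginzburg's proof, and it is correct; in particular the Cartan identity $\mathcal{L}_{X_h}=[X_h,\cdot]=\diff_\pi\circ[h,\cdot]+[h,\cdot]\circ\diff_\pi$ follows directly from the graded Jacobi identity for the Schouten bracket, and your integrated homotopy plus Haar-averaging gives $\mathrm{Av}(P)-P\in\mathrm{im}\,\diff_\pi$ for cocycles.

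The only genuine technical point you raise yourself --- the need for a measurable choice of $\xi(g)$ with $\exp\xi(g)=g$ when passing from the infinitesimal homotopy to the integral over $K$ --- is real but harmless here: for the application in \cref{sec:euclidean} one has $K=S^1$, where the obvious linear parametrisation $\theta\mapsto\exp(\theta\,\partial_\theta)$ works and everything is smooth. For a general compact $K$ your Peter--Weyl alternative is the cleaner route and avoids the section issue entirely; either way nothing is missing beyond the sign bookkeeping you already flagged.
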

\begin{remark}
    The statement of Ginzburg holds in the more general setting of actions by closed cotangent lifts, i.e. if the infinitesimal action $a: \mathfrak{k}\to \mathfrak{X}^1(M)$ satisfies $a=\pi^{\sharp}\circ \bar{a}$ where $\bar{a}:\mathfrak{k} \to \Omega^1_{\text{closed}}(M)$.
\end{remark}

In the next subsections we first determine the $S^1$-invariant multi-vector fields and then compute the invariant cohomology.

\subsection{Classification of $S^1$-invariant multivector fields on $\R^3$}
We consider the standard $S^1$-action $A:S^1\times \R^3\to \R^3 $ given by
\begin{equation*}
	A_\theta ((x,y,z))= (x\cos \theta - y\sin\theta, x\sin\theta + y\cos\theta, z),
\end{equation*}
infinitesimally generated by $a(\partial_\theta)= T$.
\begin{definition}
	We say that $V\in \mathfrak{X}^{\bullet}(\R^3)$ is $S^1$-invariant if for all $\theta \in S^1$ we have
	\begin{equation}\label{eq: def-invariant}
	    A_\theta^*(V) = V 
	\end{equation}
	The space of all  $S^1$\emph{-invariant multivector fields}\index{Multivector field!invariant} is denoted by $\mathfrak{X}^{\bullet,S^1}(\R^3)$.
\end{definition}
\begin{remark}
    Since $S^1$ is connected \eqref{eq: def-invariant} is equivalent to
	\begin{align*}
	    \lie _T V=0
	\end{align*}
\end{remark}
To classify all $S^1$-invariant multivector fields on $\R^3$ the following lemmas are useful.
\begin{lemma}\label{lemma:even_function_ring_isomorphism}
The map $\varphi: C^\infty([0,\infty)) \rightarrow C^\infty_{\mathrm{even}}(\R)$ defined by
\begin{equation*}
    \varphi(f) = (x \mapsto f(x^2)).
\end{equation*}
is an isomorphism of rings.
\end{lemma}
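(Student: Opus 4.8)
The plan is to show that $\varphi$ is a well-defined ring homomorphism that is both injective and surjective. Well-definedness and the homomorphism property are immediate: if $f \in C^\infty([0,\infty))$ then $x \mapsto f(x^2)$ is smooth on $\R$ (composition of smooth maps, noting $x^2 \geq 0$), and it is even since $(-x)^2 = x^2$; moreover $\varphi(f+g) = \varphi(f) + \varphi(g)$, $\varphi(fg) = \varphi(f)\varphi(g)$ and $\varphi(1) = 1$ all hold pointwise. Injectivity is equally quick: if $f(x^2) = 0$ for all $x \in \R$, then since $x^2$ ranges over all of $[0,\infty)$ as $x$ ranges over $\R$, we get $f \equiv 0$ on $[0,\infty)$.

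The substantive content is surjectivity: given an even function $h \in C^\infty_{\mathrm{even}}(\R)$, I must produce a smooth $f$ on $[0,\infty)$ with $f(x^2) = h(x)$. The natural candidate is $f(t) := h(\sqrt{t})$ for $t \geq 0$; the only issue is smoothness at $t = 0$, since $t \mapsto \sqrt{t}$ is not smooth there. This is exactly Whitney's theorem on even functions: every even smooth function of one variable is a smooth function of $x^2$. I would invoke (or prove) this via the standard argument: write $h(x) - h(0) = \int_0^1 \frac{d}{ds} h(sx)\, ds = x \int_0^1 h'(sx)\, ds$, so $h(x) = h(0) + x h_1(x)$ with $h_1 \in C^\infty(\R)$ by Hadamard (Lemma~\ref{lemma: hadamard}); since $h$ is even, $h_1$ must be odd, hence $h_1(0) = 0$, so applying Hadamard again $h_1(x) = x h_2(x)$ and thus $h(x) = h(0) + x^2 h_2(x)$ with $h_2$ even. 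Iterating, one gets for every $N$ a Taylor-type expansion $h(x) = \sum_{k=0}^{N} a_k x^{2k} + x^{2N+2} r_N(x)$ with $r_N$ even and smooth; defining $f(t) = \sum_{k=0}^N a_k t^k + t^{N+1} r_N(\sqrt t)$ for $t$ in a neighborhood of $0$ shows $f$ has all one-sided derivatives at $0$ agreeing across the expansions, and combined with the obvious smoothness of $h(\sqrt t)$ for $t > 0$ this gives $f \in C^\infty([0,\infty))$ with $\varphi(f) = h$.

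The main obstacle is precisely this smoothness-at-the-origin point — one cannot naively compose with $\sqrt{\cdot}$ — and the resolution is the repeated application of Hadamard's lemma together with the parity bookkeeping (each extracted factor of $x$ flips parity, forcing the next coefficient to vanish and allowing another factor of $x$ to be pulled out). Since Lemma~\ref{lemma: hadamard} is already available in the paper, the argument is self-contained. I expect the write-up to be short: state well-definedness and the homomorphism/injectivity parts in a line or two each, then spend the bulk on the inductive Hadamard argument for surjectivity, concluding that $\varphi$ is a ring isomorphism.
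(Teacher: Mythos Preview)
Your proposal is correct and takes essentially the same approach as the paper: reduce surjectivity to Whitney's even-function theorem and prove it by iterated application of Hadamard's lemma together with parity bookkeeping. The paper organizes the induction slightly more directly---rather than peeling off Taylor coefficients and controlling the remainder $t^{N+1}r_N(\sqrt t)$, it observes that $g'(y)=yh(y)$ with $h$ even and hence $f'(t)=\tfrac{1}{2}h(\sqrt t)$, so one inducts on the order of differentiability without needing a separate remainder estimate to justify existence of the one-sided derivatives.
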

\begin{proof}
Linearity and preservation of the product are clear. Furthermore $\ker \varphi = \Set{0}$, thus it is injective.

For surjectivity, let $g \in C^\infty_{\textrm{even}}(\R)$ and define $f(x) \coloneqq g(\sqrt{x})$. Then $\varphi(f)(x) = g(\sqrt{x^2}) = g(|x|) = g(x)$ using that $g$ is even.
Therefore, we only have to proof that $f \in C^\infty([0,\infty))$. Note that $f$ is continuous as the composition of continuous functions.
Thus it remains to show that all derivatives exist and are continuous. For this we introduce $t(x) \coloneqq \sqrt{x}$. Then
\begin{equation*}
    \frac{d}{dx} = \frac{dt}{dx}\frac{d}{dt} = \frac{1}{2\sqrt{x}}\frac{d}{dt} = \frac{1}{2t}\frac{d}{dt}.
\end{equation*}
    Since $g$ is even, we have $g'(0) = 0$, hence by Hadamard's lemma \ref{lemma: hadamard} there exists a smooth function $h$ such that
\begin{equation*}
    g'(y) = g'(0) + yh(y) = yh(y).
\end{equation*}
Since $g$ is even, $g'(y)$ is odd, and therefore $g'(-y) = (-y)h(-y) = -yh(y)$, thus $h(-y) = h(y)$ and therefore $h \in C^\infty_{\textrm{even}}(\R)$.
In particular $h$ is continuous and therefore also $h \circ t = h(t)$ is continuous. Then the first derivative of $f$ is given by
\begin{equation*}
    \frac{df}{dx} = \frac{d}{dx}g(t) = \frac{1}{2t}\frac{d}{dt}g(t) = \frac{g'(t)}{2t} = \frac{th(t)}{2t} = \frac{1}{2}h(t),
\end{equation*}
    hence $\frac{df}{dx}$ exists and is continuous, so $f \in C^1([0,\infty))$. Now, since $\frac{1}{2}h$ is an element of $C^\infty_{\textrm{even}}(\R)$, we can
apply the same argument to $f'(x) = f^{(1)}(x) \coloneqq \frac{1}{2}h(\sqrt{x})$, hence inductively we find that all derivatives exist and are smooth. Thus $f \in C^\infty([0,\infty))$.
\end{proof}

\begin{lemma}\label{lemma:invariant_under_T}
Let $g \in C^\infty(\R^3)$. Then $g$ is $S^1$-invariant iff there exists a smooth function $f \in C^\infty([0,\infty) \times \R)$ such that $g(x,y,z) = f(x^2 + y^2, z)$
\end{lemma}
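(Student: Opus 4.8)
The statement is a standard fact about $S^1$-invariant functions, and the natural approach is to combine the infinitesimal characterization of invariance ($\lie_T g = 0$) with a reduction to the one-variable even-function lemma (\cref{lemma:even_function_ring_isomorphism}). First I would note that the $z$-variable is a free parameter for the $S^1$-action, so it suffices to treat $g$ as a smooth family in $z$ of functions on $\R^2$; concretely, $g$ is $S^1$-invariant iff $g(\cdot,\cdot,z)$ is $S^1$-invariant on $\R^2$ for each fixed $z$, and the smoothness in $z$ will be carried along automatically since the construction below is natural. So the core case is $g \in C^\infty(\R^2)$ with $\lie_T g = 0$, i.e. $-y\,\partial_x g + x\,\partial_y g = 0$.

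The key step is to pass to polar-type coordinates. Writing $r^2 = x^2+y^2$, the condition $\lie_T g = 0$ says precisely that $g$ is constant along the circles $r = \text{const}$, so on the punctured plane $\R^2\setminus\{0\}$ there is a well-defined function $\tilde g$ of $r^2$ (or of $r$) with $g(x,y) = \tilde g(x^2+y^2)$. The real content is then smoothness of the resulting function $f(s) := \tilde g(s)$ at $s = 0$, i.e. showing $f \in C^\infty([0,\infty))$. For this I would restrict $g$ to the positive $x$-axis: set $G(x) := g(x,0)$ for $x \ge 0$. By $S^1$-invariance $g(x,y) = G(\sqrt{x^2+y^2})$ on all of $\R^2$, and moreover, evaluating on the line $y=0$ with $x<0$ as well, invariance forces $G(x) = g(x,0) = g(-x,0) = G(-x)$ once we extend $G$ to an even function on $\R$ via $G(x) = g(|x|,0)$ — more carefully, the function $x\mapsto g(x,0)$ on $\R$ is already smooth (restriction of a smooth function to a line) and is even because $(x,0)$ and $(-x,0)$ lie on the same $S^1$-orbit. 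Hence $x \mapsto g(x,0)$ lies in $C^\infty_{\mathrm{even}}(\R)$, and \cref{lemma:even_function_ring_isomorphism} produces a unique $f \in C^\infty([0,\infty))$ with $f(x^2) = g(x,0)$. It then only remains to check $g(x,y) = f(x^2+y^2)$ everywhere, which holds on $\{r\ne 0\}$ by the invariance argument above and extends to the origin by continuity.

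For the version with the extra parameter $z$, I would simply observe that $x\mapsto g(x,0,z)$ is even in $x$ and smooth jointly in $(x,z)$, so applying \cref{lemma:even_function_ring_isomorphism} fiberwise (or, better, re-running its proof with $z$ as an inert parameter, since the Hadamard decomposition and the estimates there are uniform and smooth in extra parameters) yields $f \in C^\infty([0,\infty)\times\R)$ with $g(x,y,z) = f(x^2+y^2,z)$. The converse direction — that any such $f(x^2+y^2,z)$ is $S^1$-invariant — is immediate since $x^2+y^2$ and $z$ are $S^1$-invariant and $A_\theta^*$ is a ring homomorphism.

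\textbf{Main obstacle.} The only non-formal point is the smoothness at the origin, and this is exactly what \cref{lemma:even_function_ring_isomorphism} is designed to handle; the mild subtlety is the joint smoothness in the parameter $z$, which requires either citing that the even-function isomorphism works in smooth families or re-examining the inductive Hadamard argument to see that all constructions there (the function $h$, the passage $f^{(1)}(x) = \tfrac12 h(\sqrt x)$, etc.) depend smoothly on inert parameters. Everything else — the reduction via $\lie_T g = 0$, the polar-coordinate argument on the punctured plane, and continuity at $0$ — is routine.
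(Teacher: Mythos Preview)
Your proposal is correct and follows essentially the same route as the paper: use invariance to see that $g$ is constant on circles, restrict to the line $y=0$ to obtain a function that is smooth and even in the radial variable, and then invoke \cref{lemma:even_function_ring_isomorphism} (with $z$ as an inert parameter) to produce $f\in C^\infty([0,\infty)\times\R)$. The only cosmetic difference is that the paper first passes through the polar parametrization $h(s,t,z)=g(t\cos s,t\sin s,z)$ to read off both the $s$-independence and the evenness in $t$, whereas you argue these two points directly; the smoothness-in-$z$ issue you flag is handled equally informally in the paper.
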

\begin{proof}
Define $h: \R^3 \rightarrow \R$ by
\begin{equation*}
    h: (s,t,z) \mapsto f(t\cos s, t\sin s,z).
\end{equation*}
Then we obtain that
\begin{align*}
    h(\pi, t,z) =&\,  g(-t,0,z) = h(0, -t,z) \qquad \text{ and } \qquad  \partial_s h = (\lie _T f)(t\cos s, t\sin s, z) = 0 
\end{align*}
for all $(s,t,z)\in \R^3$. The second equations implies that $h(s,t,z) = \tilde{h}(t,z)$ for $\tilde{h}:\R^2\to \R$ and the first equation implies that $\tilde{h}(\cdot,z)$ is even for every $z$. Finally, for $(x,y,z) \in \R^3$, define $r \coloneqq \sqrt{x^2 + y^2}$, then there exists $\theta$ such that 
$(x,y) = (r\cos \theta, r\sin \theta)$ and hence
\begin{equation*}
    g(x,y,z) = \tilde{h}(r,z)
\end{equation*}
By the previous lemma there exists one parameter family $f:\R\to C^\infty([0,\infty))$ such that 
\begin{equation*}
    \tilde{h}(\sqrt{x^2 + y^2},z) = f_z(x^2 + y^2),
\end{equation*}
Finally, since $g$ is smooth we obtain that $f\in C^\infty([0,\infty) \times \R)$ with $g(x,y,z) = f(x^2 + y^2, z)$.
\end{proof}

\begin{lemma}\label{lemma:second_order_invariance}
	Let $g \in C^\infty(\R^3)$ and suppose $T^2g = -g$, then there exist smooth functions
	$g_+, g_- \in C^\infty([0,\infty) \times \R)$ such that
	\begin{equation*}
		g(x,y,z) = xg_+(x^2 + y^2, z) + yg_-(x^2 + y^2, z).
	\end{equation*}
\end{lemma}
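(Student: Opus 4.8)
The plan is to mimic the structure of the proof of Lemma \ref{lemma:invariant_under_T}: pass to "polar-type" coordinates, use the differential condition $T^2 g = -g$ to solve an ODE in the angular variable, and then invoke Lemma \ref{lemma:even_function_ring_isomorphism} to recover smoothness of the coefficient functions on $[0,\infty)\times\R$. The key observation is that the vector field $T = -y\partial_x + x\partial_y$ is, in polar coordinates $(x,y) = (r\cos\theta, r\sin\theta)$, simply $\partial_\theta$; so the hypothesis becomes $\partial_\theta^2 h = -h$ for $h(r,\theta,z) := g(r\cos\theta, r\sin\theta, z)$, whose general solution is $h(r,\theta,z) = a(r,z)\cos\theta + b(r,z)\sin\theta$ for functions $a,b$ depending only on $(r,z)$. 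Unwinding, $g(x,y,z) = a(r,z)\frac{x}{r} + b(r,z)\frac{y}{r}$, so the natural candidates are $g_+(x^2+y^2,z)$ "$=a(r,z)/r$" and $g_-(x^2+y^2,z)$ "$=b(r,z)/r$"; the content of the lemma is that these are genuinely smooth functions of $x^2+y^2$ and $z$, including across $r = 0$.

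First I would make the candidates explicit without dividing by $r$: define $g_+ $ and $g_-$ by the leafwise formulas
\begin{align*}
    g_+(x^2+y^2,z) &:= \frac{x\, g(x,y,z) + y\, (T g)(x,y,z)}{x^2+y^2}, &
    g_-(x^2+y^2,z) &:= \frac{y\, g(x,y,z) - x\, (T g)(x,y,z)}{x^2+y^2},
\end{align*}
for $(x,y)\neq(0,0)$, which is the algebraic inversion of $g = x g_+ + y g_-$ together with $Tg = x g_- - y g_+$ (using $Tx = -y$, $Ty = x$, so $T(xg_++yg_-) = -yg_+ + xg_- + x(Tg_+) + y(Tg_-)$ and $g_\pm$ are $S^1$-invariant by construction once we check it). I would verify directly that the right-hand sides are $S^1$-invariant — this uses $T^2 g = -g$ — so that by Lemma \ref{lemma:invariant_under_T} each is $f(x^2+y^2,z)$ for some continuous, indeed away-from-origin-smooth, $f$; the only remaining issue is smoothness at points where $x^2+y^2 = 0$.

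The main obstacle, and the part requiring real care, is exactly that regularity across the $z$-axis. The denominator $x^2+y^2$ vanishes there, so I must show the numerators vanish to sufficient order. Here I would use that $g$ itself is $S^1$-anti-invariant of "weight one" (i.e.\ $T^2g = -g$), which forces $g$ to vanish on the $z$-axis together with enough of its jet: applying the averaging/Fourier-mode argument, or Hadamard's Lemma \ref{lemma: hadamard} twice in $x$ and $y$, one writes $g(x,y,z) = x\,\alpha(x,y,z) + y\,\beta(x,y,z)$ for smooth $\alpha,\beta$; substituting into $T^2g = -g$ and comparing yields relations forcing the $S^1$-invariant parts of $\alpha$ and $\beta$ to combine into the desired $g_\pm$. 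Concretely, I expect the cleanest route is: set $\alpha_{\mathrm{inv}}, \beta_{\mathrm{inv}}$ to be the $S^1$-averages (which are smooth, and by Lemma \ref{lemma:invariant_under_T} are functions of $x^2+y^2$ and $z$), show that $g - (x\,\alpha_{\mathrm{inv}} + y\,\beta_{\mathrm{inv}})$ is annihilated by the projection onto the weight-one Fourier mode and hence is zero, and conclude $g_+ = \alpha_{\mathrm{inv}}$, $g_- = \beta_{\mathrm{inv}}$. The bookkeeping of which Fourier modes survive is the fiddly step, but no deep idea beyond Lemma \ref{lemma:even_function_ring_isomorphism}, Hadamard, and the polar-coordinate computation is needed.
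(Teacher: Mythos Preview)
Your overall plan---pass to polar coordinates, use that $T$ becomes $\partial_\theta$, solve the ODE $\partial_\theta^2 \hat g = -\hat g$ to get $\hat g = \hat g_+\cos\theta + \hat g_-\sin\theta$, then argue smoothness of the coefficients as functions of $r^2$---is exactly the paper's approach. The divergence, and the gap in your argument, is in how you handle regularity at the $z$-axis.

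First a small point: your explicit inversion formulas have a sign error. From $g = xg_+ + yg_-$ and $Tg = -yg_+ + xg_-$ one gets $g_+ = (xg - y\,Tg)/(x^2+y^2)$ and $g_- = (yg + x\,Tg)/(x^2+y^2)$, not the signs you wrote.

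More seriously, your ``cleanest route'' does not close. Writing $g = x\alpha + y\beta$ via Hadamard and then replacing $\alpha,\beta$ by their $S^1$-averages $\alpha_{\mathrm{inv}},\beta_{\mathrm{inv}}$ is not obviously harmless: multiplication by $x$ or $y$ shifts Fourier weight by $\pm 1$, so the weight-one component of $x\alpha$ picks up contributions from the weight-$\pm 2$ parts of $\alpha$, not only from $\alpha_{\mathrm{inv}}$. Your claimed identity $g = x\alpha_{\mathrm{inv}} + y\beta_{\mathrm{inv}}$ therefore needs a real argument, and the sentence ``the bookkeeping of which Fourier modes survive is the fiddly step'' is precisely where the proof is missing.

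The paper sidesteps all of this with a parity trick. It parametrises by $\psi(r,\theta,z)=(r\cos\theta,r\sin\theta,z)$ with $r\in\R$ (not just $r\ge 0$), reads off $\hat g_+(r,z)=\hat g(r,0,z)$ and $\hat g_-(r,z)=\hat g(r,\tfrac{\pi}{2},z)$ as smooth functions on all of $\R^2$, and then uses the identity $\psi(-r,\theta,z)=\psi(r,\theta+\pi,z)$ together with $\hat g(r,\theta+\pi,z)=-\hat g(r,\theta,z)$ to conclude that $\hat g_\pm$ are \emph{odd} in $r$. Hadamard then gives $\hat g_\pm(r,z)=r\,\hat g_{\pm,1}(r,z)$ with $\hat g_{\pm,1}$ even in $r$, and Lemma~\ref{lemma:even_function_ring_isomorphism} produces the desired $g_\pm\in C^\infty([0,\infty)\times\R)$ with $\hat g_{\pm,1}(r,z)=g_\pm(r^2,z)$. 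No averaging or Fourier bookkeeping is needed; the oddness-in-$r$ observation is the whole content.
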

\begin{proof}
	Let $\psi: \R \times S^1 \times \R \rightarrow \R^3$ be defined by $ (r,\theta,z) \mapsto (r\cos \theta, r\sin \theta, z)$. Then $\partial_\theta$ is $\psi$-related to $T$. Set $\hat{g} = \psi^\ast g$. The condition
	on $g$ gives $\partial^2_\theta \hat{g} = -\hat{g}$. For fixed $r,z \in \R$ consider $\hat{g}_{r,z}(\theta) := \hat{g}(r, \theta, z)$. The equation $\hat{g}_{r,z}'' = -\hat{g}_{r,z}$ implies that 
	\begin{equation*}
		\hat{g}(r,\theta,z) = \hat{g}_+(r,z)\cos\theta + \hat{g}_-(r,z)\sin\theta.
	\end{equation*}
	for some $\hat{g}_\pm :\R^2\to \R $. Since $\hat{g}_+ = \hat{g}(\cdot,0,\cdot)$ and $\hat{g}_- = \hat{g}\left(\cdot,\frac{\pi}{2},\cdot\right)$ we obtain that $\hat{g}_+, \hat{g}_- \in C^{\infty}(\R^2)$.

	Note that we have $\psi(-r, \theta, z) = \psi(r, \theta + \pi, z)$. Therefore, we must have
	\begin{equation*}
	\hat{g}_\pm (-r,z) = \hat{g}\left(-r,\frac{\pi\pm (-\pi)}{4},z\right) = g\left(\psi\left(-r,\frac{\pi\pm (-\pi)}{4},z\right)\right) = \hat{g}\left(r,\frac{\pi\pm (-\pi)}{4} + \pi, z\right) = -\hat{g}_\pm(r,z).
	\end{equation*}
    So both are odd in $r$, and thus $\hat{g}^\pm(0,z) = 0$. By Hadamard's lemma \ref{lemma: hadamard} we may
	write
	\begin{equation*}
		\hat{g}_\pm(r,z) = \hat{g}_{\pm,0}(0,z) + r\hat{g}_{\pm,1}(r,z) = r\hat{g}_{\pm,1}(r,z),
	\end{equation*}
	with $\hat{g}_{\pm,1} \in C^\infty(\R^2)$ even in $r$. Therefore, by \cref{lemma:even_function_ring_isomorphism} there exist functions $\hat{g}_\pm \in C^\infty([0,\infty) \times \R)$ such that $\hat{g}_{\pm,1}(r,z) = \hat{g}_\pm(r^2, z)$ and hence we have
	\begin{equation*}
		g(x,y,z) = xg_+(x^2 + y^2, z) + yg_-(x^2 + y^2, z).\qedhere
	\end{equation*}
\end{proof}

We can now prove the main result of this section.

\begin{proposition}[Classification]\label{prop:classification_invariants}
Let $S^1$ act on $\R^3$ by rotation around the $z$-axis and recall that $E = x\partial_x + y\partial_y$. The $S^1$-invariant multivector fields are given by
\begin{itemize}
    \item the $S^1$-invariant smooth functions on $\R^3$ are of the form $g(x^2 + y^2, z)$ for $g\in C^\infty([0,\infty) \times \R)$
    \item the $S^1$-invariant vector fields form a free module over $C^{\infty,S^1}(\R^3)$ with three generators:
    \begin{align*}
        \langle E,T,\partial_z \rangle
    \end{align*}
    \item similarly, the $S^1$-invariant bivector fields form a free module three generators:
    \begin{align*}
        \langle E\wedge \partial_z,T\wedge \partial_z,\partial_x\wedge\partial_y \rangle
    \end{align*}
    \item the $S^1$-invariant threevector fields form a free module with generator:
    \begin{align*}
        \langle \partial_x\wedge \partial_y\wedge \partial_z \rangle
    \end{align*}
\end{itemize}
\end{proposition}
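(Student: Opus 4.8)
The plan is to decompose an arbitrary $S^1$-invariant multivector field into components along the frame $\{E,T,\partial_z\}$ of $\mathfrak{X}^1(\R^3)$ and reduce each component to the scalar case handled by the preceding lemmas. The key observation is that $E$, $T$ and $\partial_z$ are themselves $S^1$-invariant: $\lie_T E = \lie_T T = \lie_T \partial_z = 0$, which one checks directly (e.g.\ $[T,E]=0$ since both generate commuting flows, rotations and dilations). Away from the $z$-axis the frame $\{E,T,\partial_z\}$ is a genuine frame for $T\R^3$, so on $\{x^2+y^2\ne 0\}$ every multivector field is \emph{uniquely} a combination of wedge products of these three with smooth coefficient functions; the content of the proposition is that for an invariant multivector field these coefficients are themselves invariant functions on $\R^3$, i.e.\ extend smoothly across the $z$-axis and are of the form $g(x^2+y^2,z)$ by \cref{lemma:invariant_under_T}.

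First I would treat functions: this is exactly \cref{lemma:invariant_under_T}. Next, for a vector field $V = V^x\partial_x + V^y\partial_y + V^z\partial_z$, I would rewrite the $\partial_x,\partial_y$ part in the $E,T$ basis: on $\{x^2+y^2\ne0\}$ one has $\partial_x = \tfrac{1}{x^2+y^2}(xE - yT)$ and $\partial_y = \tfrac{1}{x^2+y^2}(yE + xT)$, so $V = aE + bT + V^z\partial_z$ with $a = \tfrac{xV^x+yV^y}{x^2+y^2}$, $b = \tfrac{xV^y-yV^x}{x^2+y^2}$. Invariance $\lie_T V = 0$ together with $\lie_T E = \lie_T T = \lie_T \partial_z = 0$ forces $\lie_T a = \lie_T b = \lie_T V^z = 0$, i.e.\ $T a = T b = T V^z = 0$ wherever defined. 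Now $V^z$ is globally smooth and $S^1$-invariant, so $V^z = g_3(x^2+y^2,z)$ by \cref{lemma:invariant_under_T}. For $a$ and $b$ the issue is the smooth extension across the $z$-axis: here I would use that $xV^x+yV^y$ and $xV^y-yV^x$ are globally smooth and $S^1$-invariant, hence of the form $f(x^2+y^2,z)$, and moreover they vanish on the $z$-axis, so $f(0,z)=0$ and $f(s,z) = s\,\tilde f(s,z)$ with $\tilde f$ smooth by Hadamard (\cref{lemma: hadamard}) applied in the $s$-variable; thus $a = \tilde f_1(x^2+y^2,z)$, $b = \tilde f_2(x^2+y^2,z)$ are globally smooth invariant functions. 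This proves the vector-field case, and freeness of the module is clear since $E,T,\partial_z$ are linearly independent over $C^\infty(\R^3)$ on the dense set $\{x^2+y^2\ne0\}$.

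For bivectors I would argue identically using the induced frame $\{T\wedge\partial_z,\,\partial_z\wedge E,\,E\wedge T\}$ of $\wedge^2 T\R^3$ on $\{x^2+y^2\ne0\}$, noting that $E\wedge T = (x^2+y^2)\,\partial_x\wedge\partial_y$, so replacing $E\wedge T$ by $\partial_x\wedge\partial_y$ absorbs exactly one factor of $x^2+y^2$; the coefficient of $\partial_x\wedge\partial_y$ is then a globally smooth invariant function with no vanishing condition, while the coefficients of $T\wedge\partial_z$ and $E\wedge\partial_z$ must again vanish on the $z$-axis (since $T\wedge\partial_z$ and $E\wedge\partial_z$ themselves vanish there and the bivector is globally smooth), giving them the form $g(x^2+y^2,z)$ after one Hadamard division. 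The threevector case is immediate: $\partial_x\wedge\partial_y\wedge\partial_z$ spans $\wedge^3 T\R^3$ everywhere and is $S^1$-invariant (the rotation action has determinant $1$), so an invariant threevector field is $g\,\partial_x\wedge\partial_y\wedge\partial_z$ with $\lie_T(g\,\partial_x\wedge\partial_y\wedge\partial_z) = (Tg)\,\partial_x\wedge\partial_y\wedge\partial_z = 0$, hence $g$ invariant and of the stated form.

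The main obstacle is the smooth extension of the coefficient functions across the $z$-axis, where the frames $\{E,T,\partial_z\}$ and $\{E\wedge\partial_z,T\wedge\partial_z,\partial_x\wedge\partial_y\}$ degenerate. The mechanism that resolves it — clearing denominators by multiplying by $x^2+y^2$, recognizing the resulting numerators as globally smooth invariant functions, observing they vanish on the $z$-axis, and dividing once via Hadamard in the variable $s = x^2+y^2$ (legitimate because the relevant numerators are genuinely functions of $s$ and $z$ by \cref{lemma:invariant_under_T}) — is exactly what makes the module free rather than merely locally free, so I would present this division step carefully. \cref{lemma:second_order_invariance} is the analogue of this argument one would invoke if one preferred to expand the $\partial_x,\partial_y$-components directly in $x,y$ rather than in the $E,T$ frame, and could be used as an alternative route for the vector- and bivector-field cases.
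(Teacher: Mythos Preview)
Your proof is correct and follows a genuinely different route from the paper. The paper stays in the standard frame $\{\partial_x,\partial_y,\partial_z\}$: invariance of $X$ gives $TX^z=0$ together with the coupled system $TX^x=-X^y$, $TX^y=X^x$, whence $T^2X^k=-X^k$ for $k=x,y$; the analytic workhorse is then \cref{lemma:second_order_invariance}, which solves this second-order equation via polar coordinates to obtain $X^k=x\,g^k_+(x^2+y^2,z)+y\,g^k_-(x^2+y^2,z)$, and substituting back into the first-order system matches $g^x_\pm$ with $g^y_\mp$ to yield the $E,T$ decomposition (and analogously for bivectors). You instead pass to the invariant frame $\{E,T,\partial_z\}$ from the outset and observe that the numerators $xX^x+yX^y$ and $xX^y-yX^x$ of the resulting coefficients are themselves globally smooth and $S^1$-invariant (a direct consequence of the coupled system), hence equal $f(x^2+y^2,z)$ with $f(0,z)=0$, so a single Hadamard division in $s=x^2+y^2$ produces the smooth coefficients. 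Your argument bypasses \cref{lemma:second_order_invariance} entirely and is more conceptual (invariant frame $\Rightarrow$ invariant coefficients); the paper's version has the virtue of isolating the smooth-extension issue in a standalone lemma rather than interleaving it with the module argument. One wording slip: in the bivector paragraph you write that the coefficients of $T\wedge\partial_z$ and $E\wedge\partial_z$ ``must again vanish on the $z$-axis'' --- it is the \emph{numerators} $xW^x+yW^y$ and $yW^x-xW^y$ that vanish there; the final coefficients are arbitrary invariant functions, exactly as in your degree-one argument and as you describe correctly in the last paragraph.
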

\begin{proof}
	Note that all maps given are well-defined, injective module homomorphisms with respect to $C^\infty([0,\infty))$ and $C^{\infty,S^1}(\R^3)$. Therefore we only need to check surjectivity.

	\underline{In degree $0$:} If $g \in \mathfrak{X}^{0,S^1}(\R^3)$ then $\mathscr{L}_T g = 0$
	and hence $g(x,y,z)= f(x^2 + y^2, z)$ by \cref{lemma:invariant_under_T}.

	\underline{In degree $1$:} For $X \in \mathfrak{X}^{1,S^1}(\R^3)$ the condition $\lie_{T} X = 0$ is equivalent to the equations
    \begin{align}\label{eq: invariant}
        \lie_T X^x + X^y = \lie_T X^y - X^x =\lie_T X^z = 0.
    \end{align}
	For $X^z$ the result is implied by \cref{lemma:invariant_under_T}. The equations for $X^x$ and $X^y$ imply that
	\begin{align*}
	    T^2 X^k=-X^k 
	\end{align*}
	for $k=x,y$ and hence by \cref{lemma:second_order_invariance} we obtain
	\begin{align*}
	    X^k(x,y,z)= xX^k_+(x^2+y^2,z)+yX^k_-(x^2+y^2,z).
	\end{align*}
	Therefore \eqref{eq: invariant} implies the result.

	\underline{In degree $2$:}
    Let $W \in \mathfrak{X}^{2,S^1}(\R^3)$. The $S^1$-invariance for $W$ is equivalent to the equations
    \begin{align*}
        \lie_T W^x + W^y=-\lie_ T W^y + W^x =\lie_ T W^z = 0.
    \end{align*}
    Hence the result follows as in degree $1$.

	\underline{In degree $3$:}  
    A three-vector field $h\partial_x \wedge \partial_y \wedge \partial_z \in \mathfrak{X}^{3,S^1}(\R^3)$ is invariant iff
    \begin{align*}
        \lie_{T} h = 0.
    \end{align*}
    and hence \cref{lemma:invariant_under_T} implies the result.
\end{proof}

\subsection{Invariant Poisson cohomology}
In this section we compute $H^{\bullet, S^1}(\mathfrak{e}(2)^\ast,\pi)$ and prove \cref{thm:cohomology_euclidean}. 

\begin{theorem}[Invariant cohomology of $\mathfrak{e}(2)^\ast$]
	Let $\mathfrak{e}(2)$ be the Euclidean Lie algebra of dimension 3 with the corresponding Poisson structure $(\mathfrak{e}(2)^\ast, \pi = T\wedge \partial_z)$. The invariant cohomology $H^{\bullet, S^1}(\mathfrak{e}(2)^\ast,\pi)$ is described as follows:
	\begin{itemize}
		\setlength\itemsep{0em}
		\item in degree $0$, each cohomology class is uniquely determined by
			\begin{equation*}
				g(x^2 + y^2)
			\end{equation*}
			for $g \in C^\infty([0,\infty))$.
		\item in degree $1$, the cohomology classes are uniquely represented by vector fields of the form
			\begin{equation*}
				X^E(x^2 + y^2)E + X^z(x^2 + y^2)\partial_z
			\end{equation*}
			for $X^E,X^z \in C^\infty([0,\infty))$.
		\item in degree $2$, the cohomology classes can be uniquely represented by
			\begin{equation*}
				W^T(x^2 + y^2) E \wedge \partial_z + W^z(z)\partial_x \wedge \partial_y.
			\end{equation*}
			for $W^T \in C^\infty([0,\infty))$ and $W^z \in C^\infty(\R)$.
		\item in degree $3$, the cohomology classes are uniquely represented by
			\begin{equation*}
				h(z)\partial_x\wedge\partial_y\wedge\partial_z
			\end{equation*}
			with $h \in C^\infty(\R)$.
	\end{itemize}
\end{theorem}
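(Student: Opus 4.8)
The plan is to reduce everything to the classification of $S^1$-invariant multivector fields from \cref{prop:classification_invariants} and then compute $\diff_{\pi}$ degreewise on the free generators, exploiting that $[E,T]=[E,\partial_z]=[T,\partial_z]=0$, that $E\wedge T=(x^2+y^2)\,\partial_x\wedge\partial_y$, and that $\lie_T$ annihilates every invariant coefficient function. Writing an invariant $0$-, $1$-, $2$- and $3$-vector field as $g$, $aE+bT+c\partial_z$, $pE\wedge\partial_z+qT\wedge\partial_z+w\,\partial_x\wedge\partial_y$ and $h\,\partial_x\wedge\partial_y\wedge\partial_z$, with coefficients pulled back from $C^{\infty}([0,\infty)\times\R)$ (resp. $C^{\infty}(\R)$ for the top generator), I would substitute $X_{\pi}=T$ (so $X_{\pi}^x=-y$, $X_{\pi}^y=x$) into the general differentials \eqref{eq: general 0} and \eqref{eq: general 1x}--\eqref{eq: general 2}. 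Collecting terms via $\lie_T y=x$, $\lie_T x=-y$ and the identity above, this gives
\[
\diff_{\pi}g=(\partial_z g)\,T,\qquad
\diff_{\pi}(aE+bT+c\partial_z)=-(\partial_z a)(x^2+y^2)\,\partial_x\wedge\partial_y+(\partial_z c)\,T\wedge\partial_z,
\]
\[
\diff_{\pi}\bigl(pE\wedge\partial_z+qT\wedge\partial_z+w\,\partial_x\wedge\partial_y\bigr)=-(\partial_z p)(x^2+y^2)\,\partial_x\wedge\partial_y\wedge\partial_z,
\]
and $\diff_{\pi}$ vanishes on $3$-vectors for dimensional reasons; here $\partial_z$ means differentiation in the $z$-slot of the coefficients. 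In particular the $T$-, $T\wedge\partial_z$- and $\partial_x\wedge\partial_y\wedge\partial_z$-directions appear as coboundaries obstructing nothing, whereas the $(x^2+y^2)\,\partial_x\wedge\partial_y$-direction does.

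From these formulas the cohomology follows by the two-step procedure of \cref{sec:prelim}. In degree $0$ the cocycle condition is $\partial_z g=0$, i.e. $g=g(x^2+y^2)$. In degree $1$, the image of $\diff_{\pi}$ on (a complement of the degree-$0$ cocycles in) $C^{\infty,S^1}(\R^3)$ is exactly $\{\beta(x^2+y^2,z)\,T\}$ by surjectivity of $\partial_z$ (\cref{cor: invertible}), so one may assume $b=0$; the cocycle condition then reads $(x^2+y^2)\partial_z a=0$ and $\partial_z c=0$, equivalently $a=a(x^2+y^2)$, $c=c(x^2+y^2)$, which is the free module $\langle E,\partial_z\rangle$. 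In degree $2$ the coboundaries are $\{(x^2+y^2)\mu\,\partial_x\wedge\partial_y+\nu\,T\wedge\partial_z\}$ with $\mu,\nu$ arbitrary invariant functions, so one deletes the $T\wedge\partial_z$ component and, by Hadamard's lemma (\cref{lemma: hadamard}) applied in the variable $x^2+y^2$, replaces $w(x^2+y^2,z)$ by $w(0,z)\in C^{\infty}(\R)$; the cocycle condition forces $p=p(x^2+y^2)$, yielding the stated $W^T(x^2+y^2)E\wedge\partial_z+W^z(z)\,\partial_x\wedge\partial_y$. In degree $3$ the coboundaries are $\{(x^2+y^2)\phi\,\partial_x\wedge\partial_y\wedge\partial_z\}$, so Hadamard's lemma again leaves the representative $h(0,z)\in C^{\infty}(\R)$.

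For uniqueness I would use that the three $2$-vector generators (and the single $3$-vector generator) are free over $C^{\infty,S^1}(\R^3)$, so a difference of two normal forms which is a coboundary must match component by component; combined with injectivity of multiplication by $x^2+y^2$ on $C^{\infty}([0,\infty)\times\R)$ — which upgrades $(x^2+y^2)f=0$ to $f=0$ — this pins the representatives down exactly. The computation is essentially mechanical once the classification is available; the two genuinely delicate points are (i) verifying that the general differentials \eqref{eq: general 0}--\eqref{eq: general 2} collapse to this clean form on invariant multivector fields, which rests on $\lie_T$ killing invariant functions together with $E\wedge T=(x^2+y^2)\,\partial_x\wedge\partial_y$, and (ii) the bookkeeping of coboundaries, using \cref{cor: invertible} and \cref{lemma: hadamard} to extract unique representatives — in particular the fact that $C^{\infty}([0,\infty)\times\R)$ modulo multiplication by $x^2+y^2$ is $C^{\infty}(\R)$ in the $z$-variable.
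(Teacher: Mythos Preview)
Your proposal is correct and follows essentially the same route as the paper: classify $S^1$-invariant multivectors via \cref{prop:classification_invariants}, compute $\diff_\pi$ on the free generators $E,T,\partial_z$ (respectively their wedge products), and normalize representatives using \cref{cor: invertible} and \cref{lemma: hadamard}. The one slip is the parenthetical ``resp.\ $C^{\infty}(\R)$ for the top generator'' --- the coefficient $h$ of an invariant $3$-vector lies in $C^{\infty}([0,\infty)\times\R)$, not $C^{\infty}(\R)$ --- but your degree-$3$ argument correctly treats $h=h(x^2+y^2,z)$ and reduces to $h(0,z)$, so the proof is unaffected.
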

\begin{proof}
    We use $S^1$-invariant mulitvector fields as described in Proposition \ref{prop:classification_invariants}.
    
	\underline{In degree $0$:} The Poisson differential for $g \in \mathfrak{X}^{0,S^1}(\R^3)$ is given by
	\begin{equation}\label{eq: e0}
		\mathrm{d}_\pi g = \partial_z g T.
	\end{equation}
	Hence $g$ is a cocyle iff $g$ is independent of $z$.

	\underline{In degree $1$:} Let $X \in \mathfrak{X}^{1,S^1}(\R^3)$. For the Poisson differential of $X$ we obtain by a direct computation that
	\begin{equation}\label{eq: e1}
	    \begin{aligned}
	        W^x(\pi)= x\partial_zX^z(x^2 + y^2, z),&\, \qquad 
	        W^y(\pi)= y\partial_zX^z(x^2 + y^2, z)\\
	        W^z(\pi)=&\, -(x^2 + y^2)\partial_z X^E(x^2 + y^2, z)
	    \end{aligned}
	\end{equation}
	By \eqref{eq: e0} we may assume that $X^T=0$ and hence the cocycle condition for $X$ is equivalent to
	\begin{equation*}
	    \partial_z X^E(x^2 + y^2, z)=\partial_zX^z(x^2 + y^2, z) = 0.
	\end{equation*}
	which implies that $X^E$ and $X^z$ are independent of $z$.

    \underline{In degree $2$:} For $W \in \mathfrak{X}^{2,S^1}(\R^3)$ we obtain for the Poisson differential: 
    \begin{align}\label{eq: e2}
        \mathrm{d}_\pi W = -\partial_z W^T(r^2, z)T \wedge E \wedge \partial_z.
    \end{align}
    Note that \eqref{eq: e1} is equivalent to saying
    \begin{align*}
        W(\pi)= \partial_zX^z(x^2+y^2,z)T\wedge\partial_z -(x^2 + y^2)\partial_z X^E(x^2 + y^2, z)\partial_x\wedge \partial_y. 
    \end{align*}
    Since $[0,\infty) \times \R$ is convex, Hadamard's lemma \ref{lemma: hadamard} applies and we may assume that
    \begin{align*}
        W^E(x^2+y^2,z)=0 \qquad \text{ and }\qquad W^z(x^2+y^2,z)=W^z(z)
    \end{align*}
    Moreover, the cocycle condition for $W$ is equivalent to $\partial_z W^T(r^2, z)=0$, which implies the result.

	\underline{In degree $3$:} Equation \eqref{eq: e2} together with Hadamard's Lemma \ref{lemma: hadamard} imply the result in this degree.
\end{proof}

We can now prove the main result of this section.
\begin{proof}[Proof of \cref{thm:cohomology_euclidean}]
    By Proposition \ref{proposition: ginzburg} we have 
	\begin{equation*}
		H^{\bullet,S^1}(\mathfrak{e}(2)^\ast, \pi) \cong H^\bullet(\mathfrak{e}(2)^\ast, \pi).\qedhere
	\end{equation*}
\end{proof}

\section{Open book and hyperbolic-type Lie algebras}\label{sec:typevi}
In this section we give a proof for the Poisson cohomology of the Poisson structure $\pi_{\tau}$ associated with the Lie algebra $\mathfrak{b}_\tau$ for $0 < |\tau| \leq 1$. Let $E_\tau = x\partial_x + \tau y\partial_y$, then the Poisson structures $\pi_{\tau}$ are given by
\begin{align*}
    \pi_\tau = E_\tau \wedge \partial_z
\end{align*}
We make use of the first short exact sequence in \cref{proposition: ses} to obtain the long exact sequence
\begin{equation}\label{eq: les R}
\ldots \stackrel{j^{\infty}_R}{\to} H^{q-1}_{\Rho _F}(\R^3,\pi_{\tau})\stackrel{\partial}{\to} 
H^{q}_{\Rho _f}(\R^3,\pi_{\tau})\to H^{q}(\R^3,\pi_{\tau})\stackrel{j^{\infty}_R}{\to}H^{q}_{\Rho _F}(\R^3,\pi_{\tau} )\stackrel{\partial}{\to}\ldots. \end{equation}

\subsection{Formal Poisson cohomology}
To compute the formal Poisson cohomology for $0 < |\tau| \leq 1$ we distinguish the cases $\tau = -\frac{p}{q}$ for $p,q\in \N$ relatively prime (including $\tau=-1$), $\tau = 1$, $\tau = \frac{1}{n}$ for some integer $n \geq 2$ and $\tau \notin [-1,0)\backslash \Q \cup (0,1] \backslash \left\{\frac{1}{n}\right\}_{n\in \N} $.
\begin{theorem}\label{thm:open_book_formal}
    For $0 < |\tau | \leq 1$ the formal Poisson cohomology groups $H_{\Rho _F}^{\bullet}(\mathfrak{b}_{\tau}^*,\pi_{\tau})$ can be described as follows:
    \begin{itemize}
        \item in degree $0$ we have the canonical isomorphisms 
        \begin{align*}
            H_{\Rho _F}^{0}(\mathfrak{b}_{\tau}^*,\pi_{\tau})\simeq \begin{cases}
            \R[[x^py^q]] &\text{ if } \tau =-\frac{p}{q} \ \text{ where } p\le q\in \N \ \text{ are relatively prime}\\
            \R & \text{ else;}
            \end{cases}
        \end{align*}
        \item in degree $1$ we have an isomormorphism
         \begin{align*}
            H_{\Rho _F}^{1}(\mathfrak{b}_{\tau}^*,\pi_{\tau})\simeq \begin{cases}
            \R[[x^py^q]] &\text{ if } \tau =-\frac{p}{q} \ \text{ where } p\le q\in \N \ \text{ are relatively prime}\\
            \R^4 & \text{ if }\tau=1\\
            \R^3 &\text{ if } \tau=\frac{1}{n} \text{ with } 2\le n\in \N\\
            \R^2 & \text{ else.}
            \end{cases}
        \end{align*}
        Explicit representatives are given in the different cases by
        \begin{align*}
            f_1E +f_2\partial_z \qquad \qquad \qquad &\text{ for } f_1,f_2\in \R[[x^py^q]], \text{ and }\tau =-\frac{p}{q} \ \text{ with } p\le q\in \N \ \text{ relatively prime}\\
            ay\partial_x + (bx+cy)\partial_y + d\partial_z \qquad &\text{ for } a,b,c,d\in \R , \text{ and }\tau =1 \\
            ay^n\partial_x + by\partial_y + c\partial_z \qquad \quad &\text{ for } a,b,c\in \R, \text{ and }\tau =\frac{1}{n} \text{ with } 2\le n\in \N \\
            ay\partial_y + b\partial_z \qquad \qquad &\text{ for } a,b\in \R, \text{ else.}
        \end{align*}
        \item in degree $2$ we have an isomormorphism
         \begin{align*}
            H_{\Rho _F}^{2}(\mathfrak{b}_{\tau}^*,\pi_{\tau})\simeq \begin{cases}
            \R[[x^py^q]] &\text{ if } \tau =-\frac{p}{q} \ \text{ where } p\le q\in \N \ \text{ are relatively prime}\\
            \R^3 & \text{ if }\tau=1\\
            \R^2 &\text{ if } \tau=\frac{1}{n} \text{ with } 2\le n\in \N\\
            \R & \text{ else.}
            \end{cases}
        \end{align*}
        Explicit representatives are given in the different cases by
        \begin{align*}
            fE\wedge \partial_z \qquad \qquad \qquad &\text{ for } f\in \R[[x^py^q]], \text{ and }\tau =-\frac{p}{q} \ \text{ with } p\le q\in \N \ \text{ relatively prime}\\
            ay\partial_x\wedge \partial_z + (bx+cy)\partial_y\wedge \partial_z  \qquad &\text{ for } a,b,c\in \R , \text{ and }\tau =1 \\
            ay^n\partial_x\wedge \partial_z + by\partial_y\wedge\partial_z  \qquad \quad &\text{ for } a,b\in \R, \text{ and }\tau =\frac{1}{n} \text{ with } 2\le n\in \N \\
            ay\partial_y\wedge \partial_z \qquad \qquad \qquad &\text{ for } a\in \R, \text{ else.}
        \end{align*}
        \item in degree $3$ we have an isomormorphism
        \begin{align*}
            H_{\Rho _F}^{3}(\mathfrak{b}_{\tau}^*,\pi_{\tau})=0 \qquad \text{ for all } \ 0<|\tau|\le 1
        \end{align*}
    \end{itemize}
    Moreover the algebraic structures are given as follows:
    \begin{itemize}
        \item if $\tau =-\frac{p}{q}$ with $p\le q\in \N$ relatively prime, then the algebraic operations map representatives onto representatives;
        \item if $\tau=1$ the algebraic structure is determined by the relations
        \begin{align*}
            \left[x\partial_x\wedge\partial_z-y\partial_y\wedge\partial_z\right]=\left[0\right]\in H^2_{\Rho _F}(\mathfrak{b}_{1}^*,\pi_{1})\qquad \text{ and }\qquad \left[p(x,y)\partial_x\wedge\partial_y\right] =\left[0\right]\in H^2_{\Rho _F}(\mathfrak{b}_{1}^*,\pi_{1})
        \end{align*}
        for any polynomial $p$ homogeneous of degree 2;
        \item if $\tau=\frac{1}{n}$ for $2\le n\in \N$ then we have the relation
        \begin{align*}
            \left[y^{n+1}\partial_x\wedge\partial_y\right]=\left[0\right]\in H^2_{\Rho _F}(\mathfrak{b}_{\frac{1}{n}}^*,\pi_{\frac{1}{n}});
        \end{align*}
        otherwise the image of the operations on representatives is again a representative, except when it maps into $H^3_{\Rho _F}(\mathfrak{b}_{\tau}^*,\pi_{\tau})$ where it is zero.
        \item otherwise the algebraic operations map representatives to representatives.
    \end{itemize}
\end{theorem}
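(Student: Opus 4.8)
By \cref{proposition: ses} and the identification \eqref{eq: iso cx general}, the complex computing $H^\bullet_{\Rho_F}(\mathfrak{b}_\tau^*,\pi_\tau)$ is the Chevalley-Eilenberg complex of $\mathfrak{b}_\tau$ with coefficients in the module $\mathcal{R}:=C^\infty(\R)[[x,y]]$, and its differential is given by \eqref{eq: general 0}--\eqref{eq: general 2} with $X_\pi=E_\tau$, i.e.\ $X_\pi^x=x$, $X_\pi^y=\tau y$, $X_\pi^z=0$; for instance in degree $0$
\[
  \diff_\pi g=(\partial_z g)\,E_\tau-(\lie_{E_\tau}g)\,\partial_z ,
\]
and in degrees $1,2$ one obtains completely explicit formulas whose only operations on coefficients are: (i) $\lie_{E_\tau}$, diagonal on $\mathcal{R}$ with $\lie_{E_\tau}\bigl(h(z)x^ay^b\bigr)=(a+\tau b)\,h(z)x^ay^b$; (ii) multiplication by $x$ or $\tau y$; (iii) $\partial_z$. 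Two elementary facts are used throughout. First, for a constant $c$ the operator $\lie_{E_\tau}-c$ preserves monomial bidegree, is invertible on the span of the $h(z)x^ay^b$ with $a+\tau b\neq c$, and has kernel spanned by the ``resonant'' monomials with $a+\tau b=c$; this resonance set is finite (for each $c$) when $\tau>0$ or $\tau$ is negative irrational, and an infinite arithmetic family when $\tau=-\frac{p}{q}$. Second, $\partial_z\colon C^\infty(\R)\to C^\infty(\R)$ is onto and, by \cref{cor: invertible}, restricts to an isomorphism $zC^\infty(\R)\xrightarrow{\ \sim\ }C^\infty(\R)$, so the $z$-dependent part of a coefficient can be uniquely normalised away. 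Together these explain the case split: the only constants $c$ that occur are $0,1,\tau,1+\tau$, and $\tau=1$, $\tau=\frac{1}{n}$, $\tau=-\frac{p}{q}$ are precisely the values for which there are coincidences $a+\tau b\in\{0,1,\tau,1+\tau\}$ beyond the obvious ones.

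\textbf{Degree $0$ and the bookkeeping scheme.} In degree $0$ a cocycle has $\partial_z g=0$ and $\lie_{E_\tau}g=0$, hence $g\in\R[[x,y]]$ supported on monomials with $a+\tau b=0$: this is $\R$ for $\tau>0$ or $\tau$ negative irrational and $\R[[x^py^q]]$ for $\tau=-\frac{p}{q}$, as claimed. In degrees $1,2,3$ I will follow the general plan of \cref{sec:prelim}: fix a complement to the cocycles in degree $i-1$, compute its image under $\diff_\pi$ to describe the coboundaries in degree $i$ explicitly, use this to normalise the coefficients of a general degree-$i$ cocycle (each normalisation step being an equation ``$(\lie_{E_\tau}-c)u=\cdots$'' or ``$\partial_z u=\cdots$'', solvable off the resonances by the two facts above), and finally impose the cocycle condition on the normalised representative. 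A recurring simplification is that $\diff_\pi z=E_\tau$ (since $\lie_{E_\tau}z=0$), whence $[E_\tau]=0$ in $H^1_{\Rho_F}$ and $[E_\tau\wedge\partial_z]=0$ in $H^2_{\Rho_F}$; this is what reduces the generator count by one below the naive number of resonant monomials.

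\textbf{Degrees $1,2,3$.} For $X=X^x\partial_x+X^y\partial_y+X^z\partial_z$ the cocycle equations read, up to signs, $\lie_{E_\tau}X^y-\tau X^y=-\tau y\,\partial_z X^z$, $\lie_{E_\tau}X^x-X^x=-x\,\partial_z X^z$ and $x\,\partial_z X^y=\tau y\,\partial_z X^x$. After the normalisation step one uses the third equation together with injectivity of $\partial_z$ on $zC^\infty(\R)$ to strip the $z$-dependence of $X^x,X^y$ off the resonant monomials, and then the first two to determine $X^z$ and to force $X^x\partial_x$ onto the monomials with $a+\tau b=1$, $X^y\partial_y$ onto those with $a+\tau b=\tau$, and $X^z$ onto those with $a+\tau b=0$. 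Reading off the surviving monomials and quotienting by $[E_\tau]=0$ yields the generators $\langle y\partial_x,x\partial_y,y\partial_y,\partial_z\rangle$ for $\tau=1$, $\langle y^n\partial_x,y\partial_y,\partial_z\rangle$ for $\tau=\frac{1}{n}$, $\langle y\partial_y,\partial_z\rangle$ generically, and the free $\R[[x^py^q]]$-module on $E$ and $\partial_z$ for $\tau=-\frac{p}{q}$ (note that $\lie_{E_\tau}(fx)=fx$ and $\lie_{E_\tau}(fy)=\tau fy$ for $f\in\R[[x^py^q]]$, which is exactly what the first two equations demand). Degree $2$ is handled analogously (now with a single scalar cocycle equation and richer coboundaries) and the outcome is $H^1_{\Rho_F}$ wedged with $[\partial_z]$, the $\partial_z$-generator now dying. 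Finally $H^3_{\Rho_F}=0$ because $\diff_\pi W=\bigl(x\,\partial_z W^x+\tau y\,\partial_z W^y+(1+\tau-\lie_{E_\tau})W^z\bigr)\partial_x\wedge\partial_y\wedge\partial_z$ is onto $\mathcal{R}\cdot\partial_x\wedge\partial_y\wedge\partial_z$: $1+\tau-\lie_{E_\tau}$ inverts everything off the resonance $a+\tau b=1+\tau$, and those resonant monomials are reached through the $x\,\partial_z W^x$ and $\tau y\,\partial_z W^y$ terms using only surjectivity of $\partial_z$.

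\textbf{Algebraic structure and the main obstacle.} The relations among representatives follow by computing wedge products and Schouten brackets of the chosen generators and reducing modulo the coboundaries already identified: for $\tau=1$ one exhibits $p(x,y)\partial_x\wedge\partial_y$ (with $p$ homogeneous of degree two) and $x\partial_x\wedge\partial_z-y\partial_y\wedge\partial_z$ as images under $\diff_\pi$ of explicit $1$-vectors, and for $\tau=\frac{1}{n}$ one does the same for $y^{n+1}\partial_x\wedge\partial_y$; in the remaining cases a direct check shows the operations preserve representatives (or land in $H^3_{\Rho_F}=0$). The main obstacle is the bookkeeping in the resonant cases $\tau=-\frac{p}{q}$ and $\tau=\frac{1}{n}$, where the cocycle and coboundary spaces are infinite-dimensional: one must verify simultaneously that every resonant monomial does contribute a class, that no non-resonant monomial survives normalisation, and that the resulting module over $\R[[x^py^q]]$ (resp.\ $\R$) is free with exactly the listed generators --- equivalently, that the complement chosen in the scheme is genuinely complementary and that no relations are created in passing to cohomology. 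A persistent secondary subtlety, present in every case, is that $\partial_z$ is injective only on $zC^\infty(\R)$, so the $z$-independent part of each coefficient must be followed separately at each step --- and it is exactly these leftover $z$-independent pieces that survive as the generators $\partial_z$, $y^n\partial_x$, $y\partial_y$, etc.
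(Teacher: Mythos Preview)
Your approach is essentially the same as the paper's: both compute degree by degree, diagonalise $\lie_{E_\tau}$ on monomials to reduce the cocycle and coboundary conditions to the resonance equations $a+\tau b\in\{0,1,\tau,1+\tau\}$ (the paper packages this as \cref{lemma: number game}), normalise representatives using the image of $\diff_\pi$ from the previous degree, and then read off the surviving monomials case by case. Your presentation is more conceptual---in particular the observation that $H^2_{\Rho_F}\cong H^1_{\Rho_F}\wedge[\partial_z]$ is a clean way to organise the degree-$2$ step that the paper does not make explicit---but the underlying computation is identical.
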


For this the following lemma is useful.
\begin{lemma}\label{lemma: number game}
	Let $0 < |\tau | \leq 1$ and $i,j \in \N_0$. Then
	\begin{enumerate}[(i)]
		\setlength\itemsep{-0.2em}
		\item If $\tau = \frac{1}{n}$ for some $n\in \N$, then $i + \tau j - 1 = 0$ iff $i = 0, j = n$ or $i = 1, j = 0$.
		\item If $0<\tau \neq \frac{1}{n}$ for all positive integers, then $i + \tau j - 1 = 0$ iff $i = 1$ and $j = 0$.
		\item If $\tau = -\frac{p}{q}$ for $p\le q\in \N$ relatively prime, then $i + \tau j - 1 = 0$ iff $j=nq, i=np+1$ for $n\in \N_0$. 
	\end{enumerate}
\end{lemma}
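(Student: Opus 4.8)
The plan is to clear denominators in the equation $i + \tau j - 1 = 0$ and then run an elementary divisibility-and-nonnegativity argument separately in each of the three cases; nothing beyond $\gcd$-bookkeeping is required, and in each case the converse (that the listed pairs actually solve the equation) is immediate by substitution.

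For (i), writing $\tau = \frac{1}{n}$ and multiplying through by $n$ turns the equation into $ni + j = n$ with $i,j \in \N_0$; since $i \ge 2$ already forces the left-hand side above $n$, only $i \in \{0,1\}$ survive, and these give exactly $i=0,\,j=n$ and $i=1,\,j=0$. For (ii), with $0 < \tau$ not a reciprocal of a positive integer, I rearrange to $\tau j = 1 - i$: if $j = 0$ then $i = 1$, while $j \ge 1$ forces $1 - i = \tau j > 0$, hence $i = 0$ and $\tau = \frac{1}{j}$, contradicting the hypothesis; so $(i,j) = (1,0)$ is the only solution. For (iii), with $\tau = -\frac{p}{q}$, multiplying by $q$ gives $q(i-1) = pj$, and coprimality of $p$ and $q$ forces $q \mid j$; writing $j = nq$ and substituting back yields $i - 1 = np$, i.e. $i = np+1 \in \N_0$, with $n \ge 0$ because $j = nq \ge 0$, and the value $n = 0$ recovers $(i,j) = (1,0)$.

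I do not expect any genuine obstacle here: the statement is a routine counting lemma. The only points that need a little care are treating the case $j = 0$ uniformly (it is precisely the $n = 0$ instance of (iii)) and invoking $\gcd(p,q) = 1$ at exactly the right step in (iii); the hypothesis $p \le q$ plays no role in the arithmetic and is recorded only because it corresponds to the parametrization $-1 \le \tau < 0$ of the Lie algebras $\mathfrak{b}_\tau$.
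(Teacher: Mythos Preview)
Your proof is correct and follows essentially the same elementary route as the paper: the paper treats (i) and (ii) together by rewriting the equation as $\tau j = 1 - i$ and using $\tau j \ge 0$ to force $i \in \{0,1\}$, which matches your argument for (ii) and is equivalent to your denominator-clearing argument for (i); for (iii) the paper simply declares it a direct calculation, and your $\gcd$ argument supplies exactly that.
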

\begin{proof}
	(i, ii) The implication from right to left is a simple calculation. For the other implication, suppose $i + \tau j - 1 = 0$, then $\tau j = 1 - i$. We have $\tau j \geq 0$, thus $i \leq 1$. Then with $i \geq 0$ follows that either $i = 0$ or $i = 1$. If $ i = 1$, then $\tau j = 0$ and therefore $j = 0$. If $ i = 0$, then $\tau j = 1$, thus $\tau = \frac{1}{j}$. If $\tau = \frac{1}{n}$, it follows that $j = n$. If $\tau \neq \frac{1}{n}$ for all positive integers $n$, this gives a contradiction, hence there is no solution with $i = 0$ in this case. Finally, (iii) follows from a direct calculation.
\end{proof}

\begin{proof}[Proof of \cref{thm:open_book_formal}]
	\underline{In degree $0$:} \ Let $g \in C^\infty(\R)[[x,y]]$ given by $g(x,y,z) = \sum_{i,j = 0}^\infty g_{ij}(z)x^i y^j$. Using \eqref{eq: general 0} we obtain for the Poisson differential:
	\begin{equation}\label{eq:formal:diff_function}
	    \diff_{\pi_\tau} g = \begin{aligned}[t]& \left(\sum_{i,j = 0}^\infty \partial_zg_{ij}(z) x^{i + 1}y^j\right)\partial_x  +\left(\sum_{i,j = 0}^\infty \tau\partial_zg_{ij}(z) x^iy^{j+1}\right)\partial_y -\left(\sum_{i,j = 0}^\infty \left(i + \tau j\right)g_{ij}(z)x^iy^j\right)\partial_z.\end{aligned}
    \end{equation}
	For $g$ to be a cocyle is equivalent to
    \begin{equation*}
	    0=\mathrm{d}_{\pi_\tau} g  \qquad \Leftrightarrow \qquad \forall i,j : \ \begin{cases}0=\left(i + \tau j\right)g_{ij}(z)\\ 
	    0=\partial_zg_{ij}(z)\end{cases}
    \end{equation*}
	The second equation implies $g_{ij}(z) \equiv g_{ij} \in \R$ for all $i,j \geq 0$ and hence the first equation together with \cref{lemma: number game} imply the result.

    \underline{In degree $1$:} \ Let $X \in \mathfrak{X}_{\Rho _F}^1(\R^3)$. It's Poisson differential $\diff_{\pi_{\tau}}$ is by \eqref{eq: general 1x}, \eqref{eq: general 1y} and \eqref{eq: general 1z} described by
    \begin{align}
	    W^x(\pi_{\tau})=&\, \lie_{E_\tau} X^y - \tau X^y + \tau y\partial_z X^z\label{eq: general 611}\\
	    W^y(\pi_{\tau})=&\, -\lie _{E_\tau} X^x + X^x - x\partial_z X^z \label{eq: general 612} \\
	    W^z(\pi_{\tau})=&\, x\partial_z X^y - \tau y\partial_z X^x \label{eq: general 613} 
    \end{align}
    Here $X^k(x,y,z) = \sum_{i,j = 0}^\infty X^k_{ij}(z)x^i y^j$ for all $k\in \{x,y,z\}$. Following our general strategy for computing formal Poisson cohomology we may, by a choice of $g$ and \eqref{eq:formal:diff_function}, assume that
    \begin{align*}
        X^z_{ij}(z) = &\, 0 \quad \text{ for all } \begin{cases}
        (i,j) \ne (np,nq)  \ \text{ and } \ n \in \N_0 , &\text{ if } \tau =-\frac{p}{q} \ \text{ where } p\le q\in \N \ \text{relatively prime}\\
        (i,j)\ne 0 & \text{ else.}
        \end{cases}\\
        X^x_{ij}(z) = &\, 0 \quad \text{ for all } \begin{cases}
        (i,j) = (np+1,nq)  \ \text{ and } \ n \in \N_0 , &\text{ if } \tau =-\frac{p}{q} \ \text{ where } p\le q\in \N \ \text{relatively prime}\\
        (i,j)= (1,0) & \text{ else.}
        \end{cases}
    \end{align*}
    In the following we will treat the two cases separately. 
    \begin{itemize}
        \item For $\tau=-\frac{p}{q}$ with $p\le q\in \N$ relatively prime:
        
        By setting $W(\pi)=0$ equation \eqref{eq: general 612} becomes equivalent to
        \begin{align*}
           0=(i+\tau j-1)X^x_{ij}(z)+\delta_{i(np+1)}\delta_{j(nq)}\partial_zX^z_{i-1j}(z) \qquad \text{ for all }\ i,j\ge 0
        \end{align*}
        Note that the $X^x_{ij}(z)$ which are zero don't appear anyways, since in these cases the coefficient is zero. Hence this allows us to conclude that:
        \begin{align*}
            X^x_{ij}(z)=0 \qquad \text{ for all }\ i,j\ge 0 \qquad \text{ and } \qquad X^z_{ij}(z)=\begin{cases}
            X^z_{ij}\in \R &\text{ if }\, (i,j) = (np,nq)  \text{ and } \ n \in \N_0 , \\
        0&\text{ else.}\end{cases}
        \end{align*}
        Therefore, \eqref{eq: general 611} and \eqref{eq: general 613} become equivalent to
        \begin{align*}
            0=(i+\tau(j-1))X^y_{ij}(z) \qquad \text{ and } \qquad 0=\partial_zX^y_{ij}(z)
        \end{align*}
        for all $i,j\ge 0$, which implies using \cref{lemma: number game} that
        \begin{align*}
            X^y_{ij}(z)=\begin{cases}
            X^y_{ij}\in \R & \text{ if }(i,j) = (np,nq+1)  \ \text{ where } \ n \in \N_0\\
            0& \text{ else.}
            \end{cases}
        \end{align*}
        \item For the other cases, i.e. $0<|\tau|\le 1$ and $\tau \notin [-1,0)\cap \Q$ we follow the same line of calculation:
        
        Setting $W(\pi)=0$, implies that equation \eqref{eq: general 612} is equivalent to
        \begin{align*}
           0=(i+\tau j-1))X^x_{ij}(z)+\delta_{i1}\delta_{j0}\partial_zX^z_{00}(z) \qquad \text{ for all }\ i,j\ge 0
        \end{align*}
        Hence we conclude using \cref{lemma: number game} that:
        \begin{align*}
            X^x_{ij}(z)=&\, 0 \begin{cases}
            \text{ for all } (i,j)\ne (0,n)&\text{ if } \tau = \frac{1}{n} \text{ for some } n\in \N\\
            \text{ for all }\ i,j\ge 0&\text{ else,}\end{cases} \\
            X^z_{ij}(z)=&\, \begin{cases}
            X^z_{00}\in \R &\text{ if }\, (i,j) = 0 , \\
        0&\text{ else.}\end{cases}
        \end{align*}
        Therefore, \eqref{eq: general 611} and \eqref{eq: general 613} become equivalent to
        \begin{align*}
            0=(i+\tau(j-1))X^y_{ij}(z) \qquad \text{ and } \qquad \begin{cases}
           \delta_{i0}\delta_{(j-1)n}\partial_zX^x_{0n}(z)=\partial_zX^y_{i-1j}(z)& \text{ if } \tau = \frac{1}{n} \text{ for some } n\in \N\\
           0=\partial_zX^y_{ij}(z)& \text{ else,}\end{cases}
        \end{align*}
        for all $i,j\ge 0$. The left equation implies using \cref{lemma: number game} that
        \begin{align*}
            X^y_{ij}(z)=0\begin{cases}
            \text{ for } (i,j)\notin \{ (1,0),(0,1)\}&\text{ if } \tau = 1,\\
            \text{ for }\ (i,j)\ne (0,1) &\text{ else,}
            \end{cases}
        \end{align*}
        Therefore, the right equation allows us to conclude that,
        \begin{itemize}
            \item if $\tau = 1$, then:
            \begin{align*}
            X^x_{ij}(z)=\begin{cases}
            X^x_{01}\in \R & \text{ if }(i,j)=(0,1)\\ 
            0 &\text{ else,}
            \end{cases}\qquad \text{ and } \qquad X^y_{ij}(z)=\begin{cases}
            X^y_{ij}\in \R & \text{ if }(i,j)\in \{(1,0),(0,1)\}\\ 
            0 &\text{ else,}
            \end{cases}
            \end{align*}
            \item if $\tau = \frac{1}{n}$ for $2\le n\in \N$, then:
            \begin{align*}
            X^x_{ij}(z)=\begin{cases}
            X^x_{0n}\in \R & \text{ if }(i,j)=(0,n)\\ 
            0 &\text{ else,}
            \end{cases}\qquad \text{ and } \qquad X^y_{ij}(z)=\begin{cases}
            X^y_{ij}\in \R & \text{ if }(i,j)= (0,1)\\ 
            0 &\text{ else,}
            \end{cases}
            \end{align*}
            \item else we have:
            \begin{align*}
            X^x_{ij}(z)=
            0 \ \text{ for all } i,j\ge 0, \qquad \text{ and } \qquad X^y_{ij}(z)=\begin{cases}
            X^y_{ij}\in \R & \text{ if }(i,j)=(0,1)\\ 
            0 &\text{ else,}
            \end{cases}
            \end{align*}
        \end{itemize}
    \end{itemize}
    
    \underline{In degree $2$:} For $W \in \mathfrak{X}_{\Rho _F}^2(\R^3)$, we have according to \eqref{eq: general 2} that 
    \begin{equation}
	    \mathrm{d}_{\pi_\tau} W = (1 + \tau-\lie_{E_\tau}) W^z + x\partial_zW^x+ \tau y\partial_z W^y )\partial_x\wedge\partial_y\wedge\partial_z.\label{eq: general 62}
    \end{equation}
    We first note that by \eqref{eq: general 611}, \eqref{eq: general 612} and \eqref{eq: general 613} we have
        \begin{align}
            W^x_{ij}(z)=&\, (i+\tau (j-1))X^y_{ij}(z)+\tau \partial_zX^z_{ij-1}(z)\label{eq: general 621b} \\
            W^y_{ij}(z)=&\, (i+\tau j-1)X^x_{ij}(z)+\partial_zX^z_{i-1j}(z) \label{eq: general 622b}\\
            W^z_{ij}(z)=&\, \partial_z X^y_{i-1j}(z) - \tau \partial_z X^x_{ij-1}(z)\label{eq: general 623b}
        \end{align}
    We distinguish between the different cases:
    \begin{itemize}
        \item if $\tau =-\frac{p}{q}$ with $p\le q \in \N$ relatively prime:
        
        By choosing:
        \begin{align*}
            X^y_{ij}(z) \ \text{ for } \ (i,j)\ne (np,nq+1) \qquad \text{ and } \qquad X^z_{ij}(z) \ \text{ for all } \ i,j 
        \end{align*}
        where $n\in \N_0$, we may assume, using \eqref{eq: general 621b} and \eqref{eq: general 622b}, that
        \begin{align*}
            W^x_{ij}(z)=0 \ \text{ for all } \ i,j \qquad \text{ and }\qquad W^y_{ij}(z)=0 \ \text{ for } \ (i,j)\ne (np+1,nq)  
        \end{align*}
        Moreover, equation \eqref{eq: general 623b} implies that by choosing $X^y_{np,nq+1}(z)$ we may assume that
        \begin{align*}
            W^z_{np+1,nq+1}(z)=0.
        \end{align*}
        Then the cocycle condition for $W$ is by \eqref{eq: general 62} equivalent to the equation
        \begin{align*}
            0=(1 -i -\tau(j-1))W^z_{ij}+\delta_{i(np+1)}\delta_{j(nq+1)}\tau\partial_zW^y_{np+1,nq}
        \end{align*}
        which implies that
        \begin{align*}
            W^z_{ij}(z)=0 \ \text{ for all }\ i,j \qquad \text{ and }\qquad W^y_{np+1,nq}(z)=W^y_{np+1,nq}\in \R .
        \end{align*}
        \item if $\tau =1 $, then:
        
        Equations \eqref{eq: general 621b} - \eqref{eq: general 623b} imply that we may assume that
        \begin{align*}
            W^x_{ij}(z)=0\ \text{ for }&\, \ (i,j)\notin \{ (1,0),(0,1)\}, \qquad W^y_{ij}(z)=0\ \text{ for } \ (i,j)\ne (0,1) \\
            \text{ and }&\, \qquad W^z_{ij}(z)=0\ \text{ for } \ (i,j)\in \{ (2,0),(1,1),(0,2)\}.
        \end{align*}
        The cocycle condition for $W$ is then by \eqref{eq: general 62} equivalent to
        \begin{align*}
            0=(2 -i -j))W^z_{ij}(z)\ \text{  for }\ i+j\ne 2 \qquad \text{ and } \qquad 0=\partial_z W^x_{i-1j}(z)+\delta_{i0}\delta_{j2}\partial_zW^y_{01}(z) \ \text{ for } \ i+j=2
        \end{align*}
        Hence we obtain that:
        \begin{align*}
            W^z_{ij}(z)= 0 \ \text{ for all } \ i,j, \qquad \text{ and }\qquad W^x_{10}(z)=W^x_{10},\, W^x_{01}(z)=W^x_{01},\, W^y_{01}(z)=W^y_{01} \in \R
        \end{align*}
        \item if $\tau=\frac{1}{n}$ for $2\le n\in\N$, then:
        
        Similar as above we obtain here from \eqref{eq: general 621b} - \eqref{eq: general 623b} imply that we may assume that
        \begin{align*}
            W^x_{ij}(z)=0\ \text{ for }&\, \ (i,j)\notin \{ (0,1)\}, \qquad W^y_{ij}(z)=0\ \text{ for } \ (i,j)\ne (0,n) \\
            \text{ and }&\, \qquad W^z_{ij}(z)=0\ \text{ for } \ (i,j)\in \{ (1,1),(0,n+1)\}.
        \end{align*}
        The cocycle condition for $W$ is then by \eqref{eq: general 62} equivalent to
        \begin{align*}
            0=(1 -i -\tau(j-1)))W^z_{ij}(z)+\delta_{i1}\delta_{j1}\partial_z W^x_{i-1j}(z)+\delta_{i0}\delta_{jn+1}\tau \partial_zW^y_{01}(z) 
        \end{align*}
        Hence we obtain that:
        \begin{align*}
            W^z_{ij}(z)= 0 \ \text{ for all } \ i,j, \qquad W^x_{01}(z)=W^x_{01},\, W^y_{0n}(z)=W^y_{0n} \in \R .
        \end{align*}
        \item In all the other cases we obtain the following:
        
        First \eqref{eq: general 621b}, \eqref{eq: general 622b} and \eqref{eq: general 623b} imply that we may assume that
        \begin{align*}
            W^x_{ij}(z)=0\ \text{ for }&\, \ (i,j)\notin \{ (0,1)\}, \qquad W^y_{ij}(z)=0\ \text{ for all } \ i,j \\
            \text{ and }&\, \qquad W^z_{ij}(z)=0\ \text{ for } \ (i,j)\in \{ (1,1)\}.
        \end{align*}
        The cocycle condition for $W$ is again by \eqref{eq: general 62} equivalent to
        \begin{align*}
            0=(1 -i -\tau(j-1)))W^z_{ij}(z)+\delta_{i1}\delta_{j1}\partial_z W^x_{i-1j}(z)
        \end{align*}
        Hence we obtain that:
        \begin{align*}
            W^z_{ij}(z)= 0 \ \text{ for all } \ i,j, \qquad \text{ and } \qquad W^x_{01}(z)=W^x_{01} \in \R .
        \end{align*}
    \end{itemize}
    
    \underline{In degree $3$:} From \eqref{eq: general 62} we obtain for a general coboundary element the equation
    \begin{align*}
        h_{ij}(\pi)(z)=(1 -i -\tau(j-1)))W^z_{ij}(z)+\partial_z W^x_{i-1j}(z)+\tau\partial_z W^y_{ij-1}(z)
    \end{align*}
    which is clearly surjective onto $h_{ij}(z)$ for all $i,j$.
    
    \underline{Algebraic structure:} All the statements are implied by \eqref{eq: general 621b} - \eqref{eq: general 623b}.
\end{proof}

\subsection{Flat Poisson cohomology for the open book-type}
The goal of this section is to compute $H^{\bullet}_{\Rho _F}(\R^3,\pi_\tau)$. The idea for $\pi_1$ is to calculate the flat cohomology using cylindrical coordinates, which induces a Poisson diffeomorphism
\begin{align}\label{eq:cylindrical_coord_model}
    \varphi: (M = (0,\infty) \times S^1 \times \R,\pi_M:=r\partial_r\wedge \partial_s )&\, \to (N:=\R^3\backslash \Rho ,\pi_N:= \pi_1|_N)\\
    (r, \theta, s) \qquad \qquad \qquad &\,\mapsto \qquad (r\cos \theta, r\sin \theta, s)\nonumber 
\end{align}
and hence an isomorphism $H^\bullet(M, \pi_M) \cong H^\bullet(N, \pi_N)$. While the change of coordinates is only smooth on $\R^3\backslash P$ it actually induces an isomorphism between the rng of $P$-flat functions on $\R^3$ and the functions which are flat at $\{r=0\}\subset [0,\infty)\times S^1\times \R$ and hence we obtain an isomorphism in \enquote{flat cohomology}. We adapt this idea to the cases $\mathfrak{b}_\tau$ with $0<\tau\le 1$.

\subsubsection{Elliptic cylindrical coordinates}

Let $0 < \tau \le 1$ and define $\varphi_\tau: M \rightarrow N$ by 
\begin{equation}\label{eq:psi_diffeo}
	(r,\theta,z) \mapsto (r\cos \theta, r^\tau \sin \theta, z). 
\end{equation}
See \cref{fig:graphi_psi} for an image of this coordinate transformation. Note that this map is smooth since $r > 0$. 

\begin{wrapfigure}{l}{5cm}
\vspace{-10pt}
\centering
\scalebox{1.5}{
\begin{tikzpicture}
    \node[above right,inner sep=0] (image) at (0,0) {
	\includegraphics[width=0.5\linewidth]{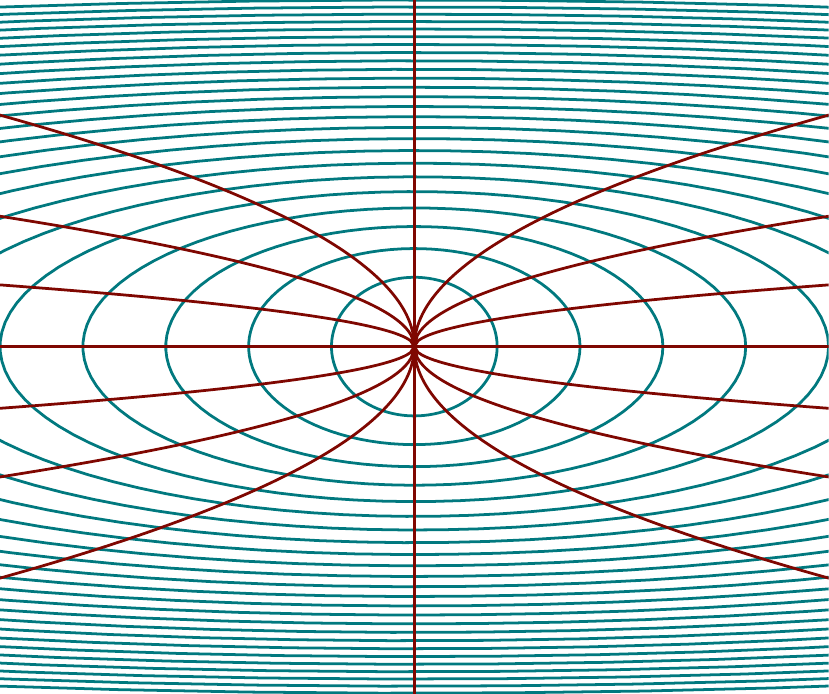}
    };
\begin{scope}[
x={($0.1*(image.south east)$)},
y={($0.1*(image.north west)$)}]
    \draw[->] (0,5) -- node[pos=1,above] {\scriptsize $x$} (10.3,5);
    \draw[->] (5,0) -- node[pos=1,right] {\scriptsize $y$} (5,10.3);
\end{scope}
\end{tikzpicture}}
    \caption{The map \cref{eq:psi_diffeo} with images for constant $\textcolor{blue}{r}$ and  $\textcolor{red}{\theta}$.}\label{fig:graphi_psi}
    \vspace{-10pt}
\end{wrapfigure}
\noindent First we compute for the differential:
\begin{equation}\label{eq: differential positive transformation}
	\diff\varphi_{\tau} = \begin{pmatrix}
		\cos \theta & -r\sin \theta & 0 \\
		\tau r^{\tau - 1} \sin \theta & r^\tau \cos \theta & 0 \\
		0 & 0 & 1
	\end{pmatrix}
\end{equation}
Note that the differential is invertible for all points in $M$ since
\begin{equation*}
	 \det \diff \varphi_{\tau}(r,\theta,s)  = r^\tau \cos^2 \theta + \tau r^\tau \sin^2 \theta \geq \tau r^\tau > 0.
\end{equation*}
Therefore, $\varphi_{\tau}$ is
a diffeomorphism iff it is bijective by the inverse function theorem. Assuming that this is the case we obtain
\begin{equation}\label{eq: poisson diffeo positive tau}
    (\varphi_{\tau})_\ast \pi_M = (r\cos\theta \partial_x + \tau r^\tau \sin \theta) \wedge \partial_s = (x\partial_x + \tau y\partial_y) \wedge \partial_s = \pi_\tau.
\end{equation}

Therefore it remains to prove that $\varphi_{\tau}$ is bijective. For $\tau =1$ this is well-known, hence we assume $0<\tau <1$. We begin this task with some auxiliary lemmas.
\begin{lemma}\label{lemma: bijective}
	Let $I, J \subseteq \R$ be two open intervals and set $a := \inf I, b := \sup I$. Let $f: I \rightarrow J$ be a continuous, strictly monotone function. Then:
	\begin{equation*}
		\Set{\lim_{t \searrow a} f(t), \lim_{t \nearrow b} f(t)} = \Set{\inf J, \sup J} \qquad \Rightarrow \qquad f \text{ is bijective.} 
	\end{equation*}
\end{lemma}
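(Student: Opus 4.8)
The statement is a standard fact about continuous strictly monotone functions on intervals, so the plan is to reduce \enquote{bijective} to \enquote{surjective} and then identify the image. First I would observe that strict monotonicity of $f$ already gives injectivity, so it only remains to show $f(I)=J$. By replacing $f$ with $-f$ (and $J$ with $-J=\{-t : t\in J\}$) I may assume without loss of generality that $f$ is strictly increasing; this swaps the two one-sided limits but does not change the hypothesis $\{\lim_{t\searrow a}f(t),\lim_{t\nearrow b}f(t)\}=\{\inf J,\sup J\}$.

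Next I would show that $f(I)$ is the open interval with endpoints $\inf J$ and $\sup J$. Since $I$ is an interval and $f$ is continuous, the intermediate value theorem gives that $f(I)$ is an interval. Since $f$ is increasing, the one-sided limits $\lim_{t\searrow a}f(t)$ and $\lim_{t\nearrow b}f(t)$ exist in $[-\infty,+\infty]$ and equal $\inf f(I)$ and $\sup f(I)$ respectively. Because $I$ is open we have $a,b\notin I$, so for every $t\in I$ there exist $s_{-},s_{+}\in I$ with $s_{-}<t<s_{+}$; strict monotonicity then gives $f(s_{-})<f(t)<f(s_{+})$, hence $\inf f(I)<f(t)<\sup f(I)$. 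Thus neither extremum is attained and $f(I)$ is the \emph{open} interval $(\inf f(I),\sup f(I))$. By hypothesis $\{\inf f(I),\sup f(I)\}=\{\inf J,\sup J\}$, and since $J$ is open this forces $f(I)=(\inf J,\sup J)=J$, proving surjectivity.

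There is essentially no hard step here; the only points requiring a little care are that the intervals may be unbounded, so the endpoints live in the extended reals (handled uniformly by phrasing everything in terms of $\inf$ and $\sup$ in $[-\infty,+\infty]$), and that one must use the openness of $I$ to rule out the extremal values being attained and the openness of $J$ to conclude $f(I)=J$ rather than merely $\overline{f(I)}=\overline{J}$.
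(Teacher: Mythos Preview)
Your proof is correct and follows essentially the same approach as the paper: strict monotonicity gives injectivity, reduce to the increasing case, identify the one-sided limits with $\inf f(I)$ and $\sup f(I)$, and use the intermediate value theorem to conclude surjectivity. The paper argues surjectivity slightly more directly by picking an arbitrary $c\in J$ and sandwiching it between two values of $f$, whereas you first characterize $f(I)$ as the open interval $(\inf f(I),\sup f(I))$; these are cosmetic differences within the same argument.
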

\begin{proof}
	Since $f$ is strictly monotone, it is injective. Assume without loss of generality that $f$ is strictly increasing, else we multiply by $-1$. Then $\lim_{t \searrow a} f(t) = \inf f(I)$ and $\lim_{t \nearrow b} f(t) = \sup f(I)$. For $c \in J$ there exists $\varepsilon>0$ such that $\inf J + \varepsilon < c < \sup J - \varepsilon$ and $x, y \in I$ such that 
	\begin{align*}
	    f(x) < \inf J + \varepsilon < c < \sup J - \varepsilon < f(y).
	\end{align*}
	Therefore, there exists $z \in (x, y)$ such that $f(z) = c$ by the intermediate value theorem and hence $f$ is surjective.
\end{proof}

With the previous lemma we can prove the following result.

\begin{lemma}\label{lemma:angle_inverse}
	Let $0 < \tau < 1$. The maps $f: \left(0, \frac{\pi}{2}\right) \rightarrow (0, \infty)$ and $g: \left(\frac{\pi}{2},\pi\right) \rightarrow (0, \infty)$ given by
	\begin{equation*}
		f(t):=\frac{(\cos t)^\tau}{\sin t}
	\qquad \text{ and }\qquad g(t):=\frac{(-\cos t)^\tau}{\sin t}\qquad \text{ are bijections.}
	\end{equation*}
\end{lemma}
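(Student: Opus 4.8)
The plan is to verify, for each of the two functions, the hypotheses of Lemma~\ref{lemma: bijective}: continuity, strict monotonicity, and the correct limiting behaviour at the endpoints of the domain. Continuity is immediate on the open intervals in question, since $\cos t>0$ on $\left(0,\tfrac{\pi}{2}\right)$ and $-\cos t>0$ on $\left(\tfrac{\pi}{2},\pi\right)$, so the fractional powers $(\cos t)^{\tau}$ and $(-\cos t)^{\tau}$ are smooth there and the denominators $\sin t$ are positive and bounded away from zero on compact subintervals. The target interval is $J=(0,\infty)$ in both cases, with $\inf J = 0$ and $\sup J = +\infty$.

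First I would treat $f(t)=\dfrac{(\cos t)^{\tau}}{\sin t}$ on $\left(0,\tfrac{\pi}{2}\right)$. For the endpoint limits: as $t\searrow 0$ we have $\cos t\to 1$ while $\sin t\to 0^{+}$, so $f(t)\to +\infty$; as $t\nearrow\tfrac{\pi}{2}$ we have $\cos t\to 0^{+}$ (so $(\cos t)^{\tau}\to 0$) while $\sin t\to 1$, so $f(t)\to 0$. Thus $\{\lim_{t\searrow 0}f,\lim_{t\nearrow\pi/2}f\}=\{0,+\infty\}=\{\inf J,\sup J\}$. For strict monotonicity I would compute the logarithmic derivative: writing $\log f(t)=\tau\log\cos t-\log\sin t$, one gets
\begin{equation*}
	\frac{f'(t)}{f(t)} = -\tau\tan t - \cot t,
\end{equation*}
which is strictly negative on $\left(0,\tfrac{\pi}{2}\right)$ since $\tan t>0$, $\cot t>0$ and $\tau>0$ there. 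Hence $f$ is strictly decreasing, and Lemma~\ref{lemma: bijective} gives that $f$ is a bijection.

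Next I would treat $g(t)=\dfrac{(-\cos t)^{\tau}}{\sin t}$ on $\left(\tfrac{\pi}{2},\pi\right)$. As $t\searrow\tfrac{\pi}{2}$ we have $-\cos t\to 0^{+}$ and $\sin t\to 1$, so $g(t)\to 0$; as $t\nearrow\pi$ we have $-\cos t\to 1$ while $\sin t\to 0^{+}$, so $g(t)\to +\infty$. Again the two limits are $\{0,+\infty\}=\{\inf J,\sup J\}$. For strict monotonicity, with $\log g(t)=\tau\log(-\cos t)-\log\sin t$ one computes
\begin{equation*}
	\frac{g'(t)}{g(t)} = -\tau\,\frac{-\sin t}{-\cos t} - \cot t = \tau\tan t - \cot t.
\end{equation*}
On $\left(\tfrac{\pi}{2},\pi\right)$ we have $\tan t<0$ and $\cot t<0$, so $\tau\tan t<0$ and $-\cot t>0$; this is not obviously signed, so here the slightly more careful observation is needed. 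I would instead substitute $u=\pi-t\in\left(0,\tfrac{\pi}{2}\right)$, under which $-\cos t=\cos u$ and $\sin t=\sin u$, so $g(t)=f(u)$ exactly. Since $f$ is a strictly decreasing bijection $\left(0,\tfrac{\pi}{2}\right)\to(0,\infty)$ and $u\mapsto\pi-u$ reverses orientation, $g$ is a strictly increasing bijection $\left(\tfrac{\pi}{2},\pi\right)\to(0,\infty)$. This reduction is cleaner than fighting with the sign of $\tau\tan t-\cot t$ directly, and it is the one small point where a naive argument stalls — that is the main (very minor) obstacle. This completes the proof.
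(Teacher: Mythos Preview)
Your argument is correct and follows the paper's approach: apply Lemma~\ref{lemma: bijective} after checking continuity, the endpoint limits, and strict monotonicity via the logarithmic derivative; the paper treats $f$ exactly as you do and then remarks that $g$ ``follows along the same lines''. Your reduction $g(t)=f(\pi-t)$ is a perfectly clean way to handle $g$.

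One small slip worth noting: in your computation of $(\log g)'$ you dropped a sign. Since $\tfrac{d}{dt}\log(-\cos t)=\dfrac{\sin t}{-\cos t}=-\tan t$, one actually gets
\[
\frac{g'(t)}{g(t)}=-\tau\tan t-\cot t,
\]
not $\tau\tan t-\cot t$. On $\left(\tfrac{\pi}{2},\pi\right)$ both $\tan t$ and $\cot t$ are negative, so $-\tau\tan t>0$ and $-\cot t>0$, and the expression is manifestly positive. In other words, the ``obstacle'' you encountered was an artifact of the sign error; with the correct formula the direct argument goes through just as smoothly as for $f$, which is presumably what the paper had in mind. Either route suffices.
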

\begin{proof}
	For $0 < t < \frac{\pi}{2}$ the sine and cosine are strictly positive, hence $f$ is well-defined and continuous. The function $f$ is strictly monotone iff $\log f$ is strictly monotone. The strict monotonicity of $\log f$ follows from
	\begin{align*}
	    \frac{\mathrm{d}(\log f(t))}{\dt} &= \frac{\mathrm{d}}{\dt}\left(\tau \log \cos t - \log \sin t\right) = -(\tau \tan t + \cot t)  \leq -\frac{\tau}{\sin t \cos t} < 0.
	\end{align*}
	Finally, as $t \searrow 0$ we have $\cos t \rightarrow 1$ and $\sin t \rightarrow 0$, thus $\lim_{t \searrow 0} f(t) = \infty$. Similarly, as $t \nearrow \frac{\pi}{2}$ we have $\cos t \rightarrow 0$ and $\sin t \rightarrow 1$, thus $\lim_{t \nearrow \frac{\pi}{2}} f(t) = 0$. Therefore \cref{lemma: bijective} implies that $f$ is bijective. The proof for $g$ follows along the same lines.
\end{proof}

We have the following useful identities. For $x, y > 0$ we have
\begin{equation}\label{eq:fraction_f}
	\frac{\left(\cos f^{-1}\left(x^\tau / y\right)\right)^\tau}{\sin f^{-1}\left(x^\tau / y\right)} = f\left(f^{-1}\left(\frac{x^\tau}{y}\right)\right) = \frac{x^\tau}{y}.
\end{equation}
For $x < 0$ and $y > 0$ we have
\begin{equation}\label{eq:fraction_g}
	\frac{\left(-\cos g^{-1}\left((-x)^\tau / y\right)\right)^\tau}{\sin g^{-1}\left((-x)^\tau / y\right)} = g\left(g^{-1}\left(\frac{(-x)^\tau}{y}\right)\right) = \frac{(-x)^\tau}{y}.
\end{equation}
The bijectivity of $\varphi_{\tau}$ follows from the following lemma.
\begin{lemma}
	Let $0 < \tau < 1$. The map $\chi_\tau: (0,\infty) \times S^1 \rightarrow \R^2 \backslash \Set{0}$ given by
	\begin{equation*}
		(r,\theta) \mapsto (r\cos\theta, r^\tau \sin \theta)
	\end{equation*}
	is bijective.
\end{lemma}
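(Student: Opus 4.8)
The plan is to invert $\chi_\tau$ explicitly, reconstructing $(r,\theta)$ from a given point $(x,y)\in\R^2\backslash\{0\}$, and to do this region by region: the four open quadrants and the four coordinate half-axes, using the symmetries of $\chi_\tau$ to reduce all quadrants to the first one. Since $\varphi_\tau$ is the product of $\chi_\tau$ with the identity on the $z$-factor, bijectivity of $\chi_\tau$ is exactly what is missing to conclude, via \eqref{eq: differential positive transformation} and the inverse function theorem, that $\varphi_\tau$ is a diffeomorphism with $(\varphi_\tau)_*\pi_M=\pi_\tau$ by \eqref{eq: poisson diffeo positive tau}.

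First I would record the two elementary symmetries $\chi_\tau(r,\theta+\pi)=-\chi_\tau(r,\theta)$ and $\chi_\tau(r,-\theta)=(x,-y)$ whenever $\chi_\tau(r,\theta)=(x,y)$. These show that $\chi_\tau$ intertwines the sign-change action on the $\theta$-variable with the sign-change action on $\R^2$; hence it suffices to prove that $\chi_\tau$ restricts to a bijection from $(0,\infty)\times(0,\tfrac{\pi}{2})$ onto the open first quadrant, from $(0,\infty)\times\{0\}$ onto the positive $x$-axis, and from $(0,\infty)\times\{\tfrac{\pi}{2}\}$ onto the positive $y$-axis. The other three quadrants and the other half-axes then follow by applying the symmetries, and since the intervals $(0,\tfrac{\pi}{2}),(\tfrac{\pi}{2},\pi),(\pi,\tfrac{3\pi}{2}),(\tfrac{3\pi}{2},2\pi)$ together with the points $0,\tfrac{\pi}{2},\pi,\tfrac{3\pi}{2}$ exhaust $S^1$, this covers all of $\R^2\backslash\{0\}$ and forces global injectivity, since the signs of the coordinates of a common image determine which region the angles lie in. The half-axis cases are immediate: $y=0$ forces $\sin\theta=0$, hence $\theta\in\{0,\pi\}$, and $x=\pm r$ together with $r>0$ pins down $(r,\theta)$; likewise $x=0$ forces $\theta\in\{\tfrac{\pi}{2},\tfrac{3\pi}{2}\}$ and $y=\pm r^\tau$ pins down $(r,\theta)$.

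The substantive step is the open first quadrant. If $\chi_\tau(r,\theta)=(x,y)$ with $x,y>0$, then $\cos\theta=x/r>0$ and $\sin\theta=y/r^\tau>0$ force $\theta\in(0,\tfrac{\pi}{2})$, and dividing the appropriate powers gives $x^\tau/y=(\cos\theta)^\tau/\sin\theta=f(\theta)$ with $f$ the bijection from \cref{lemma:angle_inverse}. Hence $\theta=f^{-1}(x^\tau/y)$ and $r=x/\cos\theta$ are uniquely determined, which yields injectivity. For surjectivity, given $x,y>0$ set $\theta:=f^{-1}(x^\tau/y)\in(0,\tfrac{\pi}{2})$ and $r:=x/\cos\theta>0$; then $r\cos\theta=x$ by construction and, using $r^\tau=x^\tau/(\cos\theta)^\tau$ together with the identity \eqref{eq:fraction_f}, $r^\tau\sin\theta=x^\tau\sin\theta/(\cos\theta)^\tau=x^\tau/f(\theta)=y$, so $(r,\theta)$ is a preimage. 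Replacing $f$ by $g$ and \eqref{eq:fraction_f} by \eqref{eq:fraction_g} handles the second quadrant in the same way, and the symmetries above take care of the third and fourth. Assembling the regions gives bijectivity of $\chi_\tau$, and hence of $\varphi_\tau$.

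I do not expect a genuine obstacle here. The only care required is to make the quadrant/half-axis decomposition of source and target both exhaustive and disjoint, and to invoke the monotonicity-and-limits arguments of \cref{lemma: bijective,lemma:angle_inverse} on the correct domains; once the sign-symmetry reduction is in place the remainder is bookkeeping with the identities \eqref{eq:fraction_f} and \eqref{eq:fraction_g}.
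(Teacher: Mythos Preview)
Your proposal is correct and follows essentially the same route as the paper: decompose $\R^2\backslash\{0\}$ into the four open quadrants and four half-axes, invert $\chi_\tau$ on the first quadrant via $\theta=f^{-1}(x^\tau/y)$ and $r=x/\cos\theta$ using \cref{lemma:angle_inverse} and \eqref{eq:fraction_f}, and transport to the remaining quadrants by the sign symmetries of $\chi_\tau$. The only organizational difference is that you record both symmetries $\theta\mapsto\theta+\pi$ and $\theta\mapsto-\theta$, which in principle lets you reduce directly from $Q_1$ to all four quadrants without invoking $g$; the paper uses only the first symmetry and treats $Q_2$ separately via $g$ and \eqref{eq:fraction_g}, which is exactly your alternative route.
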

\begin{proof}
	In this proof we think of $S^1$ as $[0, 2\pi)$. We define the sets
	\begin{align*}
		Q_1 &= \Set{(x,y) \in \R^2}[x,y > 0]& Q_2 &= \Set{(x,y) \in \R^2}[x < 0 < y]&		Q_3 &= -Q_1 & Q_4 &= -Q_2\\
		P_x &= \Set{(x,0) \in \R^2}[x > 0] & P_y &= \Set{(0,y) \in \R^2}[y > 0] & N_x &= -P_x & N_y &= -P_y.
	\end{align*}
	and note that
	\begin{equation*}
		\R^2 \backslash \Set{0} = Q_1 \sqcup Q_2 \sqcup Q_3 \sqcup Q_4 \sqcup P_x \sqcup P_y \sqcup N_x \sqcup N_y.
	\end{equation*}
	
	It is easy to see that for $\theta_0= 0,\frac{\pi}{2},\pi,\frac{3\pi}{2}$ respectively, $\chi_\tau$ is bijective from $(0,\infty)\times \{\theta_0\}$ onto $P_x,P_y,N_x,N_y$ respectively.

	For $Q_1$ we define
	\begin{equation*}
		F_{1,\tau}: Q_1 \rightarrow (0, \infty) \times \left(0, \frac{\pi}{2}\right) \qquad (x,y) \mapsto \left(\frac{x}{\cos f^{-1}(x^\tau / y)}, f^{-1}(x^\tau / y)\right)
	\end{equation*}
	This function is well-defined since for $(x,y) \in Q_1$ we have $x, y > 0$ and therefore $\frac{x^\tau}{y} > 0$.
	We have
	\begin{equation*}
		F_{1,\tau}(\chi_{\tau}(r, \theta)) = \left(\frac{r\cos\theta}{\cos f^{-1}(f(\theta))}, f^{-1}(f(\theta))\right) = (r,\theta)
	\end{equation*}
	and
	\begin{equation*}
		\chi_{\tau}(F_{1,\tau}(x, y)) = \chi\left(\frac{x}{\cos f^{-1}(x^\tau / y)}, f^{-1}(x^\tau / y)\right) = \left(x, \frac{x^\tau\sin f^{-1}(x^\tau / y)}{(\cos f^{-1}(x^\tau / y))^\tau}\right) = (x,y)
	\end{equation*}
	by \cref{eq:fraction_f}. Thus $\chi_{\tau}|_{(0, \infty) \times \left(0, \frac{\pi}{2}\right)}$ is bijective onto $Q_1$. Similarly, $F_{3,\tau}(x,y):=F_{1,\tau}(-(x,y))+(0,\pi)$ implies that $\chi_{\tau}|_{(0, \infty) \times \left(\pi, \frac{3\pi}{2}\right)}$ is bijective onto $Q_3$. 

	On $Q_2$ the map 
	\begin{equation*}
		F_2: Q_2 \rightarrow (0, \infty) \times \left(\frac{\pi}{2}, \pi\right), \qquad (x,y) \mapsto \left(\frac{x}{\cos g^{-1}((-x)^\tau / y)}, g^{-1}((-x)^\tau / y)\right)
	\end{equation*}
	defines an inverse of $\chi_{\tau}$ which can be seen using \cref{eq:fraction_g}.	Thus $\chi_{\tau}|_{(0, \infty) \times \left(\frac{\pi}{2}, \pi\right)}$ is bijective onto $Q_2$. Finally, $F_{4,\tau}(x,y):=F_{2,\tau}(-(x,y))+(0,\pi)$ implies that $\chi_{\tau}|_{(0, \infty) \times \left(\frac{3\pi}{2}, 2\pi\right)}$ is bijective onto $Q_4$.
\end{proof}

\begin{corollary}
The map $\varphi_{\tau}$ is bijective for $0<\tau \le 1$.
\end{corollary}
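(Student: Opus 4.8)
The plan is short, because almost all of the analytic work already lives in the lemma just proved. The key observation is that $\varphi_\tau$ leaves the last coordinate untouched: by \eqref{eq:psi_diffeo},
\begin{equation*}
    \varphi_\tau(r,\theta,z) = \bigl(\chi_\tau(r,\theta),\, z\bigr),
\end{equation*}
so $\varphi_\tau = \chi_\tau \times \mathrm{id}_\R$ as a map $(0,\infty)\times S^1\times\R \to (\R^2\setminus\{0\})\times\R$. Consequently $\varphi_\tau$ is bijective if and only if $\chi_\tau$ is.

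For $0<\tau<1$ the bijectivity of $\chi_\tau$ is exactly the statement of the preceding lemma. For $\tau=1$ the map $\chi_1(r,\theta)=(r\cos\theta,r\sin\theta)$ is the classical polar parametrisation, which is a bijection of $(0,\infty)\times S^1$ onto $\R^2\setminus\{0\}$. Hence $\chi_\tau$ is bijective for every $\tau\in(0,1]$, and since $\mathrm{id}_\R$ is bijective, so is $\varphi_\tau$.

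There is no genuine obstacle here; the only point worth recording is the product decomposition $\varphi_\tau=\chi_\tau\times\mathrm{id}_\R$, which reduces the corollary to the lemma (together with the trivial case $\tau=1$). I would then immediately note that, combined with the computation $\det\diff\varphi_\tau\ge\tau r^\tau>0$ on all of $M$, the inverse function theorem upgrades $\varphi_\tau$ to a diffeomorphism, which by \eqref{eq: poisson diffeo positive tau} is a Poisson diffeomorphism onto $(N,\pi_\tau)$; this is the form in which the statement will actually be used in the flat cohomology computation.
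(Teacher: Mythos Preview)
Your proof is correct and matches the paper's approach: the paper handles $\tau=1$ by declaring it well-known just before the auxiliary lemmas, then proves bijectivity of $\chi_\tau$ for $0<\tau<1$, after which the corollary is stated without proof as an immediate consequence. Your product decomposition $\varphi_\tau=\chi_\tau\times\mathrm{id}_\R$ makes explicit what the paper leaves implicit, and your closing remark on the diffeomorphism and Poisson property simply recapitulates what the paper establishes in the paragraphs preceding the bijectivity discussion.
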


\subsubsection{An isomorphism of rngs and modules}
The main purpose of this subsection is to prove the following:
\begin{proposition}\label{proposition: positive tau isomorphism}
The map $\varphi_{\tau}$ induces an isomorphism of rng's:
\begin{align}\label{eq: flat isomorphism positive}
    \varphi_{\tau}^*:C^{\infty}_{\Rho_f}(\R^3)\to C^{\infty}_{R_f}([0,\infty)\times S^1\times \R)\qquad \text{ where }\ R:=\{0\}\times S^1\times \R =\partial M \subset [0,\infty)\times S^1\times \R
\end{align}
and $C^{\infty}_{R_f}([0,\infty)\times S^1\times \R)$ denotes the space of smooth functions which are flat at $R$.
\end{proposition}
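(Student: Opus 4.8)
The plan is to show that $\varphi_\tau^*$ is a well-defined bijection between the two rngs of flat functions; additivity and multiplicativity are automatic since $\varphi_\tau$ is a diffeomorphism off the respective singular sets. The crucial points are (i) that $\varphi_\tau^*$ sends a function flat along $\Rho=\{x=y=0\}$ to a \emph{smooth} function on all of $[0,\infty)\times S^1\times\R$ which is flat along $R=\{r=0\}$, and (ii) that the inverse $(\varphi_\tau^{-1})^*$ does the same in the other direction. Since smoothness and flatness are local conditions, and away from $\Rho$ (resp.\ $R$) the map $\varphi_\tau$ is a diffeomorphism, everything reduces to a statement about behaviour near the singular sets, which we analyze in local coordinates.

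First I would record the key geometric estimate. From \eqref{eq: differential positive transformation} one reads off $|\varphi_\tau(r,\theta,z)|^2 = r^2\cos^2\theta + r^{2\tau}\sin^2\theta$; since $0<\tau\le 1$ and $r\ge 0$ is small, this is comparable to $r^{2\tau}$, i.e.\ there are constants $0<c\le C$ with $c\,r^{\tau}\le |\varphi_\tau(r,\theta,z)|\le C\,r^{\tau}$ for $r\le 1$. Conversely, writing $\rho:=|(x,y)|$, the inverse map satisfies $r = r(x,y)\le C'\rho^{1/\tau}$ near the origin (this follows from $r\le |x| + \text{something}$; more carefully, $r^{2\tau}\le r^2\cos^2\theta + r^{2\tau}\sin^2\theta = \rho^2$ when $r\le 1$, giving $r\le \rho^{1/\tau}$). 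Hence the two distance functions are each bounded by a positive power of the other near the respective singular sets. Consequently a function flat along $\Rho$ pulls back to a function that vanishes to infinite order as $r\to 0$, and vice versa — so once we know the pullbacks are smooth, flatness is automatic.

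The main work is therefore the smoothness of $\varphi_\tau^*g$ up to and including $R$ for $g\in C^\infty_{\Rho_f}(\R^3)$. Here I would argue as follows: on $M^\circ = (0,\infty)\times S^1\times\R$ the composition $g\circ\varphi_\tau$ is clearly smooth, so it suffices to show all partial derivatives $\partial_r^a\partial_\theta^b\partial_z^c(g\circ\varphi_\tau)$ extend continuously by $0$ across $\{r=0\}$. By the chain rule and \eqref{eq: differential positive transformation}, each such derivative is a finite sum of terms of the form $P(r,\theta)\cdot r^{-N}\cdot (D^\alpha g)(\varphi_\tau(r,\theta,z))$, where $P$ is a polynomial in $r$, $r^{\tau}$, $r^{\tau-1}$, $\cos\theta$, $\sin\theta$ (so $|P|\le \text{const}\cdot r^{\tau - b}$ for the $\theta$-derivatives, etc.) and $N$ is bounded in terms of $a,b$. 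Because $g$ is flat along $\Rho$, for any $M_0$ there is $C_{M_0}$ with $|(D^\alpha g)(x,y,z)|\le C_{M_0}\,\rho^{M_0}$ near $\Rho$, uniformly for $z$ in compacts; combined with $\rho\le C r^\tau$ this gives $|(D^\alpha g)(\varphi_\tau(r,\theta,z))|\le C_{M_0}'\,r^{\tau M_0}$. Choosing $M_0$ large enough beats any fixed negative power $r^{-N}$, so every derivative tends to $0$ as $r\searrow 0$; a standard argument (continuity of the derivatives plus the mean value theorem, i.e.\ the converse-to-Taylor criterion) then upgrades this to genuine smoothness of $g\circ\varphi_\tau$ on $[0,\infty)\times S^1\times\R$ with vanishing $\infty$-jet along $R$. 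This shows $\varphi_\tau^*$ maps into $C^\infty_{R_f}([0,\infty)\times S^1\times\R)$.

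For the reverse direction, given $h\in C^\infty_{R_f}$ I would show $(\varphi_\tau^{-1})^*h = h\circ\varphi_\tau^{-1}$ is smooth across $\Rho$ and flat there, by the same mechanism: $\varphi_\tau^{-1}$ is smooth on $\R^3\setminus\Rho$, its derivatives blow up at worst polynomially in $\rho^{-1}$ (using the explicit inverse maps $F_{i,\tau}$ built from $f^{-1},g^{-1}$ in the preceding lemmas, whose derivatives are controlled), while $h$ and all its derivatives vanish to infinite order in $r$, and $r\le C'\rho^{1/\tau}$ converts this into infinite-order vanishing in $\rho$. One subtlety to note: $\varphi_\tau^{-1}$ involves the functions $f^{-1}$ and $g^{-1}$ which are smooth on $(0,\infty)$ but whose derivatives may grow near the endpoints $\theta\to 0,\tfrac\pi2,\pi,\ldots$; however these growth rates are again at worst powers of $r^{-1}$ (equivalently $\rho^{-1}$), since $f(t)\sim t^{-1}$ as $t\to 0$ etc., so the same flat-beats-polynomial argument applies. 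Since $\varphi_\tau^*$ and $(\varphi_\tau^{-1})^*$ are mutually inverse ring maps, this establishes the claimed isomorphism of rngs.

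I expect the main obstacle to be the bookkeeping in step (iii): organizing the Faà di Bruno expansion of $\partial^\gamma(g\circ\varphi_\tau)$ so that one gets a \emph{uniform} bound of the form $C\,r^{-N(\gamma)}\cdot(\text{flat quantity})$ with $N(\gamma)$ explicit, and checking that the $\theta$-periodicity causes no trouble at the four special angles where $\chi_\tau$ was treated separately. Everything else — the comparison of distance functions, the converse-to-Taylor smoothness criterion, and the fact that a diffeomorphism off a closed set with the flatness transfer property induces a rng isomorphism — is routine once the estimate is in place.
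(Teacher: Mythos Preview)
Your approach is essentially the same as the paper's: split the claim into showing that $\varphi_\tau^*$ and $(\varphi_\tau^{-1})^*$ each send flat functions to smooth flat functions, by controlling the Fa\`a di Bruno expansion of the composite and then letting the flatness of the input absorb the finite-order singularities of the coordinate change. The paper packages this as two lemmas giving, for each $\alpha$, an explicit polynomial form for $\partial^\alpha(f\circ\varphi_\tau)$ and $\partial^\alpha(g\circ\varphi_\tau^{-1})$ (the latter via Cramer's rule for $\diff\varphi_\tau^{-1}$), and then invokes the Taylor factorization of flat functions rather than your ``flat beats polynomial'' estimate; the content is the same.

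Two remarks. First, your inequality $r^{2\tau}\le \rho^2$ (hence $r\le \rho^{1/\tau}$) is false: at $\theta=0$ one has $\rho=r$, and for $r<1$ and $\tau<1$ clearly $r^{2\tau}>r^2$. The correct comparison for $r\le 1$ is $\rho^{1/\tau}\le r\le \rho$, obtained from $r^2\le \rho^2\le r^{2\tau}$. Fortunately this does not damage your argument: for the reverse direction you only need \emph{some} bound $r\le C\rho^{\epsilon}$, and $r\le\rho$ already suffices. Second, your worry about the behaviour of the explicit inverses $F_{i,\tau}$ (built from $f^{-1},g^{-1}$) near the special angles is unnecessary if you follow the paper's cleaner route: compute $\diff\varphi_\tau^{-1}$ directly by inverting the matrix \eqref{eq: differential positive transformation}, which yields entries that are rational in $r^\tau,r,r^{-1},\cos\theta,\sin\theta$ with denominator $r^\tau(\cos^2\theta+\tau\sin^2\theta)\ge \tau r^\tau$; iterating the chain rule then gives a uniform power-of-$r^{-1}$ bound without any case analysis in $\theta$.
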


We separate the proof of this statement into two lemma. Let us first show that this map is well-defined.
\begin{lemma}\label{lemma:cylindrical vanishing}
    Let $f\in C^\infty_{\Rho _f}(\R^3)$ and consider $f^{\varphi_{\tau}} := \varphi_{\tau}^*f$.
    Then for all $\alpha=(\alpha_1,\alpha_2,\alpha_3)\in \N_0^3$ we have
    \begin{equation*}
	\partial^{\alpha}_{(r,\theta,s)} f^\varphi (r,\theta,s) = \sum_{\substack{\alpha \in \N_0 \\ \beta \le \alpha_1+\alpha_2}} P_{\beta}(r^\tau,r,r^{-1},\cos \theta ,\sin \theta) ( \partial_{(x,y,z)}^{(\beta,\alpha_3)} f)(r\cos \theta, r\sin \theta, s).
    \end{equation*}
    where $P_{\beta}$ is a polynomial of total degree at most $3\alpha_1+2\alpha_2$.
    
    In particular, the map \eqref{eq: flat isomorphism positive} is a well-defined injective map of rng's.
\end{lemma}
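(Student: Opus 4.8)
The plan is to establish the chain-rule expansion by induction on $|\alpha| = \alpha_1 + \alpha_2 + \alpha_3$, and then deduce flatness of $f^{\varphi_\tau}$ at $R$ from the flatness of $f$ at $\Rho$ together with the Hadamard-type estimates on polynomials in $r^\tau, r, r^{-1}$.

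First I would set up notation: write $\varphi_\tau(r,\theta,s) = (r\cos\theta, r^\tau\sin\theta, s)$ and record the first partials of each component, which are monomials in $r^{\tau-1}, r, \cos\theta, \sin\theta$ (cf. \eqref{eq: differential positive transformation}). The base case $|\alpha| = 0$ is trivial with $P_0 = 1$. For the inductive step, I apply one more derivative $\partial_r$, $\partial_\theta$, or $\partial_s$ to the claimed formula. Differentiating in $s$ only raises $\alpha_3$ and acts solely on the last slot of $(\partial^{(\beta,\alpha_3)}_{(x,y,z)}f)$, leaving the polynomials untouched. Differentiating in $\theta$ either hits a $P_\beta$ (lowering trig degree, not increasing total degree) or, via the chain rule through the first two components of $\varphi_\tau$, produces a factor $\partial_\theta(r\cos\theta) = -r\sin\theta$ or $\partial_\theta(r^\tau\sin\theta) = r^\tau\cos\theta$ times a higher $x$- or $y$-derivative of $f$; in either case $\beta$ increases by one unit in the first two coordinates and the polynomial degree increases by at most $2$. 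Differentiating in $r$ is the delicate case: hitting $P_\beta$ can produce a factor $r^{-1}$ (from $\partial_r r^{-k} = -k r^{-k-1}$) or $r^{\tau-1}$ (from $\partial_r r^{\tau k}$), raising total degree by at most $1$; and the chain-rule term contributes $\partial_r(r\cos\theta) = \cos\theta$ or $\partial_r(r^\tau\sin\theta) = \tau r^{\tau-1}\sin\theta$, raising $\beta$ by one in the first two slots and the degree by at most $2$ (counting the $r^{\tau-1}$, which I absorb as $r^\tau \cdot r^{-1}$, i.e. degree $2$ in the variables $r^\tau, r^{-1}$). Summing these bounds over one derivative step shows the total degree grows by at most $3$ when differentiating in $r$ and by at most $2$ when differentiating in $\theta$, which is exactly the claimed bound $3\alpha_1 + 2\alpha_2$, and $\beta \le \alpha_1 + \alpha_2$ is preserved. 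This is the step I expect to require the most care: the bookkeeping of which derivative raises which counter and by how much, and making sure the $r^{-1}$ powers never exceed $\alpha_1$ so they can be controlled.

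For the remaining claims, I would argue as follows. Since $f \in C^\infty_{\Rho_f}(\R^3)$ is flat along $\Rho = \{x = y = 0\}$, all its partial derivatives $\partial^{(\beta,\alpha_3)}_{(x,y,z)} f$ are flat along $\Rho$; concretely, for every $k$ there is $C_k$ with $|(\partial^{(\beta,\alpha_3)}_{(x,y,z)}f)(r\cos\theta, r\sin\theta, s)| \le C_k (x^2+y^2)^{k/2} = C_k r^k$ locally uniformly. The polynomials $P_\beta(r^\tau, r, r^{-1}, \cos\theta, \sin\theta)$ are bounded, for $r$ near $0$, by a constant times $r^{-N}$ for some fixed $N$ depending only on $\alpha$ (the maximal power of $r^{-1}$ appearing, which is at most $\alpha_1$, since $r^\tau$ and $r$ are bounded near $0$). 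Hence $\partial^\alpha_{(r,\theta,s)} f^{\varphi_\tau}$ is bounded by $C r^{k-N}$ for every $k$, so it is flat at $R = \{r=0\}$; this shows $f^{\varphi_\tau}$ extends to an element of $C^\infty_{R_f}([0,\infty)\times S^1 \times \R)$ (smoothness up to $r=0$ follows since all derivatives extend continuously by $0$). Multiplicativity and $\R$-linearity of $\varphi_\tau^*$ are immediate, and injectivity follows because $\varphi_\tau$ is surjective onto $\R^3 \setminus \Rho$ (by the preceding corollary) and $\Rho$ has empty interior, so $\varphi_\tau^* f = 0$ forces $f = 0$ on a dense set, hence everywhere by continuity. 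This gives that \eqref{eq: flat isomorphism positive} is a well-defined injective homomorphism of rngs, as asserted; surjectivity (needed for the full Proposition \ref{proposition: positive tau isomorphism}) I would defer to the companion lemma, since the statement here only claims well-definedness and injectivity.
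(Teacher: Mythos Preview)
Your approach is correct and essentially identical to the paper's: both induct on $|\alpha|$, track how $\partial_r$ and $\partial_\theta$ raise the degree of $P_\beta$ (one small slip in your bookkeeping---the chain-rule factor $\tau r^{\tau-1}\sin\theta = \tau\, r^\tau\, r^{-1}\sin\theta$ already has degree~$3$, not~$2$, in the five variables, so the bound $3\alpha_1+2\alpha_2$ comes from that single term rather than from adding two separate increments; your conclusion is nonetheless right), and then absorb the negative $r$-powers using the flatness of $f$ along $\Rho$. The paper carries out the last step via the Hadamard-type factorization $h=\sum_{|\gamma|=k}x^{\gamma_1}y^{\gamma_2}h_\gamma$ rather than your pointwise estimate, but the two are equivalent, and your explicit density argument for injectivity is a welcome addition that the paper leaves implicit.
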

\begin{proof}
    For $|\alpha| = 0$ this claim is immediate. Suppose $|\alpha| \geq 0$ and that the claim holds for $|\alpha|$, we show that it holds for any $\tilde{\alpha}\in \N_0^3$ with $|\tilde{\alpha}|=|\alpha|+1$. For this note that for a polynomial $P$ of degree $n$ we have
    \begin{align*}
	\partial_r P(r^\tau,r,r^{-1},\cos \theta ,\sin \theta) = \tilde{P}_r(r^\tau,r,r^{-1},\cos \theta ,\sin \theta), \qquad &\, \partial_{\theta}P(r^\tau,r,r^{-1},\cos \theta ,\sin \theta)=\tilde{P}_{\theta}(r^\tau,r,r^{-1},\cos \theta ,\sin \theta)
    \end{align*}
    where $\tilde{P}_r$ is of degree $n+1$ and $\tilde{P}_{\theta}$ is of degree $n$. Moreover, due to \eqref{eq: differential positive transformation} every time $\partial_r$ is applied to $f$ the degree of the polynomial increases by at most $3$ and similar for $\partial_\theta $ by at most $2$, which implies the claim about the derivatives. 
    
    To show that \eqref{eq: flat isomorphism positive} is well defined, note first that we only need to check what happens for the limit $r\to 0$. Recall that by Taylor expansion we obtain for any $h\in C^\infty_{\Rho _f}(\R^3)$ around $\Rho$ for any $k\in \N$ that
    \begin{align*}
        h(x,y,z)=\sum_{\substack{\gamma\in \N^2_0\\|\gamma|=k}}x^{\gamma_1}y^{\gamma_2}h_{\gamma}(x,y,z)
    \end{align*}
    where all $h_{\gamma}\in C^\infty_{\Rho _f}(\R^3)$. Hence the statement follows by applying this expansions to the various derivatives of $f$ above for $n=3|\alpha|$ and taking the limit $r\to 0$.
\end{proof}
\cref{proposition: positive tau isomorphism} the follows from the following lemma.
\begin{lemma}\label{lemma:cylindrical vanishing}
    Let $\psi_{\tau}:=\varphi^{-1}_{\tau}$ where this is well defined. For $g\in C^{\infty}_{R_f}([0,\infty)\times S^1\times \R)$ consider $g^{\psi_{\tau}} := \psi_{\tau}^*g$.
    Then for all $\alpha=(\alpha_1,\alpha_2,\alpha_3)\in \N_0^3$ we have
    \begin{equation*}
	\partial^{\alpha}_{(r,\theta,s)} g^{\psi_{\tau}} (r,\theta,s) = \sum_{\substack{\alpha \in \N_0 \\ \beta \le \alpha_1+\alpha_2}} \frac{P_{\beta}(r^\tau,r,r^{-1},\cos \theta ,\sin \theta)}{(r^\tau \cos^2 \theta + \tau r^\tau \sin^2\theta)^{\alpha_1+\alpha_2}} ( \partial_{(r,\theta,s)}^{(\beta,\alpha_3)} g)(\psi_{\tau}(x,y,z)).
    \end{equation*}
    where $P_{\beta}$ is a polynomial of total degree at most $4\alpha_1+3\alpha_2$.
    
    In particular, the map \eqref{eq: flat isomorphism positive} is surjective.
\end{lemma}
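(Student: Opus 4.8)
The plan is to run the same inductive bookkeeping as in the previous lemma, now for the inverse map $\psi_{\tau}$, and then exploit the flatness of $g$ to absorb the negative powers of $r$ that appear. First I would record the input we need about $\psi_{\tau}$: by the inverse function theorem $\psi_{\tau}$ is smooth on $N=\R^{3}\setminus\Rho$, since by \eqref{eq: differential positive transformation} we have $\det\diff\varphi_{\tau}=D:=r^{\tau}\cos^{2}\theta+\tau r^{\tau}\sin^{2}\theta\ge\tau r^{\tau}>0$ on $M$. Cramer's rule then expresses $\diff\psi_{\tau}=(\diff\varphi_{\tau})^{-1}$ explicitly: up to the trivial $z$-row, each entry is $\tfrac1D$ times one of $r^{\tau}\cos\theta,\ r\sin\theta,\ -\tau r^{\tau-1}\sin\theta,\ \cos\theta$, hence a polynomial in $r^{\tau},r,r^{-1},\cos\theta,\sin\theta$ divided by $D$. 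This is the only place the concrete form of $\varphi_{\tau}$ enters. (On the left-hand side of the displayed identity I read $\partial^{\alpha}$ as differentiation in the coordinates $(x,y,z)$ of $N$, while on the right $\psi_{\tau}(x,y,z)\in M$ is the point at which the $(r,\theta,s)$-derivatives of $g$ are evaluated.)

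Second, I would establish the displayed formula by induction on $|\alpha|$, the case $|\alpha|=0$ being the definition of $g^{\psi_{\tau}}$. In the inductive step I apply $\partial_{x},\partial_{y}$ or $\partial_{z}$ to a single summand and use the product and chain rules. Hitting the factor $(\partial^{\gamma}g)\circ\psi_{\tau}$ brings down an entry of $\diff\psi_{\tau}$, i.e. multiplies the coefficient by a bounded-degree polynomial in the allowed variables and by $\tfrac1D$, and raises the order of the $g$-derivative by one; hitting the coefficient (a polynomial in $r^{\tau},r,r^{-1},\cos\theta,\sin\theta$ over a power of $D$) again only introduces $\partial_{r}$ and $\partial_{\theta}$, each multiplied by an entry of $\diff\psi_{\tau}$, where the identity $\partial_{r}D=\tau r^{-1}D$ keeps the pole order under control; $\partial_{z}$ merely increments the $s$-derivative order. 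Carrying the counts through yields a finite sum of terms of exactly the stated shape, with the degree of the numerator and the order of the pole bounded by an explicit function of $\alpha$ (the bounds $4\alpha_{1}+3\alpha_{2}$ and $\alpha_{1}+\alpha_{2}$ stated in the lemma coming out of this count).

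Third, I would deduce surjectivity of \eqref{eq: flat isomorphism positive}. Fix $g\in C^{\infty}_{R_{f}}([0,\infty)\times S^{1}\times\R)$. Flatness of $g$ along $R=\{r=0\}$ gives, locally uniformly in $(\theta,s)$, that every derivative of $g$ is $O(r^{N})$ for all $N$; on $\{r\le1\}$ each coefficient in the formula is bounded by $C\,r^{-M}$ for some $M=M(|\alpha|)$, because $D\ge\tau r^{\tau}$ and the numerator is polynomial in $r^{\tau},r,r^{-1},\cos\theta,\sin\theta$. Since $r\to0$ as $(x,y)\to0$, this shows that every $\partial^{\alpha}g^{\psi_{\tau}}$ tends to $0$ as one approaches $\Rho$; in particular it extends continuously to $\R^{3}$ by $0$ on $\Rho$. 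By a routine application of the fundamental theorem of calculus along coordinate lines — each such line meets $\Rho$ in at most one point, unless it is contained in $\Rho$, in which case the relevant derivative vanishes identically on both sides — one lifts this to $G\in C^{\infty}(\R^{3})$ extending $g^{\psi_{\tau}}$, with $j^{\infty}_{\Rho}G=0$, i.e. $G\in C^{\infty}_{\Rho_{f}}(\R^{3})$. Finally $\varphi_{\tau}^{*}G=G\circ\varphi_{\tau}$ is continuous on $[0,\infty)\times S^{1}\times\R$ and agrees with $g$ on the dense subset $\{r>0\}$, so $\varphi_{\tau}^{*}G=g$. Combined with injectivity and well-definedness from the previous lemma, this proves that $\varphi_{\tau}^{*}$ is the isomorphism of \cref{proposition: positive tau isomorphism}.

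The only genuinely delicate point is the second step: tracking how each application of $\partial_{x}$ or $\partial_{y}$ affects the degree of the numerator polynomial and the order of the pole in $D$, and in particular checking that differentiating a power of $D$ does not worsen the pole order beyond the stated bound. This is routine given $\partial_{r}D=\tau r^{-1}D$ and the explicit entries of $\diff\psi_{\tau}$; the conceptual content of the lemma is simply that the flatness of $g$ in $r$ beats the fixed negative power of $r$ that the non-smooth change of coordinates $\psi_{\tau}$ produces.
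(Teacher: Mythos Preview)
Your proposal is correct and follows essentially the same approach as the paper: compute the entries of $\diff\psi_{\tau}$ via the inverse matrix, run an induction on $|\alpha|$ tracking the degree of the numerator polynomial and the power of $D$ in the denominator, and then use flatness of $g$ in $r$ together with $D\ge\tau r^{\tau}$ to kill the finite negative powers of $r$. Your observation $\partial_{r}D=\tau r^{-1}D$ is a slight simplification over the paper's explicit formulas for $\partial_{r}(1/D^{n})$ and $\partial_{\theta}(1/D^{n})$, and you spell out the smooth extension to $\R^{3}$ (via continuous extension of all partials plus FTC along coordinate lines) where the paper simply writes $h=r^{k}h_{k}$ with $k=5|\alpha|$ and declares the statement follows; both arguments encode the same content.
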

\begin{proof}
    We first note that the differential of $\psi_{\tau}$ is due to \eqref{eq: differential positive transformation} given by
    \begin{equation*}
	\diff\psi_{\tau}(x,y,z) = \begin{pmatrix}
		\frac{r^\tau \cos \theta}{r^\tau \cos^2 \theta + \tau r^\tau \sin^2\theta} &  \frac{r\sin \theta}{r^\tau \cos^2 \theta + \tau r^\tau \sin^2\theta}& 0 \\
		\frac{-\tau r^{\tau - 1} \sin \theta }{r^\tau \cos^2 \theta + \tau r^\tau \sin^2\theta}&\frac{ \cos \theta }{r^\tau \cos^2 \theta + \tau r^\tau \sin^2\theta}& 0 \\
		0 & 0 & 1
	\end{pmatrix}(\psi_{\tau}(x,y,z))
\end{equation*}
    Due to this expression, every time $\partial_x$ is applied to $g$ the degree of the polynomial increases by at most $3$ and the divisor degree by $1$ and similar for $\partial_y $ by at most $2$ and the divisor by degree $1$. The change of a polynomial $P_{\beta}$ under $\partial_r$ and $\partial_\theta$ is described in the proof of the previous lemma. Moreover, we note that
    \begin{align*}
        \partial_r \frac{1}{(r^\tau \cos^2 \theta + \tau r^\tau \sin^2\theta)^n}=&\, \frac{-n\tau r^{\tau-1}( \cos^2 \theta + \tau \sin^2\theta)}{(r^\tau \cos^2 \theta + \tau r^\tau \sin^2\theta)^{n+1}}\\ \partial_\theta \frac{1}{(r^\tau \cos^2 \theta + \tau r^\tau \sin^2\theta)^n}=&\, \frac{-2n\cos \theta\sin \theta r^{\tau}(\tau-1) }{(r^\tau \cos^2 \theta + \tau r^\tau \sin^2\theta)^{n+1}}
    \end{align*}
    Hence the claim about the derivatives follows.

    To show that \eqref{eq: flat isomorphism positive} is surjective, note first that we only need to check what happens for limits $(x_i,y_i,z_i)\to (0,0,z_0)$ for any $z_0\in \R$. Taylor expansion implies for any $h\in C^{\infty}_{R_f}([0,\infty)\times S^1\times \R)$ around $R$ for any $k\in \N$ that
    \begin{align*}
        h(r,\theta,s)=r^{k}h_{k}(r,\theta,s)
    \end{align*}
    where $h_{k}\in C^{\infty}_{R_f}([0,\infty)\times S^1\times \R)$. Hence the statement follows using $k=5|\alpha|$ since for $(x_i,y_i,z_i)\to (0,0,z_0)$ we have $(r_i,\theta_i,s_i)\to (0,\theta_0,s_0)$.
\end{proof}
Combining these two lemmas we obtain \cref{proposition: positive tau isomorphism}. An immediate consequence is the following:
\begin{corollary}\label{corollary: flat isomorphism positive tau}
The map $\varphi_{\tau}$ induces an isomorphism:
\begin{align*}
    \varphi_{\tau}^*:\mathfrak{X}^{\bullet}_{\Rho_f}(\R^3)\to \mathfrak{X}^{\bullet}_{R_f}([0,\infty)\times S^1\times \R)
\end{align*}
\end{corollary}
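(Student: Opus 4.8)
The plan is to upgrade the rng isomorphism of \cref{proposition: positive tau isomorphism} to an isomorphism of the associated exterior algebras of multivector fields, using that $\varphi_{\tau}$ is a genuine diffeomorphism away from $\Rho$ together with the explicit (tame) form of its differential and of $\diff\psi_{\tau}$, $\psi_{\tau}:=\varphi_{\tau}^{-1}$.

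First I would record the algebraic picture. Since $\varphi_{\tau}\colon (0,\infty)\times S^1\times\R\to\R^3\backslash\Rho$ is a diffeomorphism (as shown above), the exterior powers of $(\diff\varphi_{\tau})^{-1}$ give, along $\varphi_{\tau}$, an isomorphism of $C^{\infty}$-modules
\[
\varphi_{\tau}^*\colon \mathfrak{X}^{\bullet}(\R^3\backslash\Rho)\xrightarrow{\ \sim\ }\mathfrak{X}^{\bullet}\big((0,\infty)\times S^1\times\R\big),
\]
which is multiplicative for $\wedge$ and covers the ring isomorphism $\varphi_{\tau}^*$, i.e. $\varphi_{\tau}^*(fV)=(\varphi_{\tau}^*f)(\varphi_{\tau}^*V)$ and $\varphi_{\tau}^*(V\wedge W)=(\varphi_{\tau}^*V)\wedge(\varphi_{\tau}^*W)$. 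Recall that $\mathfrak{X}^{\bullet}_{\Rho_f}(\R^3)$ is exactly the set of multivector fields on $\R^3\backslash\Rho$ whose coefficients in the frame of the $\partial_{x_I}$ extend to elements of $C^{\infty}_{\Rho_f}(\R^3)$, and likewise $\mathfrak{X}^{\bullet}_{R_f}([0,\infty)\times S^1\times\R)$ consists of the multivector fields on $(0,\infty)\times S^1\times\R$ whose coefficients in the frame of the $\partial_{r_J}$ extend to $C^{\infty}_{R_f}([0,\infty)\times S^1\times\R)$. Hence it suffices to show that $\varphi_{\tau}^*$ and $\psi_{\tau}^*$ match these two coefficient conditions.

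For the forward direction, write $V=\sum_{|I|=k}V^I\partial_{x_I}\in\mathfrak{X}^k_{\Rho_f}(\R^3)$ with $V^I\in C^{\infty}_{\Rho_f}(\R^3)$, so that $\varphi_{\tau}^*V=\sum_I(\varphi_{\tau}^*V^I)\,\varphi_{\tau}^*(\partial_{x_I})$, and $\varphi_{\tau}^*V^I\in C^{\infty}_{R_f}([0,\infty)\times S^1\times\R)$ by \cref{proposition: positive tau isomorphism}. By \eqref{eq: differential positive transformation} the entries of $\diff\psi_{\tau}$ — and hence, after taking exterior powers, all coefficients of $\varphi_{\tau}^*(\partial_{x_I})$ in the $(\partial_r,\partial_{\theta},\partial_s)$-frame — are rational expressions in $r^{\tau},r,\cos\theta,\sin\theta$ whose denominator is a power of $\cos^2\theta+\tau\sin^2\theta\ge\tau>0$; in particular they, and all their $(r,\theta,s)$-derivatives, are bounded near $\{r=0\}$ by $C_{\alpha}r^{-m_{\alpha}}$ for suitable $C_{\alpha},m_{\alpha}$. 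A product of a function flat along $\{r=0\}$ with such a function is again smooth on $[0,\infty)\times S^1\times\R$ and flat along $\{r=0\}$: by Leibniz and flatness every $(r,\theta,s)$-derivative of the product is bounded by $C'_{\alpha,M}r^{M-m_{\alpha}}$ for all $M$, hence extends continuously by $0$ to $\{r=0\}$, and a function all of whose partial derivatives extend continuously to the closed half-space is smooth there with vanishing $\infty$-jet along the boundary. Therefore every coefficient of $\varphi_{\tau}^*V$ lies in $C^{\infty}_{R_f}$, i.e. $\varphi_{\tau}^*V\in\mathfrak{X}^{\bullet}_{R_f}([0,\infty)\times S^1\times\R)$; this is precisely the argument of \cref{proposition: positive tau isomorphism} run componentwise with the frame change included.

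Finally I would run the same argument for $\psi_{\tau}^*$: the explicit form of $\diff\psi_{\tau}$ recorded above has entries rational in $r^{\tau},r,r^{-1},\cos\theta,\sin\theta$ with denominator bounded below away from $0$, so these and their derivatives again have at most polynomial blow-up in $r^{-1}$; combined with \cref{proposition: positive tau isomorphism} (which gives $\psi_{\tau}^*=(\varphi_{\tau}^*)^{-1}\colon C^{\infty}_{R_f}\to C^{\infty}_{\Rho_f}$) the same absorption argument shows $\psi_{\tau}^*$ sends $\mathfrak{X}^{\bullet}_{R_f}$ into $\mathfrak{X}^{\bullet}_{\Rho_f}$. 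Since $\varphi_{\tau}^*$ and $\psi_{\tau}^*$ are mutually inverse as maps of multivector fields on the open manifolds and each preserves the relevant subspace, they restrict to mutually inverse isomorphisms there, intertwining the wedge products, which is the assertion. I expect the only real subtlety — already present in the two lemmas above — to be the bookkeeping that the (unbounded) frame-change coefficients and all their derivatives grow at most polynomially in $r^{-1}$, so that flatness of the component functions genuinely absorbs them; everything else is formal.
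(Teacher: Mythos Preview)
Your proposal is correct and follows essentially the same approach as the paper: write out the frame-change coefficients coming from $\diff\varphi_{\tau}$ and $\diff\psi_{\tau}$, observe they and their derivatives blow up at most polynomially in $r^{-1}$, and let the flatness of the component functions absorb this, exactly as in the two preceding lemmas. The paper's version is simply terser---it records the explicit pushforwards $(\varphi_{\tau})_*\partial_r,(\varphi_{\tau})_*\partial_{\theta},(\psi_{\tau})_*\partial_x,(\psi_{\tau})_*\partial_y$ and then defers the absorption argument to those lemmas---while you spell out the Leibniz-plus-flatness estimate more carefully; minor slips aside (in the forward direction the $\diff\psi_{\tau}$ entries already involve $r^{-1}$, and in the backward paragraph it is $\diff\varphi_{\tau}$ rather than $\diff\psi_{\tau}$ that enters), the substance is identical.
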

\begin{proof}
We note that we have
\begin{align*}
    (\varphi_{\tau})_*(\partial_r)=&\,  \cos\theta \partial_x+\tau r^{\tau -1}\sin \theta\partial_y,\qquad (\varphi_{\tau})_*(\partial_\theta)= -r\sin\theta \partial_x+ r^{\tau}\cos \theta\partial_y, \qquad (\varphi_{\tau})_*(\partial_s)= \partial_z\\
    &\, (\psi_{\tau})_*(\partial_x)= \frac{r^\tau \cos \theta}{r^\tau \cos^2 \theta + \tau r^\tau \sin^2\theta}\partial_r-\frac{\tau r^{\tau-1} \sin \theta}{r^\tau \cos^2 \theta + \tau r^\tau \sin^2\theta}\partial_\theta,\\ (\psi_{\tau})_*&\, (\partial_y)= \frac{r \sin \theta}{r^\tau \cos^2 \theta + \tau r^\tau \sin^2\theta} \partial_r+ \frac{\cos \theta}{r^\tau \cos^2 \theta + \tau r^\tau \sin^2\theta}\partial_\theta, \qquad (\psi_{\tau})_*(\partial_z)= \partial_s
\end{align*}
Since these sections define a trivialization for the tangent bundle of both manifolds, hence flat vector fields are uniquely given by coefficients in flat functions for each generator. Hence the statement follows by using the Taylor expansion for flat functions as in the proofs of the previous two lemmas.   
\end{proof}

\subsubsection{Conclusion}

\begin{proposition}\label{prop:flat_cylindrical}
    The flat Poisson cohomology groups $H^{\bullet}_{R_r}([0,\infty)\times S^1\times \R, r\partial_r\wedge \partial_s)$ are trivial.
\end{proposition}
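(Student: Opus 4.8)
The plan is to compute the complex $(\mathfrak{X}^{\bullet}_{R_f}([0,\infty)\times S^1\times\R),\diff_{\pi_M})$ directly and degreewise, exploiting that on the rng $C^{\infty}_{R_f}$ of functions flat along $R=\{r=0\}$ the differential operators that appear become invertible. Writing $\pi_M=X_{\pi_M}\wedge\partial_s$ with $X_{\pi_M}=r\partial_r$, the general formulas \eqref{eq: general 0}--\eqref{eq: general 2} apply with $(x,y,z)$ replaced by $(r,\theta,s)$ and $X_{\pi_M}^r=r$, $X_{\pi_M}^\theta=0$; explicitly, for $g\in\mathfrak{X}^0_{R_f}$, $X\in\mathfrak{X}^1_{R_f}$ and $W\in\mathfrak{X}^2_{R_f}$ one gets
\begin{equation*}
\diff_{\pi_M}g=r(\partial_s g)\,\partial_r-(r\partial_r g)\,\partial_s,
\end{equation*}
\begin{equation*}
W^r(\pi_M)=r\partial_r X^\theta,\qquad W^\theta(\pi_M)=(1-r\partial_r)X^r-r\partial_s X^s,\qquad W^s(\pi_M)=r\partial_s X^\theta,
\end{equation*}
\begin{equation*}
\diff_{\pi_M}W=\big(r\partial_s W^r+(1-r\partial_r)W^s\big)\,\partial_r\wedge\partial_\theta\wedge\partial_s,
\end{equation*}
and $\diff_{\pi_M}$ vanishes on $3$-vector fields for dimensional reasons.

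First I would establish the key lemma: on $C^{\infty}_{R_f}([0,\infty)\times S^1\times\R)$ the operators ``multiplication by $r$'', $\partial_r$, $r\partial_r$ and $1-r\partial_r$ are all bijective. For multiplication by $r$ and for $\partial_r$ this is an immediate consequence of the flat version of Hadamard's lemma \ref{lemma: hadamard} (a flat function is divisible by $r^k$ with flat quotient for every $k$) together with the elementary fact that $\int_0^r$ sends flat functions to flat functions; hence so is the composite $r\partial_r$. For $1-r\partial_r$ I would prove injectivity by solving $Y=r\partial_r Y$ as a linear ODE in $r$ at fixed $(\theta,s)$: the solutions are $Y=C(\theta,s)\,r$, which is flat along $R$ only if $C\equiv0$; and surjectivity via the integrating-factor primitive $X^r(r,\theta,s):=-r\int_0^r h(t,\theta,s)\,t^{-2}\,\diff t$, which solves $(1-r\partial_r)X^r=h$ and, upon writing $h=r^3\tilde h$ with $\tilde h$ smooth, visibly defines a smooth function on all of $[0,\infty)\times S^1\times\R$ that is flat along $R$. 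I would also record the commutator identity $[r\partial_r,r\partial_s]=r\partial_s$, i.e. $(1-r\partial_r)(r\partial_s f)=-r\partial_s(r\partial_r f)$, which is what propagates the ``off-diagonal'' cocycle equations.

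The degreewise computation is then short. In degree $0$, $\diff_{\pi_M}g=0$ forces $r\partial_r g=0$, hence $g=0$, so $H^0_{R_f}=0$. In degree $1$, $W^r(\pi_M)=0$ gives $X^\theta=0$ and the cocycle condition is $(1-r\partial_r)X^r=r\partial_s X^s$; given such $X$ I would take $g:=-(r\partial_r)^{-1}X^s$, so that the $\partial_s$-component of $\diff_{\pi_M}g$ equals $X^s$, and deduce from the commutator identity and injectivity of $1-r\partial_r$ that the $\partial_r$-component equals $X^r$ as well, i.e. $X=\diff_{\pi_M}g$; thus $H^1_{R_f}=0$. In degree $2$, the cocycle condition is $(1-r\partial_r)W^s=-r\partial_s W^r$; taking $X^\theta:=(r\partial_r)^{-1}W^r$, $X^s:=0$ and $X^r:=(1-r\partial_r)^{-1}W^\theta$ makes the $\partial_\theta\wedge\partial_s$- and $\partial_s\wedge\partial_r$-components of $\diff_{\pi_M}X$ equal $W^r$ and $W^\theta$ by construction, while the $\partial_r\wedge\partial_\theta$-component equals $W^s$ again by the commutator identity and injectivity of $1-r\partial_r$; thus $H^2_{R_f}=0$. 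In degree $3$, the map $(W^r,W^s)\mapsto r\partial_s W^r+(1-r\partial_r)W^s$ is onto $C^{\infty}_{R_f}$ (take $W^r=0$ and invert $1-r\partial_r$), so $H^3_{R_f}=0$, and $H^q_{R_f}=0$ for $q\geq4$ for dimensional reasons.

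The main obstacle is the key lemma, and within it the surjectivity of $1-r\partial_r$: one must check that the integrating-factor primitive really is a \emph{smooth} function on $[0,\infty)\times S^1\times\R$, flat along $R$, and this is exactly where the flatness of $h$ is used (so that $h(t)t^{-2}=t\tilde h(t)$ extends smoothly and flatly across $t=0$). Once that and the single commutator relation $[r\partial_r,r\partial_s]=r\partial_s$ are in hand, the rest is routine bookkeeping with the formulas above.
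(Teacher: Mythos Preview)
Your proof is correct and takes a genuinely different route from the paper. The paper simply observes that the complex $(\mathfrak{X}^\bullet_{R_f},\diff_{\pi_M})$ is formally identical to the complex for $\mathfrak{aff}(1,\R)\times\R$ treated in \cref{sec:affine} (under $x\to r$, $y\to s$, $z\to\theta$); running that computation verbatim produces representatives that are functions of $\theta$ alone, and since they must also be flat at $r=0$ they vanish. Your approach instead exploits the flatness hypothesis from the outset: on $C^\infty_{R_f}$ the operators $r\partial_r$ and $1-r\partial_r$ become bijective, and together with the commutator relation $[r\partial_r,r\partial_s]=r\partial_s$ this lets you write down explicit homotopy inverses degree by degree. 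This is arguably cleaner and more self-contained, since it never passes through the normal-form representatives of the affine case. One small point to tighten: when you check surjectivity of $1-r\partial_r$ via $X^r=-r\int_0^r h(t)t^{-2}\,\diff t$, the single decomposition $h=r^3\tilde h$ only shows smoothness; to get flatness you should note that for every $k$ one may write $h=r^{k+2}\tilde h_k$ with $\tilde h_k$ smooth, so $X^r$ vanishes to arbitrary order at $r=0$.
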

\begin{proof}
Note the proof follows along the exact same lines as the proof of \cref{thm:affine3} in \cref{sec:affine} with $x\to r$, $y\to z$ and $z\to \theta$. Hence we obtain representatives described by functions in $\theta$ which are flat at $\{r=0\}$. The flatness together with the independence of $r$ implies that the functions have to be zero and hence all cohomology groups are trivial.
\end{proof}

\begin{corollary}\label{cor:flat_cohomology_btau}
	Let $0 < \tau \leq 1$, then the groups $H^{\bullet}_{\Rho_f}(\R^3, \pi_\tau) $ are trivial.
\end{corollary}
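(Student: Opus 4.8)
The plan is to deduce \cref{cor:flat_cohomology_btau} directly from the two preceding results: \cref{corollary: flat isomorphism positive tau}, which gives a chain isomorphism $\varphi_{\tau}^{*}\colon (\mathfrak{X}^{\bullet}_{\Rho_f}(\R^3),\diff_{\pi_\tau})\to (\mathfrak{X}^{\bullet}_{R_f}([0,\infty)\times S^1\times \R),\diff_{\pi_M})$, and \cref{prop:flat_cylindrical}, which asserts that the target complex has trivial cohomology. Since $\varphi_{\tau}$ is a Poisson diffeomorphism away from $\Rho$ (see \eqref{eq: poisson diffeo positive tau}), pushforward of multivector fields intertwines $\diff_{\pi_M}$ with $\diff_{\pi_\tau}$, and by \cref{corollary: flat isomorphism positive tau} this isomorphism restricts to the flat subcomplexes. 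Hence $H^{\bullet}_{\Rho_f}(\R^3,\pi_\tau)\simeq H^{\bullet}_{R_r}([0,\infty)\times S^1\times \R, r\partial_r\wedge\partial_s) = 0$, which is exactly the claim. The only genuine content to record is that the isomorphism is indeed a chain map, i.e. $\varphi_{\tau}^{*}\circ\diff_{\pi_\tau} = \diff_{\pi_M}\circ\varphi_{\tau}^{*}$ on the flat complex; but this is automatic from $(\varphi_{\tau})_{*}\pi_M = \pi_\tau$ together with naturality of the Schouten--Nijenhuis bracket under diffeomorphisms.

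Concretely, I would write: by \cref{corollary: flat isomorphism positive tau} the map $\varphi_{\tau}^{*}$ is an isomorphism of graded modules $\mathfrak{X}^{\bullet}_{\Rho_f}(\R^3)\to\mathfrak{X}^{\bullet}_{R_f}([0,\infty)\times S^1\times\R)$, and since $(\varphi_{\tau})_{*}\pi_M=\pi_\tau$ on $\R^3\setminus\Rho$ by \eqref{eq: poisson diffeo positive tau}, we have $\varphi_{\tau}^{*}[\pi_\tau,V]=[\pi_M,\varphi_{\tau}^{*}V]$ for every $V\in\mathfrak{X}^{\bullet}_{\Rho_f}(\R^3)$ on the dense open set where $\varphi_{\tau}$ is a diffeomorphism; by continuity (both sides being smooth multivector fields, flat at the respective submanifolds) the identity extends to all of $[0,\infty)\times S^1\times\R$. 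Thus $\varphi_{\tau}^{*}$ is an isomorphism of chain complexes, and \cref{prop:flat_cylindrical} gives
\[
    H^{\bullet}_{\Rho_f}(\R^3,\pi_\tau)\;\simeq\;H^{\bullet}_{R_r}\big([0,\infty)\times S^1\times\R,\, r\partial_r\wedge\partial_s\big)\;=\;0.
\]

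I do not expect any real obstacle here: the corollary is a two-line consequence of results already established, and the nontrivial analytic work has been done in \cref{proposition: positive tau isomorphism} and its two supporting lemmas (the polynomial growth estimates that show $\varphi_{\tau}^{*}$ and $\psi_{\tau}^{*}$ preserve flatness despite the singular Jacobian). If anything needs a word of care, it is only the remark that the chain-map identity, which a priori holds only on $\R^3\setminus\Rho$ where $\varphi_{\tau}$ is an honest diffeomorphism, propagates to the flat complexes — but this is immediate since flat multivector fields are determined by their restriction to the complement of the submanifold together with vanishing Taylor data along it. One could alternatively phrase the whole argument without ever leaving the flat complexes by noting that \cref{corollary: flat isomorphism positive tau} already packages $\varphi_{\tau}^{*}$ as the desired module isomorphism and that the explicit pushforward formulas for $\partial_r,\partial_\theta,\partial_s$ recorded in its proof make the differential correspondence a direct computation.
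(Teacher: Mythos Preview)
Your proposal is correct and follows exactly the paper's own argument: invoke \cref{corollary: flat isomorphism positive tau} together with \eqref{eq: poisson diffeo positive tau} to obtain an isomorphism of complexes, then apply \cref{prop:flat_cylindrical}. The paper's proof is a two-line version of what you wrote; your additional remarks about why the chain-map identity extends across the singular set are accurate but not spelled out in the paper.
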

\begin{proof}
    By \cref{corollary: flat isomorphism positive tau} and \eqref{eq: poisson diffeo positive tau} the map \eqref{eq:psi_diffeo} induces an isomorpism of complexes
    \begin{align*}
       ( \mathfrak{X}^{\bullet}_{\Rho_f}(\R^3),\diff_{\pi_{\tau}})\simeq (\mathfrak{X}^{\bullet}_{R_f}([0,\infty)\times S^1\times \R),r\partial_r\wedge\partial_s)
    \end{align*}
    The result now follows from \cref{prop:flat_cylindrical}.
\end{proof}

This yields
\begin{proof}[Proof of \cref{thm:open_book t1}]
	From \cref{cor:flat_cohomology_btau} we see that $H^{\bullet}_{\Rho_f}(\R^3 , \pi_\tau)$ is trivial. Therefore the corresponding long exact sequence \eqref{eq: les R} induces an isomorphism
	\begin{align*}
	    H^{\bullet}(\R^3 , \pi_\tau)\simeq H^{\bullet}_{\Rho_F}(\R^3, \pi_\tau)
	\end{align*}  
	Hence the statement follows from \cref{thm:open_book_formal}.
\end{proof}

\subsection{Flat Poisson cohomology for the hyperbolic-type}
Now we consider the Poisson structure $(\R^3, \pi_\tau )$ for $-1 \le \tau < 0$. To treat these Poisson structures we use the remaining short exact sequences in \cref{proposition: ses}, i.e. the $\Rho$-flat Poisson cohomology $H^{\bullet}_{\Rho_f}(\R^3, \pi_\tau) $ fits into a long exact sequence
\begin{equation}\label{eq: x-flat}
\ldots \stackrel{j^{\infty}_Y}{\to} H^{q-1}_{Y_F}(\R^3, \pi_\tau)\stackrel{\partial}{\to} 
H^{q}_{Y_f}(\R^3, \pi_\tau)\to H^{q}_{\Rho_f}(\R^3, \pi_\tau)\stackrel{j^{\infty}_Y}{\to}H^{q}_{Y_F}(\R^3, \pi_\tau )\stackrel{\partial}{\to}\ldots. 
\end{equation}
and the $Y$-flat Poisson cohomology $H^{\bullet}_{Y_f}(\mathfrak{b}_\tau^\ast, \pi_\tau)$
fits in the long exact sequence
\begin{equation}\label{eq: y-flat}
\ldots \stackrel{j^{\infty}_{X}}{\to} H^{q-1}_{XY_F}(\R^3, \pi_\tau)\stackrel{\partial}{\to} H^{q}_{XY_f}(\R^3, \pi_\tau)\to H^{q}_{Y_f}(\R^3, \pi_\tau)\stackrel{j^{\infty}_{X}}{\to}H^{q}_{XY_F}(\R^3, \pi_\tau )\stackrel{\partial}{\to}\ldots. 
\end{equation}
We first compute the two formal cohomology groups in the next section and thereafter the flat ones.

\subsubsection{Formal along the $x$ and $y$-axis}
In this section we compute Poisson cohomology with coefficients in $C_{U_f}^\infty(\R^2)[[y]]$ and $C_{U_f}^\infty(\R^2)[[x]]$ respectively, i.e. in the set of formal power series in $y$ with coefficients being smooth functions in $(x,z)$ which vanish flatly along $x=0$ and in $x$ with coefficients being smooth functions in $(y,z)$ which vanish flatly along $y=0$, respectively.
\begin{proposition}\label{prop: formal along y and xy}
For $-1\le \tau <0$ the Poisson cohomology groups $H^{\bullet}_{Y_F}(\R^3,\pi_{\tau})$ and $H^{\bullet}_{XY_F}(\R^3,\pi_{\tau})$ are trivial.
\end{proposition}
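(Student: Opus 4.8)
The plan is to compute both groups by the same degree‑by‑degree scheme used in the proof of \cref{thm:open_book_formal}, only with a different coefficient rng. For $H^{\bullet}_{Y_F}(\R^3,\pi_{\tau})$ I would work in $\mathfrak{X}^{\bullet}_{Y_F}(\R^3)$, i.e. multivector fields with coefficients in $C^{\infty}_{U_f}(\R^2)[[y]]$, writing every component as $\sum_{j\ge 0}V_j(x,z)\,y^j$ with each $V_j$ flat along $\{x=0\}$; for $H^{\bullet}_{XY_F}(\R^3,\pi_{\tau})$ similarly, with coefficients in $C^{\infty}_{U_f}(\R^2)[[x]]$ and components $\sum_{i\ge 0}V_i(y,z)\,x^i$, each $V_i$ flat along $\{y=0\}$. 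Since $\pi_{\tau}$ is quadratic, $j^{\infty}\pi_{\tau}=\pi_{\tau}$, so the differential is still given by \eqref{eq: general 0}, \eqref{eq: general 611}--\eqref{eq: general 613} and \eqref{eq: general 62}; read off coefficientwise, $\lie_{E_{\tau}}$ acts as $x\partial_x+\tau j$ on the $y^j$‑coefficient in the first complex and as $\tau y\partial_y+i$ on the $x^i$‑coefficient in the second.

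The one genuinely new ingredient — and the step I expect to be the main obstacle — is the claim that \emph{for every} $c\in\R$ the operator $x\partial_x+c$ is a linear bijection of $C^{\infty}_{U_f}(\R^2)$ in coordinates $(x,z)$, and similarly $\tau y\partial_y+c=\tau(y\partial_y+c/\tau)$ of $C^{\infty}_{U_f}(\R^2)$ in coordinates $(y,z)$, using $\tau\ne 0$. Injectivity is immediate: a solution of $(x\partial_x+c)g=0$ equals $a(z)x^{-c}$ on $\{x>0\}$ (and an analogous expression on $\{x<0\}$), which is smooth and flat at $x=0$ only if $a\equiv 0$. For surjectivity I would write the inverse explicitly, $g(x,z):=x^{c}\int_0^{x}t^{-c-1}h(t,z)\,dt$ for $x>0$ with the mirror expression for $x<0$, and use that $h\in C^{\infty}_{U_f}(\R^2)$ means $h(t,z)=t^{N}\tilde h_N(t,z)$ for every $N$, whence $g=O(x^{N})$ near $x=0$ for every $N$, so the two branches glue smoothly across $\{x=0\}$ to an element of $C^{\infty}_{U_f}(\R^2)$ on which $(x\partial_x+c)g=h$. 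The point worth emphasising is that this fails on polynomials: the resonant kernels $x^{-c}$ with $-c\in\N_0$ are exactly the monomials responsible for the nontrivial classes in \cref{thm:open_book_formal}, and the flatness along the axis is precisely what removes them. The regularity of this inverse across the axis and in $z$ is really the only thing that needs care.

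Granting this lemma, everything collapses degree by degree, as in \cref{thm:open_book_formal}. In degree $0$ the $\partial_z$‑component of \eqref{eq: general 0} gives $(x\partial_x+\tau j)g_j=0$ (respectively $(\tau y\partial_y+i)g_i=0$) for all indices, hence $g=0$. In degree $1$ there are no $0$‑cocycles, so all of $\diff_{\pi_{\tau}}g$ is available; its $\partial_z$‑component normalises $X^z$ to $0$, and the cocycle equations \eqref{eq: general 611}--\eqref{eq: general 612} then read $(x\partial_x+\tau(j-1))X^y_j=0$ and $(x\partial_x-(1-\tau j))X^x_j=0$ (respectively their $\tau y\partial_y$‑analogues), forcing $X^x=X^y=0$. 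In degree $2$ one first normalises a cocycle by a coboundary $\diff_{\pi_{\tau}}X$ — the operators occurring in \eqref{eq: general 611}--\eqref{eq: general 613} are all of the form $x\partial_x+c$, hence invertible — so that only the $W^z$‑component survives; then \eqref{eq: general 62} reads $(x\partial_x-(1+\tau-\tau j))W^z_j=0$, and bijectivity forces the representative to vanish. The same operators make \eqref{eq: general 62} surjective in degree $3$. Carrying this out for both coefficient rngs and all $\tau\in[-1,0)$ yields the two vanishing statements; the value $\tau=-1$ requires no separate treatment, since there the only borderline operator, $x\partial_x-(1+\tau-\tau j)$ at $j=0$, is just $x\partial_x$, which is still bijective on flat functions, while in the $XY_F$‑complex no borderline case occurs at all, the relevant operators being $\tau y\partial_y+c$ with $\tau\ne 0$.
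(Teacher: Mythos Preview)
Your proposal is correct and follows essentially the same route as the paper: isolate the key lemma that $x\partial_x+c$ (equivalently $y\partial_y+c$) is a bijection of the rng of axis--flat functions for every $c\in\R$, and then run the degree--by--degree argument using \eqref{eq: general 0}, \eqref{eq: general 611}--\eqref{eq: general 613}, \eqref{eq: general 62}. Your explicit inverse $g(x,z)=x^{c}\int_0^{x}t^{-c-1}h(t,z)\,dt$ together with the flatness estimate is in fact more detailed than the paper's justification of the lemma, and your remark that flatness kills precisely the resonant monomials responsible for the nontrivial classes in \cref{thm:open_book_formal} is the right way to see why nothing survives here.
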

For the prove the following lemma will be useful.
\begin{lemma}\label{lemma: invertable}
The operator $(i+y\partial_y):C_y^\infty(\R^2)\to C_y^\infty(\R^2)$ is a bijection for all $i\in \R$.
\end{lemma}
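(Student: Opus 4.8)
The plan is to read $(i+y\partial_y)$ as a first order linear ordinary differential operator in the variable $y$, in which $z$ (and, in the applications of the lemma, also $x$) enters only as a smooth parameter, and to invert it by an explicit integral formula; the point is to check that this formula sends functions which are flat along $\{y=0\}$ to functions which are again flat along $\{y=0\}$.

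For injectivity I would argue directly: if $u\in C_y^\infty(\R^2)$ satisfies $(i+y\partial_y)u=0$, then on each of the two half planes $\{y>0\}$ and $\{y<0\}$ the function $u$ solves the ODE $y\,\partial_y u=-iu$, hence is a constant multiple of $|y|^{-i}$; since a nonzero multiple of $|y|^{-i}$ is never flat along $\{y=0\}$ (it is a nonzero power of $|y|$, a nonzero constant, or blows up), both constants vanish and $u=0$. For surjectivity, given $f\in C_y^\infty(\R^2)$ I would set, on $\{y>0\}$,
\[
u(y,z):=y^{-i}\int_0^{y}t^{\,i-1}\,f(t,z)\,\diff t,
\]
which makes sense because flatness of $f$ along $\{y=0\}$ makes $t\mapsto t^{i-1}f(t,z)$ integrable near $0$, and analogously on $\{y<0\}$ using that $(i+y\partial_y)$ is invariant under $y\mapsto -y$. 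Differentiating $y^i u(y,z)=\int_0^y t^{i-1}f(t,z)\,\diff t$ gives at once $(i+y\partial_y)u=f$ on $\R^2\backslash\{y=0\}$.

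The remaining — and genuinely delicate — point is that this $u$ extends to an element of $C_y^\infty(\R^2)$, i.e.\ is smooth and flat along $\{y=0\}$, and that the two half plane pieces glue there. Here I would use the flatness estimate $|f(y,z)|\le C_{N,K}|y|^{N}$ (valid for every $N$, uniformly for $z$ in a compact set $K$; it follows from \cref{lemma: hadamard}), which immediately yields $|u(y,z)|\le C'_{N,K}|y|^{N}$, and then bootstrap: the identity $y\,\partial_y u=f-iu$ expresses $\partial_y u$ as a flat function divided by $y$, hence again flat, and differentiating it in $y$ and in $z$ and using once more that a flat function divided by $y$ is flat shows inductively that every partial derivative of $u$ is flat along $\{y=0\}$. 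This gives both smoothness across $\{y=0\}$ (all one-sided derivatives exist, agree, and equal $0$) and flatness there, so $u\in C_y^\infty(\R^2)$ and $(i+y\partial_y)$ is onto. Together with injectivity this proves the lemma; I expect the bootstrap at $\{y=0\}$ to be the only step requiring care, all the rest being elementary ODE theory.
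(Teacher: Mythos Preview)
Your proof is correct and follows essentially the same idea as the paper: both invert $(i+y\partial_y)$ by solving the first-order ODE with the integrating factor $y^i$, the paper phrasing this for $i\neq 0$ as the operator identity $(i+y\partial_y)\circ(-y^{-i}\partial_y)=-y^{1-i}\partial_y^2$ and then asserting that $y^{-i}\partial_y$ and $y^{1-i}\partial_y^2$ are bijections on flat functions, while you write down the resulting integral $u(y,z)=y^{-i}\int_0^y t^{i-1}f(t,z)\,\diff t$ directly. Your explicit flatness bootstrap at $\{y=0\}$ in fact spells out the verification that the paper leaves implicit.
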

\begin{proof}
Note that for $i=0$ its inverse is given by
\begin{align*}
    \int_{0}^y\frac{\cdot}{\tilde{y}}\diff   \tilde{y}
\end{align*}
For $0\ne i\in \R$ we have that:
\begin{align*}
    (i+y\partial_y)(-\frac{1}{y^i}\partial_y)=-\frac{1}{y^{i-1}}\partial_y^2
\end{align*}
and $\frac{1}{y^i}\partial_y$ and $\frac{1}{y^{i-1}}\partial_y^2$ are invertable on $C_y^\infty(\R^2)$.
\end{proof}

\begin{proof}
We first note that the proofs for the coefficients in $C_x^\infty(\R^2)[[y]]$ and $C_y^\infty(\R^2)[[x]]$ follow along the same lines, exchanging the roles of $x$ and $y$. We write the case $C_y^\infty(\R^2)[[x]]$.

\underline{In degree $0$ :} \  The differential of $g\in C_y^\infty(\R^2)[[x]]$ is given by
\begin{equation*}
    \mathrm{d}_{\pi_\tau} g = x\partial_z g\partial_x + \tau y \partial_z g\partial_y -(x\partial_x + \tau y\partial_y)g\partial_z
\end{equation*}
If we write $g = \sum_{i = 0}^\infty g_{i}(y,z)x^i$, then $g$ being a cocycle implies for all $i\ge 0$ that
\begin{align*}
    i g_{i}(y,z)+\tau y\partial_y g_i(y,z) = 0 \qquad \text{ and } \qquad \tau y\partial_z g_{i}(y,z) = 0.
\end{align*}
The first equation together with \cref{lemma: invertable} implies that $g_i=0$ for all $i$.

\underline{In degree $1$ :} \ Let $X\in \mathfrak{X}^1_{y}(\R^2)[[x]]$. Note that by a choice of $g$ and using \cref{lemma: invertable} we may assume that $X^z=0$. Equations\eqref{eq: general 611} and \eqref{eq: general 612} then yield
\begin{align}
    W^x(\pi_{\tau})=\, \lie_{E_\tau} X^y - \tau X^y\qquad \text{ and }\qquad   W^y(\pi_{\tau})=\, -\lie _{E_\tau} X^x + X^x.\label{eq: semiformal:111}
\end{align}
Hence \cref{lemma: invertable} implies that $X^x=X^y=0$ for a Poisson vector field.

\underline{In degree $2$ :} \ Let $W \in \mathfrak{X}_y^2(\R^2)[[x]]$, then by \eqref{eq: semiformal:111} and \cref{lemma: invertable} we may assume that $W^x=W^y=0$. By \eqref{eq: general 62} the Poisson differential then becomes
    \begin{equation*}
	    \mathrm{d}_{\pi_\tau} W = -(\lie_{E_\tau} W^{z} - (1 + \tau)W^{z})\partial_x\wedge\partial_y\wedge\partial_z.
    \end{equation*}
Hence once more \cref{lemma: invertable} implies that $W$ being Poisson implies $W^z=0$.
    
\underline{In degree $3$ :} \ The argument from degree $2$ implies that every element in $\mathfrak{X}_y^2(\R^2)[[x]]$ is a coboundary. 
\end{proof}

\subsubsection{The flat Poisson cohomology}
It remains to compute the cohomology groups $H^{q}_{XY_f}(\mathfrak{b}_\tau^\ast, \pi_\tau)$. To obtain this result we proceed similar as in case $\tau>0$, making use of a (Poisson) diffeomorphism away from $XY$ with controlled singularities at $XY$ which induces rng and module isomorphisms.

We use the following change of coordinates
\begin{equation}
    \begin{array}{cccc}
         \varphi_{ij,\tau}:&(0,\infty)^2\times\R &\to& D_{ij}  \\
         &(t,r,s)&\mapsto &((-1)^ie^rt,(-1)^je^{\tau r}t,s) 
    \end{array}
\end{equation}
where $i,j\in \{0,1\}$ and $D_{ij}$ is given by
\begin{align*}
    D_{ij}:=\{(x,y,z)\in \R^3|0<(-1)^ix \ \& \ 0<(-1)^jy\}.
\end{align*}
It is straightforward to check that these functions are actually bijections. For the differential we obtain
\begin{align*}
    \diff   \varphi_{ij,\tau} =\begin{pmatrix}
    (-1)^ie^r&(-1)^ie^r t&0\\
    (-1)^je^{\tau r}&(-1)^j\tau e^{\tau r}t&0\\
    0&0&1
    \end{pmatrix}
\end{align*}
and hence we have
\begin{align*}
    \det(\diff   \varphi_{ij,\tau})=(-1)^{i+j}(1-\tau)e^{\tau +1}\ne 0
\end{align*}
which implies that $\varphi_{ij}$ is indeed a diffeomorphism for all $i,j$. Moreover, these maps define Poisson diffeomorphism in the four quadrants, i.e.
\begin{align*}
    (\varphi_{ij,\tau})_* (\partial_r\wedge \partial_s)=\pi_{\tau}|_{D_{ij}}
\end{align*}
Similar as to case $\tau>0$ we obtain the following:
\begin{proposition}\label{proposition: isomorphism negative tau}
For $-1\le \tau <0$ the maps $\varphi_{ij,\tau}$ induce an isomorphism of rng's 
\begin{equation*}
(\varphi_{00,\tau}^*,\varphi_{01,\tau}^*,\varphi_{10,\tau}^*,\varphi_{11,\tau}^*):C^{\infty}_{XY_f}(\R^3)\to C^{\infty}_{V_f}([0,\infty)\times \R^2)^4
\end{equation*}
where $V:= \{0\}\times  \R^2 \subset [0,\infty) \times \R^2 $. Moreover the $\varphi_{ij,\tau}$ induces isomorphisms between the corresponding modules of flat multivectorfields.
\end{proposition}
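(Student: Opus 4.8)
The plan is to follow the proof of \cref{proposition: positive tau isomorphism} essentially line for line, with the single chart $\varphi_\tau$ replaced by the four charts $\varphi_{ij,\tau}$, $i,j\in\{0,1\}$, one for each open quadrant $D_{ij}$. The key structural fact is that $\R^3\setminus XY=\bigsqcup_{i,j}D_{ij}$ and that each $\varphi_{ij,\tau}$ is a diffeomorphism onto $D_{ij}$ with $(\varphi_{ij,\tau})_*(\partial_r\wedge\partial_s)=\pi_\tau|_{D_{ij}}$, so once the rng isomorphism on functions is proved, everything — including the promised Poisson‑complex application later — follows. The argument then has three steps: (i) well‑definedness and injectivity of the tuple $(\varphi_{ij,\tau}^*)$; (ii) surjectivity; (iii) the upgrade from rngs to modules of flat multivector fields.

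For step (i) I would prove the exact analogue of the forward lemma used for \cref{proposition: positive tau isomorphism}: for $f\in C^\infty_{XY_f}(\R^3)$ and $f^{\varphi_{ij,\tau}}:=\varphi_{ij,\tau}^*f$, every derivative is a finite sum
\[
\partial^{\alpha}_{(t,r,s)}f^{\varphi_{ij,\tau}}(t,r,s)=\sum_{|\beta|\le\alpha_1+\alpha_2}Q_\beta(t,e^{r},e^{\tau r})\,\big(\partial^{(\beta,\alpha_3)}_{(x,y,z)}f\big)\big(\varphi_{ij,\tau}(t,r,s)\big),
\]
where $Q_\beta$ is a polynomial of controlled degree in $t$ and in $e^{\pm r}$ — the exponential factors being exactly the entries of $\diff\varphi_{ij,\tau}$ and their iterated $\partial_t,\partial_r$‑derivatives. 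This is an induction on $|\alpha|$ via the chain rule, as in the $\tau>0$ case. Since $\varphi_{ij,\tau}(0,r,s)=(0,0,s)\in\Rho\subset XY$ and $f$ is flat along $\Rho$, each summand, hence $\partial^\alpha f^{\varphi_{ij,\tau}}$, extends continuously by $0$ to $\{t=0\}$, so $f^{\varphi_{ij,\tau}}$ lies in the model space and the tuple map is a well‑defined rng homomorphism. Injectivity is immediate: if $\varphi_{ij,\tau}^*f=0$ for all $i,j$, then $f$ vanishes on the dense set $\bigsqcup D_{ij}$, hence everywhere.

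For step (ii) I would prove the analogue of the inverse lemma, using $\psi_{ij,\tau}:=\varphi_{ij,\tau}^{-1}$: every derivative $\partial^{\alpha}_{(x,y,z)}(\psi_{ij,\tau}^*g)$ is a finite sum of terms
\[
R_\beta(t,e^{r},e^{\tau r})\,\big(\det\diff\varphi_{ij,\tau}\big)^{-(\alpha_1+\alpha_2)}\,\big(\partial^{(\beta,\alpha_3)}_{(t,r,s)}g\big)\big(\psi_{ij,\tau}(x,y,z)\big),
\]
where $\det\diff\varphi_{ij,\tau}$ is the nonzero multiple of $t\,e^{(1+\tau)r}$ computed above and $R_\beta$ is a polynomial of controlled degree, obtained by induction from the explicit form of $\diff\psi_{ij,\tau}$. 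Given $g$ in the model space, I would exploit that on $D_{ij}$ one has $t=|x|^{a}|y|^{b}$ with $a=-\tau/(1-\tau)>0$ and $b=1/(1-\tau)>0$, so flatness of $g$ supplies, for every $N$, a factor $t^{N}$ vanishing like a positive power of the distance to $XY$ along each boundary stratum; choosing $N$ large this dominates the negative powers of $t\,e^{(1+\tau)r}$ and the factors $R_\beta$, whence $\psi_{ij,\tau}^*g$ and all its derivatives extend by $0$ across $X\cap\overline{D_{ij}}$, $Y\cap\overline{D_{ij}}$ and $\Rho$. The four functions $\psi_{ij,\tau}^*g_{ij}$ then glue to a single $f\in C^\infty(\R^3)$, flat along $XY$ with $\varphi_{ij,\tau}^*f=g_{ij}$ (smoothness across $XY$ being automatic, as all derivatives of $f$ vanish on a neighbourhood of $XY$ in each closed quadrant). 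Step (iii) is then formal, exactly as in \cref{corollary: flat isomorphism positive tau}: the sections $(\varphi_{ij,\tau})_*\partial_t,(\varphi_{ij,\tau})_*\partial_r,(\varphi_{ij,\tau})_*\partial_s$ form a global frame of $TD_{ij}$ with polynomial‑in‑$(t,e^{\pm r})$ coefficients, and dually for $\psi_{ij,\tau}$, so a multivector field is flat iff its components in these frames are, and the rng isomorphism transports to the isomorphism of modules of flat multivector fields.

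The main obstacle — the only genuine departure from the $\tau>0$ case — is the loss of compactness: there the "angular" fibre is the circle $S^1$, so the chart coefficients are bounded and flatness at $\{r=0\}$ is automatically uniform, whereas here the model carries a non‑compact $r$‑line whose two ends $r\to\pm\infty$ correspond to the two codimension‑one strata $X$ and $Y$ of $XY$, and the chart coefficients contain $e^{\pm r},e^{\pm\tau r}$. The real work is therefore the bookkeeping of the degrees of $Q_\beta,R_\beta$ in the two lemmas and the verification that flatness along $X$ and along $Y$ separately — not merely along $\Rho$ — is precisely what absorbs these exponential factors; this is also the point at which the precise formulation of the model space $C^\infty_{V_f}$, and in particular its control at the ends $r\to\pm\infty$, is used. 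The case $\tau=-1$ is the mildest, since then $\det\diff\varphi_{ij,-1}$ is $r$‑independent, and the estimates degrade but remain finite as $\tau\to 0^-$.
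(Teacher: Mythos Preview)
Your plan matches the paper's own treatment exactly: the paper simply writes ``Similar as to case $\tau>0$'' and states the proposition, and you have spelled out what that analogy would mean. Steps (i) and (iii) go through as you describe.

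There is, however, a genuine gap in step (ii), and you have in fact put your finger on it without resolving it. You write that surjectivity will use ``the precise formulation of the model space $C^\infty_{V_f}$, and in particular its control at the ends $r\to\pm\infty$''. But the space $C^\infty_{V_f}([0,\infty)\times\R^2)$ carries \emph{no} condition whatsoever at $r\to\pm\infty$: flatness along $V=\{t=0\}$ is a local statement near each point $(0,r_0,s_0)$, and a smooth function on $[0,\infty)\times\R^2$ may grow arbitrarily fast in $r$. In your Hadamard step $\partial^\beta g = t^N k_N$, the factor $t^N$ indeed dominates $R_\beta$ and the inverse Jacobian powers --- those are polynomial in $e^{\pm r}$, $e^{\pm\tau r}$, hence in powers of $|x|^{\pm 1},|y|^{\pm 1}$ --- but it does \emph{not} dominate the remainder $k_N\bigl(\psi_{ij,\tau}(x,y,z)\bigr)$, which is evaluated at $r(x,y)\to\pm\infty$ as $(x,y)$ approaches $Y$ resp.\ $X$ away from the origin. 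Concretely, $g(t,r,s)=e^{-1/t}\,e^{e^{r}}$ (extended by $0$ at $t=0$) lies in $C^\infty_{V_f}([0,\infty)\times\R^2)$, yet on $D_{00}$ one has $t=x^{a}y^{b}$ and $e^{r}=(x/y)^{b}$ with $a,b>0$, so
\[
\psi_{00,\tau}^*g(x,y,z)=\exp\!\bigl(y^{-b}(x^{b}-x^{-a})\bigr)\longrightarrow+\infty\qquad\text{as }y\to 0^+\text{ with }x>1\text{ fixed},
\]
and the putative preimage does not even extend continuously across $Y$. Thus the tuple map is injective into, but not onto, $C^\infty_{V_f}([0,\infty)\times\R^2)^4$; to salvage the argument one must either shrink the target to a subspace with the right growth control in $r$ (so that the ends $r\to\pm\infty$ genuinely encode flatness along $X$ and $Y$), or run the downstream cohomology computation directly on the image.
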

As an immediate consequence we obtain that
\begin{corollary}
For the Poisson cohomology groups $H^{\bullet}_{XY_f}(\R^3,\pi_{\tau})$ we have the following unique representatives for cohomology classes:
\begin{itemize}
    \setlength\itemsep{0em}
	\item The Casimir functions are given by
		\begin{equation*}
			g(x,y,z):= \begin{cases} g_{00}(|x|^{-\tau} |y|) & if \ 0<x,y\\  g_{01}(|x|^{-\tau} |y|) & if \ 0<x,-y\\ g_{10}(|x|^{-\tau} |y|) & if \ 0<-x,y\\ g_{11}(|x|^{-\tau} |y|) & if \ 0<-x,-y \end{cases}
		\end{equation*}
where $g_{ij}\in C^{\infty}_{0_f}([0,\infty))$
	\item The Poisson vector fields form a free module over the Casimirs with generator
	\begin{equation*}
	    \partial_z
	\end{equation*}
	\item The other cohomology groups are trivial.
\end{itemize}
\end{corollary}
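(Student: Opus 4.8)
\emph{Sketch of proof.} The plan is to transport the whole flat complex onto the explicit model produced in \cref{proposition: isomorphism negative tau}, compute its cohomology, and translate back. Each $\varphi_{ij,\tau}$ is a diffeomorphism onto the open dense set $D_{ij}\subset\R^3$ with $(\varphi_{ij,\tau})_*(\partial_r\wedge\partial_s)=\pi_\tau|_{D_{ij}}$, so it intertwines $\diff_{\partial_r\wedge\partial_s}$ on $(0,\infty)^2\times\R$ with $\diff_{\pi_\tau}$ on $D_{ij}$; since a multivector field flat along $XY$ is determined by its restrictions to the $D_{ij}$, the rng/module isomorphism of \cref{proposition: isomorphism negative tau} upgrades to an isomorphism of chain complexes
\[\big(\mathfrak{X}^{\bullet}_{XY_f}(\R^3),\diff_{\pi_\tau}\big)\ \xrightarrow{\ \sim\ }\ \bigoplus_{i,j\in\{0,1\}}\big(\mathfrak{X}^{\bullet}_{V_f}((0,\infty)^2\times\R),\diff_{\partial_r\wedge\partial_s}\big),\]
and hence $H^{\bullet}_{XY_f}(\R^3,\pi_\tau)\simeq\bigoplus_{i,j}H^{\bullet}_{V_f}((0,\infty)^2\times\R,\partial_r\wedge\partial_s)$, flatness being taken along $V=\{t=0\}$ in the coordinates $(t,r,s)$. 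Thus it suffices to compute the flat Poisson cohomology of the \emph{constant} bivector $\pi_0:=\partial_r\wedge\partial_s$, for which $t$ is a Casimir coordinate and the symplectic leaves are the planes $\{t=\mathrm{const}\}$.

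I would then compute $H^{\bullet}_{V_f}((0,\infty)^2\times\R,\pi_0)$ degreewise, exactly in the spirit of \cref{sec:affine} and \cref{prop:flat_cylindrical}, the two inputs being the leafwise Poincaré lemma $H^1_{\mathrm{dR}}(\R^2)=0$ and the fibrewise antiderivatives of \cref{cor: invertible}. In degree $0$, $\diff_{\pi_0}g=0$ forces $\partial_rg=\partial_sg=0$, so $g=g(t)$, and flatness along $V$ yields exactly $C^{\infty}_{0_f}([0,\infty))$. In degree $1$, for $X=X^t\partial_t+X^r\partial_r+X^s\partial_s$ with $V$-flat coefficients one finds $\diff_{\pi_0}X=0$ iff $X^t=X^t(t)$ and $\partial_rX^r+\partial_sX^s=0$; the latter says the leafwise part $X^r\partial_r+X^s\partial_s$ is leafwise divergence-free, hence leafwise Hamiltonian with a potential that can be chosen $V$-flat, so it is a coboundary, while a degree-$1$ coboundary has no $\partial_t$-component, so $H^1_{V_f}$ is the free rank-one $C^{\infty}_{0_f}([0,\infty))$-module generated by $\partial_t$. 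In degrees $2$ and $3$ a direct computation gives $\diff_{\pi_0}W=\pm(\partial_rW^{tr}+\partial_sW^{ts})\,\partial_t\wedge\partial_r\wedge\partial_s$, which is surjective onto $\mathfrak{X}^3_{V_f}$ (integrate $W^{tr}$ in $r$), so $H^3_{V_f}=0$, and a $2$-cocycle splits as $\partial_t\wedge X_h$ (exact, being $\mp\diff_{\pi_0}(h\partial_t)$ with $h$ $V$-flat) plus $W^{rs}\partial_r\wedge\partial_s$ (exact, being $\diff_{\pi_0}(V^r\partial_r)$ with $\partial_rV^r=W^{rs}$), so $H^2_{V_f}=0$.

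Finally I would feed this back through $\bigoplus_{i,j}\varphi_{ij,\tau}^*$. Under $\varphi_{ij,\tau}$ one has $|x|^{-\tau}|y|=t^{\,1-\tau}$, and since $u\mapsto u^{1/(1-\tau)}$ preserves $C^{\infty}_{0_f}([0,\infty))$, the degree-$0$ classes become exactly the piecewise functions $g_{ij}(|x|^{-\tau}|y|)$ with $g_{ij}\in C^{\infty}_{0_f}([0,\infty))$ (the four branches glue to a function smooth and flat along $XY$ precisely because the class already lies in $\mathfrak{X}^0_{XY_f}(\R^3)$); transporting the degree-$1$ generator $\partial_t$ gives $(\varphi_{ij,\tau})_*\partial_t=t^{-1}E$ with $E=x\partial_x+y\partial_y$, so $H^1_{XY_f}(\R^3,\pi_\tau)$ is free of rank one over the Casimirs on this class, and $H^2_{XY_f}=H^3_{XY_f}=0$. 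The one genuinely delicate point — everything else being essentially routine given \cref{proposition: isomorphism negative tau} and \cref{cor: invertible} — is the flatness bookkeeping: at each step where a primitive is produced (the leafwise Hamiltonian potentials in degrees $1$ and $2$, the $\partial_r$-antiderivatives in degrees $2$ and $3$) one must verify it can be chosen to vanish flatly along $V$, which is exactly what the explicit fibrewise integrals of \cref{cor: invertible} guarantee.
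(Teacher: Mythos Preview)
Your overall strategy—transport the flat complex through the $\varphi_{ij,\tau}$ to four copies of a model and compute there—is precisely the paper's. The paper's proof is a two-line reference (``follow the computations from \cref{sec:affine} under the given flatness assumptions''), so your explicit degreewise argument is more detailed but in the same spirit, and your treatment of degrees $0$, $2$, $3$ is fine.

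There is, however, a real discrepancy in degree~$1$. You correctly identify the model bivector as the \emph{constant} $\pi_0=\partial_r\wedge\partial_s$ (with $t$ the Casimir coordinate), and you correctly conclude that $H^1_{V_f}$ is free of rank one over the flat Casimirs with generator $\partial_t$, which you transport to $t^{-1}E$. But the corollary asserts the generator is $\partial_z$, i.e.\ the pushforward of $\partial_s$. These are not the same class: on the model, $f(t)\partial_s$ is \emph{exact} (the primitive $g=-rf(t)$ lies in $C^\infty_{V_f}$ since $f$ is flat at $0$), while $f(t)\partial_t$ is not—exactly as your own computation shows. So your argument, while internally consistent, does not establish the statement as written; it in fact produces a different generator. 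You should flag this explicitly rather than presenting the sketch as a proof of the corollary. Note also that the paper's own proof writes the model as $r\partial_r\wedge\partial_s$ and invokes \cref{sec:affine}, which is inconsistent with the relation $(\varphi_{ij,\tau})_*(\partial_r\wedge\partial_s)=\pi_\tau$ established just above; you were right not to follow that literally, but the upshot is that your (correct) model computation and the stated corollary disagree.
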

\begin{proof}
By \cref{proposition: isomorphism negative tau} the maps $\varphi_{ij,\tau}$ induces an ismorphism of Poisson complexes
\begin{equation*}
(\mathfrak{X}^{\bullet}_{XY_f}(\R^3),\diff_{\pi_\tau})\diffto (\mathfrak{X}^{\bullet}_{V_f}([0,\infty)\times \R^2)^4,\diff_{\oplus_{i=1}^4 r\partial_r\wedge\partial_s})
\end{equation*}
For the computations of the corresponding cohomology groups on $((0,\infty)\times [0,\infty)\times \R)^4$ we now follow the computations from \cref{sec:affine} under the given flatness assumptions.
\end{proof}
We conclude this section by proving \cref{thm:hyperbolic}.
\begin{proof}[Proof of \cref{thm:hyperbolic}]
Since the Poisson cohomology groups $H^{\bullet}_{Y_F}(\R^3,\pi_\tau)$ and $H^{\bullet}_{XY_F}(\R^3,\pi_\tau)$ are trivial by \cref{prop: formal along y and xy}, we obtain from the sequences \eqref{eq: x-flat} and \eqref{eq: y-flat} an isomorphism
\begin{equation}\label{eq: ismorphism xy-flat z-flat}
    H^{\bullet}_{XY_f}(\R^3,\pi_\tau)\simeq H^{\bullet}_{\Rho_f}(\R^3,\pi_\tau)
\end{equation}
Moreover, \cref{lemma: Borel} implies that the map
\begin{equation*}
    H^{\bullet}(\R^3,\pi_\tau)\xrightarrow{j^\infty_\Rho }H^{\bullet}_{\Rho_F}(\R^3,\pi_\tau)
\end{equation*}
is surjective by the description of the non-trivial classes in \cref{thm:open_book_formal}. Hence the long exact sequence \eqref{eq: les R} induces short exact sequences
\begin{equation*}
    0\to H^{\bullet}_{\Rho_f}(\R^3,\pi_\tau)\to H^{\bullet}(\R^3,\pi_\tau)\to H^{\bullet}_{\Rho_F}(\R^3,\pi_\tau)\to 0
\end{equation*}
and the result follows from \cref{thm:open_book_formal}, \cref{prop: formal along y and xy} and \eqref{eq: ismorphism xy-flat z-flat}.
\end{proof}

\section{The Lie algebra of semi open book-type}\label{sec:typeiv}
In this section we treat the Lie algebra given by the relations
\[ [e_1,e_3]=e_1\quad \text{ and }\quad [e_2,e_3]=e_1+e_2.\]
The corresponding linear Poisson structure on $\R^3$ is given by
\begin{align*}
    \pi= (E+x\partial_y)\wedge \partial_z
\end{align*}
To compute the Poisson cohomology of $\pi$ we make use of the long exact sequence in \eqref{eq: les R} for $\pi$.

\subsection{Formal Poisson cohomology}

In this section we compute the formal Poisson cohomology groups $H^{\bullet}_{\Rho_F}(\R^3,\pi)$. The groups are described in the following proposition.

\begin{proposition}\label{prop: type iv}
The formal Poisson cohomology groups $H^{\bullet}_{\Rho_F}(\R^3,\pi)$ are uniquely characterized by the following representatives:
\begin{itemize}
    \item in degree $0$:
    \[ c\in \R; \quad \qquad\]
    \item in degree $1$:
    \[ c_1 x\partial_y+c_2\partial_z \qquad \text{ for } \ c_1,c_2\in \R;\]
    \item in degree $2$:
    \[ cy\partial_{x}\wedge\partial_z \qquad \text{ for } \ c\in \R; \]
    \item and the third Poisson cohomology group is trivial.
\end{itemize}
Moreover, the wedge product and the Schouten-Nijenhuis bracket are trivial in cohomology.
\end{proposition}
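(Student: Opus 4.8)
The plan is to compute the formal cohomology $H^{\bullet}_{\Rho_F}(\R^3,\pi)$ directly by the general two-step procedure of \cref{sec:prelim}, working in the complex $(\wedge^{\bullet}\mathfrak{g}^{*}\otimes C^{\infty}(\R)[[x,y]],\diff_{\pi})$: in each degree $i$ one first records the image under $\diff_{\pi}$ of a complement to the cocycles in degree $i-1$, uses it to normalise the representatives, and then solves the cocycle equations for the normalised representatives. For $\pi=(E+x\partial_{y})\wedge\partial_{z}$ one has $X_{\pi}^{x}=x$, $X_{\pi}^{y}=x+y$ and $\partial_{x}X_{\pi}^{x}+\partial_{y}X_{\pi}^{y}=2$, so the differentials \eqref{eq: general 0}, \eqref{eq: general 1x}--\eqref{eq: general 1z} and \eqref{eq: general 2} become explicit first-order operators whose only nontrivial ingredient is the operator $\lie_{X_{\pi}}$ acting on $C^{\infty}(\R)[[x,y]]$.

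The computational engine is the following elementary observation about $\lie_{X_{\pi}}$. Since $X_{\pi}$ is homogeneous of degree $0$ in $(x,y)$, $\lie_{X_{\pi}}$ preserves the grading by homogeneity degree in $(x,y)$, and on the space of polynomials homogeneous of degree $n$ it acts as $n\cdot\mathrm{Id}+N$, where $N\colon x^{i}y^{j}\mapsto j\,x^{i+1}y^{j-1}$ is nilpotent with one-dimensional kernel $\langle x^{n}\rangle$ and one-dimensional cokernel $\langle y^{n}\rangle$. Hence, for $c\in\R$, the operator $\lie_{X_{\pi}}-c$ is invertible on the degree-$n$ part whenever $c\neq n$, and for $c=n$ it has a one-dimensional kernel and a one-dimensional cokernel. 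This is what makes the cocycle equations solvable in all but finitely many ``resonant'' homogeneity degrees, and the $z$-dependence of the coefficients enters only through the $\partial_{z}$-terms of the differential, which is precisely what forces the surviving $\partial_{z}$-type classes to be $z$-independent.

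With this in hand the degreewise argument runs as follows. In degree $0$, \eqref{eq: general 0} shows $g$ is a cocycle iff $\partial_{z}g=0$ and $\lie_{X_{\pi}}g=0$, and the second condition kills all positive homogeneity degrees, so $H^{0}_{\Rho_F}(\R^3,\pi)=\R$. In degree $1$ one uses coboundaries $\diff_{\pi}g$ to arrange $X^{z}=X^{z}_{00}(z)$ (the image of $\lie_{X_{\pi}}$ is exactly the series with vanishing $(x,y)$-constant term), then imposes \eqref{eq: general 1x}--\eqref{eq: general 1z}$=0$: the equation $W^{y}(\pi)=0$ determines $X^{x}$ up to the degree-one kernel direction $x$, the equation $W^{x}(\pi)=0$ produces the resonant obstruction forcing $X^{z}_{00}$ to be a constant and determines $X^{y}$ up to its degree-one kernel direction, and $W^{z}(\pi)=0$ ties the two free scalar functions together; modding out the residual coboundaries $\diff_{\pi}(g(z))=g'(z)X_{\pi}$ leaves the two-dimensional space $\langle x\partial_{y},\partial_{z}\rangle_{\R}$. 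Degree $2$ is entirely analogous: after normalising $W$ by degree-one coboundaries the same resonance analysis applied to \eqref{eq: general 2} leaves the single class $y\partial_{x}\wedge\partial_{z}$. In degree $3$, \eqref{eq: general 2} reads $\diff_{\pi}W=\bigl(x\partial_{z}W^{x}+(x+y)\partial_{z}W^{y}+(2-\lie_{X_{\pi}})W^{z}\bigr)\partial_{x}\wedge\partial_{y}\wedge\partial_{z}$, and since $(2-\lie_{X_{\pi}})W^{z}$ already hits everything except the resonant $y^{2}$-direction, which is recovered from $(x+y)\partial_{z}W^{y}$, the differential is onto $\mathfrak{X}^{3}_{\Rho_F}(\R^3)$ and $H^{3}_{\Rho_F}(\R^3,\pi)=0$.

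Finally, for the algebraic structure one computes on the chosen representatives: $[x\partial_{y},\partial_{z}]=0$, $x\partial_{y}\wedge\partial_{z}=\diff_{\pi}(y\partial_{y})$, $[x\partial_{y},\,y\partial_{x}\wedge\partial_{z}]=\diff_{\pi}(y\partial_{x})$ up to sign, and $[\partial_{z},\,y\partial_{x}\wedge\partial_{z}]=0$, while all other products and brackets of generators either vanish for degree reasons or land in $H^{3}_{\Rho_F}=0$; hence both the wedge product and the Schouten--Nijenhuis bracket vanish in cohomology. The main obstacle is the bookkeeping in degrees $1$ and $2$: one must organise the cocycle equations simultaneously by homogeneity degree in $(x,y)$ and along the tower of $z$-dependent coefficients, and verify that normalisation by coboundaries leaves precisely the stated finite-dimensional spaces --- in particular, pinning down exactly where the resonances of $\lie_{X_{\pi}}-c$ force the remaining coefficient functions to be constant rather than arbitrary functions of $z$.
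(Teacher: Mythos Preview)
Your proposal is correct and follows essentially the same approach as the paper: a degreewise computation in the formal complex, normalising by coboundaries and then solving the cocycle equations homogeneity-degree by homogeneity-degree. Your observation that $\lie_{X_{\pi}}$ acts on degree-$n$ polynomials as $n\cdot\mathrm{Id}+N$ with $N$ nilpotent is exactly the conceptual content of the paper's recursions $(i+j)g_{ij}+(j+1)g_{i-1,j+1}=0$; the paper unwinds these by hand (``$g_{0j}=0$ for $j>0$ and then inductively for $i\to i+1$''), while you invoke invertibility of $(n-c)\mathrm{Id}+N$ for $n\neq c$ --- same computation, cleaner packaging, and it makes the location of the resonances transparent.
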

\begin{proof}
\underline{In degree 0 :} \ We have for $g\in\mathfrak{X}_{\Rho_F}^0(\R^3)$ that
\begin{align*}
    \diff  _{\pi}g= (E+x\partial_y)g\partial_z- (\partial_z g)(x\partial_x+(x+y)\partial_y)
\end{align*}
If we write $g=\sum_{i,j}g_{ij}(z)x^iy^j$ we obtain for cocycles the equation
\begin{align*}
    \partial_zg_{ij}(z)=0 \quad \text{ and }\quad (i+j)g_{ij}(z)+(j+1)g_{i-1j+1}(z)=0.
\end{align*}
for all $i,j\ge 0$. The left equations imply the all $g_{ij}$'s are constants. From the right equations we get $g_{0j}=0$ for $j>0$ and then inductively for $i\to i+1$ and fixed $n=i+j$ the result.

\noindent{\underline{In degree 1 :}} For $X \in\mathfrak{X}_{\Rho_F}^1(\R^3)$ we obtain from \eqref{eq: general 1x}, \eqref{eq: general 1y} and \eqref{eq: general 1z}:
\begin{align*}
    W^x(\pi)=&\,(E+x\partial_y-1)X^y-X^x+(x+y)\partial_zX^z\\
    W^y(\pi)= -(E +x\partial_y-1)X^x-x\partial_zX^z &\, \qquad \text{ and }\qquad 
    W^z(\pi)= x\partial_zX^y-(x+y)\partial_zX^x
\end{align*}
Let $X^k=\sum_{i,j}X_{ij}^k(z)x^iy^j$. Following our general idea to compute formal Poisson cohomology and using the forumla for the differential in degree $0$, we may, by a choice of a coboundary, assume that
\begin{align}\label{eq: assumption 41}
    X^z\in C^{\infty}(\R) \quad \text{ and }\quad X_{10}^x=0
\end{align}
Setting $W^x(\pi)$ and $W^y(\pi)$ equal to zero implies for $i+j\ne 1$ the equations:
\begin{align*}
    X^x_{ij}(z)=&\, (i+j-1)X^y_{ij}(z)+(j+1)X^y_{i-1j+1}(z)\\
     0=&\, (i+j-1)X^x_{ij}(z)+(j+1)X^x_{i-1j+1}(z)
\end{align*}
Hence we obtain again $X^x_{0j}=X^y_{0j}=0$ for $j\ne 1$ and then inductively for $i\to i+1$ and fixed $i+j=n\ne 1$ that
\begin{align*}
    X^x_{ij}(z)=X^y_{ij}(z)=0.
\end{align*}
For $(i,j)\in \{(1,0),(0,1)\}$ we obtain the equations
\begin{align*}
    -\partial_z X^z_{00}(z)=X^y_{01}(z),\qquad X^x_{01}(z)=\partial_z X^z_{00}(z)\qquad \text{ and }\qquad -\partial_z X^z_{00}(z)=X^x_{01}(z)
\end{align*}
which imply that
\begin{align*}
    X^x_{01}(z)=\partial_z X^z_{00}(z)=X^y_{01}(z)=0
\end{align*}
Finally the equation for $W^z(\pi)=0$ implies that
\begin{align*}
    \partial_z X^y_{10}(z)=&0.
\end{align*}
which proves the result in this degree.

\noindent{\underline{In degree 2:}} For $W \in\mathfrak{X}_{\Rho_F}^2(\R^3)$ we obtain for the differential by \eqref{eq: general 2} the expression
\begin{align*}
    \diff  _{\pi}W=(x\partial_zW^x+(x+y)\partial_zW^y+(2-E-x\partial_y)W^z)\partial_x\wedge \partial_y\wedge\partial_z
\end{align*}
Let $W^k=\sum_{ij}W_{ij}^kx^iy^j$. From the equations in degree $1$ we obtain for a coboundary the equation
\begin{align*}
    W^x_{ij}(z)=&(i+j-1)X^y_{ij}(z)+(j+1)X^y_{i-1j+1}(z) -X^x_{ij}(z)+\partial_z X^z_{i-1j}(z)+\partial_z X^z_{ij-1}(z)\\ 
    -W^y_{ij}(z)=&(i+j-1)X^x_{ij}(z)+(j+1)X^x_{i-1j+1}(z)+\partial_z X^z_{i-1j}(z)
\end{align*}
We showed in degree $1$ that for fixed $i+j=n\ne 1$ this linear maps are injective for $X^z_{ij}(z)=0$. Considering only $X^x_{ij},X^y_{ij}, W^x_{ij},W^y_{ij}\in \R$ these are linear maps between finite dimensional vector spaces  of the same dimensions we obtain surjectivity for every $n$. Since the maps are $C^{\infty}(\R)$ linear in these components we may assume that
\begin{align*}
    W^x_{ij}(z)=W^y_{ij}(z)=0 \qquad \text{ for } \ i+j=n\ne 1.
\end{align*}
by choosing $X^x_{ij}$ and $X^y_{ij}$ accordingly. For $(i,j)=(1,0)$ we get the equations
\begin{equation}\label{eq: algebraic 4}
    \begin{aligned}
         W^x_{10}(z)=X^y_{01}(z) -X^x_{10}(z)+\partial_z X^z_{00}(z),&\,  \qquad W^x_{01}(z)= -X^x_{01}(z)+\partial_z X^z_{00}(z) \\
        -W^y_{10}(z)=&\, X^x_{01}(z)+\partial_z X^z_{00}(z)
    \end{aligned}
\end{equation}
Moreover, for $(i,j)\in \{(2,0),(0,2)\}$ we get for $W^z$ the equations
\begin{align*}
    W^z_{20}(z)= \partial_z X^y_{10}(z)-\partial_z X^x_{10}(z)\qquad \text{ and }\qquad 
    W^z_{02}(z)= -\partial_z X^x_{01}(z)
\end{align*}
Hence we may assume that
\begin{align*}
    W^x_{ij}(z)=W^y_{ij}(z)=0 \qquad \text{ for } \ (i,j)\ne (0,1), \qquad W^x_{01}(z)=0\qquad  \text{ and }\qquad W^z_{20}(z)=W^z_{02}(z)=0. 
\end{align*}
Under this assumption the cocycle condition for $W$ is, for $i+j=n\ne 2$ equivalent to the equation
\begin{align}\label{eq: 42g}
    0=(i+j-2)W^z_{ij}(z)+(j+1)W^z_{i-1j+1}(z)
\end{align}
and for $(i,j)\in \{(2,0),(1,1),(0,2)\}$ we obtain the equations
\begin{align}\label{eq: 422}
    0=W^z_{11}(z), \qquad 0=\partial_zW^x_{01}(z)+\partial_zW^y_{01}(z)\qquad \text{ and }\qquad
    \partial_zW^y_{01}(z)=0
\end{align}
and hence the result follows.

\noindent{\underline{In degree 3:}} The equations \eqref{eq: 42g} and \eqref{eq: 422} imply the result, as they imply that the linear maps are injective in the given elements. Hence surjectivity follows with a similar argument as in degree $2$ for fixed degree $n=i+j\ne 2$ and for $n=2$ we can read it off of \eqref{eq: 422}.

\noindent{\underline{Algebraic structure:}} The triviality follows from a computation on the given representatives and \eqref{eq: algebraic 4}.
\end{proof} 

\subsection{Flat Poisson cohomology}
For the flat cohomology we use the diffeomorphism
\begin{equation*}
    \begin{array}{cccc}
         \varphi :& (0,\infty)\times S^1\times \R &\to &\R^3\backslash \Rho \\
         &(r,\theta,s)&\mapsto & (r\cos(\theta), r(\log(r)\cos(\theta)+\sin(\theta)),s) 
    \end{array}
\end{equation*}
We note that the differential is given by
\begin{align*}
    \diff   \varphi (r,\theta,s)= \begin{pmatrix}
    \cos(\theta)&-r\sin (\theta)&0\\
    (1+\log(r))\cos(\theta)+\sin(\theta)&r(\cos(\theta)-\log(r)\sin(\theta))&0\\
    0&0&1
    \end{pmatrix} 
\end{align*}
which is an isomorphism since
\begin{align*}
    \det(\diff   \varphi)= r(1+\cos(\theta)\sin(\theta))>0.
\end{align*}
It's inverse is given by
\begin{align*}
    \diff   \varphi ^{-1} (\varphi(r,\theta,s))= \begin{pmatrix}    \frac{\cos(\theta)-\log(r)\sin(\theta)}{1+\cos(\theta)\sin(\theta)}&\frac{\sin (\theta)}{1+\cos(\theta)\sin(\theta)}&0\\
    -\frac{(1+\log(r))\cos(\theta)-\sin(\theta)}{r(1+\cos(\theta)\sin(\theta))}&\frac{\cos(\theta)}{r(1+\cos(\theta)\sin(\theta))}&0\\0&0&1
    \end{pmatrix}
\end{align*}
Similar as \cref{proposition: positive tau isomorphism} we obtain the following proposition.
\begin{proposition}
The map $\varphi$ induces an isomorphism of rngs:
\begin{align*}
    \varphi^*: C^{\infty}_{\Rho_f}(\R^3)\diffto C^{\infty}_{R_f}((0,\infty)\times S^1\times \R).
\end{align*}
\end{proposition}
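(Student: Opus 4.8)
The plan is to follow the blueprint of \cref{proposition: positive tau isomorphism}. Since $\varphi$ is a diffeomorphism from $(0,\infty)\times S^1\times\R$ onto $\R^3\setminus\Rho$ (the determinant being $r(1+\cos\theta\sin\theta)>0$, together with bijectivity, which is checked as for $\varphi_\tau$ or directly), the pullback $\varphi^{*}$ is already an isomorphism of rngs from $C^{\infty}(\R^3\setminus\Rho)$ onto $C^{\infty}((0,\infty)\times S^1\times\R)$, so the only content of the proposition is that $\varphi^{*}$ restricts to a bijection between the $\Rho$-flat functions on $\R^3$ and the $R$-flat functions on $[0,\infty)\times S^1\times\R$. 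As in the reference this splits into (i) well-definedness: $\varphi^{*}f$ extends to a smooth function on $[0,\infty)\times S^1\times\R$, flat along $R$, whenever $f\in C^{\infty}_{\Rho_f}(\R^3)$; and (ii) surjectivity: the inverse pullback $\psi^{*}:=(\varphi^{-1})^{*}$ maps $C^{\infty}_{R_f}$ back into $C^{\infty}_{\Rho_f}(\R^3)$. The only new feature compared with the case $0<\tau\le 1$ is that the factor $r^\tau$ is replaced by $\log r$; but $\log r$ grows slower than any negative power of $r$ blows up, i.e. $r^{k}|\log r|^{m}\to 0$ as $r\searrow 0$ for all $k\ge 1$, $m\ge 0$, and moreover $1+\cos\theta\sin\theta\in[1/2,3/2]$ is bounded away from $0$, so the denominators appearing in $\diff\varphi^{-1}$ are harmless.

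For (i) I would show by induction on $|\alpha|$, using the chain rule and the explicit form of $\diff\varphi$, that for $f\in C^{\infty}(\R^3)$
\[ \partial^{\alpha}_{(r,\theta,s)}(\varphi^{*}f)(r,\theta,s)=\sum_{\beta\le\alpha_1+\alpha_2}P_{\beta}\bigl(r,r^{-1},\log r,\cos\theta,\sin\theta\bigr)\,\bigl(\partial^{(\beta,\alpha_3)}_{(x,y,z)}f\bigr)(\varphi(r,\theta,s)), \]
where each $P_{\beta}$ is a polynomial whose degree in each argument is bounded by a constant depending only on $|\alpha|$; in the induction step $\partial_r$ may turn a $\log r$ into an $r^{-1}$ and raise the $\log r$-degree by one, while $\partial_\theta$ only introduces further factors $r,\cos\theta,\sin\theta$, both effects being controlled. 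Next, for $f\in C^{\infty}_{\Rho_f}(\R^3)$, Taylor expansion along $\Rho$ to order $k$ gives $f=\sum_{|\gamma|=k}x^{\gamma_1}y^{\gamma_2}f_{\gamma}$ with $f_{\gamma}\in C^{\infty}_{\Rho_f}(\R^3)$; substituting $x=r\cos\theta$ and $y=r(\log r\cos\theta+\sin\theta)$ produces an overall factor $r^{k}$ times a polynomial in $\log r,\cos\theta,\sin\theta$. Choosing $k$ larger than the maximal power of $r^{-1}$ occurring in the $P_{\beta}$ over all $|\alpha|\le N$ (a bound depending only on $N$), every derivative of order $\le N$ of $\varphi^{*}f$ tends to $0$ as $r\searrow 0$, uniformly in $\theta$ and locally uniformly in $s$. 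Since $N$ is arbitrary, $\varphi^{*}f$ extends smoothly across $R$ with vanishing infinite jet there; injectivity of $\varphi^{*}$ is immediate from bijectivity of $\varphi$.

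For (ii) the argument is symmetric, using $\diff\varphi^{-1}$ in place of $\diff\varphi$. An analogous induction gives, for $g\in C^{\infty}((0,\infty)\times S^1\times\R)$,
\[ \partial^{\alpha}_{(x,y,z)}(\psi^{*}g)(x,y,z)=\sum_{\beta\le\alpha_1+\alpha_2}\frac{Q_{\beta}\bigl(r,r^{-1},\log r,\cos\theta,\sin\theta\bigr)}{(1+\cos\theta\sin\theta)^{\alpha_1+\alpha_2}}\,\bigl(\partial^{(\beta,\alpha_3)}_{(r,\theta,s)}g\bigr)(\psi(x,y,z)), \]
with $Q_{\beta}$ of bounded degree, where each application of $\partial_x$ or $\partial_y$ contributes at most one further $r^{-1}$ (from the second row of $\diff\varphi^{-1}$) and leaves the denominator bounded. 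For $g\in C^{\infty}_{R_f}([0,\infty)\times S^1\times\R)$, Taylor expansion along $R$ shows $g=r^{k'}g_{k'}$ with $g_{k'}$ flat along $R$; since $\partial^{(\beta,\alpha_3)}_{(r,\theta,s)}g$ is again flat along $R$, it equals $r^{k'}$ times a smooth function for $k'$ arbitrarily large. Because $(x,y,z)\to\Rho$ precisely when $r\searrow 0$, taking $k'$ larger than the maximal $r^{-1}$-power arising for $|\alpha|\le N$ makes all derivatives of order $\le N$ of $\psi^{*}g$ vanish in the limit $(x,y,z)\to\Rho$, so $\psi^{*}g$ extends smoothly and flatly across $\Rho$.

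The main obstacle, exactly as in the case $\tau>0$, is the bookkeeping in these two inductions: one must verify that the number of negative powers of $r$ — and, here, also the degree in $\log r$ — generated after $|\alpha|$ differentiations is bounded by an explicit function of $|\alpha|$ alone, so that a single Taylor order suffices for all derivatives up to a given order. Everything else reduces to the elementary limit $r^{k}|\log r|^{m}\to 0$ and the two-sided bound on $1+\cos\theta\sin\theta$. Combining (i) and (ii) yields the claimed rng isomorphism; reading off the pushforwards of the coordinate vector fields from $\diff\varphi$ and $\diff\varphi^{-1}$ then gives the corresponding module isomorphism, and hence, via the analogues of \cref{corollary: flat isomorphism positive tau} and \cref{prop:flat_cylindrical}, the triviality of the flat Poisson cohomology $H^{\bullet}_{\Rho_f}(\R^3,\pi)$.
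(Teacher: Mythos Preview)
Your proposal is correct and takes essentially the same approach as the paper, which simply says ``Similar as \cref{proposition: positive tau isomorphism}'' and gives no further proof. You have correctly identified and handled the two new features of this case relative to the $\tau>0$ situation: the appearance of $\log r$ (controlled by $r^{k}|\log r|^{m}\to 0$ for $k\ge 1$) and the denominator $1+\cos\theta\sin\theta$ (bounded in $[1/2,3/2]$, hence harmless). One minor bookkeeping remark: the exponent of $(1+\cos\theta\sin\theta)$ in the inverse direction may exceed $\alpha_1+\alpha_2$, since differentiating the reciprocal raises the power; but as you observe this factor is uniformly bounded, so the precise exponent is irrelevant to the argument.
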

Note that $\varphi$ is a Poisson diffeomorphism where it is well-defined since
\begin{align*}
    \varphi_*(r\partial_r\wedge \partial_s)=(E+x\partial_y)\wedge \partial_z
\end{align*}
Hence $\varphi$ induces an isomorphism in flat Poisson cohomology. As a consequence of Proposition \ref{prop:flat_cylindrical} we get
\begin{corollary}
The flat Poisson cohomology groups $H^{\bullet}_{\Rho_f}(\R^3,\pi)$ are trivial in all degrees.
\end{corollary}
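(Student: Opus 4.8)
The plan is to reproduce, up to the obvious modifications, the argument already used for $0<\tau\le 1$: upgrade the rng isomorphism $\varphi^*$ of the previous proposition to an isomorphism of the \emph{complexes} of multivector fields flat along $\Rho$, and then read off the answer from \cref{prop:flat_cylindrical}.

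First I would promote $\varphi^*$ to an isomorphism of modules of flat multivector fields, exactly in the spirit of \cref{corollary: flat isomorphism positive tau}. The frames $\{\partial_r,\partial_\theta,\partial_s\}$ and $\{\partial_x,\partial_y,\partial_z\}$ trivialize the tangent bundles of $(0,\infty)\times S^1\times\R$ and of $\R^3$ respectively, and by the explicit formulas for $\diff\varphi$ and $\diff\varphi^{-1}$ written above, the pushforwards of these frames have coefficients that are polynomials in $r$, $r^{-1}$, $\log r$, $\cos\theta$, $\sin\theta$ and $(1+\cos\theta\sin\theta)^{-1}$. The point to verify is that multiplication by any such coefficient preserves flatness at $\{r=0\}$: a function flat there vanishes to all orders, so $r^{-k}(\log r)^m f$ remains smooth and flat for every flat $f$ and all $k,m$, because $\log r$ and $r^{-1}$ grow strictly slower than $e^{1/r}$. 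Since a multivector field is flat along $\Rho$ (resp.\ along $R$) iff all of its coefficients in the corresponding frame are flat at $\{r=0\}$ — after the harmless identification $C^\infty_{R_f}((0,\infty)\times S^1\times\R)\cong C^\infty_{R_f}([0,\infty)\times S^1\times\R)$ by extension by zero — this yields the module isomorphism $\varphi^*:\mathfrak{X}^\bullet_{\Rho_f}(\R^3)\xrightarrow{\sim}\mathfrak{X}^\bullet_{R_f}([0,\infty)\times S^1\times\R)$.

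Since $\varphi_*(r\partial_r\wedge\partial_s)=(E+x\partial_y)\wedge\partial_z=\pi$ on $\R^3\setminus\Rho$, this isomorphism intertwines $\diff_\pi$ with $\diff_{r\partial_r\wedge\partial_s}$, hence is an isomorphism of complexes $(\mathfrak{X}^\bullet_{\Rho_f}(\R^3),\diff_\pi)\simeq(\mathfrak{X}^\bullet_{R_f}([0,\infty)\times S^1\times\R),\diff_{r\partial_r\wedge\partial_s})$, and \cref{prop:flat_cylindrical} then finishes the proof. The only genuine work is the flatness-preservation check of the middle step; it is slightly more delicate than in the $\tau>0$ case because of the $\log r$ entries in $\diff\varphi$, but since $\log r$ decays slower than any negative power of $r$ this turns out to be no obstacle and the remainder is formal.
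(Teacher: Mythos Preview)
Your proof is correct and follows essentially the same route as the paper: promote the rng isomorphism $\varphi^*$ to an isomorphism of the flat multivector-field complexes (as in \cref{corollary: flat isomorphism positive tau}), use that $\varphi_*(r\partial_r\wedge\partial_s)=\pi$ to see it is a chain map, and conclude via \cref{prop:flat_cylindrical}. You actually spell out more than the paper does, in particular the point that the new $\log r$ factors in $\diff\varphi$ and $\diff\varphi^{-1}$ are harmless for flatness preservation since $r^{-k}(\log r)^m$ is killed by any flat function; the paper leaves this implicit by saying ``similar as \cref{proposition: positive tau isomorphism}''.
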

We are now ready to prove \cref{thm: type iv}.
\begin{proof}[Proof of \cref{thm: type iv}]
Since the cohomology groups $H^{\bullet}_{\Rho_f}(\R^3,\pi)$ are trivial, \eqref{eq: les R} induces isomorphisms
\begin{equation*}
    H^{\bullet}(\R^3,\pi)\diffto H^{\bullet}_{\Rho_F}(\R^3,\pi).
\end{equation*}
Therefore the result follows from \cref{prop: type iv}.
\end{proof}
\section{The Lie algebra of spiral-type}\label{sec:typevii}
In this section we want to compute the Poisson cohomology of the linear Poisson structure associated with the Lie algebra given by
\begin{align*}
    [e_1,e_3]= \tau e_1- e_2 \quad \text{ and }\quad [e_2,e_3]= e_1 +\tau e_2.
\end{align*}
for $0<\tau$. The corresponding linear Poisson structure is given by
\begin{align*}
    \pi_{\tau} = (\tau E+ T)\wedge \partial_z
\end{align*}
For the computation of the Poisson cohomology of $(\R^3,\pi_{\tau})$ we use once more the long exact sequence \eqref{eq: les R}.

\subsection{Formal Poisson cohomology}
Denote by $\mathfrak{X}_{\Rho_F}^{\bullet}(\R^3)$ the multivector fields with coefficients in the ring $C^{\infty}(\R)[[x,y]]$ and denote by $H^{\bullet}_{\Rho_F}(\R^3,\pi_{\tau})$ the associated Poisson cohomology.

\begin{proposition}\label{proposition: formal 7}
For $0<\tau$, the formal Poisson cohomology groups $H^{\bullet}_{\Rho_F}(\R^3,\pi_{\tau})$ are uniquely characterized by the following representatives in the different degrees:
\begin{itemize}
    \item in degree $0$:
    \[ c\in \R; \]
    \item in degree $1$:
    \[ c_1E+ c_2\partial_z\qquad \text{ for } c_1,c_2\in \R;\]
    \item in degree $2$:
    \[ cE\wedge\partial_z\qquad \text{ for } c\in \R;\]
    \item and the third formal Poisson cohomology group is trivial.
\end{itemize}
\end{proposition}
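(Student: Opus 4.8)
The plan is to compute $H^{\bullet}_{\Rho_F}(\R^3,\pi_{\tau})$ in exactly the same spirit as the earlier formal computations (\cref{thm:open_book_formal} and \cref{prop: type iv}): one expands a multivector field with coefficients in $C^{\infty}(\R)[[x,y]]$ as $\sum_{i,j\ge 0}(\,\cdot\,)_{ij}(z)\,x^iy^j$, writes the Poisson differential through the general formulas \eqref{eq: general 0}--\eqref{eq: general 2} with $X_{\pi_{\tau}}=\tau E+T=(\tau x-y)\partial_x+(x+\tau y)\partial_y$, and solves the resulting equations on the coefficient functions degree by degree, following the general procedure of \cref{sec:prelim}: pick a complement of the cocycles in degree $i-1$, use it to normalize the cocycles in degree $i$, then solve the remaining cocycle conditions.

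The role of \cref{lemma: number game} is taken here by a spectral analysis of $\lie_{X_{\pi_{\tau}}}$ and of the shifted operators appearing in the differential, all of which preserve the space $V_n$ of polynomials in $(x,y)$ homogeneous of degree $n=i+j$. Since $\lie_E$ acts on $V_n$ as $n\cdot\mathrm{id}$ and $\lie_T$ is the infinitesimal rotation, in the complex basis $\{w^a\bar w^b : a+b=n\}$ of $V_n\otimes\mathbb{C}$ with $w=x+\sqrt{-1}\,y$, the operator $\lie_{X_{\pi_{\tau}}}=\tau\lie_E+\lie_T$ is diagonal with eigenvalue $\tau n+\sqrt{-1}\,(a-b)$. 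As $\tau>0$, for a given $\lambda\in\mathbb{C}$ at most one monomial is killed by $\lie_{X_{\pi_{\tau}}}-\lambda$, so $\lie_{X_{\pi_{\tau}}}-\lambda$ restricts to a $C^{\infty}(\R)$-linear isomorphism of $V_n\otimes C^{\infty}(\R)$ except in the cases we will need: $\lambda=0$ with $n=0$; $\lambda=\tau\mp\sqrt{-1}$ with $n=1$ (kernel $\bar w$, resp.\ $w$, i.e.\ the $E,T$-directions); and $\lambda=2\tau$ with $n=2$ (kernel $w\bar w=x^2+y^2$). Morally this is, for the spiral Poisson structure, the analogue of the $S^1$-symmetry exploited in \cref{sec:euclidean}; it is the only structural input needed.

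In degree $0$ a cocycle satisfies $\partial_z g=0$ and $\lie_{X_{\pi_{\tau}}}g=0$, so all coefficients are constant and, by the spectral statement, only $g_{00}$ survives: $H^0_{\Rho_F}\simeq\R$. In degree $1$, a degree-$0$ coboundary has $\partial_z$-component $-\lie_{X_{\pi_{\tau}}}g$, so invertibility of $\lie_{X_{\pi_{\tau}}}$ in degrees $\ge1$ lets us normalize $X^z=X^z(z)$; combining \eqref{eq: general 1x} and \eqref{eq: general 1y} into the variables $X^x\mp\sqrt{-1}\,X^y$ turns $W^x(\pi_\tau)=W^y(\pi_\tau)=0$ into $(\lie_{X_{\pi_{\tau}}}-(\tau\mp\sqrt{-1}))(X^x\mp\sqrt{-1}\,X^y)$ equal to a scalar multiple of $\bar w\,\partial_zX^z$, resp.\ $w\,\partial_zX^z$; since that right-hand side lies in the kernel of the operator on the left, it must vanish, forcing $X^z=c_2\in\R$ and that the $(x,y)$-part of $X$ has the form $\beta_1(z)E+\beta_2(z)T$, while \eqref{eq: general 1z} forces $\beta_1+\tau\beta_2$ to be constant. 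The residual coboundary freedom $\diff_{\pi_{\tau}}(g_0(z))=g_0'(z)(\tau E+T)$ shifts $\beta_2$ by $g_0'$ and leaves $\beta_1+\tau\beta_2$ unchanged, so we may set $\beta_2=0$ and obtain $H^1_{\Rho_F}\simeq\R^2$ with representatives $c_1E+c_2\partial_z$. In degree $2$ the cocycle condition is the single scalar equation $(\tau x-y)\partial_zW^x+(x+\tau y)\partial_zW^y+(2\tau-\lie_{X_{\pi_{\tau}}})W^z=0$ from \eqref{eq: general 2}; solving it degreewise, $W^z$ is determined by $W^x,W^y$ in every homogeneous degree except $n=2$, where $2\tau-\lie_{X_{\pi_{\tau}}}$ has kernel $\R(x^2+y^2)$, producing a free term $\lambda(z)(x^2+y^2)$ and one compatibility constraint on the degree-one parts of $W^x,W^y$. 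Using \eqref{eq: general 1x}--\eqref{eq: general 1z} to describe the coboundaries and perform the normalizations, one checks that this free term is exact --- indeed $\diff_{\pi_{\tau}}(-zE)=(x^2+y^2)\partial_x\wedge\partial_y$, and similarly $\diff_{\pi_{\tau}}(z\partial_z)=(\tau E+T)\wedge\partial_z$, $\diff_{\pi_{\tau}}(\tfrac12 S)=E_{-1}\wedge\partial_z$ with $S=y\partial_x+x\partial_y$, etc., kill the remaining candidate classes --- so that modulo coboundaries a $2$-cocycle reduces to a multiple of $E\wedge\partial_z$, which one verifies is not exact; hence $H^2_{\Rho_F}\simeq\R$ with representative $cE\wedge\partial_z$. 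Finally $\diff_{\pi_{\tau}}\colon\mathfrak{X}^2_{\Rho_F}(\R^3)\to\mathfrak{X}^3_{\Rho_F}(\R^3)$ is onto: given $h\,\partial_x\wedge\partial_y\wedge\partial_z$, use $W^x=A(z)x$, $W^y=A(z)y$ with $A'(z)=\tfrac1\tau$ times the $(x^2+y^2)$-component of $h$, then invert $2\tau-\lie_{X_{\pi_{\tau}}}$ on the remainder to choose $W^z$; so $H^3_{\Rho_F}=0$.

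The main obstacle is the bookkeeping in degree $2$: one must track precisely how the coboundary relations inherited from degree $1$ act on the $(W^x,W^y)$-data and on the $\R(x^2+y^2)$-term in $W^z$, in order to conclude that exactly the class of $E\wedge\partial_z$ survives (equivalently, that the modular class generates $H^2_{\Rho_F}$) and nothing more. The other degrees, and the surjectivity in degree $3$, are then essentially forced once the spectral description of $\lie_{X_{\pi_{\tau}}}$ is in place.
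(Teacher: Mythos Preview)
Your proposal is correct and follows essentially the same route as the paper's proof, with one cosmetic difference. The paper introduces a real decomposition (\cref{lemma: nice basis}) that splits a vector field as $X^E E+X^T T+X^x\partial_x+X^y\partial_y+X^z\partial_z$ with $X^E,X^T,X^z\in C^{\infty}(\R)[[x,y]]$ and $X^x,X^y\in C^{\infty}(\R)[[y]]$, and then runs the recursion on the coefficients $X^{\bullet}_{ij}(z)$ directly over~$\R$, arguing by induction on the total degree $n=i+j$. Your complexification via $w=x+\sqrt{-1}\,y$ simply diagonalises the same operator: the eigenspaces $\R w^a\bar w^b$ are the complexified version of the paper's $E,T$-splitting, and your identification of the relevant kernels ($n=0$; $n=1$ with eigenvectors $w,\bar w$, i.e.\ the $E,T$ directions; $n=2$ with eigenvector $w\bar w=x^2+y^2$) is exactly what drives the paper's recursion to terminate. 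In particular your coboundary checks $\diff_{\pi_\tau}(-\Lambda(z)E)=\Lambda'(z)(x^2+y^2)\,\partial_x\wedge\partial_y$ and $\diff_{\pi_\tau}(z\partial_z)=(\tau E+T)\wedge\partial_z$ are the explicit incarnation of what the paper obtains from \eqref{eq: 722}. The complexified presentation is a bit more transparent about \emph{why} the only surviving classes are the ones listed; the paper's real recursion is more self-contained (no complex eigenvalues) and matches the style of the earlier sections. Either way the degree~$2$ bookkeeping you flag as the main obstacle is precisely the content of the paper's equations \eqref{eq: 72}--\eqref{eq: 722}, and your treatment of it is adequate.
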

For the proof we will use the following Lemma.
\begin{lemma}\label{lemma: nice basis}
Every vector field $X\in \mathfrak{X}_{\Rho_F}$ can uniquely be written as
\begin{align*}
    X^E E+X^{T}T+X^x\partial_x + X^y\partial_y +X^z\partial_z
\end{align*}
where $X^E,X^T,X^z\in C^{\infty}(\R)[[x,y]]$ and $X^x,X^y\in C^{\infty}(\R)[[y]]$. Similarly, every bivector field $W\in \mathfrak{X}^2_{\Rho_F}$ can uniquely be written as
\begin{align*}
    W^T E\wedge \partial_z +W^{E}T\wedge\partial z+W^x\partial_x\wedge\partial_z +W^y\partial_y\wedge \partial z +W^z\partial_x\wedge \partial_y
\end{align*}
where $W^E,W^T,W^z\in C^{\infty}(\R)[[x,y]]$ and $W^x,W^y\in C^{\infty}(\R)[[y]]$.
\end{lemma}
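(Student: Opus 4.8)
The plan is to see that \cref{lemma: nice basis} is really a statement of linear algebra over the formal power series ring $C^{\infty}(\R)[[x,y]]$: reduce the bivector case to the vector field case, then identify the vector field case with a ``division by $x+\mathrm i y$'' statement for formal power series.

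First, the bivector case follows from the vector field case by relabeling: since $E\wedge\partial_z=x\,\partial_x\wedge\partial_z+y\,\partial_y\wedge\partial_z$ and $T\wedge\partial_z=-y\,\partial_x\wedge\partial_z+x\,\partial_y\wedge\partial_z$, the asserted expression for $W$ is the one for vector fields read through $\partial_x\leftrightarrow\partial_x\wedge\partial_z$, $\partial_y\leftrightarrow\partial_y\wedge\partial_z$, $\partial_z\leftrightarrow\partial_x\wedge\partial_y$, $E\leftrightarrow E\wedge\partial_z$, $T\leftrightarrow T\wedge\partial_z$ — only the constraint pattern (which coefficients lie in $C^{\infty}(\R)[[x,y]]$ and which in $C^{\infty}(\R)[[y]]$) matters, and it is preserved. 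So I would work with a vector field $X=a\,\partial_x+b\,\partial_y+c\,\partial_z$, with $a,b,c\in C^{\infty}(\R)[[x,y]]$. Expanding $X^EE+X^TT+X^x\partial_x+X^y\partial_y+X^z\partial_z$ in the frame $(\partial_x,\partial_y,\partial_z)$ forces $X^z=c$ and leaves
\begin{equation*}
  (a,b)=(xX^E-yX^T+X^x,\ yX^E+xX^T+X^y),
\end{equation*}
so the lemma is equivalent to the bijectivity of the $C^{\infty}(\R)$-linear map
\[
  \Psi\colon C^{\infty}(\R)[[x,y]]^{2}\oplus C^{\infty}(\R)[[y]]^{2}\longrightarrow C^{\infty}(\R)[[x,y]]^{2},\qquad
  (X^E,X^T,X^x,X^y)\longmapsto(xX^E-yX^T+X^x,\ yX^E+xX^T+X^y).
\]

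Next I would complexify: writing a pair $(u,v)$ of power series with $C^{\infty}(\R)$-coefficients as the single series $u+\mathrm i v$ with complex $C^{\infty}(\R)$-coefficients, one has $(xX^E-yX^T)+\mathrm i(yX^E+xX^T)=(x+\mathrm i y)(X^E+\mathrm i X^T)$, so under this identification $\Psi$ becomes $(g,h)\mapsto(x+\mathrm i y)\,g+h$, with $g$ an arbitrary power series in $x,y$ and $h$ one in $y$ alone (all with complex $C^{\infty}(\R)$-coefficients). Thus everything comes down to the statement that every such $f$ has a unique expression $f=(x+\mathrm i y)\,g+h$; taking real and imaginary parts of $g$ and $h$ then produces the required $X^E,X^T\in C^{\infty}(\R)[[x,y]]$ and $X^x,X^y\in C^{\infty}(\R)[[y]]$, uniquely. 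To prove this I would make the invertible $C^{\infty}(\R)$-linear change of formal variables $(x,y)\mapsto(w,y)$ with $w=x+\mathrm i y$; in the variables $(w,y)$, collecting the $w$-free part gives $f=f|_{w=0}+w\,g$ (the formal power series avatar of Hadamard's \cref{lemma: hadamard}), which is existence with $f|_{w=0}$ involving $y$ only, and uniqueness because $w\,g$ carries no $w^{0}$-term, so $w\,g=f-f|_{w=0}$ determines $g$ (multiplication by the single variable $w$ being injective on the power series ring).

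I do not expect a genuine obstacle here; the only work is conceptual, namely spotting that the lemma is ``multiplication by $x+\mathrm i y$, together with its evident complement'' in disguise, after which every verification is routine. One might worry that $C^{\infty}(\R)$ is not an integral domain, so that $x+\mathrm i y$ need not a priori be a non-zero-divisor; but after the change of variables the only injectivity used is that of multiplication by a single formal variable, which holds over any commutative coefficient ring, so this worry is moot. With the claim established, \cref{lemma: nice basis} follows, and it supplies the adapted frame in which the Poisson differential entering \cref{proposition: formal 7} becomes transparent.
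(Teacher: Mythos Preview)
Your proof is correct. Both you and the paper rest on the same underlying fact---the formal Hadamard decomposition $R[[u,y]]=u\,R[[u,y]]\oplus R[[y]]$---but you deploy it differently. The paper's proof is a single line, asserting that the lemma follows from $\R[[x,y]]=x\cdot\R[[x,y]]\oplus\R[[y]]$; taken literally this requires the reader to supply a nontrivial bridge, since the $2\times 2$ system encoded by $E$ and $T$ does not decouple over the reals (applying the decomposition once leaves a residual term that must be handled iteratively, with convergence in the $(x,y)$-adic topology). Your complexification is exactly the missing diagonalisation: identifying $(a,b)$ with $a+\mathrm{i}b$ turns $\begin{pmatrix} x & -y\\ y & x\end{pmatrix}$ into multiplication by $x+\mathrm{i}y$, and after the invertible change of formal variables $w=x+\mathrm{i}y$ the same Hadamard decomposition applies in one stroke. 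So your argument is more explicit and self-contained; the paper's is terser and stays over $\R$, at the price of leaving the reader to fill in the iteration. Your remark that only injectivity of multiplication by the formal variable $w$ is needed (valid over any commutative coefficient ring, integral domain or not) is a good observation and closes the only potential gap.
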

\begin{proof}
This follows from the simple fact that $\R[[x,y]]$ decomposes as
\begin{align*}
    \R[[x,y]]= x\cdot \R[[x,y]] \oplus \R[[y]]
\end{align*}
\end{proof}
\begin{proof}[Proof of \cref{proposition: formal 7}]
\underline{In degree $0$:} In degree zero we have for $g\in\mathfrak{X}_{\Rho_F}^0(\R^3)$ that
\begin{align}\label{eq: 70}
    \diff  _{\pi_{\tau}}g= (\tau E+ T)g\partial_z- (\partial_z g)(\tau E+ T)
\end{align}
If we write $g=\sum_{i,j}g_{ij}(z)x^iy^j$ we obtain for cocycles the equations
\begin{align*}
    \tau \partial_zg_{i-1j}(z)- \partial_zg_{ij-1}(z)=0, \qquad \text{ and }\qquad
    \tau (i+j)g_{ij}(z)+ (j+1)g_{i-1j+1}(z)-(i+1)g_{i+1j-1}(z)=0.
\end{align*}
The second equations imply for fixed $i+j=n\ne 0$ and $i\to i+1\le n$ that
\begin{align*}
    g_{i+1n-i-1}(z)=c_ig_{0n}(z)
\end{align*}
for a positive constants $c_i >0$. For $i=n$ we get the equation
\begin{align*}
    g_{1n-1}=-cg_{0n}
\end{align*}
However, for $(i,j)=(0,n)$ we obtain the equation
\begin{align*}
    \tau ng_{0n}(z)=- g_{1n-1}(z).
\end{align*}
which implies that
\begin{align*}
    g_{ij}(z)=0 \qquad \text{ for } i+j\ne 0
\end{align*}
Finally the first equation implies that $g\in \R$.

\noindent{\underline{In degree $1$:}} For $X \in\mathfrak{X}_{\Rho_F}^1(\R^3)$ let us write $X$ as in Lemma \ref{lemma: nice basis} and let us denote its differential by
\begin{align*}
    \diff   _{\pi_{\tau}}X=& W^T(\pi) E\wedge \partial_z +W^{E}(\pi)T\wedge\partial z +W^x(\pi)\partial_y\wedge \partial_z +W^y(\pi)\partial_z\wedge\partial_x +W^z(\pi)\partial_x\wedge \partial_y
\end{align*}
The components $W^i(\pi)$ are given by
\begin{align*}
    W^T(\pi)=&\, (\tau E+T)X^E+\partial_y X^x+\tau \partial_z X^z, \qquad 
    W^E(\pi)= (\tau E+T)X^T+\partial_y X^y+\partial_z X^z \\
    W^x(\pi)=&\, \tau (y\partial_y -1) X^y-(1+y\partial_y)X^x\qquad -W^y(\pi) = \tau (y\partial_y -1) X^x+(1+y\partial_y)X^y,\quad  \qquad \\
    W^z(\pi)=&\, -r^2 \partial_zX^E+\tau r^2\partial_zX^T+ (\tau x-y)\partial_zX^y -(\tau y +x)\partial_z X^x
\end{align*}
where $r^2:=x^2+y^2$. Since we are interested in cohomology, we can use the same argument as in degree zero to show that we may choose a coboundary such that we may assume that
\begin{align*}
    X^z_{ij}(z)=0 \quad \text{ for } \ (i,j)\ne (0,0)  \qquad \text{ and }\qquad X^T_{00}(z)=0.
\end{align*}
Assuming that $X$ is a cocycle, the equations for $W^x(\pi)$ and $W^y(\pi)$ then read as:
\begin{align}
     0= \tau (j-1)X^y_j-(1+j)X^x_j\qquad \text{ and }\qquad 0=\tau (j-1)X^x_j+(1+j)X^y_j \label{eq: 714}
\end{align}
for all $j$, implying that
\begin{align*}
    X^x_{j}(z)= X^y_{j}(z)=0 \qquad \text{ for all }\ j.
\end{align*}
The equations for $W^E$ and $W^T$ then become
\begin{align}
     0=&\,  \tau(i+j)X^E_{ij}(z)-(i+1)X^E_{i+1j-1}(z)+(j+1)X^E_{i-1j+1}(z)+\tau \delta_{i0}\delta_{j0}\partial_z X^z_{00}(z) \label{eq: 711}\\
    0=&\, \tau(i+j)X^T_{ij}(z)-(i+1)X^T_{i+1j-1}(z)+(j+1)X^T_{i-1j+1}(z)+\delta_{i0}\delta_{j0}\partial_z X^z_{00}(z)\nonumber
\end{align}
These equations imply inductively for fixed $0\ne n=i+j $ and $i\to i+1$ with the same argument as in degree $0$ that
\begin{align*}
    X^E_{ij}(z)=0 \qquad \text{ for }\ (i,j)\ne (0,0)\qquad \text{ and }\qquad X^T_{ij}(z)=0 \qquad \text{ for all }\ (i,j).
\end{align*}
Equation \eqref{eq: 711} implies for $(i,j)=0$ that $X^z\in \R$ and finally, the equation for $W^z(\pi)=0$ implies that $X^E\in \R$.

\noindent{\underline{In degree $2$:}} Let us write $W \in\mathfrak{X}_{\Rho_F}^2(\R^3)$ as in Lemma \ref{lemma: nice basis} and note that by the arguments in degree $1$ we may assume that 
\begin{align*}
    W^x=W^y=W^E=0, \qquad  W^T\in C^{\infty}(\R), \qquad \text{ and }\qquad W^z\in C^{\infty}(\R)[[x,y]]\backslash x^2C^{\infty}(\R).
\end{align*}
We obtain for the differential the expression
\begin{align*}
    \diff  _{\pi_{\tau}}W= (-r^2\partial_zW^T+(\tau(2-E) -T)W^z)\partial_x\wedge \partial_y\wedge\partial_z
\end{align*}
The cocycle condition then can for $i+j\ne 2$ be written as
\begin{align}\label{eq: 72}
    0=&\,\tau(2-i-j)W^z_{ij}(z)+(i+1)W^z_{i+1j-1}(z)-(j+1)W^z_{i-1j+1}(z)
\end{align}
Hence arguing as in degree $0$ for $n=i+j\ne 2$ fixed and $j\to j+1$ we obtain 
\begin{align*}
    W^z_{ij}(x)=0 \qquad \text{ for } i+j\ne 2
\end{align*}
For $i+j=2$ we obtain the three equations
\begin{align}\label{eq: 722}
    0=&\,\partial_zW^T_{00}(z)-W^z_{11}(z),\qquad
    0=-2W^z_{02}(z)\qquad \text{ and }\qquad
    0=\partial_zW^T_{00}(z)+W^z_{11}(z)
\end{align}
and hence we obtain that
\begin{align*}
    W^z_{ij}=0 \qquad \text{ for all } i,j \qquad \text{ and } \qquad W^T\in \R.
\end{align*}

\noindent{\underline{Degree 3:}} We can derive from equations \eqref{eq: 72} and \eqref{eq: 722} that the third cohomology group is trivial.
\end{proof}

\subsection{The flat Poisson complex}
To compute the flat cohomology we use the fact that, away from the $z$-axis the Poisson structure induces a codimension one symplectic foliation.

By removing the $z$-axis from the Poisson manifold $(\R^3,\pi_{\tau})$, we obtain a codimension one symplectic foliation $(\F_{\tau},\omega_{\tau})$ which is unimodular with defining one-form $\varphi_{\tau}$ given by
\begin{align*}
    \varphi_{\tau} = \frac{(x+\tau y)\diff x +(y-\tau x)\diff y}{x^2+y^2}= \frac{\diff r}{r}-\tau \diff \theta  
\end{align*}
Therefore, the techniques from \cref{subsec:corank1} can be used to describe its cohomology. Consider on $\R^3\backslash\{x=y=0\}$ the vector field 
 \begin{align*}
   V_{\tau}:=&\frac{1}{2}\left(\left(x+\frac{y}{\tau}\right)\partial_x+\left(y-\frac{x}{\tau}\right)\partial_y\right)= \frac{1}{2}\left(E-\frac{1}{\tau}T\right),
  \end{align*}
and note that $\varphi_{\tau} (V_{\tau})=1$. The unique extension $\widetilde{\omega}_{\tau}$ of the leafwise symplectic structure satisfying $i_{V_{\tau}}\widetilde{\omega}_{\tau}=0$ is given by 
  \begin{align*}
   \widetilde{\omega}:=-\frac{1}{2(x^2+ y^2)}\left(\left(\frac{x}{\tau}-y\right)\diff x +\left(x+\frac{y}{\tau}\right)\diff y\right)\wedge \diff z 
  \end{align*}
Therefore, the Poisson complex of $(\R^3\backslash\{x=y=0\},\pi)$ fits into the short exact sequence \eqref{ses poisson unimod}, with $\varphi=\varphi_{\tau}$. However, since the singularities of $\varphi_{\tau}$ and $\widetilde{\omega}_{\tau}$ are of finite order, we can apply the same reasoning and obtain a similar short exact sequence for the $z$-flat Poisson cohomology. Denote by $\Omega_{\Rho_f}^{\bullet}(\R^3)$ the space of differential forms on $\R^3$ which are flat at the $z$-axis. The following holds:

\begin{proposition}\label{ses for flat things}
The $z$-flat Poisson complex of $(R^3,\pi_{\tau})$ fits into the short exact sequence:
 \[ 0\to (\varphi_{\tau}\wedge \Omega_{\Rho_f}^{\bullet}(\R^3),\diff )\xrightarrow{j_{\varphi_{\tau}}} (\mathfrak{X}^{\bullet}_{\Rho_f}(\R^3),\diff _{\pi})\xrightarrow{p_{\varphi_{\tau}}} (\varphi_{\tau}\wedge \Omega^{\bullet-1}_{\Rho_f}(\R^3),\diff )\to 0,
 \]
 where $j_{\varphi_{\tau}}=(-\pi_{\tau}^{\sharp})\circ i_{V_{\tau}}$ and  $p_{\varphi_{\tau}}=e_{\varphi_{\tau}}\circ (-\widetilde{\omega}^{\flat}_{\tau})\circ i_{\varphi_{\tau}}$.
\end{proposition}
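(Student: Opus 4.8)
The plan is to obtain this sequence by restricting, to the subcomplexes of objects flat along the $z$-axis, the short exact sequence \eqref{ses poisson unimod} attached to the regular corank one Poisson manifold $(\R^3\backslash\Rho,\pi_{\tau})$, whose symplectic foliation $(\F_{\tau},\omega_{\tau})$ is unimodular and coorientable with the closed defining one-form $\varphi_{\tau}$. I would proceed in three steps.

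First I would set up the smooth picture away from the $z$-axis. One checks directly that $\varphi_{\tau}=\diff r/r-\tau\,\diff\theta$ is closed on $\R^3\backslash\Rho$ and that $\varphi_{\tau}(V_{\tau})=1$, so $\varphi_{\tau}$ is a closed defining one-form for $\F_{\tau}$; since $\F_{\tau}$ is unimodular and coorientable, \eqref{ses poisson unimod} provides a short exact sequence of complexes with the maps $j_{\varphi_{\tau}}=(-\pi_{\tau}^{\sharp})\circ i_{V_{\tau}}$ and $p_{\varphi_{\tau}}=e_{\varphi_{\tau}}\circ(-\widetilde{\omega}_{\tau}^{\flat})\circ i_{\varphi_{\tau}}$. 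I would also record the dual maps $j_{V_{\tau}}=e_{V_{\tau}}\circ(-\pi_{\tau}^{\sharp})\circ i_{V_{\tau}}$ and $p_{V_{\tau}}=e_{\varphi_{\tau}}\circ(-\widetilde{\omega}_{\tau}^{\flat})$ from \eqref{dual_maps}, together with the homotopy identities \eqref{dual_maps_relations}. As a side remark, $\lie_E\pi_{\tau}=\lie_T\pi_{\tau}=0$, so $V_{\tau}=\frac{1}{2}\left(E-\frac{1}{\tau}T\right)$ is a Poisson vector field, $(\varphi_{\tau},\widetilde{\omega}_{\tau})$ is a cosymplectic pair, and $j_{V_{\tau}},p_{V_{\tau}}$ are chain morphisms as well.

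Second — and this is the heart of the argument — I would show that all four maps restrict to well-defined maps between $\mathfrak{X}^{\bullet}_{\Rho_f}(\R^3)$ and $\varphi_{\tau}\wedge\Omega^{\bullet}_{\Rho_f}(\R^3)$. The vector field $V_{\tau}$ and the bivector $\pi_{\tau}$ are smooth on all of $\R^3$, so $i_{V_{\tau}}$, $e_{V_{\tau}}$ and $-\pi_{\tau}^{\sharp}$ preserve flatness along the $z$-axis, whereas the coefficients of $\varphi_{\tau}$ and of $\widetilde{\omega}_{\tau}$ have poles of finite order along $\Rho$, so $i_{\varphi_{\tau}}$, $e_{\varphi_{\tau}}$ and $-\widetilde{\omega}_{\tau}^{\flat}$ send objects flat along the $z$-axis to objects which are again smooth and flat there, since vanishing to infinite order kills any finite-order pole. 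This is the same controlled-singularity mechanism already used in \cref{sec:typevi} and \cref{sec:typeiv}. Composing the building blocks, $j_{\varphi_{\tau}}$ carries $\varphi_{\tau}\wedge\Omega^{\bullet}_{\Rho_f}(\R^3)$ into $\mathfrak{X}^{\bullet}_{\Rho_f}(\R^3)$, while for $z$-flat $W$ the multivector $p_{\varphi_{\tau}}(W)=\varphi_{\tau}\wedge\big((-\widetilde{\omega}_{\tau}^{\flat})\,i_{\varphi_{\tau}}W\big)$ is $\varphi_{\tau}$ wedged with a $z$-flat form, hence lies in $\varphi_{\tau}\wedge\Omega^{\bullet-1}_{\Rho_f}(\R^3)$; the same computation applies to $j_{V_{\tau}}$ and $p_{V_{\tau}}$. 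One also notes that $\varphi_{\tau}\wedge\Omega^{\bullet-1}_{\Rho_f}(\R^3)$ is closed under $\diff$ because $\varphi_{\tau}$ is closed, so it is indeed a subcomplex.

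Third, exactness follows formally. Since $\R^3\backslash\Rho$ is dense in $\R^3$, the chain-map property of $j_{\varphi_{\tau}},p_{\varphi_{\tau}}$ and all the identities of \eqref{dual_maps_relations}, already valid for the unrestricted maps, persist for the restrictions to the flat complexes. From $p_{V_{\tau}}\circ j_{\varphi_{\tau}}=\mathrm{Id}$ one reads off that $j_{\varphi_{\tau}}$ is injective, from $p_{\varphi_{\tau}}\circ j_{V_{\tau}}=\mathrm{Id}$ that $p_{\varphi_{\tau}}$ is surjective, and from $\mathrm{Id}=j_{V_{\tau}}\circ p_{\varphi_{\tau}}+j_{\varphi_{\tau}}\circ p_{V_{\tau}}$ together with $p_{\varphi_{\tau}}\circ j_{\varphi_{\tau}}=0$ (itself a consequence of \eqref{dual_maps_relations}) that $\ker p_{\varphi_{\tau}}=\mathrm{Im}\, j_{\varphi_{\tau}}$, which gives the asserted short exact sequence. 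The main obstacle is the well-definedness in the second step: one has to make sure that the genuinely singular maps not only survive the restriction to the flat subcomplexes but also map them into the correct flat subcomplexes (in particular that the image of $p_{\varphi_{\tau}}$ really is $\varphi_{\tau}$ wedge a flat form); everything else is either a short explicit computation or a purely formal consequence of \cref{subsec:corank1}.
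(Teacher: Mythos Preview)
Your proposal is correct and follows the same approach as the paper: show well-definedness of $j_{\varphi_{\tau}},p_{\varphi_{\tau}}$ (and the dual maps $j_{V_{\tau}},p_{V_{\tau}}$) on the flat subcomplexes using that $\varphi_{\tau}$ and $\widetilde{\omega}_{\tau}$ have only finite-order singularities along $\Rho$, observe that the chain-map property and the identities \eqref{dual_maps_relations} persist by density, and deduce exactness formally from those identities. Your write-up is in fact more detailed than the paper's, which compresses the same argument into a few lines.
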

\begin{proof}
First, note that $\varphi_{\tau}\wedge \Omega^{\bullet}_{\Rho_f}(\R^3)$ is indeed well-defined: if $\eta$ is a flat form at $0$, then $\varphi_{\tau}\wedge \eta$ extends smoothly at zero and is also flat. The same applies also to the maps $\widetilde{\omega}^{\flat}_{\tau}$ and $p_{\varphi_{\tau}}$, hence the maps $j_{\varphi_{\tau}}$ and $p_{\varphi_{\tau}}$ are well-defined. They are chain maps because they satisfy this condition away from the $z$-axis. In order to show that the sequence is exact, note that also the maps $p_{V_{\tau}}$ and $j_{V_{\tau}}$ defined in \eqref{dual_maps} induce maps on flat forms/multi-vector fields. Relations \eqref{dual_maps_relations} (which still hold, because they hold away from the $z$-axis) imply that the sequence in the statement is indeed exact. 
\end{proof}

We call the cohomology of the complex 
\begin{equation*}
(\varphi_{\tau}\wedge\Omega^{\bullet}_{\Rho_f}(\R^3),\diff )
\end{equation*}
the flat foliated cohomology which we denote by $H^{\bullet}_{\Rho_f}(\mathcal{F})$. We have the following result:
\begin{proposition}\label{flat foliated cohomology}
    The flat foliated cohomology $H^{\bullet}_{\Rho_f}(\mathcal{F})$ is trivial in all degrees. 
\end{proposition}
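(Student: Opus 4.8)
The plan is to compute the cohomology of the complex $(\varphi_\tau \wedge \Omega^\bullet_{\Rho_f}(\R^3), \diff)$ directly, exploiting that wedging with $\varphi_\tau$ identifies this complex with a shifted complex of flat forms on the leaves. First I would note that, since $\varphi_\tau = \frac{\diff r}{r} - \tau \diff\theta$ is closed and nowhere zero on $\R^3 \setminus \{x=y=0\}$, the map $\eta \mapsto \varphi_\tau \wedge \eta$ is injective on forms not divisible by $\varphi_\tau$, and the complex $\varphi_\tau \wedge \Omega^\bullet_{\Rho_f}(\R^3)$ is isomorphic (with a degree shift by one) to the foliated de Rham complex of the symplectic foliation $\F_\tau$ restricted to flat forms. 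Concretely, writing a flat form as $\varphi_\tau \wedge \beta$ with $\beta$ a flat form pulled back from the leaf directions, the differential becomes $\varphi_\tau \wedge \diff_{\F_\tau}\beta$, so $H^\bullet_{\Rho_f}(\F) \cong H^{\bullet-1}_{\Rho_f}(\F_\tau)$ where the latter is leafwise (flat) de Rham cohomology.

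The key computation is then the flat leafwise de Rham cohomology of $\F_\tau$. The leaves of $\F_\tau$ away from the $z$-axis are the spiraling surfaces swept out by the flow; after passing to cylindrical-type coordinates $(r,\theta,z)$ as used elsewhere in this section, each leaf is parametrized by a plane and the foliation is defined by the closed one-form $\varphi_\tau$. I would use the same change of coordinates / Poisson-diffeomorphism trick as in the earlier flat computations: away from the $z$-axis the foliation is diffeomorphic to a product foliation, and the diffeomorphism has only finite-order singularities at $\{x=y=0\}$, so it induces an isomorphism on the complexes of flat forms (by the Taylor-expansion argument already established in Lemmas for $C^\infty_{\Rho_f}$). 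In the product model the leafwise forms have coefficients that are flat at $r=0$ and the leafwise differential is essentially $\diff_{r,z}$ (or the de Rham differential on $\R^2$), so a standard Poincaré-lemma-style argument — integrating along the contractible leaf directions while keeping track of flatness — shows every closed flat form is the differential of a flat form. Flatness is preserved because integration against a flat function against a finite-order singular coefficient stays flat, exactly as in the proof of \cref{prop:flat_cylindrical}.

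The main obstacle I anticipate is not the homological algebra but bookkeeping the flatness condition under the coordinate change and under the homotopy operators: one must check that the primitives produced by the leafwise Poincaré lemma genuinely remain flat at the $z$-axis, which requires the kind of polynomial-growth-versus-flat-decay estimates proved in \cref{lemma:cylindrical vanishing}. I would structure the argument to reduce everything to \cref{prop:flat_cylindrical} (or an analogue for forms rather than multivector fields) via the explicit diffeomorphism $\varphi_\tau$ for the spiral case, so that the only genuinely new content is verifying that $\varphi_\tau$ (with its finite-order singularities) induces an isomorphism of flat-form complexes — which follows verbatim from the method of \cref{proposition: positive tau isomorphism}. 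Hence the triviality of $H^\bullet_{\Rho_f}(\F)$ follows, degree by degree, from the triviality of flat de Rham cohomology of $\R^2$ relative to the origin.
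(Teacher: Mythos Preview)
Your approach is plausible but genuinely different from the paper's, and it has a gap that would need filling.

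The paper does not pass to a product model via a coordinate change. Instead it builds a chain homotopy directly from the flow of the Hamiltonian vector field $W_\tau = -\tau E - T = \pi_\tau^\sharp(\diff z)$. Because $\tau > 0$, the flow $\phi_t^\tau$ spirals every point inward toward the $z$-axis, so the retraction $p(x,y,z) = (0,0,z)$ satisfies $\lim_{t\to\infty}\phi_t^\tau = p$ in the compact-open $C^\infty$-topology. The Cartan homotopy $h_t^\tau(\alpha) = \int_0^t i_{W_\tau}(\phi_s^\tau)^*\alpha\,\diff s$ then converges as $t\to\infty$ to a homotopy between the identity and $p^* = 0$ on the flat complex; flatness absorbs the finite-order blow-up, and $W_\tau$ being tangent to $\F_\tau$ guarantees that $h^\tau$ preserves $\varphi_\tau \wedge \Omega^\bullet_{\Rho_f}$. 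The spiral geometry itself supplies the contraction --- no auxiliary model is needed.

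Your route, by contrast, requires an explicit diffeomorphism from a product model $(0,\infty)\times S^1\times\R$ to $\R^3\setminus\Rho$ intertwining the foliations, analogous to the maps in \cref{sec:typevi} and \cref{sec:typeiv}. You do not actually write one down: in this section the symbol $\varphi_\tau$ denotes the defining one-form, not a diffeomorphism, so your reference to ``the explicit diffeomorphism $\varphi_\tau$ for the spiral case'' points to something that is not in the paper. Such a map does exist --- for instance $(\rho,\phi,s)\mapsto(\rho^\tau\cos(\log\rho+\phi),\,\rho^\tau\sin(\log\rho+\phi),\,s)$ --- but you would then have to prove the analogue of \cref{proposition: positive tau isomorphism} for it, checking in particular that the oscillatory $\log\rho$ in the angular coordinate still yields only polynomial-order singularities in the inverse. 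After that you would still need a foliated Poincar\'e lemma for flat forms on the cylinder model, which is not literally \cref{prop:flat_cylindrical} (that statement is about the Poisson complex of multivector fields, not a complex of foliated forms). None of this is obviously false, but it is real work that you have not done, and the paper's flow argument bypasses all of it in one stroke.
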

As an immediate consequence we obtain for the $z$-flat Poisson cohomology:
\begin{corollary}
The $z$-flat Poisson cohomology groups $H^{\bullet}_{\Rho_f}(\R^3,\pi_{\tau})$ are trivial in all degrees.
\end{corollary}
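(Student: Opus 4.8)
The plan is to make the complex $(\varphi_{\tau}\wedge\Omega^{\bullet}_{\Rho_f}(\R^3),\diff)$ completely explicit in coordinates and then show it is acyclic by a short homotopy argument. On $\R^3\setminus\Rho$ I would use polar coordinates $(r,\theta,z)$, so that $\varphi_{\tau}=\frac{\diff r}{r}-\tau\,\diff\theta$, and take $\{\varphi_{\tau},\diff z,\diff\theta\}$ as a coframe. A direct computation gives
\[
\varphi_{\tau}\wedge\diff z=\frac{\big((x+\tau y)\diff x+(y-\tau x)\diff y\big)\wedge\diff z}{x^{2}+y^{2}},\qquad \varphi_{\tau}\wedge\diff\theta=\frac{\diff x\wedge\diff y}{x^{2}+y^{2}},\qquad \varphi_{\tau}\wedge\diff z\wedge\diff\theta=-\frac{\diff x\wedge\diff y\wedge\diff z}{x^{2}+y^{2}},
\]
so each of these forms has a singularity along $\Rho$ of exactly the order that is cancelled by a coefficient flat along $\Rho$. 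Hence writing an element of $\varphi_{\tau}\wedge\Omega^{k}_{\Rho_f}(\R^3)$ as $f\varphi_{\tau}$, as $\eta_{2}\,\varphi_{\tau}\wedge\diff z+\eta_{3}\,\varphi_{\tau}\wedge\diff\theta$, and as $h\,\varphi_{\tau}\wedge\diff z\wedge\diff\theta$ for $k=0,1,2$ identifies these spaces with tuples of functions in $C^{\infty}_{\Rho_f}(\R^3)$; the coefficients are recovered from the form by contracting with $V_{\tau}$, $W:=\tau E+T$ and $\partial_z$, which are globally smooth vector fields on $\R^3$, so the identification is genuinely with smooth functions flat along $\Rho$. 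Since $\varphi_{\tau},\diff z,\diff\theta$ are closed and $[W,\partial_z]=0$, the differential becomes, up to signs, the Koszul-type complex
\[
C^{\infty}_{\Rho_f}(\R^3)\xrightarrow{\ f\mapsto(\partial_z f,\,Wf)\ }C^{\infty}_{\Rho_f}(\R^3)^{2}\xrightarrow{\ (\eta_{2},\eta_{3})\mapsto W\eta_{2}-\partial_z\eta_{3}\ }C^{\infty}_{\Rho_f}(\R^3)\to 0.
\]

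The one analytic input then needed is a lemma, the geometric analogue of \cref{cor: invertible} and \cref{lemma: invertable}: for $\tau>0$ the operator $W=\tau E+T=(\tau x-y)\partial_x+(x+\tau y)\partial_y$ is a bijection of $C^{\infty}_{\Rho_f}(\R^3)$. Injectivity is immediate: if $Wf=0$ then $f$ is constant along the flow lines of $W$, which lie in the planes $z=\text{const}$ and converge to a point of $\Rho$ in backward time since the linear part of $W$ has eigenvalues $\tau\pm i$ with positive real part; as $f$ is continuous and vanishes on $\Rho$, it vanishes along every such flow line, hence $f\equiv0$. For surjectivity I would exhibit the inverse explicitly, $W^{-1}g(r,\theta,z)=\int_{-\infty}^{0}g(re^{\tau s},\theta+s,z)\,\diff s=\int_{-\infty}^{0}(\Phi_{W}^{s})^{*}g\,\diff s$; flatness of $g$ along $\Rho$ together with $\tau>0$ gives for every $N$ a bound $|D^{\alpha}g|\le C_{N,\alpha,K}(x^{2}+y^{2})^{N}$ near $\Rho$, uniformly for $z$ in a compact $K$, which makes the integral and all its derivatives absolutely convergent; the substitution $u=re^{\tau s}$ shows the result is again flat along $\Rho$ and extends smoothly to $\R^3$, and differentiating under the integral gives $W\circ W^{-1}=\mathrm{id}$.

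With the lemma in hand, acyclicity is formal. The last differential is onto because $g=W(W^{-1}g)-\partial_z0$. If $(\eta_{2},\eta_{3})$ is a cocycle in degree $1$, i.e. $W\eta_{2}=\partial_z\eta_{3}$, then $f:=W^{-1}\eta_{3}$ satisfies $Wf=\eta_{3}$ and $W(\partial_z f-\eta_{2})=\partial_z\eta_{3}-W\eta_{2}=0$, so $\partial_z f=\eta_{2}$ by injectivity of $W$, whence $(\eta_{2},\eta_{3})=\diff f$. A cocycle in degree $0$ satisfies $Wf=0$, so $f=0$ again by injectivity. Thus $H^{\bullet}_{\Rho_f}(\mathcal{F})=0$, and the corollary for $H^{\bullet}_{\Rho_f}(\R^3,\pi_{\tau})$ then drops out of the short exact sequence in \cref{ses for flat things}.

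I expect the main obstacle to be the first step, namely checking that wedging the singular form $\varphi_{\tau}$ against an arbitrary flat form always yields a smooth form on $\R^3$ that is still flat along $\Rho$ and, in the other direction, that every smooth flat form of the listed type arises this way with coefficients that are themselves smooth and flat on all of $\R^3$. This is precisely where one uses that $V_{\tau}$, $W=\tau E+T$ and $\partial_z$ extend to smooth vector fields on $\R^3$, so that extracting the coefficients by contraction stays inside $C^{\infty}_{\Rho_f}(\R^3)$. After that, the estimates for $W^{-1}$ are routine and the homotopy argument above is immediate.
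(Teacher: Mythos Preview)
Your proof is correct and its analytic core coincides with the paper's: both arguments ultimately rest on the fact that, for $\tau>0$, integration along the backward flow of $W=\tau E+T$ (equivalently the forward flow of the paper's $W_\tau=-\tau E-T$) converges on $\Rho$-flat data and provides a homotopy to zero. The final step, deducing triviality of $H^{\bullet}_{\Rho_f}(\R^3,\pi_\tau)$ from the short exact sequence of \cref{ses for flat things} once $H^{\bullet}_{\Rho_f}(\mathcal{F})=0$, is identical.

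The organizational difference is genuine, though. The paper works directly with forms: it writes down the Cartan homotopy $h^\tau(\alpha)=\int_0^\infty i_{W_\tau}(\phi_s^\tau)^*\alpha\,\diff s$ and shows it yields $p^*=\mathrm{id}+\diff h^\tau+h^\tau\diff$ with $p^*=0$. You instead first strip the complex down to a two-term Koszul complex in $C^\infty_{\Rho_f}(\R^3)$ with the commuting operators $\partial_z$ and $W$, using the coframe $\{\varphi_\tau,\diff z,\diff\theta\}$ and the dual frame $\{V_\tau,\partial_z,W\}$ to extract coefficients, and then kill everything by proving $W$ is bijective on flat functions via the integral $W^{-1}g=\int_{-\infty}^{0}(\Phi_W^s)^*g\,\diff s$. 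Your version makes the structure of the complex completely transparent and isolates the single analytic lemma (bijectivity of $W$) needed; the paper's version is shorter but keeps the homotopy at the level of forms. Note that the well-definedness issue you flag as the main obstacle is exactly what the paper handles in the proof of \cref{ses for flat things}, so you can cite that rather than redo it.
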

\begin{proof}[Proof of \cref{flat foliated cohomology}]    
In order to calculate the cohomology of $(\varphi_{\tau}\wedge\Omega^{\bullet}_{\Rho_f}(\R^3),\diff )$, we use a retraction onto the $z$-axis along the leaves of the foliation. Define the retraction as follows:
\begin{align*}
p (x,y,z)&:=(0,0,z)
\end{align*}
Note that $p$ induces via pullback the zero map on the $z$-flat foliated complex, i.e. 
\[p^*=0:(\varphi_{\tau}\wedge\Omega_{\Rho_f}^{\bullet}(\R^3),\diff )\to (\varphi_{\tau}\wedge\Omega_{\Rho_f}^{\bullet}(\R^3),\diff ).\]
We show that $p^*$ is a quasi-isomorphism, i.e.\, an isomorphism in cohomology. For the proff we use the flow of the vector field:
  \begin{align*}
   W_{\tau}=& -\tau E -T =\pi_{\tau}^{\sharp}(\diff z).
  \end{align*}
Note that $W_{\tau}$ is tangent to $\F_{\tau}$. Its flow is given by:
  \begin{equation*}
  \begin{array}{cccc}
   \phi^{\tau}_t:&[0,\infty)\times \R^3 &\to&\R^3\\
   &(t,x,y,z)&\mapsto&(x_t^{\tau},y_t^{\tau},z_t^{\tau})
  \end{array}
  \end{equation*}
  where 
  \begin{align*}
      x_t^{\tau}:=&\, e^{-t\tau}(x\cos(2\pi t)- y\sin(2\pi t))\\
      y_t^{\tau}:=&\, e^{-t\tau}(y\cos(2\pi t)+x\sin(2\pi t))\\
      z_t^{\tau}:=&\, z
  \end{align*}
From the properties of the vector field $W_{\tau}$ we immediately obtain that $\phi^{\tau}_t$ fixes the $z$-axis and preserves the complex $\varphi_{\tau}\wedge\Omega^{\bullet}_{\Rho_f}(\R^3)$. 
By a direct calculation, we have that 
\begin{equation}\label{eq: at time t}
\left(\phi_t^\tau\right)^*(\alpha)-\alpha=\diff  \circ h_t^\tau(\alpha)+h_t^\tau\circ \diff (\alpha),    
\end{equation}
where
\[h_t^{\tau}:\varphi_\tau \wedge\Omega^{\bullet}_{\Rho_f}(\R^3)\to \varphi_\tau\wedge\Omega^{\bullet-1}_{\Rho_f}(\R^3), \ \ h_t^{\tau}(\alpha):=\int_0^{t}i_{W_{\tau}}(\phi_s^{\tau})^*(\alpha)\diff  s.\]
The explicit formulas for $\phi_t^\tau$ imply that: 
\begin{align}\label{infinity now}
\lim_{t\to\infty}\phi_t^\tau=p, 
\end{align}
with respect to the compact-open $C^{\infty}$-topology, and for any $\alpha\in \Omega_{\Rho_f}^{\bullet}(\R^3)$, the limit 
   \begin{align}\label{limit h}
    h^\tau(\alpha):=\lim_{t \to \infty}{h_{t}^\tau(\alpha)}\in \Omega_{\Rho_f}^{\bullet-1}(\R^3),
   \end{align}
exists with respect to the compact-open $C^{\infty}$-topology. Recall that the existence of a limit with respect to the compact-open $C^{\infty}$-topology means that all partial derivatives converge uniformly on compact subsets (see e.g. ~\autocite[section 7.2]{Marcut2019}). Since the limit \eqref{limit h} is uniform on compact subsets with respect to all $C^k$-topologies, $h$ satisfies
   \begin{align*}
    \diff  h^\tau(\alpha)&=\lim_{t \to \infty}{\diff  h_{t}^\tau(\alpha)}.
   \end{align*}
From \eqref{eq: at time t} and \eqref{infinity now}, we obtain that for any $\alpha\in \Omega_{\Rho_f}^{\bullet}(\R^3)$
\[0=p^*(\alpha)=\lim_{t\to\infty}(\phi_t^\tau)^*(\alpha)=\alpha+\diff \circ h^\tau(\alpha)+h^\tau\circ \diff  (\alpha)\]
holds for the compact-open $C^{\infty}$-topology. 

Finally, since $h_t^\tau$ commutes with $e_{\varphi_{\tau}}$, and this condition is closed, we have that 
\[ h^\tau(\varphi_\tau\wedge\Omega^{\bullet}_{\Rho_f}(\R^3))\subset \varphi_\tau\wedge\Omega^{\bullet-1}_{\Rho_f}(\R^3).\] 
Thus, the above relation holds on $\varphi_\tau\wedge\Omega^{\bullet}_{\Rho_f}(\R^3)$, and so we obtain also \cref{flat foliated cohomology}.
\end{proof}

\printbibliography

\end{document}